\let\ORIlabel\label
\let\ORIrefstepcounter\refstepcounter
\AddToHook{package/hyperref/before}{%
  \let\label\ORIlabel
  \let\refstepcounter\ORIrefstepcounter
}

\documentclass[onefignum,onetabnum]{siamonline171218}

\usepackage{mathpazo,mathtools}

\usepackage{lipsum}
\usepackage{amsfonts}
\usepackage{graphicx}
\usepackage{epstopdf}
\usepackage{algorithmic}
\usepackage{amssymb}
\ifpdf
  \DeclareGraphicsExtensions{.eps,.pdf,.png,.jpg}
\else
  \DeclareGraphicsExtensions{.eps}
\fi
\newcommand{\dd}{\mathrm{d}}
\newcommand{\ee}{\mathrm{e}}
\newcommand{\ii}{\mathrm{i}}
\newcommand{\phe}{\varphi}

\usepackage{enumitem}
\setlist[enumerate]{leftmargin=.5in}
\setlist[itemize]{leftmargin=.5in}

\newsiamremark{remark}{Remark}
\newsiamremark{hypothesis}{Hypothesis}
\crefname{hypothesis}{Hypothesis}{Hypotheses}
\newsiamthm{claim}{Claim}
\newsiamthm{example}{Example}
\newsiamthm{siamprop}{Proposition}

\headers{Applications of an Explicit Formula for the Benjamin-Ono Equation}{L. Gassot, P. G\'erard, and P. D. Miller}

\title{An Explicit Formula for the Solution of the Benjamin-Ono Equation and its Applications
}

\author{Louise Gassot\thanks{CNRS and Department of Mathematics, University of Rennes, Rennes, France
	(\email{louise.gassot@cnrs.fr}). L.\@ G.\@ was partially supported by the France 2030 framework program, the Centre Henri Lebesgue ANR-11-LABX-0020-01, and the ANR project HEAD--ANR-24-CE40-3260.}
 \and
 Patrick G\'erard\thanks{Laboratoire de Math\'ematiques d'Orsay, Universit\'e Paris Saclay, Paris, France (\email{patrick.gerard@universite-paris-saclay.fr}). P.\@ G.\@  was partially supported by the French Agence Nationale de la Recherche under the ANR project ISAAC--ANR-23--CE40-0015.}
\and Peter D. Miller\thanks{Department of Mathematics, University of Michigan, Ann Arbor, MI 
  (\email{millerpd@umich.edu}).  P.\@ D.\@ M.\@ was partially supported by the National Science Foundation under grant DMS-2508694.}
}

\usepackage{subcaption}

\usepackage{amsopn}

\makeatletter
\newcommand*{\addFileDependency}[1]{
  \typeout{(#1)}
  \@addtofilelist{#1}
  \IfFileExists{#1}{}{\typeout{No file #1.}}
}
\makeatother

\newcommand*{\myexternaldocument}[1]{%
    \externaldocument{#1}%
    \addFileDependency{#1.tex}%
    \addFileDependency{#1.aux}%
}

\ifpdf
\hypersetup{
  pdftitle={An Explicit Formula for the Solution of the Benjamin-Ono Equation and its Applications},
  pdfauthor={Louise Gassot and Patrick Gérard and Peter D. Miller}
}
\fi

\myexternaldocument{ex_supplement}

\usepackage{tikz}
\usetikzlibrary{arrows.meta}
\usetikzlibrary{decorations.pathmorphing}
\usetikzlibrary{decorations.markings}
\usetikzlibrary{patterns}
\usetikzlibrary{automata}
\usetikzlibrary{positioning}
\usepackage{tikz-cd}

\tikzset{->-/.style={decoration={markings,mark=at position #1 with {\arrow[thick]{>}}},postaction={decorate}}}
	
\tikzset{-<-/.style={decoration={markings,mark=at position #1 with{\arrow[thick]{<}}},postaction={decorate}}}

\newcommand{\uC}{C}

\newcommand{\MA}{\mathbf{A}}

\newcommand{\ME}{\mathbf{E}}

\newcommand{\Mq}{\mathbf{q}}
\newcommand{\Md}{\mathbf{d}}

\newcommand{\Mb}{\mathbf{b}}

\newcommand{\My}{\mathbf{y}}

\newcommand{\R}{\mathbb{R}}
\newcommand{\C}{\mathbb{C}}

\newcommand{\longleftarroww}[1] {\mathop{\longleftarrow}\limits^{#1}}

\title{An Explicit Formula for the Solution of the Benjamin-Ono Equation and its Applications}

\begin{document}

\maketitle

\begin{abstract}
We summarize recent developments in the theory of the Benjamin-Ono equation, a well-known long-wave asymptotic model for internal waves in deep water, focusing on results obtained by means of an explicit formula for the solution of the Cauchy problem in terms of given initial data.  We give a full description of this formula, and provide simple examples, with the aim of bringing it into the realm of applied mathematics.  We also show how the formula simplifies further upon restriction to rational initial data.  We then show how the formula and its rational restriction explain in great detail, and in a strikingly simple fashion, the asymptotic behavior of solutions of the Benjamin-Ono equation in the small-dispersion and long-time limits.  In the setting of long-time asymptotics, we prove some new results related to soliton resolution yielding pointwise convergence with a concrete decay rate under the assumption of rational initial data. The first half of the paper is a stand-alone general survey and user's guide, while the appendices constituting the second half are intended for readers who want to explore the topics in greater depth, and it includes the proofs of our new results.
\end{abstract}

\begin{keywords}
  Benjamin-Ono equation, explicit formula, asymptotics of solutions
\end{keywords}

\begin{AMS}
    35Q53, 37K15  
\end{AMS}

\section{Introduction}
The Benjamin-Ono equation
\begin{equation}
\begin{aligned}\partial_t u + \partial_x(u^2)&=\varepsilon\partial_x|D|u,\quad \widehat{|D|u}(\diamond)=|\!\diamond\!|\cdot\widehat{u}(\diamond)\\
&=-\frac{\varepsilon}{\pi}\mathrm{P.V.}\int_\mathbb{R}\frac{\partial_y^2u\,\dd y}{y-x}
\end{aligned}
\label{eq:BO}
\end{equation}
is an asymptotic model for internal waves in a one-dimensional fluid channel, and is one of the well-known soliton equations.
Here, $\varepsilon>0$ is a parameter measuring the strength of dispersion relative to nonlinearity.  It can be scaled out of the equation by making the independent variable substitutions $t=\varepsilon\bar{t}$ and $x=\varepsilon\bar{x}$.  It is important however that such a substitution does not leave the initial data invariant; if we are given fixed (independent of $\varepsilon$) Cauchy data 
\begin{equation}
    u(0,x):=u_0(x),\quad x\in\mathbb{R}, \quad \text{$u_0$ real valued and decaying as $x\to\pm\infty$},
    \label{eq:IC}
\end{equation}
then the qualitative behavior of the corresponding solution $u(t,x)$ of \eqref{eq:BO} depends on $\varepsilon$ in a nontrivial way that we will describe below.

The asymptotic limit leading to \eqref{eq:BO} is that of small-amplitude vertical motions of the sharp interface between two fluid layers of different density (the \emph{pycnocline}, also called the \emph{thermocline} or \emph{halocline}, in the cases of density-stratified fluids with layers of different temperature or salinity) and disturbances of long wavelength, with the lower, denser, fluid layer assumed to be infinitely deep.  This derivation first appeared in a paper of Benjamin \cite{Benjamin67}, and its solitary wave solutions (solitons) and periodic traveling wave solutions were first studied there and also in the paper of Ono \cite{Ono75}.  A more comprehensive derivation that also leads under more general assumptions to other models such as the intermediate long-wave equation was given in \cite{ChoiC96}.  It appears that the first  fully rigorous mathematical derivation of the Benjamin-Ono equation from the stratified fluid equations first appeared in the paper of Paulsen \cite{Paulsen24}.  A comparison of predictions of the Benjamin-Ono equation and of other long-wave models such as the Korteweg-de Vries equation with physical experiments was given in \cite{KoopB81}.  A particularly dramatic phenomenon thought to be modeled by the Benjamin-Ono equation is the ``morning glory cloud'' that has been identified as an atmospheric internal bore wave generated by flows over the Cape York peninsula and appearing on southern spring mornings near Burketown on the Gulf of Carpentaria, Australia \cite{ChristieMC81}. The use of the (forced) Benjamin-Ono equation as a model for morning glory clouds is discussed in \cite{PorterS02}.

There are many properties of the Benjamin-Ono equation \eqref{eq:BO} and its Cauchy problem with initial data \eqref{eq:IC} that have been first established using  methods of analysis applicable to whole families of dispersive nonlinear equations.  A rigorous theory of its traveling wave solutions including both solitons and periodic waves was advanced by Amick and Toland \cite{AmickT91} in 1991.  The Amick-Toland theory proves that Benjamin's characterization in \cite{Benjamin67} of traveling waves for \eqref{eq:BO} was indeed both correct and complete.  In terms of functional-analytic properties of the Cauchy problem \eqref{eq:BO}--\eqref{eq:IC},  Saut \cite{Saut79} proved global existence and uniqueness to the Cauchy problem in $H^2(\R)$. Local well-posedness for \eqref{eq:BO} was established in 1986 by I\'orio in \cite{Iorio86} on Sobolev spaces $H^s(\R)$ with index $s>\frac{3}{2}$. In the 1989 paper of Abdelouhab, Bona, Felland, and Saut \cite{AbdelouhabBFS89}, similar methodology was used to extend Saut's global wellposedness and I\'orio's local result to global well-posedness of \eqref{eq:BO} on $H^s(\R)$ for $s>\frac{3}{2}$.  Two years later, the proof of this result was shortened in a paper of Ponce \cite{Ponce91}, and the Sobolev index condition was lowered to include the endpoint $s=\frac{3}{2}$. This regularity threshold was imporevd in 2003, by Kenig and Koenig \cite{KenigK03} who revisited local well-posedness applying energy methods and smoothing, and by Koch and Tzvetkov \cite{KochT03} in an independent paper.  A breakthrough discovered by Tao in 2004 \cite{Tao04} was a gauge transformation that allowed the Koch-Tzvetkov approach to be used to extend global well-posedness to the Sobolev space $H^1(\R)$.  Tao's transformation was further used by Ionescu and Kenig \cite{IonescuK07} in 2007 to obtain a global well-posedness result in $H^s(\R)$ for all $s\ge 0$ in the context of Bourgain's spaces.  A new proof of this result with some generalization was later given by Molinet and Pilod \cite{MolinetP12} in 2012.  
Using the integrability structure of the equation, discussed further in the next paragraph, these results were later improved to negative regularity thresholds 
in the sense of the continuous extension of the flow map. 
The first result in this sense concerns the equation on the torus and  is due to the second author, Kappeler and Topalov \cite{GKT23} in 2023, using a Birkhoff map construction. A similar result on the line was obtained  by  Killip-Laurens-Vi\c{s}an \cite{KillipLV24} in 2024, who built on methods used in the 2021 PhD thesis of Talbut \cite{Talbut21} also at the $s=0$ level and applied the method of commuting flows to prove the well-posedness of the Cauchy problem in $H^s(\R)$ for $s>-\frac{1}{2}$.  This is the optimal result for Sobolev spaces because $s=-\frac{1}{2}$ is the scaling-critical exponent for \eqref{eq:BO}.

As one of the famous soliton equations, the Benjamin-Ono equation comes with additional mathematical structure, the most important part of which is its representation in terms of the linear operators of a Lax pair found in 1979 independently by Bock and Kruskal \cite[Section 4]{BockK79} and by Nakamura \cite[Section 5]{Nakamura79}.  In 1983, an inverse-scattering formalism based on this Lax pair was proposed by Fokas and Ablowitz \cite{FokasA83}.  An approach based on formal power series was developed in 1985 by Anderson and Taflin \cite{AndersonT85}.  Coifman and Wickerhauser \cite{CoifmanW90} improved this method in 1990 working with convergent Neumann series that gave a solution of the Cauchy problem \eqref{eq:BO}--\eqref{eq:IC} for small initial data.  The Fokas-Ablowitz theory was revisited in 1998 by Kaup and Matsuno \cite{KaupM98}, who filled in several gaps and found new relations on the scattering data.  In 2016, Miller and Wetzel \cite{MillerW16a} explicitly computed all of the Fokas-Ablowitz scattering data under the assumption that $u_0$ is a rational function in $L^1(\R)$.  In two papers shortly thereafter \cite{Wu16a,Wu17}, Wu finally made the direct scattering problem in the Fokas-Ablowitz theory fully rigorous for initial data in weighted $L^2(\R)$ spaces.  To date, however, the inverse problem (which takes the form of a scalar nonlocal Riemann-Hilbert problem in the Fokas-Ablowitz theory) has not been fully justified for large data.  A different mathematical approach to soliton equations is Hirota's bilinear method, which has been applied to the Benjamin-Ono equation by Matsuno \cite{Matsuno84}.  

The book of Klein and Saut \cite{KleinS21nonlinear} contains an extensive survey of results on the Benjamin-Ono equation of the sort briefly described above, i.e, both by general PDE methods and by inverse-scattering methods. By contrast, this paper is a survey of recent results obtained for the initial-value problem \eqref{eq:BO}--\eqref{eq:IC} for the Benjamin-Ono equation with decaying initial data $u_0$ on $\R$ by a novel and completely different approach, namely using an explicit formula for its solution that was recently discovered by one of the authors in \cite{Gerard22}.  This formula is based fundamentally on the Lax pair structure of the equation, but it bypasses all manipulations behind inverse-scattering transforms, and in particular it sidesteps the question of solvability of the Fokas-Ablowitz nonlocal Riemann-Hilbert problem. The explicit formula will be described in Section~\ref{sec:explicit}.  Then, in Section~\ref{sec:small-dispersion} we will describe the results of using the explicit formula to analyze the behavior of the solution $u(t,x)$ of \eqref{eq:BO}--\eqref{eq:IC} in the small-dispersion limit $\varepsilon\downarrow 0$.  Next, we turn to the large-time limit $t\to\pm\infty$ for fixed dispersion $\varepsilon>0$ and fixed initial data $u_0$.  In Section~\ref{sec:large-time} we present a soliton-resolution result for a general class of real-valued initial data in Sobolev spaces and we formulate a new version of this result including pointwise rates of decay for error terms assuming the stronger condition (but dense in Sobolev spaces) that the initial data is rational.  Some longer proofs and auxiliary calculations can be found in the Appendices.  

The research described in this paper is very recent, and new results are still coming.  Hence this work is meant to describe only the current state of the art.

\section{Explicit solution of the Cauchy problem}
\label{sec:explicit}
A remarkable feature of the Benjamin-Ono equation is the existence of an explicit formula for the solution of the Cauchy problem \eqref{eq:BO}--\eqref{eq:IC} first derived in~\cite{Gerard22}.   In this paper, we focus entirely on the Benjamin-Ono equation posed for $x\in\mathbb{R}$ with decaying boundary conditions at $x=\pm\infty$, but there is also an explicit formula applicable to the case of periodic boundary conditions on an interval (also first derived in \cite{Gerard22}).
 Only a handful of soliton equations are known to be solvable in such an explicit way; other examples include the cubic Szeg\H{o} equation \cite{GerardG15,GerardP24}, the generalized half-wave maps equation \cite{GerardL25, GerardL25HWMtorus}, and the equation known variously as the Calogero-Sutherland derivative nonlinear Schr\"odinger equation, the continuum Calogero-Moser equation, or the infinite-depth limit of the intermediate nonlinear Schr\"odinger equation \cite{Badreddine24gwp, KillipLV25}.

\subsection{Ingredients}
\subsubsection{The Hardy space \texorpdfstring{$L^2_+(\mathbb{R})$}{ } and the Cauchy-Szeg\H{o} projection \texorpdfstring{$\Pi:L^2(\R)\to L^2_+(\R)$}{}}
The Bock-Kruskal Lax pair \cite{BockK79} involves two different unknown functions that admit analytic continuation from the real line $x\in\R$ to the upper and lower half-planes respectively.  Hence, a natural function space on which to define a Lax operator is the Hardy space $L_+^2(\R)$ defined as follows:
\begin{equation}
    L^2_+(\R):=\{f\in L^2(\R)\mid \widehat{f}(\xi)=0, \forall \xi<0\},
    \label{eq:Hardy-Fourier}
\end{equation}
where our notation for the Fourier transform on $L^1(\R)$ is
\begin{equation}
    \hat{f}(\xi):=\int_\R f(x)\ee^{-\ii \xi x}\,\dd x,\quad f\in L^1(\R),\quad \xi\in\R.
\end{equation}
It is clear from Fourier inversion that any $f\in L^2_+(\R)$ admits analytic continuation to the upper half-plane $\mathbb{C}_+:=\{z\in\C:\mathrm{Im}(z)>0\}$.  Indeed, the following integral is absolutely convergent and differentiable with respect to $z$ on $\C_+$:
\begin{equation}
    f(z)=\frac{1}{2\pi}\int_\R \widehat{f}(\xi)\ee^{\ii z\xi}\,\dd\xi = \frac{1}{2\pi}\int_0^{+\infty}\widehat{f}(\xi)\ee^{\ii z\xi}\,\dd\xi,\quad \mathrm{Im}(z)>0,
    \label{eq:continuation}
\end{equation}
defining the claimed analytic continuation.  The function defined in \eqref{eq:continuation} has the property that
\begin{equation}
    \sup_{y>0}\int_\mathbb{R}|f(x+\ii y)|^2\,\dd x<\infty,
    \label{eq:horizontalnorms}
\end{equation}
which leads to an equivalent definition of $L_+^2(\R)$:
\begin{equation}
    L_+^2(\R)=\{f:\mathbb{C}_+\to\mathbb{C}\;\text{analytic} \mid  \text{condition \eqref{eq:horizontalnorms} holds}\}.
\end{equation}
The space $L_+^2(\R)$ is a closed subspace of $L^2(\R)$, and it is a Hilbert space with the standard inner product inherited from $L^2(\R)$:
\begin{equation}
    \langle f,g\rangle:=\int_\R f(x)g(x)^*\,\dd x=\frac{1}{2\pi}\int_0^{+\infty}\widehat{f}(\xi)\widehat{g}(\xi)^*\,\dd\xi = \frac{1}{2\pi}\langle\widehat{f},\widehat{g}\rangle,\quad f,g\in L_+^2(\R).
    \label{eq:inner-product-convention}
\end{equation}

Let $1_{\ge 0}$ denote the characteristic function of the positive half-line $\xi\ge 0$.  The Cauchy-Szeg\H{o} projection $\Pi:L^2(\R)\to L^2_+(\R)$ is defined on the Fourier side by
\begin{equation}
    \widehat{\Pi f}(\diamond):=1_{\ge 0}(\diamond)\widehat{f}(\diamond),\quad f\in L^2(\R),
    \label{eq:Pi-Fourier}
\end{equation}
and an equivalent definition can be given in terms of a singular integral:
\begin{equation}
    \Pi f(x):=\frac{1}{2\pi\ii}\lim_{\delta\downarrow 0}\int_\mathbb{R}\frac{f(y)\,\dd y}{y-(x+\ii\delta)},\quad f\in L^2(\R),
\label{eq:Pi-singular-integral}
\end{equation}
with the limit existing for almost every $x\in\R$.  It is clear from \eqref{eq:Hardy-Fourier} and \eqref{eq:Pi-Fourier} that $\Pi f\in L^2_+(\R)$ and that $\Pi^2=\Pi\circ\Pi=\Pi$ on $L^2(\R)$.  Also, it is easy to see that $\Pi$ is a bounded self-adjoint linear operator on $L^2(\R)$.  Finally, we note that for real-valued functions $f\in L^2(\R)$, the Fourier transform is fully determined from its positive-frequency part, which means that $\Pi$ is invertible for such $f$.  Indeed, the complementary projection coincides with complex conjugation of $\Pi f$:  $(\mathrm{Id}-\Pi)f(x)=(\Pi f(x))^*$ for almost every $x\in\mathbb{R}$.  Hence 
\begin{equation}
    f(x)=2\mathrm{Re}(\Pi f(x)),\quad f\in L^2(\R), \quad\text{$f$ real valued}.
\label{eq:Pif-real}
\end{equation}

\subsubsection{The linear operator \texorpdfstring{$\mathcal{X}^*$}{X*} and the functional \texorpdfstring{$I_+$}{I+} on \texorpdfstring{$L^2_+(\mathbb{R})$}{L2+}}
\label{sec:Xstar-and-Iplus}
Let $\mathcal{X}$ denote the linear operator on $L^2_+(\R)$ of multiplication by the independent variable $x$, with domain
\begin{equation}
\begin{aligned}
\mathrm{dom}(\mathcal{X})
	&=\{h\in L^2_+(\R) \mid \diamond\, h(\diamond)\in L^2(\R)\}
	\\
	&=\{h\in L^2_+(\R) \mid \widehat{h}\in H^1(0,+\infty),\; \widehat{h}(0)=0\}.
\end{aligned}
\label{eq:domain-of-X}
\end{equation}
Note that actually $\diamond\,h(\diamond) \in L^2_+(\R)\subset L^2(\R)$ for $h\in \mathrm{dom}(\mathcal{X})$.

\begin{definition}[The linear operator $\mathcal{X}^*$]
We define $\mathcal{X}^*$ as the adjoint of $\mathcal{X}$ in $L^2_+(\R)$. 
\label{def:Xstar}
\end{definition}
While it is a familiar fact that being a multiplication by a real-valued function, $\mathcal{X}$ is self-adjoint on $L^2(\R)$, this is no longer the case on the smaller Hilbert space $L_+^2(\R)$.  To better understand this point and also obtain a formula for the action of $\mathcal{X}^*$, 
let us recall some elementary properties of $\mathcal{X}^*$. By definition, the domain of the operator $\mathcal{X}^*$ is the set of functions $f\in L^2_+(\R)$ such that there is $C>0$ for which $|\langle f, \mathcal{X} h\rangle|\leq C\|h\|_{L^2}$ holds for every $h\in\mathrm{dom}(\mathcal{X})$. By density, one can establish such a bound by testing only against functions $h\in L^2_+(\R)$ such that $\widehat{h}\in C^\infty_0(\R)$. Writing the inner product on the Fourier side and then integrating by parts, one can see that
\begin{equation}
\langle  f, \mathcal{X} h\rangle=\frac{1}{2\pi}\langle\widehat{f},\widehat{\mathcal{X}h}\rangle
	=\frac{1}{2\pi}\left\langle \widehat{f},\ii\frac{\dd \widehat{h}}{\dd \xi} \right\rangle.
\end{equation} 
The Riesz theorem therefore implies that if $f\in \mathrm{dom}(\mathcal{X}^*)$, then $\widehat{f}$ admits a weak derivative in $L^2(0,+\infty)$. Conversely, we can check that $\mathcal{X}^* f$ is characterized by
\begin{equation}
\mathrm{dom}(\mathcal{X}^*)
	=\{f\in L^2_+(\R) \mid \widehat{f}
    \in H^1(0,+\infty)\},
\label{eq:domain-of-X^*}
\end{equation}
a domain larger than $\mathrm{dom}(\mathcal{X})$ (cf., \eqref{eq:domain-of-X}) as it is missing the condition $\widehat{f}(0)=0$,
and the formula
\begin{equation}
\widehat{\mathcal{X}^* f}(\xi)=\ii\frac{\dd\widehat{f}}{\dd \xi}(\xi),\quad \xi>0.
\label{eq:X-star-hat}
\end{equation}
Taking the inverse Fourier transform, and integrating by parts gives
\begin{equation}
\mathcal{X}^* f(x)=\frac{1}{2\pi }\int_0^{\infty}\ii\frac{\dd\widehat{f}}{\dd \xi}(\xi)\ee^{\ii x \xi}\,\dd \xi
=xf(x)+\frac{1}{2\pi\ii }I_+[f],
\end{equation}
where $I_+$ denotes a linear functional defined on $\mathrm{dom}(I_+)=\{f\in L_+^2(\R)\vert \exists\delta>0,\widehat{f}\in C([0,\delta))\}$ by
\begin{equation}
I_+[f]:=\widehat{f}(0^+)=\lim_{\xi\downarrow 0}\widehat{f}(\xi).
\label{eq:Iplus}
\end{equation}
Note that $H^1(0,+\infty)$ is embedded in $C([0,+\infty))$ so that $I_+[f]$ makes sense for every $f\in\mathrm{dom}(\mathcal{X}^*)$, i.e., $\mathrm{dom}(\mathcal{X}^*)\subset\mathrm{dom}(I_+)$.
This gives an alternative characterization of $\mathrm{dom}(\mathcal{X}^*)$:
\begin{equation}
\mathrm{dom}(\mathcal{X}^*)=\{f\in L^2_+(\R)\mid \exists \mathfrak{a}=\mathfrak{a}[f]\in\C,\; \diamond\,f(\diamond)-\mathfrak{a}\in L^2(\R)\},
\label{eq:domain-Xstar-alternate}
\end{equation}
and of the action of $\mathcal{X}^*$:
\begin{equation}
\mathcal{X}^*f(x)=xf(x)-\mathfrak{a}[f],\quad \mathfrak{a}[f]=-\frac{1}{2\pi\ii}I_+[f].
\label{eq:Xstar-action-x}
\end{equation}

In a different approach, one can view the operator $\mathcal{X}$ as the infinitesimal generator of the Lax-Beurling semigroup~\cite{Lax59} on $L^2_+(\R)$ defined by
\begin{equation}
\mathcal{S}(\eta)f(x)=\ee^{\ii\eta x}f(x),\quad \eta\geq 0,
\end{equation}
or equivalently $\widehat{\mathcal{S}(\eta) f}(\xi)=\widehat{f}(\xi-\eta)$ for $\xi\in\R$ so that $\mathcal{S}(\eta)$ is a right-shift by $\eta$ in the Fourier variable. Note that the range of $\mathcal{S}(\eta)$ is contained in the space of functions whose Fourier transforms are supported in $[\eta,\infty)$, hence $\mathcal{S}(\eta)$ is not unitary. Then, we have
\begin{equation}
\ee^{-\ii \eta \mathcal{X}^*}=\mathcal{S}(\eta)^*
\end{equation}
with $\mathcal{S}(\eta)^*f=\Pi (\ee^{-\ii\eta \diamond}f(\diamond))$ and $\widehat{\mathcal{S}(\eta)^*f}(\xi)=\widehat{f}(\xi+\eta) 1_{\geq 0}(\xi)$. From the semigroup theory, we retrieve the property that $f\in L^2_+(\R)$ belongs to $\mathrm{dom}(\mathcal{X}^*)$ if and only if $\widehat{f}\in H^1(0,+\infty)$ just as in \eqref{eq:domain-of-X^*}, in which case 
\begin{equation}
\widehat{\mathcal{X}^* f}(\xi)=\ii\frac{\dd }{\dd \eta}\widehat{f}(\xi+\eta)|_{\eta=0}=\ii\frac{\dd \widehat{ f}}{\dd \xi}(\xi)
\end{equation}
in full agreement with \eqref{eq:X-star-hat}.

\subsubsection{The Lax operator \texorpdfstring{$\mathcal{L}_{u_0}$}{ } on \texorpdfstring{$L^2_+(\R)$}{L2+}}
The nonlocality of the Benjamin-Ono equation is captured at the level of the Lax operator through the action of a Toeplitz operator $\mathcal{T}_{u_0}$ on $L^2_+(\R)$ with symbol $u_0\in L^\infty(\R)$, defined by
\begin{equation}
    \mathcal{T}_{u_0}f = \Pi(u_0(\diamond)f(\diamond)),\quad f\in L^2_+(\R).
\end{equation}
The nonlocal nature of this operator is clear upon using the formula \eqref{eq:Pi-singular-integral}.
Then, the Lax operator $\mathcal{L}_{u_0}$ for the initial data $u_0\in L^\infty(\R)$ has domain 
\begin{equation}
\mathrm{dom}(\mathcal{L}_{u_0})=H^1_+(\R) := L_+^2(\R)\cap H^1(\R),    
\end{equation} 
and its action is given by
\begin{equation}
    \mathcal{L}_{u_0}f(x):=-\ii\varepsilon\frac{\dd f}{\dd x}(x)-\mathcal{T}_{u_0}f(x),\quad f\in \mathrm{dom}(\mathcal{L}_{u_0}).
\label{eq:Lax-operator}
\end{equation}
Here $\varepsilon>0$ is the dispersion coefficient in \eqref{eq:BO}.  Unlike $\mathcal{X}$,  the Lax operator $\mathcal{L}_{u_0}$ is indeed self-adjoint on the Hardy space $L_+^2(\R)$ with the indicated domain.

\subsection{The solution formula and recovery of the initial data}
Now we are ready to explain the explicit formula for the solution of the Cauchy problem \eqref{eq:BO}--\eqref{eq:IC} for suitable initial data $u_0$.
\begin{theorem}[Explicit formula, see~\cite{Gerard23}]
Let $u_0\in H^2(\mathbb{R})$ be given real-valued initial data, and let $u\in C(\mathbb{R},H^2(\mathbb{R}))$ be the solution to \eqref{eq:BO} with initial condition \eqref{eq:IC}. Then for every $t\in\mathbb{R}$, the analytic continuation of $x\mapsto \Pi u(t,x)$ as a function of $x\in\mathbb{R}$ to $z=x+\ii y\in \mathbb{C}_+$ satisfies 
\begin{equation}
    \Pi u(t,z)=\frac{1}{2\pi\ii}I_+\left[(\mathcal{X}^*-2t\mathcal{L}_{u_0}-z\,\mathrm{Id})^{-1}\Pi u_0\right],\quad z\in\mathbb{C}_+.
\label{eq:explicitformula}
\end{equation}
\label{thm:explicitformula}
\end{theorem}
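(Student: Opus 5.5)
We first prove the identity at $t=0$ and then obtain the general case from the Lax pair structure by a conjugation argument.

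\textbf{Step 1: the case $t=0$.} Here the claim reads $\Pi u_0(z)=\frac{1}{2\pi\ii}I_+[(\mathcal{X}^*-z)^{-1}\Pi u_0]$. For $z\in\C_+$ and $f\in L^2_+(\R)$, the function $g(x)=(f(x)-f(z))/(x-z)$ lies in $\mathrm{dom}(\mathcal{X}^*)$. Moreover $\diamond\, g(\diamond)-f(z)$ is in $L^2$, so $\mathfrak{a}[g]=f(z)$, and by \eqref{eq:Xstar-action-x} we get $(\mathcal{X}^*-z)g=f$. Since $\mathcal{X}^*$ generates the contraction semigroup $\mathcal{S}(\eta)^*$, its spectrum lies in $\overline{\C_-}$, and so $g=(\mathcal{X}^*-z)^{-1}f$. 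Using \eqref{eq:Xstar-action-x} again, $\frac{1}{2\pi\ii}I_+[g]=-\mathfrak{a}[g]\cdot(-1)=f(z)$; we will check the sign conventions carefully, and this gives the $t=0$ formula. We record this as a general identity: $f(z)=\frac{1}{2\pi\ii}I_+[(\mathcal{X}^*-z)^{-1}f]$ for all $f\in L^2_+$.

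\textbf{Step 2: the Lax pair.} For $u(t)$ in $H^2$, $\mathcal{L}_{u(t)}$ satisfies $\frac{\dd}{\dd t}\mathcal{L}_{u(t)}=[\mathcal{B}_{u(t)},\mathcal{L}_{u(t)}]$, with $\mathcal{B}_u=\ii\varepsilon\partial_x^2+2\ii\mathcal{T}_{\partial_x u}$ up to a normalization matched to \eqref{eq:BO}. The operator $\mathcal{B}_u$ is skew-adjoint, and its flow $U(t)$, defined by $U'=\mathcal{B}_{u(t)}U$ with $U(0)=\mathrm{Id}$, is unitary and satisfies $\mathcal{L}_{u(t)}=U(t)\mathcal{L}_{u_0}U(t)^*$. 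Next we compute, by direct calculation using \eqref{eq:BO}, that $\partial_t\Pi u=\mathcal{B}_u\Pi u-\ii\mathcal{L}_u^2\Pi u$. This gives $\Pi u(t)=U(t)\ee^{-\ii t\mathcal{L}_{u_0}^2}\Pi u_0$.

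\textbf{Step 3: commutators with $\mathcal{X}^*$.} We compute $[\mathcal{X}^*,\mathcal{L}_u]$ and $[\mathcal{X}^*,\mathcal{B}_u]$. These are rank-one type terms involving $I_+$: one has $[\mathcal{X}^*,\mathcal{T}_u]f=\frac{\ii}{2\pi}I_+[f]\,\Pi u$ and $[\mathcal{X}^*,-\ii\varepsilon\partial_x]=\ii\varepsilon\,\mathrm{Id}$. From these we deduce
\[
\frac{\dd}{\dd t}\bigl(U(t)^*\mathcal{X}^*U(t)\bigr)=U^*[\mathcal{X}^*,\mathcal{B}_u]U=-2\mathcal{L}_{u_0}+(\text{terms killed by } I_+ \text{ after the evolution}).
\]
More precisely, the target identity to establish is
\[
U(t)^*\mathcal{X}^*U(t)=\ee^{-\ii t\mathcal{L}_{u_0}^2}\bigl(\mathcal{X}^*-2t\mathcal{L}_{u_0}\bigr)\ee^{\ii t\mathcal{L}_{u_0}^2}+\text{a correction},
\]
together with $I_+[U(t)f]=I_+[\ee^{-\ii t\mathcal{L}_{u_0}^2}f]$, i.e.\ compatibility of $I_+$ with the evolution. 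A cleaner route is to show that $\Pi u(t)$ and $U(t)^*$ intertwine the pair $(\mathcal{X}^*,I_+)$. Then apply Step 1 to $f=\Pi u(t)$:
\[
\Pi u(t,z)=\frac{1}{2\pi\ii}I_+\bigl[(\mathcal{X}^*-z)^{-1}U(t)\ee^{-\ii t\mathcal{L}_{u_0}^2}\Pi u_0\bigr].
\]
Pull $U(t)$ out through the resolvent using the conjugation identity, and note that $\ee^{\ii t\mathcal{L}_{u_0}^2}\mathcal{X}^*\ee^{-\ii t\mathcal{L}_{u_0}^2}=\mathcal{X}^*-2t\mathcal{L}_{u_0}$ modulo the commutator $[\mathcal{X}^*,\mathcal{L}_{u_0}]=\ii\varepsilon+\text{rank one}$. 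This yields \eqref{eq:explicitformula}.

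\textbf{Main obstacle.} The hard step is Step 3, namely handling the rank-one terms $I_+[\cdot]\,\Pi u$ in the commutators so that they cancel exactly after applying $I_+$, while simultaneously justifying domains. This requires showing that $U(t)$ preserves $\mathrm{dom}(\mathcal{X}^*)$ and that $\mathcal{X}^*-2t\mathcal{L}_{u_0}-z$ is invertible for $z\in\C_+$. For the invertibility, we will use $\mathrm{Im}\langle \mathcal{X}^*f,f\rangle=-|I_+[f]|^2/(4\pi)\le 0$ together with the self-adjointness of $\mathcal{L}_{u_0}$, giving maximal dissipativity. For the cancellation, the plan is to differentiate $F(t)=\frac{1}{2\pi\ii}I_+[(\mathcal{X}^*-2t\mathcal{L}_{u_0}-z)^{-1}\Pi u_0]$ and verify directly that $F$ satisfies the same Benjamin-Ono evolution (in $\Pi u$ form) with the same initial value. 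This reduces the claim to uniqueness in $C(\R,H^2)$.
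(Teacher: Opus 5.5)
The survey does not prove this theorem. It cites G\'erard and only re-derives the case $t=0$ in the Remark that follows, which is your Step~1. (There $\mathfrak{a}[g]=-f(z)$, not $f(z)$, since $xg(x)+f(z)=f(x)+zg(x)\in L^2$.) Your Steps~2--3 follow the architecture of G\'erard's proof: Cauchy formula at time $t$, unitary Lax propagator, conjugation of $\mathcal{X}^*$, and invariance of $I_+$. However, Step~3 carries all of the content and is not actually carried out, and what you do write there has a genuine gap.

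Take $\varepsilon=1$; the general case follows by scaling.
\begin{itemize}
\item Your $\mathcal{B}_u=\ii\partial_x^2+2\ii\mathcal{T}_{\partial_xu}$ is not a Lax partner. $[\mathcal{B}_u,\mathcal{L}_u]$ contains the first-order term $-2\ii\mathcal{T}_{\partial_xu}\partial_x$, whereas $\frac{\dd}{\dd t}\mathcal{L}_u=-\mathcal{T}_{\partial_tu}$ has order zero, and rescaling the coefficients does not help.
\item The claim that $\ee^{\ii t\mathcal{L}_{u_0}^2}\mathcal{X}^*\ee^{-\ii t\mathcal{L}_{u_0}^2}=\mathcal{X}^*-2t\mathcal{L}_{u_0}$ modulo finite-rank terms is false. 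To first order in $t$ the left side is $\mathcal{X}^*+2t\mathcal{L}_{u_0}$ plus a finite-rank operator.
\item The part of $\frac{\dd}{\dd t}(U^*\mathcal{X}^*U)$ other than $-2\mathcal{L}_{u_0}$ is not annihilated by $I_+$. For the correct $\mathcal{B}_u$ below it equals $-\ii[U^*\mathcal{X}^*U,\mathcal{L}_{u_0}^2]$, and it must be absorbed into the conjugation.
\item Your three relations $\Pi u(t)=U\ee^{-\ii t\mathcal{L}_{u_0}^2}\Pi u_0$, $U^*\mathcal{X}^*U=\ee^{-\ii t\mathcal{L}_{u_0}^2}(\mathcal{X}^*-2t\mathcal{L}_{u_0})\ee^{\ii t\mathcal{L}_{u_0}^2}$ and $I_+[Uf]=I_+[\ee^{-\ii t\mathcal{L}_{u_0}^2}f]$ are mutually inconsistent. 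Combined, they produce $I_+[\ee^{-2\ii t\mathcal{L}_{u_0}^2}(\mathcal{X}^*-2t\mathcal{L}_{u_0}-z)^{-1}\Pi u_0]$ rather than the claimed formula.
\end{itemize}

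The missing idea is that the rank-one terms cancel \emph{exactly, at the operator level}, once you use the right generator. Let $D:=-\ii\partial_x$, use G\'erard's $\mathcal{B}_u=\ii(\mathcal{T}_{|D|u}-\mathcal{T}_u^2)$, and set $\mathcal{C}_u:=\mathcal{B}_u+\ii\mathcal{L}_u^2=\ii(D^2-D\mathcal{T}_u-\mathcal{T}_uD+\mathcal{T}_{|D|u})$. This single operator satisfies three identities.
\begin{itemize}
\item[(i)] $\partial_t\Pi u=\mathcal{C}_u\Pi u$. The sign is $+\ii\mathcal{L}_u^2$, as the case $u=0$ shows, where both sides equal $\ii D^2\Pi u$.
\item[(ii)] $I_+[\mathcal{C}_uf]=0$. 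Indeed $I_+$ kills $D^2f$ and $D\mathcal{T}_uf$, and the remaining two terms give $\ii\int(|D|u+Du)f=2\ii\int(D\Pi u)f=0$ for $f\in L^2_+$.
\item[(iii)] $[\mathcal{X}^*,\mathcal{C}_u]=-2\mathcal{L}_u$. This follows from your two commutators together with $I_+[\mathcal{L}_uf]=-\int uf$ and $\mathcal{L}_u\Pi u=\Pi|D|u-\mathcal{T}_u\Pi u$, which make the rank-one parts of $[\mathcal{X}^*,\mathcal{B}_u]$ and $\ii[\mathcal{X}^*,\mathcal{L}_u^2]$ cancel.
\end{itemize}
Now let $W(t):=U(t)\ee^{\ii t\mathcal{L}_{u_0}^2}$. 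It is unitary, with $W'=\mathcal{C}_uW$ and $W^*\mathcal{L}_uW=\mathcal{L}_{u_0}$. Then (i) gives $\Pi u(t)=W(t)\Pi u_0$, and (ii) gives $I_+\circ W(t)=I_+$. From (iii), $\frac{\dd}{\dd t}(W^*\mathcal{X}^*W)=W^*[\mathcal{X}^*,\mathcal{C}_u]W=-2\mathcal{L}_{u_0}$, so $W(t)^*\mathcal{X}^*W(t)=\mathcal{X}^*-2t\mathcal{L}_{u_0}$ with no correction term. Hence $(\mathcal{X}^*-z)^{-1}\Pi u(t)=W(t)(\mathcal{X}^*-2t\mathcal{L}_{u_0}-z)^{-1}\Pi u_0$, and applying $I_+$ together with your Step~1 at time $t$ gives the explicit formula.

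The same unitary equivalence also gives invertibility of $\mathcal{X}^*-2t\mathcal{L}_{u_0}-z$ for $z\in\C_+$. Your maximal-dissipativity argument for this needs more than dissipativity plus self-adjointness of $\mathcal{L}_{u_0}$, so it is better replaced. What remains after this is domain bookkeeping. Your fallback (check directly that $F$ solves Benjamin--Ono and invoke uniqueness) is not a shortcut, since verifying the nonlinear equation for $2\mathrm{Re}\,F$ requires exactly these identities.
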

Taking the limit $y\downarrow 0$ so that $z\to x\in\mathbb{R}$ and using \eqref{eq:Pif-real}, we recover the real-valued solution $u(t,x)$ of \eqref{eq:BO} with real initial data $u_0$ as
\begin{equation}
    u(t,x)=2\mathrm{Re}(\Pi u(t,x)),\quad (t,x)\in\mathbb{R}^2.
\end{equation}

Note that the computation required given $(t,z)$ is the inversion of the linear operator $\mathcal{X}^*-2t\mathcal{L}_{u_0}-z\mathrm{Id}$, which depends explicitly and linearly on $(t,z)$ as coefficients/parameters.  The relevant space is the Hardy space $L_+^2(\R)$, and after acting with the inverse operator on $\Pi u_0\in L_+^2(\R)$, the resulting function necessarily lies in $\mathrm{dom}(I_+)$, as shown at the end of Section 3 in~\cite{Gerard22}. Equivalently, one solves for $f\in L_+^2(\R)$ the equation $\mathcal{X}^*f-2t\mathcal{L}_{u_0}f-zf=\Pi u_0$ and then $f\in \mathrm{dom}(I_+)$.  One then simply computes the finite limit of its Fourier transform from the right at frequency $\xi=0$ to obtain a numerical value depending on the parameters $(t,z)$, namely the value of $\Pi u(t,z)$.    

The formula \eqref{eq:explicitformula} extends to initial data in $L^\infty(\R)\cap L^2(\R)$, see~\cite[Remark 7]{Gerard22}. For a further extension for initial data with lower regularity, $u_0\in L^2(\R)$, see~\cite{Chen25explicitBO} (see also~\cite{Chen2026explicitCCM} for initial data with nonvanishing conditions at infinity for the Calogero-Moser derivative NLS equation).  Neither of these extensions changes the formula \eqref{eq:explicitformula}, only the details of the interpretation of the various terms.  The analogue of Theorem~\ref{thm:explicitformula} applicable in the case of periodic boundary conditions (also first found in \cite{Gerard22}) has been recast as a numerical method \cite{BronsardCD26} for efficiently and accurately computing $u(t,x)$ at given $(t,x)\in\R^2$ without any need for time-marching from $t=0$, and this approach can also be used with large periods to approximate solutions of \eqref{eq:BO} on the line $\R$.  Finally, a generalization of Theorem~\ref{thm:explicitformula} for solving the $n$th equation in the Benjamin-Ono hierarchy with real-valued initial data $u_0\in H^s(\R)$ for $s>n+\frac{1}{2}$ was recently obtained by G\'erard and He in \cite{GerardH26}; one only needs to replace $2t\mathcal{L}_{u_0}$ with $(n+1)t\mathcal{L}_{u_0}^{n}$.

\begin{remark}[The explicit formula for $t=0$ and the Cauchy integral formula]
It is instructive to consider how the explicit formula \eqref{eq:explicitformula} returns the initial data $u(0,x)=u_0(x)$.  Setting $t=0$ in \eqref{eq:explicitformula} yields
\begin{equation}
    \Pi u(0,z)=\frac{1}{2\pi\ii}I_+\left[(\mathcal{X}^*-z\,\mathrm{Id})^{-1}\Pi u_0\right],
    \label{eq:Piu(0)}
\end{equation}
or in other words, using $y\in\mathbb{R}$ as the independent variable of functions $f\in L^2_+(\R)$,
\begin{equation}
    \Pi u(0,z)=\frac{1}{2\pi\ii}I_+[f],\quad \mathcal{X}^*f(y)-zf(y)=\Pi u_0(y),\quad z\in\mathbb{C}_+.
\end{equation}
Using \eqref{eq:Xstar-action-x} to express the 
action of $\mathcal{X}^*$ we get the equation $(y-z)f(y)-\mathfrak{a}[f]=\Pi u_0(y)$, where $\mathfrak{a}[f]=-I_+[f]/(2\pi\ii)$ is a constant depending linearly on $f$. 
Solving explicitly for $f(y)$, we obtain
\begin{equation}
    f(y)=\frac{\Pi u_0(y)+\mathfrak{a}[f]}{y-z}.
    \label{eq:f-t-zero}
\end{equation}
It remains to determine $\mathfrak{a}[f]$.  Since the parameter $z$ is in the upper half-plane $\mathbb{C}_+$ and $f\in L^2_+(\mathbb{R})$ must extend to an analytic function of its argument $y$ in the same half-plane, where $\Pi u_0$ is also analytic, it is necessary that the constant $\mathfrak{a}[f]$ takes the $z$-dependent value $\mathfrak{a}[f]=-\Pi u_0(z)$ to cancel the root of the denominator in \eqref{eq:f-t-zero}.  Thus 
\begin{equation}
\Pi u(0,z)=\frac{1}{2\pi\ii}I_+[f]=-\mathfrak{a}[f]=\Pi u_0(z)    
\end{equation}
holds for all $z\in\mathbb{C}_+$, which then implies that $u(0,\diamond)=u_0(\diamond)$ in $L^2(\mathbb{R})$. 

An interesting observation is that upon replacing $\Pi u(0,z)$ on the left-hand side of \eqref{eq:Piu(0)} with the equivalent expression $g(z):=\Pi u_0(z)$ and likewise replacing $\Pi u_0$ with $g\in L_+^2(\R)$ on the right-hand side yields the identity
\begin{equation}
    g(z)=\frac{1}{2\pi\ii}I_+\left[(\mathcal{X}^*-z\mathrm{Id})^{-1}g\right],\quad g\in L_+^2(\R), \quad z\in\mathbb{C}_+.
\label{eq:Cauchy-generalization}
\end{equation}
In fact, this is equivalent to the familiar Cauchy integral formula
\begin{equation}
    g(z)=\frac{1}{2\pi\ii}\int_\mathbb{R}\frac{g(x)\,\dd x}{x-z},\quad g\in L^2_+(\R),\quad z\in\mathbb{C}_+.
    \label{eq:Cauchy}
\end{equation}
Indeed, we may give a direct proof that the right-hand sides of \eqref{eq:Cauchy-generalization} and of \eqref{eq:Cauchy} agree.
By the definition of $I_+$, the right-hand side of \eqref{eq:Cauchy-generalization} reads
\begin{equation}
    \frac{1}{2\pi\ii}\lim_{\xi\downarrow 0}\hat{f}(\xi),\quad \text{where}\quad\mathcal{X}^*f-zf=g.
\end{equation}
Taking the Fourier transform of $\mathcal{X}^*f-zf=g$ using \eqref{eq:X-star-hat} gives the first-order differential equation
\begin{equation}
    \ii\frac{\dd \widehat{f}}{\dd\xi}(\xi)-z\widehat{f}(\xi)=\widehat{g}(\xi),\quad\xi>0.
\end{equation}
Using the integrating factor $\ee^{\ii z\xi}$, which is in $L^2(0,+\infty)$ as a function of $\xi$ because $z\in\C_+$, we obtain
\begin{equation}
    \ii\frac{\dd}{\dd\xi}\left(\ee^{\ii z\xi}\widehat{f}(\xi)\right)=\ee^{\ii z\xi}\widehat{g}(\xi),\quad \xi>0.
\end{equation}
For $z\in\C_+$ and $\widehat{g}\in L^2(0,+\infty)$, the right-hand side is in $L^1(0,+\infty)$ as a function of $\xi$, so we have
\begin{equation}
    \widehat{f}(\xi)=\ii\ee^{-\ii z\xi}\int_{\xi}^{+\infty}\ee^{\ii z\zeta}\widehat{g}(\zeta)\,\dd\zeta,\quad \xi>0.
\end{equation}
The right-hand side of \eqref{eq:Cauchy-generalization} therefore reads
\begin{equation}
    \frac{1}{2\pi\ii}\lim_{\xi\downarrow 0}\widehat{f}(\xi)=\frac{1}{2\pi}\int_0^{+\infty}\ee^{\ii z\zeta}\widehat{g}(\zeta)\,\dd\zeta = \frac{1}{2\pi} \langle \widehat{g},\ee^{-\ii z^*\diamond}\rangle.
\end{equation}
But $\ee^{-\ii z^*\xi}1_{\ge 0}(\xi)$ is the Fourier transform of $x\mapsto -[2\pi\ii(x-z^*)]^{-1}\in L_+^2(\R)$, so coming back from the Fourier side using \eqref{eq:inner-product-convention},
\begin{equation}
    \frac{1}{2\pi\ii}\lim_{\xi\downarrow 0}\widehat{f}(\xi)=\left\langle g,-\frac{1}{2\pi\ii (\diamond-z^*)}\right\rangle=\frac{1}{2\pi\ii}\int_\R\frac{g(x)\,\dd x}{x-z},
\end{equation}
proving directly that \eqref{eq:Cauchy-generalization} is the same as \eqref{eq:Cauchy}.
\end{remark}

\subsection{The case of rational initial data}
\label{sec:rational}
A reasonable person might object to the word ``explicit'' being used to describe the solution formula \eqref{eq:explicitformula}.  After all, implementing it apparently requires the inversion of a linear operator including both a differential term and a nonlocal Toeplitz term, the symbol of which is a general initial condition function $u_0$.  The operator itself is explicit, but it is not clear how to compute the inverse, or the degree to which it can be done explicitly.  In this section, we will demonstrate that for any $u_0$ in a dense subspace  of $L^2(\R,\R)$ (unfortunately not invariant under the flow of \eqref{eq:BO}), the formula \eqref{eq:explicitformula} indeed reduces to a completely explicit expression, namely a ratio of finite determinants with entries that are contour integrals with simple integrands.

\subsubsection{Implementation of Theorem~\ref{thm:explicitformula} for rational $u_0$}
The following reduction first appeared in \cite{BlackstoneGGM24a}.  Consider the dense subspace of $L^2(\R,\R)$ consisting of rational functions $u_0:\R\to\R$ with simple poles in complex-conjugate pairs off the real line; each such $u_0$ can be written in the form
\begin{equation}
    u_0(x)=\Pi u_0(x)+\Pi u_0(x)^*,\quad \Pi u_0(x)^*=\sum_{n=1}^N\frac{c_n}{x-p_n}
\label{eq:PN-rational-IC}
\end{equation}
with distinct poles $p_1,\dots,p_N\in \mathbb{C}_+$ and arbitrary nonzero complex coefficients $c_1,\dots,c_N$.
To express the solution $u(t,x)$ of the Cauchy problem \eqref{eq:BO}--\eqref{eq:IC} for such a rational initial condition $u_0$ and $t>0$, we first define a system of $N+1$ oriented contours $C_0,\dots,C_N$ in the complex plane punctured at the poles of $u_0$ as follows.  
\begin{itemize}
    \item The contour $C_0$ passes from $z=\infty\ee^{3\pi\ii/4}$ to $z=\infty\ee^{-\pi\ii/4}$ with the poles $\{p_1,\dots,p_N\}$ on its left and the poles $\{p_1^*,\dots,p_N^*\}$ on its right (left/right means relative to the orientation of $C_0$). 
    \item For $j=1,\dots,N$, 
    \begin{itemize}
        \item if $-\ii c_j/\varepsilon$ is not a positive integer, then $C_j$ passes from $z=\infty\ee^{3\pi\ii/4}$ to $z=\infty\ee^{3\pi\ii/4}$ with the poles $\{p_1,\dots,p_j\}$ on its left and all remaining poles on its right, and we say that $j$ is a \emph{nonexceptional index};
        \item if $-\ii c_j/\varepsilon$ is a positive integer, then $C_j$ is a contour in $\C_+$ originating from $z=\infty\ee^{3\pi\ii/4}$ and tending in the forward direction to $z=p_j$, and we say that $j$ is an \emph{exceptional index}.
    \end{itemize}
\end{itemize}
See Figure~\ref{fig:contours}.
\begin{figure}[h]
\begin{center}
    \begin{tikzpicture}[>=Stealth, scale=0.75]

  \draw[gray, thin, ->] (-8,0) -- (8,0);
  \draw[gray, thin, ->] (0,-2.5) -- (0,2.5);
\draw[red, thick, decoration={markings, mark=at position 0.92 with {\arrow[red,thick]{Stealth}}},
  postaction={decorate}
  ]
  (-8,2.5) to[out=-45, in=180, looseness=0] (-5.5,0) -- (5.5,0) to[out=0, in=135, looseness=0] (8,-2.5);

  \draw[red, thick, decoration={markings, mark=at position 0.89 with {\arrow[red,thick]{Stealth}}},
  postaction={decorate}
  ]
  (-6,2.5) to[out=-45, in=180, looseness=0] (-4.6,1.2) -- (-2.7,1.2) to[out=0, in=-45, looseness=0] (-4,2.5);
  \draw[red, thick, decoration={markings, mark=at position 0.9 with {\arrow[red,thick]{Stealth}}},
  postaction={decorate}
  ]
  (-6.5,2.5) to[out=-45, in=180, looseness=0] (-4.9,0.9) -- (1,0.9) to[out=0, in=-45, looseness=0] (-0.6,2.5);

  \draw[
  red, thick, decoration={markings, mark=at position 0.92 with {\arrow[red,thick]{Stealth}}},
  postaction={decorate}
  ]
  (-7,2.5) to[out=-45, in=180, looseness=0] (-5.1,0.6) -- (1,0.6) to[out=0, looseness=0] (3,1.1); 

\draw[red, thick, decoration={markings, mark=at position 0.94 with {\arrow[red,thick]{Stealth}}},
  postaction={decorate}
  ]
  (-7.5,2.5) to[out=-45, in=180, looseness=0] (-5.3,0.3) -- (7,0.3) to[out=0, in=-45, looseness=0] (4.8,2.5);

  \fill (-4.5,1.8) circle (2.5pt);
  \node[right] at (-4.5,1.8) {$p_1$};

  \fill (-4.5,-1.8) circle (2.5pt);
  \node[right] at (-4.5,-1.8) {$p_1^*$};

  \fill (-2,1.5) circle (2.5pt);
  \node[right] at (-2,1.5) {$p_2$};

  \fill (-2,-1.5) circle (2.5pt);
  \node[right] at (-2,-1.5) {$p_2^*$};

  \fill (3,1.1) circle (2.5pt);
  \node[right] at (3,1.1) {$p_3$};

  \fill (3,-1.1) circle (2.5pt);
  \node[right] at (3,-1.1) {$p_3^*$};

  \fill (4,1.7) circle (2.5pt);
  \node[right] at (4,1.7) {$p_4$};

  \fill (4,-1.7) circle (2.5pt);
  \node[right] at (4,-1.7) {$p_4^*$};

  \draw[gray] node at (7,2) {$\boxed{z}$};

  \node[above,right] at (6.8,-1.2) {$\color{red}C_0$};

  \node[above,right] at (-3.4,2) {$\color{red}C_1$};

  \node[below,left] at (1.2,1.8) {$\color{red}C_2$};

  \node[above,right] at (1.5,1.3) {$\color{red}C_3$};

  \node[above,right] at (5.7,1.8) {$\color{red}C_4$};

\end{tikzpicture}
\end{center}
\caption{A schematic diagram of the contours for $N=4$ with $n=3$ being an exceptional index.}
\label{fig:contours}
\end{figure}

Then we define an exponential factor $\ee^{-\ii h(z)/\varepsilon}$ on each contour $C_0,\dots,C_N$ by 
\begin{equation}
    \ee^{-\ii h(z)/\varepsilon}=\ee^{-\ii h(z;t,x)/\varepsilon}:=\ee^{-\ii(z-x)^2/(4\varepsilon t)}\prod_{n=1}^N(z-p_n)^{-\ii c_n/\varepsilon}(z-p_n^*)^{-\ii c_n^*/\varepsilon}
    \label{eq:PN-exph}
\end{equation}
where a particular branch of the product is chosen for large $|z|$ where all of the contours originate with $\arg(z)=3\pi/4$, and the value along any given contour is determined by analytic continuation.\footnote{A standard way to more concretely define $\ee^{-\ii h(z)/\varepsilon}$ is to first introduce a system of branch cuts in $\C$ emanating from the poles and then to fix the value of $\ee^{-\ii h(z)/\varepsilon}$ at a single point in the complement of the cuts.  The system of contours $C_0,\dots,C_N$ is then specified consistently with the above rules in the cut complex plane.  This latter approach is the one taken in \cite{BlackstoneGGM24a,BlackstoneGGM26,BlackstoneMM26}.}  Suppose that  $\varepsilon t>0$.  Then two square matrices of dimension $N+1$ are  defined with elements being contour integrals involving $\ee^{-\ii h(z)/\varepsilon}$ as follows:
\begin{equation}\label{eq:Amatrix}
    \mathbf{A}(t,x):=\begin{bmatrix}\displaystyle
    \int_{C_0}\ee^{-\ii h(z)/\varepsilon}u_0(z)\,\dd z & \displaystyle\int_{C_0}\frac{\ee^{-\ii h(z)/\varepsilon}\,\dd z}{z-p_1} & \cdots&\displaystyle\int_{C_0}\frac{\ee^{-\ii h(z)/\varepsilon}\,\dd z}{z-p_N}\\
    \vdots & \vdots & \vdots & \vdots\\
     \displaystyle \int_{C_N}\ee^{-\ii h(z)/\varepsilon}u_0(z)\,\dd z & \displaystyle\int_{C_N}\frac{\ee^{-\ii h(z)/\varepsilon}\,\dd z}{z-p_1} & \cdots&\displaystyle\int_{C_N}\frac{\ee^{-\ii h(z)/\varepsilon}\,\dd z}{z-p_N}
     \end{bmatrix}
\end{equation}
\begin{equation}\label{eq:Bmatrix}
    \mathbf{B}(t,x):=\begin{bmatrix}\displaystyle
    \int_{C_0}\ee^{-\ii h(z)/\varepsilon}\,\dd z & \displaystyle\int_{C_0}\frac{\ee^{-\ii h(z)/\varepsilon}\,\dd z}{z-p_1} & \cdots&\displaystyle\int_{C_0}\frac{\ee^{-\ii h(z)/\varepsilon}\,\dd z}{z-p_N}\\
    \vdots & \vdots & \vdots & \vdots\\
    \displaystyle \int_{C_N}\ee^{-\ii h(z)/\varepsilon}\,\dd z & \displaystyle\int_{C_N}\frac{\ee^{-\ii h(z)/\varepsilon}\,\dd z}{z-p_1} & \cdots&\displaystyle\int_{C_N}\frac{\ee^{-\ii h(z)/\varepsilon}\,\dd z}{z-p_N}
     \end{bmatrix}
\end{equation}
For $k=2,\dots,N+1$, column $k$ is the same in $\mathbf{A}(t,x)$ and $\mathbf{B}(t,x)$ and involves integrands $\ee^{-\ii h(z)/\varepsilon}/(z-p_{k-1})$.  All of the integrals in row $j$ of either matrix are taken over the same contour $C_{j-1}$.  We also observe that as a consequence of the identity
\begin{equation}
    u_0(z)\prod_{n=1}^N(z-p_n)^{-\ii c_n/\varepsilon}(z-p_n^*)^{-\ii c_n^*/\varepsilon} = \ii\varepsilon\frac{\dd}{\dd z}\prod_{n=1}^N(z-p_n)^{-\ii c_n/\varepsilon}(z-p_n^*)^{-\ii c_n^*/\varepsilon},
\end{equation}
integration by parts shows that the first column elements of $\mathbf{A}(t,x)$ can be equivalently written in the form
\begin{equation}
A_{j1}(t,x)=\int_{C_{j-1}}\ee^{-\ii h(z)/\varepsilon}u_0(z)\,\dd z = -\frac{1}{2t}\int_{C_{j-1}}\ee^{-\ii h(z)/\varepsilon}(z-x)\,\dd z,\quad j=1,\dots,N+1.
\label{eq:Aj1-alternate}
\end{equation}

The following result gives a formula for the solution of the Benjamin-Ono initial-value problem with rational data that is indeed truly explicit.
\begin{theorem}[Explicit formula for rational initial data, see~\cite{BlackstoneGGM24a}]\label{thm:inversion-formula-rational}
Fix $\varepsilon>0$ and let $u_0$ be of the form \eqref{eq:PN-rational-IC}.  Then for every $x\in\mathbb{R}$ and $t>0$, the solution $u$ of the Cauchy problem \eqref{eq:BO}--\eqref{eq:IC} satisfies $u(t,x)=2\mathrm{Re}(\Pi u(t,x))$ with
\begin{equation}
    \Pi u(t,x)=\frac{\det(\mathbf{A}(t,x))}{\det(\mathbf{B}(t,x))}.
    \label{eq:Pi-u-fraction}
\end{equation}
An equivalent formula is
\begin{equation}
\Pi u(t,x)=\ii\varepsilon\frac{\partial}{\partial x}\log\left(\det(\mathbf{B}(t,x))\prod_{n=1}^N\ee^{\ii (p_n-x)^2/(4\varepsilon t)}\right).
    \label{eq:Pi-u-derivative}
\end{equation}
To solve the Cauchy problem for $t<0$, we may use the symmetry $u(-t,x)=u(t,-x)$, for which we have the initial data $u_0(-x)$ that is also of the form \eqref{eq:PN-rational-IC} with pole and residue data explicitly related to that of $u_0(x)$.
\end{theorem}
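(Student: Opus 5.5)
The plan is to start from the explicit formula of Theorem~\ref{thm:explicitformula} and pass to the Fourier side. There the equation $(\mathcal{X}^*-2t\mathcal{L}_{u_0}-z)f=\Pi u_0$ becomes an ODE in $\xi>0$, coupled to finitely many unknown constants generated by the rational structure of $u_0$. Write $q(x)=\prod_n(x-p_n^*)$, so that $\Pi u_0=\sum_n c_n^*/(x-p_n^*)$ is analytic in $\C_+$ (the paper's convention has the conjugate poles $p_n^*$ in $\C_-$ for $\Pi u_0$).

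The key observation is that the Toeplitz term has finite-rank structure modulo multiplication. For $f\in L^2_+$ we have
\[
\mathcal{T}_{u_0}f=\Pi u_0\, f+\Pi\bigl((\Pi u_0)^*f\bigr),
\]
and $\Pi\bigl(\sum_n \frac{c_n}{x-p_n}f\bigr)=\sum_n \frac{c_n f(p_n)}{x-p_n}$. Here $f(p_n)$ is the analytic continuation evaluated at $p_n\in\C_+$.

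So the equation reads, on the $x$ side,
\[
(x-z)f-\mathfrak{a}[f]+2\ii t\varepsilon f'+2t u_0 f-2t\sum_n\frac{c_n\bigl(f(x)-f(p_n)\bigr)}{x-p_n}=\Pi u_0 .
\]
Rewriting with $u_0=\Pi u_0+(\Pi u_0)^*$, this becomes a first-order linear ODE for $f$ in $x$. Its inhomogeneity involves the $N+1$ unknown constants $\mathfrak{a}$ and $\mu_n:=f(p_n)$:
\[
2\ii\varepsilon t f'+(x-z+2tu_0)f=\Pi u_0+\mathfrak{a}-2t\sum_n\frac{c_n\mu_n}{x-p_n}.
\]

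The integrating factor of this ODE is exactly $\ee^{-\ii h(\cdot;t,z)/\varepsilon}$ from \eqref{eq:PN-exph}. Indeed, the logarithmic derivative of the product gives $u_0/(\ii\varepsilon)$, and the Gaussian gives $(x-z)/(2\ii\varepsilon t)$, up to the sign conventions, which I would check.

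Variation of constants then represents $f$ as $\ee^{\ii h/\varepsilon}$ times an integral of $\ee^{-\ii h/\varepsilon}$ against the right-hand side. This integral is a linear combination of $\int \ee^{-\ii h}\Pi u_0$, $\int\ee^{-\ii h}$, and $\int\ee^{-\ii h}/(\cdot-p_n)$.

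Next comes the admissibility step, which I expect to be the main obstacle. We need $f\in L^2_+$, analytic and decaying in $\C_+$, and single-valued around the branch points $p_n$ and $p_n^*$. This is equivalent to $N+1$ linear conditions, each saying that the integral of $\ee^{-\ii h/\varepsilon}\times(\text{RHS})$ over one contour $C_j$ vanishes:
\begin{itemize}
\item $C_0$ encodes decay and analyticity along the real line, i.e.\ the correct choice of lower limit at $\infty\ee^{3\pi\ii/4}$, where the Gaussian decays when $\varepsilon t>0$;
\item each loop $C_j$ encodes trivial monodromy around $p_1,\dots,p_j$;
\item for an exceptional index, $-\ii c_j/\varepsilon\in\mathbb{Z}_{>0}$ and the loop integral vanishes identically, so it must be replaced by the open contour ending at $p_j$. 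This encodes the requirement that $f(p_j)=\mu_j$ be consistent, i.e.\ that $f$ be regular at $p_j$.
\end{itemize}
Justifying that these conditions are necessary and sufficient, and that the resulting linear system is uniquely solvable ($\det\mathbf{B}\neq0$), is the delicate analytic part. I would obtain solvability indirectly: the operator $\mathcal{X}^*-2t\mathcal{L}_{u_0}-z$ is invertible by Theorem~\ref{thm:explicitformula}, so the system has a unique solution, first for $z\in\C_+$ and then by letting $z\to x$.

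Finally, we read off $\Pi u(t,z)=-\mathfrak{a}$, since $I_+[f]/(2\pi\ii)=-\mathfrak{a}[f]$. The $N+1$ conditions form a linear system in $(\mathfrak{a},-2tc_n\mu_n)$ whose coefficient matrix is $\mathbf{B}$ and whose data column is $\int_{C_j}\ee^{-\ii h/\varepsilon}\Pi u_0$. Cramer's rule then gives $-\mathfrak{a}$ as a ratio of determinants. The numerator's first column is $\int\ee^{-\ii h}\Pi u_0$; using the conjugate-pole columns and partial fractions, this can be replaced by $\int \ee^{-\ii h}u_0$, which produces $\det\mathbf{A}$ and hence \eqref{eq:Pi-u-fraction}. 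Letting $z\to x$ gives the formula on $\R$.

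For \eqref{eq:Pi-u-derivative}, differentiate $\det\mathbf{B}$ in $x$. We have $\partial_x\ee^{-\ii h/\varepsilon}=\frac{\ii(z-x)}{2\varepsilon t}\ee^{-\ii h/\varepsilon}$, and writing $z-x=(z-p_n)+(p_n-x)$ in column $n+1$ turns each differentiated column into a multiple of column $1$ plus a multiple of itself. By multilinearity, $\partial_x\det\mathbf{B}$ equals a scalar multiple of $\det\mathbf{B}$ plus terms involving the first column $-\frac1{2t}\int\ee^{-\ii h}(z-x)$, which by \eqref{eq:Aj1-alternate} gives $\det\mathbf{A}$. The Gaussian prefactors $\prod_n\ee^{\ii(p_n-x)^2/(4\varepsilon t)}$ absorb the scalar terms, which yields \eqref{eq:Pi-u-derivative}.

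The statement for $t<0$ follows from the symmetry $u(-t,x)=u(t,-x)$.
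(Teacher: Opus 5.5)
Your proposal follows the same route as the paper's proof (the sketch after the theorem, taken from \cite{BlackstoneGGM24a}). You replace the Toeplitz term by $u_0f$ minus a rational function of the unknowns $f(p_n)$, integrate the resulting first-order ODE with the factor $\ee^{-\ii h/\varepsilon}$ from $\infty\ee^{3\pi\ii/4}$, impose the $N+1$ conditions (decay at $+\infty$, regularity at $p_1,\dots,p_N$), and extract $-\mathfrak{a}[f]=\Pi u(t,z)$ by Cramer's rule; your column-differentiation derivation of \eqref{eq:Pi-u-derivative} from \eqref{eq:Aj1-alternate} is also correct.

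Two small repairs are needed. First, the Toeplitz identity should read $\Pi\bigl(\frac{c_nf}{\diamond-p_n}\bigr)=\frac{c_n(f-f(p_n))}{\diamond-p_n}$. Then $\mathcal{T}_{u_0}f=u_0f-\sum_n\frac{c_nf(p_n)}{\diamond-p_n}$, and the forcing term is $\Pi u_0+\mathfrak{a}+2t\sum_n\frac{c_n\mu_n}{x-p_n}$. The sign does not affect Cramer's rule, but it is needed for consistency with $\mu_n=f(p_n)$. Second, nonvanishing of $\det\mathbf{B}(t,x)$ at real $x$ does not follow from letting $z\to x$. It follows instead from \eqref{eq:Pi-u-derivative}, since a real zero would give $\Pi u(t,\cdot)$ a pole on $\R$.
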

The idea behind the proof of this result in \cite{BlackstoneGGM24a} is to observe that for rational $u_0$ of the form \eqref{eq:PN-rational-IC}, the action of the Toeplitz term $\mathcal{T}_{u_0}$ in the Lax operator $\mathcal{L}_{u_0}$ on a function $f\in L^2_+(\R)$ can be expressed as a specific rational function involving $N$ constants, namely the values of the analytic extension of $f$ to $z\in\C_+$ at the poles $z=p_1,\dots,p_N$.  Moving this term to the right-hand side of the equation $\mathcal{X}^*f-2t\mathcal{L}_{u_0}f-zf=\Pi u_0$ along with the constant $\mathfrak{a}[f]=-I_+[f]/(2\pi\ii)$ entering from the action of $\mathcal{X}^*$ according to \eqref{eq:Xstar-action-x}, one sees that $f$ is necessarily the solution of a first-order linear differential equation with a forcing term involving $N+1$ unknown constants $\mathfrak{a}[f],f(p_1),\dots,f(p_N)$.  An integrating factor for this equation is precisely the exponential $\ee^{-\ii h(\diamond)/\varepsilon}$ given by \eqref{eq:PN-exph}, so $f$ can be found explicitly by an integration, enforcing only decay in the negative direction.  It remains to enforce that the resulting formula for $f$ exhibits decay also in the positive direction and has removable singularities at the points $p_1,\dots,p_N$.  This amounts to $N+1$ linear equations on $N+1$ unknowns.  Solving for the unknown $-\mathfrak{a}[f]=\Pi u(t,z)$ by Cramer's rule and letting $z\in\C_+$ approach the real line at a point $x\in\R$ yields the claimed formula \eqref{eq:Pi-u-fraction}.

\subsubsection{Examples of Theorem~\ref{thm:inversion-formula-rational}}
\label{sec:examples}
\paragraph{The simplest case}
Perhaps the simplest rational initial condition $u_0$ of the form \eqref{eq:PN-rational-IC} is
\begin{equation}
    u_0(x)=\frac{2ax-2b}{x^2+1},\quad a,b\in\R.
\label{eq:one-pole-rational}
\end{equation}
There is only one pole in $\C_+$, namely $p_1=\ii$, with residue coefficient $c_1=a+\ii b$.  We may take $C_0$ to be the straight-line contour parametrized by $z=\ee^{-\ii\pi/4}w$, $-\infty<w<+\infty$, and except in the exceptional cases that $c_1=\ii\varepsilon,2\ii\varepsilon,3\ii\varepsilon,\dots$, $C_1$ is just a loop beginning and ending at infinity in the direction $\arg(z)=3\pi/4$ that encircles $z=\ii$ but not $z=-\ii$, for instance the half-line parametrized by $z=-2+\ee^{-\ii\pi/4}w$ for $-\infty<w<0$ followed by the segment $z=w$ for $-2<w<2$ followed by the half-line $z=2+\ee^{3\pi\ii/4}w$ with $0<w<+\infty$.  In the exceptional cases, we  parametrize $C_1$ instead by the real half-line:  $z=\ii +\ee^{-\ii\pi/4}w$, $-\infty<w<0$.  See Figure~\ref{fig:OnePoleContours}.
\begin{figure}[h]
\begin{center}
    \begin{tikzpicture}[>=Stealth, scale=0.75]

  \draw[gray, thin, ->] (-2.5,0) -- (2.5,0);
  \draw[gray, thin, ->] (0,-2.5) -- (0,2.5);
  \draw[
  red, thick, decoration={markings, mark=at position 0.75 with {\arrow[red,thick]{Stealth}}},
  postaction={decorate}
  ]
  (-2.121,2.121) -- (2.121,-2.121);

  \fill (0,0.6) circle (2.5pt);
  \node[right] at (0,0.6) {$p_1=\ii$};

  \fill (0,-0.6) circle (2.5pt);
  \node[left] at (0,-0.6) {$p_1^*=-\ii$};

\node[above, left] at (-2.121+1.5,2.121+0.4) {$\infty\ee^{3\pi\ii/4}$};

\node[below, right] at (2.121-0.3,-2.121-0.1) {$\infty\ee^{-\ii\pi/4}$};

  \draw[gray] node at (2,2) {$\boxed{z}$};

  \node[above,right] at (1,-1) {$\color{red}C_0$};

 \def\shift{6.5}
  \draw[gray, thin, ->] ({-2.5+\shift},0) -- ({2.5+\shift},0);
  \draw[gray, thin, ->] ({0+\shift},-2.5) -- ({0+\shift},2.5);
  \draw[
  red, thick, decoration={markings, mark=at position 0.75 with {\arrow[red,thick]{Stealth}}},
  postaction={decorate}
  ]
  ({-2.121+\shift},{2.121-1.2}) -- ({-1.2+\shift},0) -- ({1.2+\shift},0) -- ({-2.121+1.2+\shift},{2.121});   \fill ({0+\shift},0.6) circle (2.5pt);
  \node[left] at ({0+\shift},0.6) {$p_1=\ii$};

  \fill ({0+\shift},-0.6) circle (2.5pt);
  \node[left] at ({0+\shift},-0.6) {$p_1^*=-\ii$};

\node[above, left] at (-2.121+1.5+\shift,2.121+0.4) {$\infty\ee^{3\pi\ii/4}$};

  \draw[gray] node at ({2+\shift},2) {$\boxed{z}$};

  \node[above, right] at ({\shift+0.4},0.9) {$\color{red}C_1$};

  \node[below, align=center] at ({\shift,-2.4}) {(non-exceptional case)};

\def\shift{13}

  \draw[gray, thin, ->] ({-2.5+\shift},0) -- ({2.5+\shift},0);
  \draw[gray, thin, ->] ({0+\shift},-2.5) -- ({0+\shift},2.5);

  \draw[
  red, thick, decoration={markings, mark=at position 0.7 with {\arrow[red,thick]{Stealth}}},
  postaction={decorate}
  ]
  ({-2.121+0.6+\shift},2.121) -- ({0+\shift},0.6);

  \fill ({0+\shift},0.6) circle (2.5pt);
  \node[right] at ({0+\shift},0.6) {$p_1=\ii$};

  \fill ({0+\shift},-0.6) circle (2.5pt);
  \node[left] at ({0+\shift},-0.6) {$p_1^*=-\ii$};

\node[above, left] at (-2.121+1.5+\shift,2.121+0.4) {$\infty\ee^{3\pi\ii/4}$};

  \draw[gray] node at ({2+\shift},2) {$\boxed{z}$};

  \node[above, right] at ({\shift-0.8},1.5) {$\color{red}C_1$};

  \node[below, align=center] at ({\shift},-2.4) {(exceptional cases)};

\end{tikzpicture}    
\end{center}
\caption{The integration contours $C_0$ (left) and $C_1$ (center and right) for $u_0(x)=(2ax-2b)/(x^2+1)$.}
\label{fig:OnePoleContours}
\end{figure}
For these integration contours, a reasonable way to define the generally multivalued factors in $\ee^{-\ii h(z)/\varepsilon}$ is to choose the principal branch for $(z-p_1^*)^{-\ii c_1^*/\varepsilon} = (z+\ii)^{-\ii a/\varepsilon -b/\varepsilon}$, while we can take a straight-line branch cut from $z=\ii$ in the direction $\arg(z-\ii)=3\pi/4$ and define $(z-p_1)^{-\ii c_1/\varepsilon}=k(\ee^{\ii\pi/4}(z-\ii))^{-\ii a/\varepsilon + b/\varepsilon}$, for a suitable nonzero value of $k$ that actually plays no role since it cancels from the fraction of determinants in \eqref{eq:Pi-u-fraction}.  Note that there are no branch cuts at all for the exceptional cases in particular. Then, taking $k=1$, the elements of $\mathbf{A}(t,x)$ and $\mathbf{B}(t,x)$ consist of just six contour integrals:  for $j=1,2$,
\begin{equation}
\begin{gathered}
    A_{j1}(t,x):=2\ee^{\ii\pi/4}\int_{C_{j-1}}\ee^{-\ii (z-x)^2/(4\varepsilon t)}(az-b)(\ee^{\ii\pi/4}(z-\ii))^{-\ii a/\varepsilon + b/\varepsilon-1}(z+\ii)^{-\ii a/\varepsilon -b/\varepsilon-1}\,\dd z,\\
    B_{j1}(t,x):=\int_{C_{j-1}}\ee^{-\ii (z-x)^2/(4\varepsilon t)}(\ee^{\ii\pi/4}(z-\ii))^{-\ii a/\varepsilon + b/\varepsilon}(z+\ii)^{-\ii a/\varepsilon -b/\varepsilon}\,\dd z,\\
    A_{j2}(t,x)=B_{j2}(t,x):=\ee^{\ii\pi/4}\int_{C_{j-1}}\ee^{-\ii (z-x)^2/(4\varepsilon t)}(\ee^{\ii\pi/4}(z-\ii))^{-\ii a/\varepsilon + b/\varepsilon-1}(z+\ii)^{-\ii a/\varepsilon -b/\varepsilon}\,\dd z.
\end{gathered}
\label{eq:N=1-ContourIntegrals}
\end{equation}

\paragraph{Multisoliton solutions}
Another class of examples comes from allowing for arbitrary poles $p_1,\dots,p_N\in \C_+$ but assuming that all residue coefficients are of the form $c_n=-\ii\varepsilon m_n$ for positive integers $m_n>0$. Taking the simplest case of $m_n=1$ or $c_n=-\ii\varepsilon$ for all $n=1,\dots,N$, from \eqref{eq:PN-rational-IC} we have $u_0(x)=\Pi u_0(x) + \Pi u_0(x)^*$, where
\begin{equation}   \Pi u_0(x)=\ii \varepsilon\sum_{n=1}^N\frac{1}{x-p_n^*}=
\ii\varepsilon\frac{\partial}{\partial x}\log\left(\prod_{n=1}^N(x-p_n^*)\right).
    \label{eq:rational-soliton}
\end{equation}
It is known (see, for instance \cite{CLP1979,Sun2020}) that the $N$-soliton solutions have this form in which the poles $\{p_n\}_{n=1}^N$ in $\C_+$ evolve as functions of time solving the Calogero-Moser system of ordinary differential equations, which is also an integrable Hamiltonian system.  The quantities $\{p_n(t)\}_{n=1}^N$ can be expressed, in terms of the eigenvalues $\lambda_1<\cdots<\lambda_N<0$ of the Lax operator $\mathcal{L}_{u_0}$ and the complex phase constants $\{\gamma_n\}_{n=1}^N$ of the eigenfunctions, as the eigenvalues of the $N\times N$ matrix $\mathbf{P}(t)$ with elements 
\begin{equation}
    P_{jk}(t):=\begin{cases}
        -\ii\varepsilon (\lambda_j-\lambda_k)^{-1},&j\neq k\\
        -2\lambda_jt-\mathrm{Re}(\gamma_j)-\ii\varepsilon(2\lambda_j)^{-1},&j=k.
    \end{cases}
\end{equation}
Therefore, viewing the product in \eqref{eq:rational-soliton} as a determinant and denoting by $\mathbb{I}$ the $N\times N$ identity matrix,
\begin{equation}
    \Pi u(t,x)=\ii\varepsilon\frac{\partial}{\partial x}\log\left(\det(x\mathbb{I}-\mathbf{P}(t)^\dagger)\right).
\end{equation}
By elementary operations \cite[Section 1.2]{BlackstoneGM25}, this formula can be equivalently written as Matsuno's $N$-soliton formula \cite{Matsuno84}:
\begin{equation}
    \Pi u(t,x)=\ii\varepsilon\frac{\partial}{\partial x}\log\left(\det\left(\mathbb{I}-\frac{\ii}{\varepsilon}\mathbf{H}(t,x)\right)\right),
\label{eq:Matsuno}
\end{equation}
where $\mathbf{H}(t,x)$ is the Hermitian matrix with elements
\begin{equation}
    H_{jk}(t,x):=\begin{cases}\displaystyle -2\lambda_j(x+2\lambda_jt+\mathrm{Re}(\gamma_j)),&j=k,\\
    \displaystyle \frac{2\ii\varepsilon\sqrt{\lambda_j\lambda_k}}{\lambda_j-\lambda_k},&j\neq k.
    \end{cases}
    \label{eq:Matsuno-Matrix}
\end{equation}
So, by uniqueness, the solution $u(t,x)$ of \eqref{eq:BO} with initial data \eqref{eq:rational-soliton} is a multi-soliton solution.  It is possible to obtain Matsuno's representation \eqref{eq:Matsuno}--\eqref{eq:Matsuno-Matrix} from Theorem~\ref{thm:inversion-formula-rational} \emph{by direct calculations}.  For the reader's convenience, we give these calculations in Appendix~\ref{app:Nsoliton}.

Note that the discrete data $\{(\lambda_j,\mathrm{Re}(\gamma_j))\}_j$ has been characterized for $N$-soliton initial conditions \eqref{eq:rational-soliton}, and indeed more generally for any rational initial data of the form \eqref{eq:PN-rational-IC}, in \cite{MillerW16a}.  In particular, the eigenvalues are the zeros of an explicit Evans function that is a determinant of an $N\times N$ matrix of contour integrals depending on $\lambda$ (see Proposition~\ref{PN-prop-MW16} in Appendix~\ref{app:soliton-limit} below).  When the rational data has the form \eqref{eq:rational-soliton} there are exactly $N$ simple zeros of the Evans function, however more generally the number of eigenvalues bears no relation to the number $N$ of poles in $\C_+$.

\subsubsection{Remarks on Theorem~\ref{thm:inversion-formula-rational}}
\begin{remark}[Asymptotics as \texorpdfstring{$t\to0$}{t=0}]
\label{rem:continuity-at-t=0+}
The exponential factor $\ee^{-\ii h(z)/\varepsilon}$ appearing in the integrals defining the elements of the matrices $\mathbf{A}(t,x)$ (see \eqref{eq:Amatrix}) and $\mathbf{B}(t,x)$ (see \eqref{eq:Bmatrix}) is undefined at $t=0$, according to \eqref{eq:PN-exph}.  Of course by arguments independent of the rational assumption we have already shown how the explicit formula \eqref{eq:explicitformula} returns $\Pi u(0,z)=\Pi u_0(z)$ for all $z\in \C_+$ (and hence also in the limit $z\to x$ for almost every $x\in\R$) when $t=0$, but it is useful to see how the determinantal formula \eqref{eq:Pi-u-fraction} valid for rational data $u_0$ behaves in the limit $t\to 0$ to verify directly the continuity of $t\mapsto u(t,x)$ for each fixed $x\in\R$ at $t=0$.  

We assume that $t>0$ and compute the limit of the right-hand side of \eqref{eq:Pi-u-fraction} as $t\downarrow 0$ for fixed $x\in\R$ and $\varepsilon>0$.  
For this limit, it is best to isolate the factor $\ee^{-\ii (z-x)^2/(4\varepsilon t)}$ in $\ee^{-\ii h(z)/\varepsilon}$, since it is responsible for the concentration in the integrand as $t\downarrow 0$.  
The only critical point of the exponent function $-\ii (z-x)^2/(4\varepsilon t)$ is $z=x\in\mathbb{R}$, and it is simple.  To analyze the ratio of determinants of the matrices $\mathbf{A}(t,x)$ and $\mathbf{B}(t,x)$ in the limit $t\downarrow 0$, it is first useful to apply some row and column operations to the matrices that leave the determinants unchanged.  We first apply row operations on rows $j=2,\dots,N+1$ to replace the integration contour $C_{j-1}$ whenever $j-1$ is a non-exceptional index with a contour $\widetilde{C}_{j-1}$ that encircles only one pole $p_{j-1}$ in the counterclockwise sense and tends to $z=\infty\ee^{3\pi\ii/4}$ in both directions.  If $j-1$ is exceptional, then we just take $\widetilde{C}_{j-1}=C_{j-1}$.  Then, in the matrix $\mathbf{A}(t,x)$, we use column operations to replace the function $u_0(z)$ in the integrand of the first column with $\Pi u_0(z)$ (since $u_0(z)-\Pi u_0(z)$ is a linear combination of $(z-p_n)^{-1}$ for $n=1,\dots,N$).  Thus we obtain two modified matrices $\widetilde{\mathbf{A}}(t,x)$ and $\widetilde{\mathbf{B}}(t,x)$ with the same determinants as $\mathbf{A}(t,x)$ and $\mathbf{B}(t,x)$ respectively.

Then, using the method of steepest descent, it is not difficult to see that each of the $N+2$ distinct integrals appearing in any given row of both matrices $\widetilde{\mathbf{A}}(t,x)$ and $\widetilde{\mathbf{B}}(t,x)$, all of which correspond to the same contour of integration, will be dominated in the limit $t\downarrow 0$ by contributions from the neighborhood of exactly one point.  For the first row, corresponding to the contour $C_0$, this is the simple critical point $z=x$.  For row $j$, $j=2,\dots,N+1$, corresponding to the contour $\widetilde{C}_{j-1}$, this is the point $z=p_{j-1}$.  Although the details of the form of the leading term are different depending on whether the dominating point is a critical point or a pole $z=p_{j-1}$, and in the latter case whether the integrand is integrable at the pole or not, these details produce nonzero factors that are common to every element of the same row of both $\widetilde{\mathbf{A}}(t,x)$ and $\widetilde{\mathbf{B}}(t,x)$.  Hence they will cancel from the ratio of determinants in \eqref{eq:Pi-u-fraction} for $\Pi u(t,x)$.  The relative sizes of the renormalized integrals in any given row are then determined by the order of the integrand at the dominating point.  Thus, for some nonzero common factor $\alpha_1(t)\neq 0$, the first rows of $\widetilde{\mathbf{A}}(t,x)$ and of $\widetilde{\mathbf{B}}(t,x)$ can be written as
\begin{equation}
\begin{split}
\left(\widetilde{A}_{11},\dots,\widetilde{A}_{1,N+1}\right) &= \alpha_1(t)\left[\left(\Pi u_0(x),\frac{1}{x-p_1},\dots,\frac{1}{x-p_N}\right)+o(1)\right]\\
\left(\widetilde{B}_{11},\dots,\widetilde{B}_{1,N+1}\right)&=\alpha_1(t)\left[\left(1,\frac{1}{x-p_1},\dots,\frac{1}{x-p_N}\right)+o(1)\right]
\end{split}
\end{equation}
in the limit $t\downarrow 0$.
For row $j$, $j=2,\dots,N+1$, because $u_0(z)$ has been replaced by $\Pi u_0(z)$ which is analytic in the upper half-plane, the most singular element appears in column $j$ and therefore there exists some nonzero factor $\alpha_j(t)\neq 0$ such that
\begin{equation}
    \begin{split}
        \left(\widetilde{A}_{j1},\dots,\widetilde{A}_{j,N+1}\right)&=\alpha_j(t)\left[\left(0,\dots,0,1,0,\dots,0\right)+o(1)\right]\\
        \left(\widetilde{B}_{j1},\dots,\widetilde{B}_{j,N+1}\right)&=\alpha_j(t)\left[\left(0,\dots,0,1,0,\dots,0\right)+o(1)\right]
    \end{split}
\end{equation}
as $t\downarrow 0$, where the $1$ appears in column $j$ on the right-hand side in both cases.  Note that the $o(1)$ terms in the first column here are different expressions for $\widetilde{\mathbf{A}}(t,x)$ and for $\widetilde{\mathbf{B}}(t,x)$, but they are both relatively small compared to the identical diagonal elements.
Therefore, defining a $N$-component row vector by $\mathbf{v}^\top:=((x-p_1)^{-1},\dots,(x-p_N)^{-1})$, and
letting $\mathbf{0}$ denote the $N$-component column vector of zeros, in the limit $t\downarrow 0$,
\begin{equation}
u(t,x)= 2\mathrm{Re}\left(\frac{\det(\widetilde{\mathbf{A}}(t,x))}{\det(\widetilde{\mathbf{B}}(t,x))}\right) = 2\mathrm{Re}\left(\frac{\displaystyle \begin{vmatrix} \Pi u_0(x) & \mathbf{v}^\top\\
\mathbf{0} & \mathbb{I}\end{vmatrix}+o(1)}{\displaystyle\begin{vmatrix} 1 & \mathbf{v}^\top\\\mathbf{0} & \mathbb{I}\end{vmatrix}+o(1)}\right) = 2\mathrm{Re}\left(\Pi u_0(x)\right) + o(1).
\end{equation}
This verifies directly that the solution formula of Theorem~\ref{thm:inversion-formula-rational} is continuous from the right at $t=0$, for each $x\in\mathbb{R}$ and $\varepsilon>0$.
\end{remark}

\begin{remark}[The solution \texorpdfstring{$u(t,x)$}{u(t,x)=0} in the complex \texorpdfstring{$x$}{x}-plane for rational \texorpdfstring{$u_0$}{u_0}]
When $\epsilon>0$ and $t>0$, all of the contour integrals in the elements of $\mathbf{A}(t,x)$ and $\mathbf{B}(t,x)$ are absolutely convergent for all $x\in\C$ and hence $\det(\mathbf{A}(t,x))$ and $\det(\mathbf{B}(t,x))$ are both entire functions of $x\in\C$. 
In particular, $x\mapsto \Pi u(t,x)$ is meromorphic for every $t>0$, and according to \eqref{eq:Pi-u-derivative} every pole (necessarily in the lower half-plane $\C_-$) is simple with residue that is a positive integer multiple of $\ii\varepsilon$.  Since for $x\in\R$ we have $u(t,x)=\Pi u(t,x)+\Pi u(t,x)^*=\Pi u(t,x)+\Pi u(t,x^*)^*$, it follows by Schwarz reflection that $x\mapsto u(t,x)$ is a Schwarz-symmetric meromorphic function for all $t>0$ with simple poles in complex-conjugate pairs, all having residues that are positive integer multiples of $\ii\varepsilon$, and having no finite accumulation point.  Naturally the same is true for $t=0$ except that the meromorphic function degenerates to the rational one $u_0$.  However for $t>0$, $x\mapsto u(t,x)$ is generally no longer rational, unless $\ii c_n/\varepsilon$ is a positive integer $m_n>0$ for every $n=1,\dots,N$ (in which case $u(t,x)$ is an exact $M$-soliton solution of \eqref{eq:BO} where $M=m_1+\cdots + m_N$ as discussed in Section~\ref{sec:examples}).  
In the general case that $p_n\in\C_+$ is a pole of $u_0$ with a non-quantized residue $c_n\not\in -\ii\varepsilon\mathbb{Z}_{>0}$, the behavior of $u(t,x)$ for $x\approx p_n$ is necessarily very complicated in the limit $t\downarrow 0$.  Note that the analysis carried out in Remark~\ref{rem:continuity-at-t=0+} is not valid for $x$ in the complex plane near a pole in $\C_-$ of $x\mapsto\Pi u(t,x)$.
\end{remark}

\section{Applications to small-dispersion asymptotic behavior for rational initial data}
\label{sec:small-dispersion}
Now we consider the implications of Theorem~\ref{thm:explicitformula} and Theorem~\ref{thm:inversion-formula-rational} for the initial-value problem \eqref{eq:BO}--\eqref{eq:IC} in the small-dispersion limit $\varepsilon\downarrow 0$ with fixed initial data $u_0$ independent of $\varepsilon$.  
\subsection{Weak small-dispersion limit}
First we consider convergence in the weak-$L^2$ topology.  Recall that $f_n$ converges weakly to $f$ in $L^2(\R)$ if for each $g\in L^2(\R)$, $\langle f_n,g\rangle \to \langle f,g\rangle$, which can be viewed as the convergence of local averages of $f_n$ to those of $f$.  The first result is the following.
\begin{theorem}[Weak small-dispersion limit, \cite{Gerard23}]
    Fix any $t\in\mathbb{R}$, and suppose that $u_0\in L^\infty(\R)\cap L^2(\R)$.  Let $u(t,x;\varepsilon)$ denote the solution of the Cauchy problem \eqref{eq:BO}--\eqref{eq:IC}.  Then 
    \begin{equation}
        \lim_{\varepsilon\to 0}u(t,\diamond;\varepsilon) = \bar{u}(t,\diamond)
    \end{equation}
    holds in the weak $L^2(\R)$ topology, where $\bar{u}(t,x)=2\mathrm{Re}(\Pi\bar{u}(t,x))$ and $x\mapsto \Pi\bar{u}(t,x)$ is the boundary value on $\R$ of the analytic function $\C_+\ni z\mapsto\Pi\bar{u}(t,z)$ given by
    \begin{equation}
        \Pi\bar{u}(t,z)=\frac{1}{2\pi\ii}I_+\left[(\mathcal{X}^*+2t\mathcal{T}_{u_0}-z\mathrm{Id})^{-1}\Pi u_0\right],\quad z\in\C_+.
        \label{eq:weak-limit}
    \end{equation}
    If moreover $u_0\in C^1(\R)$ with both $u_0(x)$ and $u_0'(x)$ tending to zero as $x\to\pm\infty$, then $\R\ni y\mapsto y+2tu_0(y)$ has only finitely-many critical values, and for any non-critical value $x\in\R$ the equation $y+2tu_0(y)=x$ has an odd number $2\ell(t,x)+1$ of simple real solutions $y_0(t,x)>y_1(t,x)>\cdots>y_{2\ell(t,x)}(t,x)$.  In this case, $\bar{u}(t,x)$ has another representation: 
    \begin{equation}
        \bar{u}(t,x)=\sum_{k=0}^{2\ell(t,x)}(-1)^k u_0(y_k(t,x)).
        \label{eq:alternating-sum}
    \end{equation}
    \label{thm:weak-small-dispersion}
\end{theorem}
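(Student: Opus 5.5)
The plan is to start from the explicit formula of Theorem~\ref{thm:explicitformula}, in its extension to $u_0\in L^\infty(\R)\cap L^2(\R)$, and pass to the limit $\varepsilon\to 0$ inside it. Writing $\mathcal{L}_{u_0}=-\ii\varepsilon\partial_x-\mathcal{T}_{u_0}$, we have
\begin{equation*}
\mathcal{X}^*-2t\mathcal{L}_{u_0}-z\mathrm{Id}=\mathcal{X}^*+2t\mathcal{T}_{u_0}-z\mathrm{Id}+2\ii\varepsilon t\,\partial_x .
\end{equation*}
The claim then amounts to showing that the resolvent-type inverse applied to $\Pi u_0$ converges as $\varepsilon\to0$, in a sense strong enough to pass the functional $I_+$ through the limit.

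\textbf{Step 1: convergence of $\Pi u(t,z)$ for fixed $z$.} By the Cauchy formula \eqref{eq:Cauchy}, $I_+[f]/(2\pi\ii)=\langle f,-(2\pi\ii(\diamond-z^*))^{-1}\rangle$ whenever $f$ is integrable against this kernel. It is cleaner to use the adjoint. Set $g_z:=(2\pi\ii)^{-1}(\diamond-z^*)^{-1}$ up to sign. Then
\begin{equation*}
\Pi u(t,z)=\bigl\langle \Pi u_0,\,(\mathcal{X}-2t\mathcal{L}_{u_0}-z^*)^{-1}g_z\bigr\rangle ,
\end{equation*}
modulo checking that the adjoint manipulation is legitimate, as is done in \cite{Gerard22}. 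Two facts make the limit tractable. First, $\mathcal{X}-2t\mathcal{L}_{u_0}$ has imaginary part controlled by $\mathrm{Im}\,\mathcal{X}^*$. Second, $-z^*$ contributes $\mathrm{Im}\,z>0$. Together these give the uniform bound $\|(\mathcal{X}-2t\mathcal{L}_{u_0}-z^*)^{-1}\|\le 1/\mathrm{Im}\,z$. Indeed, $\mathrm{Im}\langle \mathcal{X}^*f,f\rangle\ge0$ from the Fourier description, since the boundary term equals $|\widehat f(0)|^2/(4\pi)$, while $\mathcal{L}_{u_0}$ is self-adjoint. This bound is independent of $\varepsilon$. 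The family $A_\varepsilon=\mathcal{X}^*+2t\mathcal{T}_{u_0}+2\ii\varepsilon t\partial_x$ then converges to $A_0$ in the strong resolvent sense: on the common core $\{\widehat f\in C_0^\infty(0,\infty)\}$ we have $A_\varepsilon f\to A_0 f$, and together with the uniform resolvent bound (a Trotter--Kato type argument) this gives strong convergence of the inverses. The main obstacle is verifying that this core works for $A_0$, i.e.\ that $A_0-z$ is invertible with range dense from the core. I would first show, via the same dissipativity argument, that $A_0=\mathcal{X}^*+2t\mathcal{T}_{u_0}$ is maximal dissipative on $\mathrm{dom}(\mathcal{X}^*)$, since $\mathcal{T}_{u_0}$ is a bounded self-adjoint perturbation. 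Granting this, we get $\Pi u(t,z;\varepsilon)\to\Pi\bar u(t,z)$ for every $z\in\C_+$.

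\textbf{Step 2: from pointwise convergence in $\C_+$ to weak $L^2$ convergence.} Conservation of the $L^2$ norm under \eqref{eq:BO} gives $\|u(t)\|_{L^2}=\|u_0\|_{L^2}$ uniformly in $\varepsilon$. Hence every sequence $\varepsilon_n\to0$ has a weakly convergent subsequence. Pairing against the kernels $(\diamond-z^*)^{-1}$ recovers $\Pi u(t,z)$, and these kernels span a dense subspace of $L^2_+$. So any weak limit $w$ satisfies $\Pi w(z)=\Pi\bar u(t,z)$, and thus $w=2\mathrm{Re}\,\Pi\bar u=\bar u$. The limit is therefore unique, so the whole family converges weakly, and along the way $\Pi\bar u(t,\cdot)\in L^2_+$.

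\textbf{Step 3: the alternating-sum formula.} Solve $(\mathcal{X}^*+2t\mathcal{T}_{u_0}-z)f=\Pi u_0$ directly. Writing $\mathcal{T}_{u_0}f=u_0f-(\mathrm{Id}-\Pi)(u_0f)$ gives
\begin{equation*}
(y+2tu_0(y)-z)f(y)=\Pi u_0(y)+\mathfrak a[f]+2t(\mathrm{Id}-\Pi)(u_0f)(y).
\end{equation*}
This is awkward to solve in $y$ directly. Instead I would identify $\Pi\bar u$ through its known characterization: conjecture that $\Pi\bar u(t,z)=\frac{1}{2\pi\ii}\int_\R\frac{\bar u(x)\,dx}{x-z}$ with $\bar u$ given by \eqref{eq:alternating-sum}, and verify the explicit formula by a change of variables $x=y+2tu_0(y)$ over the branches $y_k$. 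The alternating signs are exactly the signs of $dx/dy$ on the branches. A more robust route is to note that $\Pi\bar u$ is independent of the method, and to check that $v(t,x):=\sum_k(-1)^ku_0(y_k)$ satisfies
\begin{equation*}
\frac{1}{2\pi\ii}\int\frac{v(x)\,dx}{x-z}=\frac{1}{2\pi\ii}\int\frac{u_0(y)(1+2tu_0'(y))\,dy}{y+2tu_0(y)-z}.
\end{equation*}
The second step of the plan is then to verify that the explicit solution of the inverse problem yields the same right-hand side. This can be done by expanding in $t$, where both sides are analytic and the moments match: both sides give $\int u_0^{k+1}(x-z)^{-k-2}$-type terms, consistent with the Hopf equation. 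This extends from small $t$ to all $t$ by analyticity in $t$ for fixed $z$ with large imaginary part. The finiteness of critical values follows from $u_0'\to0$ at infinity, which gives $1+2tu_0'>0$ outside a compact set, combined with Sard-type/compactness reasoning; I expect this part to need an extra genericity caveat.
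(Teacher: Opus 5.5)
Your Steps~1--2 follow the paper's outline: explicit formula, the $\varepsilon$-uniform bound $\|(\mathcal{X}^*-2t\mathcal{L}_{u_0}-z\,\mathrm{Id})^{-1}\|\le 1/\mathrm{Im}(z)$, a limit for each fixed $z\in\C_+$, then $L^2$ conservation, weak compactness and uniqueness via the Cauchy kernels. The genuine gap is Step~3, which carries the content of the alternating-sum half of the theorem and is not proved. Your change of variables correctly reduces \eqref{eq:alternating-sum} to the identity
\[
\frac{1}{2\pi\ii}I_+\bigl[(\mathcal{X}^*+2t\mathcal{T}_{u_0}-z\,\mathrm{Id})^{-1}\Pi u_0\bigr]=\frac{1}{2\pi\ii}\int_\R\frac{u_0(y)\,(1+2tu_0'(y))\,\dd y}{y+2tu_0(y)-z}.
\]
Both sides are indeed analytic in $t$ on a strip around $\R$. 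But matching Taylor coefficients at $t=0$ means proving, for every $k\ge 0$, that $[(\mathcal{T}_{u_0}R_z)^k\Pi u_0](z)=\frac{1}{k+1}\Pi\bigl[u_0^{k+1}(\diamond-z)^{-k}\bigr](z)$, where $R_zg:=(g-g(z))/(\diamond-z)=(\mathcal{X}^*-z\,\mathrm{Id})^{-1}g$. This is not a routine check: already $k=1$ needs the splitting $u_0=\Pi u_0+\overline{\Pi u_0}$ and a residue computation. Saying the coefficients are ``consistent with the Hopf equation'' presupposes that the weak limit equals the classical Burgers solution for small $t$, which is part of what must be shown. The missing idea is to invert the $\varepsilon=0$ operator explicitly, which is exactly the step you set aside as awkward. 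Write your equation as $(y-z)\phi_z(y)f(y)=\Pi u_0(y)+\mathfrak{a}[f]+2t(\mathrm{Id}-\Pi)(u_0f)(y)$ with $\phi_z(y):=1+2tu_0(y)/(y-z)$. This symbol takes values in $\C\setminus(-\infty,0]$ and tends to $1$, so $\log\phi_z\in L^2(\R)$ and $\phi_z=\ee^{\psi_+}\ee^{\psi_-}$ with $\psi_+:=\Pi\log\phi_z$ and $\psi_-:=(\mathrm{Id}-\Pi)\log\phi_z$. Multiplying by $\ee^{-\psi_-}$ and applying $\Pi$ (with some care about integrability of $\ee^{\pm\psi_\pm}$) gives $(y-z)\ee^{\psi_+}f=\mathfrak{a}[f]+\Pi(\ee^{-\psi_-}\Pi u_0)$. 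Evaluating at $y=z$ yields $\Pi\bar u(t,z)=-\mathfrak{a}[f]=\Pi(\ee^{-\psi_-}u_0)(z)=\frac{1}{4\pi\ii t}\int_\R\log\phi_z(y)\,\dd y$. One integration by parts turns this into the right-hand side above, and your change of variables then finishes.

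Step~1 is also wrong as written, though repairable. With $g_z\propto(\diamond-z^*)^{-1}$, your adjoint formula returns $\pm(\Pi u_0)'(z)$ at $t=0$, not $\Pi u_0(z)$; formally $I_+$ pairs with the constant $1\notin L^2(\R)$. A correct substitute: apply \eqref{eq:Cauchy-generalization} to $(\mathcal{X}^*-z\,\mathrm{Id})f_\varepsilon=\Pi u_0+2t\mathcal{L}_{u_0}f_\varepsilon$ to get $\Pi u(t,z;\varepsilon)=\Pi u_0(z)+2t\langle f_\varepsilon,\mathcal{L}_{u_0}k_z\rangle$, with $k_z:=-[2\pi\ii(\diamond-z^*)]^{-1}$. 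Since $\mathcal{L}_{u_0}k_z\to-\mathcal{T}_{u_0}k_z$ in $L^2$, weak convergence of the bounded family $f_\varepsilon$ suffices. Next, $\{\widehat f\in C_0^\infty(0,\infty)\}$ is not a core for $\mathcal{X}^*+2t\mathcal{T}_{u_0}$. It forces $\widehat f(0)=0$, so its graph closure is $\mathcal{X}+2t\mathcal{T}_{u_0}$, and its image under $\mathcal{X}^*+2t\mathcal{T}_{u_0}-z\,\mathrm{Id}$ is not dense. At $t=0$ that image lies in $\{g:g(z)=0\}$, precisely where $\frac{1}{2\pi\ii}I_+[(\mathcal{X}^*-z\,\mathrm{Id})^{-1}g]=g(z)$ vanishes. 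Use $\{f:\widehat f\in C_c^\infty([0,\infty))\}\subset H^1_+(\R)\cap\mathrm{dom}(\mathcal{X}^*)$ instead, or use your set as a core for the adjoint $\mathcal{X}+2t\mathcal{T}_{u_0}$ when identifying weak limits. The sign is also off: $\mathrm{Im}\langle\mathcal{X}^*f,f\rangle=-|\widehat f(0)|^2/(4\pi)\le 0$, and it is this sign, the same as that of $-\mathrm{Im}(z)$, that yields the uniform bound. Finally, your caveat on critical values is warranted. For merely $C^1$ data they form a compact null set (Sard in one dimension) but need not be finite; since \eqref{eq:alternating-sum} is an $L^2$ identity, almost every $x$ suffices.
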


A similar result was recently obtained in \cite{GerardH26} for the Benjamin-Ono hierarchy. 
Note that since the Lax operator $\mathcal{L}_{u_0}$ is defined by \eqref{eq:Lax-operator}, the weak limit $\bar{u}(t,x)$ is given by exactly the same explicit formula as is $u(t,x)$ itself, simply setting $\varepsilon=0$.  The specialized form \eqref{eq:alternating-sum} for $\bar{u}(t,x)$ is precisely the alternating sum of branches $u^\mathrm{B}_k(t,x):=u_0(y_k(t,x))$ of the generally multivalued solution of the inviscid Burgers equation $\partial_t u^\mathrm{B}+2u^\mathrm{B}\partial_xu^\mathrm{B}=0$ with initial data $u^\mathrm{B}(0,x)=u_0(x)$ obtained by the method of characteristics.  Indeed, $y+2tu_0(y)=x$ is just the equation satisfied by intercepts $y$ of all characteristic lines through the point $(t,x)\in\R^2$ along each of which the value of $u^\mathrm{B}$ is constant in time.  The formula \eqref{eq:alternating-sum} had been earlier established in \cite{MillerX10} as the weak $L^2$ limit of a family of multi-soliton solutions of \eqref{eq:BO} (a so-called soliton ensemble) converging to suitable $u_0$ (in particular being smooth and positive with a bell-shaped graph) at time $t=0$ in the strong $L^2$ sense.  The result in Theorem~\ref{thm:weak-small-dispersion} substantially improves that from \cite{MillerX10} by removing unnecessary conditions on $u_0$ (sometimes at the cost of requiring the somewhat less explicit limit formula \eqref{eq:weak-limit}), and by being applicable to the Cauchy problem rather than a soliton-ensemble approximation thereof.  The proof of Theorem~\ref{thm:weak-small-dispersion} is also far shorter and more direct than that in \cite{MillerX10}.  It is based on the explicit formula in Theorem~\ref{thm:explicitformula}; the fact that the Benjamin-Ono equation conserves the $L^2$-norm means that as $\varepsilon\to 0$, the $L^2$-norm of $x\mapsto u(t,x;\varepsilon)$ remains constant, equal to that of $u_0$ (independent of $\varepsilon$).  Therefore the family of functions $\{x\mapsto u(t,x;\varepsilon)\}_{\varepsilon>0}$ is compact in the weak $L^2$ topology so weak limits exist.  It only remains to argue that the weak limit is unique, is given by the formula \eqref{eq:weak-limit}, and that this reduces to \eqref{eq:alternating-sum} for suitable $u_0$. 

The method of proof shows that the weak limit satisfies $\|\bar{u}(t,\diamond)\|_{L^2(\R)}\le \|u_0\|_{L^2(\R)}$, but unlike classical solutions of the limiting inviscid Burgers equation, generally the inequality is strict for $t>0$ sufficiently large.  Indeed, in \cite[Section 3]{Gerard23}, it is shown that if $\ell(t,x)>0$, the function $\bar{u}(t,x)$ defined by \eqref{eq:alternating-sum} is not a solution of the limiting inviscid Burgers equation even though the functions $u_k^\mathrm{B}(t,x)=u_0(y_k(t,x))$ are so.  Another result established in \cite{Gerard23} is the maximum principle $\inf u_0(\diamond)\le \bar{u}(t,x)\le\sup u_0(\diamond)$.  By the method of characteristics, both inequalities are saturated for some $x\in\R$ provided that $t>0$ is sufficiently small that $\ell(t,x)=0$ for all $x\in\R$.  However, examples \cite[Section 5]{Gerard23} show that this is generally no longer the case once the inviscid Burgers solution becomes multi-valued for some $x\in\R$.  These results suggest that the dissipation of $L^2$-norm of the weak limit $\bar{u}(t,\diamond)$ over time is due to features of $u(t,\diamond;\varepsilon)$ such as wild oscillations of wavelength $o(1)$ as $\varepsilon\to 0$ that are not captured by local averages.

\begin{remark}[A simple example]
\label{rem:weak-one-pole}
The initial data $u_0(x)=(2ax-2b)/(x^2+1)$ satisfies the stronger hypotheses of Theorem~\ref{thm:weak-small-dispersion} under which the formula \eqref{eq:alternating-sum} applies.  The characteristic equation $y+2tu_0(y)=x$ can be rearranged as $p_3(y;t,x):=(y-x)(y^2+1)+4t(ay-b)=0$ which is a cubic equation for $y$ and hence there are most $3$ real solutions, i.e., the maximum value of $\ell(t,x)$ is $\ell(t,x)=1$.  When $t=0$ one has a unique real solution $y_0(0,x)=x$ so $\ell(0,x)=0$. If $a<0$, then for large $t>0$, there are three real solutions given approximately by $y_0(t,x)\approx 2\sqrt{-a}$, $y_1(t,x)\approx -b/a$, and $y_2(t,x)\approx -2\sqrt{-a}$ (all independent of $x$ at leading order).  Eliminating $y$ between $p_3(y;t,x)=0$ and $p_3'(y;t,x)=0$ gives the discriminant locus where there exist non-simple roots $y$; adjoining the additional condition $p_3''(y;t,x)=0$ (i.e., $6y-2x=0$) then determines specific points on the locus where all three roots of the cubic coincide, i.e., $p_3(y;t,x)$ is a perfect cube of a linear factor.  These are points of gradient catastrophe for the solution $u^\mathrm{B}(t,x)$ of the inviscid Burgers equation with initial data $u_0$, denoted $(t_\mathrm{c},x_\mathrm{c})$.  In the special case $a=0$, the discriminant locus is defined by the relation $108b^2t^2+4bx(x^2+9)t+(x^2+1)^2=0$, which is a quadratic equation for $t$ with solutions $t=(-x^3-9x\pm(x^2-3)^{3/2})/(54b)$.  These values are real for $|x|>\sqrt{3}$, with two branches bifurcating from each of two gradient catastrophe points $(t_\mathrm{c},x_\mathrm{c})=(\mp 2\sqrt{3}/(9b),\pm\sqrt{3})$.  In between each pair of bifurcating branches, one has $\ell(t,x)=1$ and otherwise $\ell(t,x)=0$.  When $a\neq 0$ there are three catastrophe points.  See Figure~\ref{fig:N=1}.
\end{remark}
\begin{figure}[h]
\begin{center}
\includegraphics[width=0.45\linewidth]{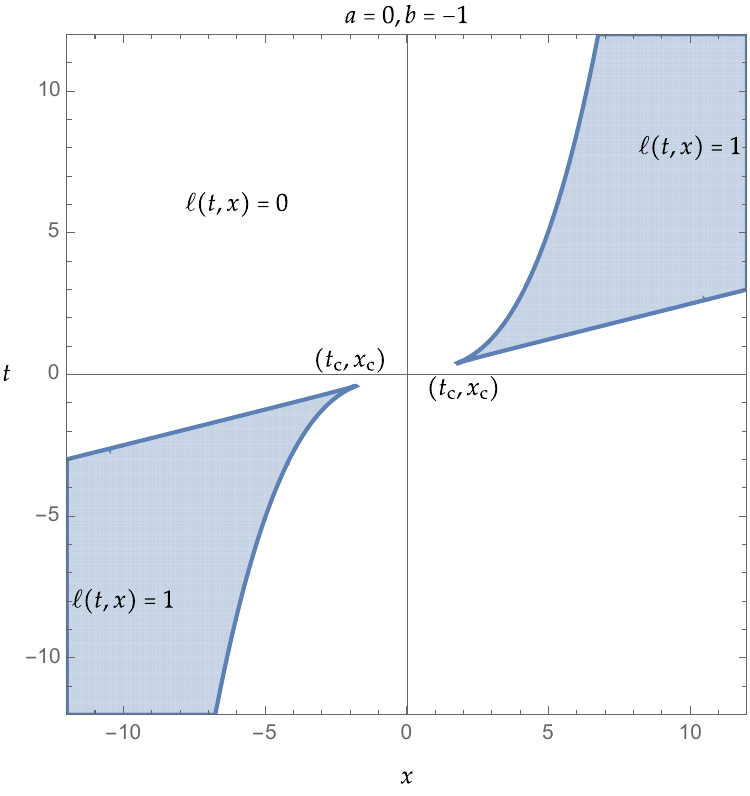}\hfill\includegraphics[width=0.45\linewidth]{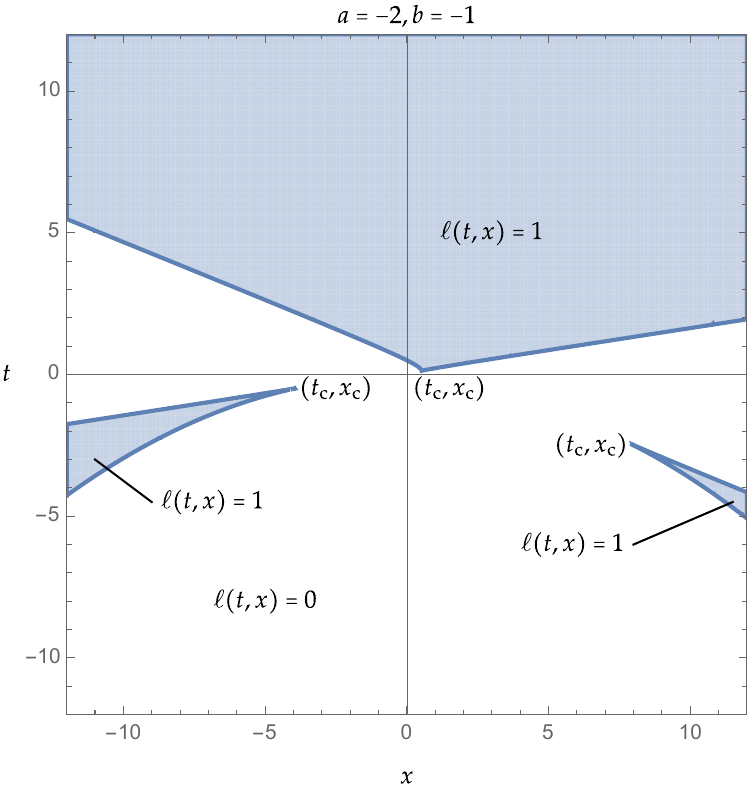}
\end{center}
\caption{Regions of the $(t,x)$-plane and piecewise-constant values of $\ell(t,x)$ for initial data $u_0(x)=2(ax-b)/(x^2+1)$ with $(a,b)=(0,-1)$ (left) and $(a,b)=(-2,-1)$ (right).  The boundary curves constitute the so-called caustic locus $\mathcal{C}$ for the multi-valued inviscid Burgers solution.}
\label{fig:N=1}
\end{figure}

In the periodic case, the corresponding analogue of the explicit formula \eqref{eq:explicitformula} was used by M{\ae}hlen \cite{Maehlen26zero} to improve earlier results~\cite{Gassot21zero, Gassot23zero}, and it was also shown that for sufficiently regular $u_0$ the weak limit $\bar{u}(t,x)$ is also of the form~\eqref{eq:alternating-sum}. See also~\cite{Badreddine24zero} for an identification of the weak small-dispersion limit for the Calogero-Moser derivative NLS equation by means of an explicit formula for its solution. 

It is natural to compare Theorem~\ref{thm:weak-small-dispersion} with its analogue for the Cauchy problem of the Korteweg-de Vries equation:
\begin{equation}
    \partial_t u+\partial_x(u^2)+\varepsilon^2\partial_x^3u=0,\quad u(0,x)=u_0(x).
\label{eq:KdV}
\end{equation}
For this problem, and for a soliton-ensemble sequence of initial data $u_0$,  Lax and Levermore \cite{LL} proved the existence of a small-$\varepsilon$ limit $\bar{u}^\mathrm{KdV}(t,x)$ in the same weak-$L^2$ topology as in Theorem~\ref{thm:weak-small-dispersion}.  The analogue of \eqref{eq:weak-limit} is a formula for $\bar{u}^\mathrm{KdV}$ in terms of the solution of a constrained convex optimization problem for measures parametrized by $(t,x)\in\R^2$.  Under additional regularity assumptions on $u_0$ similar to those in Theorem~\ref{thm:weak-small-dispersion}, the extremal measure has a density with a finite number of intervals on which the constraints are active, with an odd number of $(t,x)$-dependent endpoints that play roles analogous to $u_0(y_0(t,x)),\dots,u_0(y_{2\ell}(t,x))$ in \eqref{eq:alternating-sum}.  However, determining these endpoints is much more difficult than solving for intercepts of characteristic lines, and the domains in the $(t,x)$-plane on which the number of endpoints is constant are not the same as those for which $\ell(t,x)$ is constant.

\subsection{Strong small-dispersion asymptotics}
\label{sec:rational-small-dispersion}
It is natural to expect that the behavior of $u(t,x)$ in the small-dispersion limit $\varepsilon\downarrow 0$ should have something to do with the inviscid Burgers equation, since this is exactly what one obtains from the Benjamin-Ono equation upon setting $\varepsilon=0$.  However, as is well known, the solution of $\partial_t u^\mathrm{B}+2u^\mathrm{B}\partial_xu^\mathrm{B}=0$ for nonzero initial data vanishing at $x=\pm\infty$ is not classical for all time $t\in\mathbb{R}$, with the generic type of singularity being a finite-time gradient catastrophe occurring at a point $(t,x)=(t_\mathrm{c},x_\mathrm{c})$.  Since the neglected term in \eqref{eq:BO} when $\varepsilon=0$ is dispersive rather than diffusive, the solution of Benjamin-Ono for small $\varepsilon>0$ is expected to become highly oscillatory in the neighborhood of the catastrophe point $(t_\mathrm{c},x_\mathrm{c})$; in physical language we then have a dispersive shock wave --- a spreading oscillatory wave packet --- rather than a viscous shock modeled by a weak solution of the inviscid Burgers approximation with a jump discontinuity.

A model for a dispersive shock wave is a slowly modulated wavetrain, i.e., a periodic traveling wave belonging to a family of such solutions with continuous parameters that instead of being constants are functions of $(t,x)$ that vary on scales much longer than than the period/wavelength of the wave.  In 1965, it was discovered by Whitham \cite{Whitham65a,Whitham65b} using an averaging approach applied either to local conservation laws or to a Lagrangian density that for such a structure to be formally consistent with a given nonlinear dispersive equation to leading order in the small ratio of scales, it is necessary that the wave parameters satisfy a first-order system of quasilinear partial differential equations called the Whitham equations or the modulation equations.  

For integrable dispersive equations such as the Korteweg-de Vries equation \eqref{eq:KdV}, it is known that there exist families of exact solutions that are restrictions to a linear flow, via a traveling wave variable $\mathbf{k}x-\mathbf{w}t$ with wavenumber vector $\mathbf{k}\in\R^\ell$ and frequency vector $\mathbf{w}\in\R^\ell$, of a smooth function defined on the $\ell$-torus $\mathbb{T}^\ell$.  Such solutions are called \emph{multiphase waves}, and they are generalizations of periodic traveling waves, for which $\ell=1$.  The $\ell$-torus that appears is the real part of the Jacobian variety of a Riemann surface of genus $\ell$.  Whitham's modulation theory was extended to the multiphase wave solutions of the Korteweg-de Vries equation by Flaschka, Forest, and McLaughlin \cite{FlaschkaFM80}.  This basic approach has since been extended to numerous other systems having Lax pairs involving differential operators.  In particular, the Flaschka-Forest-McLaughlin theory shows that regardless of the value of $\ell$, the modulation equations always admit a complete set of $2\ell+1$ Riemann invariant variables, which provides a clear explanation for what Whitham himself regarded as a miracle for the case $\ell=1$ considered in his original paper \cite{Whitham65a}.

For the Benjamin-Ono equation \eqref{eq:BO}, a family of periodic traveling wave solutions was found in the original 1967 paper of Benjamin \cite{Benjamin67}, and the 1991 paper of Amick-Toland \cite{AmickT91} showed that Benjamin's solutions were the only such waves.  These solutions were generalized in 1979 to the multiphase case of $\ell>1$ by Satsuma-Ishimori \cite{SatsumaI79} and Case \cite{Case79}.  Unlike the Korteweg-de Vries case for which the multiphase waves are written in terms of the highly-transcendental Riemann theta function of the underlying genus-$\ell$ Riemann surface, the multiphase solutions of Benjamin-Ono are rational-trigonometric, i.e., rational functions of elementary oscillatory exponential factors.  In 1991, the multiphase wave solutions of Benjamin-Ono were obtained in a new way by Dobrokhotov-Krichever \cite{DobrokhotovK91}, who also were able to use their construction to develop the Benjamin-Ono analogue of multiphase modulation theory.  As was the case for the Korteweg-de Vries equation, the $\ell$-phase modulation equations found by Dobrokhotov-Krichever constitute a quasilinear system in $2\ell+1$ dependent variables, and this system admits Riemann invariants for all $\ell=1,2,3,\dots$.  What is more surprising is that for Benjamin-Ono, the modulation system written in the Riemann invariant variables completely decouples into $2\ell+1$ independent copies of the inviscid Burgers equation.

In 1974, Gurevich and Pitaevski\v{i} \cite{GurevichP74} used a matching formalism in the Korteweg-de Vries setting to connect an expanding dispersive shock wave described by the $\ell=1$ Whitham theory with appropriate solutions of the inviscid Burgers approximation on the left and right.  This gives a reasonable formal approximation to the solution of a Cauchy problem in the small-dispersion limit.  This formal approach has been adapted to the Benjamin-Ono problem also.  Like Gurevich-Pitaevski\v{i}, Matsuno \cite{Matsuno98} considered step-like initial data and constructed a global approximation of the solution in the small-$\varepsilon$ limit that develops as a single $\ell=1$ dispersive shock wave.  Subsequently, Jorge, Minzoni, and Smyth \cite{JorgeMS99} analyzed  more general initial data leading to a global approximate solution involving also multiphase waves for $\ell=2$.  

Making results such as these mathematically rigorous requires passing from the weak convergence setting of Theorem~\ref{thm:weak-small-dispersion} to a stronger topology.  The issue is that there can be no limit in the strong $L^2(\R)$ sense due to the presence of the oscillations forming a dispersive shock wave, which have a characteristic wavelength proportional to $\varepsilon$ and a nonvanishing amplitude.  So, to have strong convergence it is first necessary to construct a model for the solution that contains wild oscillations, and then to consider the difference between the true solution of the Cauchy problem and the model.  Then a strong limit may exist, and if the model is accurate, the limit should be zero.

For the Korteweg-de Vries equation \eqref{eq:KdV} in the small-dispersion limit, strong asymptotics have been obtained by Deift, Venakides, and Zhou \cite{DeiftVZ97,DeiftVZ98} using Riemann-Hilbert techniques.  An interesting and compelling aspect of these works is that the asymptotic method employed does not require any ansatz for the dispersive shock wave such as that assumed in Whitham modulation theory; rather the correct leading term (model) is extracted systematically and directly from the Riemann-Hilbert representation of the solution of the Cauchy problem.  Since the formulation of the solution of the Benjamin-Ono Cauchy problem \eqref{eq:BO}--\eqref{eq:IC} in terms of a suitable Riemann-Hilbert problem remains unjustified, and anyway it is not clear how the Deift-Venakides-Zhou methodology might be extended to apply to the Fokas-Ablowitz inverse problem, a new approach is needed to establish strong small-dispersion asymptotics for the Benjamin-Ono equation.  That approach is the use of the explicit formula in the setting of rational initial data, Theorem~\ref{thm:inversion-formula-rational}.

To formulate the result, we first specify the relevant oscillatory model denoted $u^\mathrm{ZD}(t,x;\varepsilon)$ for the solution of the Cauchy problem \eqref{eq:BO}--\eqref{eq:IC}.  It is well-defined for all $\varepsilon\neq 0$ whenever (i) the stronger conditions of Theorem~\ref{thm:weak-small-dispersion} hold, namely that $u_0\in L^\infty(\R)\cap L^2(\R)\cap C^1(\R)$ with $u_0$ and $u_0'$ tending to zero as $x\to\pm\infty$, and (ii) $(t,x)\in\R^2$ is such that $\ell(t,x)$ has a definite value.  For simplicity, here we only define $u^\mathrm{ZD}(t,x;\varepsilon)$ for rational $u_0$ of the form \eqref{eq:PN-rational-IC} and in the cases $\ell(t,x)=0,1$; the more general definition and some of its properties are described in Appendix~\ref{app:uZDgeneral}. 

If $\ell(t,x)=0$, then there is only one simple real solution $y=y_0(t,x)$ of the characteristic equation $y+2tu_0(y)=x$ and hence only one branch $u^\mathrm{B}_0(t,x):=u_0(y_0(t,x))$ of the inviscid Burgers solution; in this case we set 
\begin{equation}
    u^\mathrm{ZD}(t,x;\varepsilon):=u_0^\mathrm{B}(t,x),\quad\ell(t,x)=0.
\label{eq:uZD-ell-0}
\end{equation}
If $\ell(t,x)=1$, then there are three real simple solutions $y$ of the characteristic equation, $y_0(t,x)>y_1(t,x)>y_2(t,x)$, and three corresponding branches $u_j^\mathrm{B}(t,x)=u_0(y_j(t,x))$ with $u_0^\mathrm{B}(t,x)<u_1^\mathrm{B}(t,x)<u_2^\mathrm{B}(t,x)$ of the multivalued inviscid Burgers solution. The function $h(\diamond;t,x)$ appearing in the exponent in \eqref{eq:PN-exph} can be written explicitly for $u_0$ of the form \eqref{eq:PN-rational-IC} as
\begin{equation}
    h(y;t,x):=\frac{(y-x)^2}{4t}+\sum_{n=0}^N\left[c_n\log(y-p_n)+c_n^*\log(y-p_n^*)\right],
    \label{eq:h-rational}
\end{equation}
which is real valued for $y\in\R$.  We use \eqref{eq:h-rational} to define a real phase variable $\theta_1(t,x)$ by
\begin{equation}
    \theta_1(t,x):=h(y_1(t,x))-h(y_2(t,x)).
\end{equation}
We next introduce the auxiliary function
\begin{equation}
    \Psi(y;t,x):=\frac{\pi}{2}+\sum_{n=1}^N\arg(y-p_n)-\sum_{m=1}^{N-1}\arg(y-z_m(t,x)),
    \label{eq:Psi-ell=1}
\end{equation}
where $y=z_m(t,x)$, $m=1,\dots,N-1$, denote the complex solutions in $\C_+$ of the characteristic equation $y+2tu_0(y)=x$ (counted with multiplicity).
Using $\Psi(y;t,x)$, we define a phase correction $\varphi_1(t,x)$ by
\begin{equation}
    \varphi_1(t,x):=\frac{\pi}{2}+\Psi(y_1(t,x);t,x)-\Psi(y_2(t,x);t,x).
\end{equation}
Finally, when $\ell(t,x)=1$ we set
\begin{equation}
    u^\mathrm{ZD}(t,x;\varepsilon):=u_0^\mathrm{B}(t,x)+(u_2^\mathrm{B}(t,x)-u_1^\mathrm{B}(t,x))U\left(\frac{\theta_1(t,x)}{\varepsilon}+\varphi_1(t,x);r(t,x)\right),\quad \ell(t,x)=1,
\label{eq:uZDell1}
\end{equation}
where
\begin{equation}
    0<r(t,x):=\sqrt{\frac{u_1^\mathrm{B}(t,x)-u_0^\mathrm{B}(t,x)}{u_2^\mathrm{B}(t,x)-u_0^\mathrm{B}(t,x)}}<1\quad\text{and}\quad U(\phi;r):=\frac{1-r^2}{1+r^2-2r\cos(\phi)}.
\label{eq:r-and-periodic-profile}
\end{equation}
It is not difficult to check that if for some $t\in\R$, $\ell(t,x)=1$ on a maximal interval $x\in (a,b)$, then $u_1^\mathrm{B}(t,x)-u_0^\mathrm{B}(t,x)\downarrow 0$ as $x\downarrow a$ while $u_2^\mathrm{B}(t,x)-u_1^\mathrm{B}(t,x)\downarrow 0$ as $x\uparrow b$; it follows that $u^\mathrm{ZD}(t;x,\varepsilon)$ is always continuous at $x=a$, but generally has a jump discontinuity at $x=b$.  One can also check that if $U_j:=u_j^\mathrm{B}(t,x)$ are all constants for $j=0,1,2$, then $\theta_1(t,x)=(U_1-U_2)x-(U_1^2-U_2^2)t + \theta_1^0$ for some constant $\theta_1^0$ and $\varphi_1(t,x)$ is also a constant.  Thus \eqref{eq:uZDell1}--\eqref{eq:r-and-periodic-profile} reduces to an exact periodic traveling wave solution of \eqref{eq:BO} of the form found by Benjamin \cite{Benjamin67} and studied by Amick-Toland \cite{AmickT91}.

The results are then the following.
\begin{theorem}[Strong $L^2$ small-dispersion asymptotics, \protect{\cite[Corollary 1.6]{BlackstoneGGM26}}]
Suppose that $u_0\in L^2(\mathbb{R})$ is a rational function with simple poles of the form \eqref{eq:PN-rational-IC}, and let $u(t,x)=u(t,x;\varepsilon)$ denote the solution of the Benjamin-Ono Cauchy problem \eqref{eq:BO}--\eqref{eq:IC} with initial condition $u_0$.  If $t\ge 0$ is such that $\ell(t,x)\le 1$ for generic $x\in\mathbb{R}$, then 
\begin{equation}
\lim_{\varepsilon\to 0}\|u(t,\diamond;\varepsilon)-u^\mathrm{ZD}(t,\diamond;\varepsilon)\|_{L^2(\mathbb{R})}=0.
\end{equation}
\label{thm:strong-L2}
\end{theorem}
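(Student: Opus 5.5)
We plan to deduce the strong $L^2$ statement from a pointwise asymptotic result combined with conservation of the $L^2$ norm. The pointwise step is a steepest-descent analysis of the ratio of determinants \eqref{eq:Pi-u-fraction} in Theorem~\ref{thm:inversion-formula-rational}. For fixed $t>0$ and $x$ with $\ell(t,x)\in\{0,1\}$, write $\ee^{-\ii h(z)/\varepsilon}$ with $h$ as in \eqref{eq:h-rational}. The critical points of $h$ are exactly the solutions of $h'(y)=\frac{y-x}{2t}+u_0(y)=0$, i.e.\ of the characteristic equation $y+2tu_0(y)=x$. These are the real roots $y_0>\dots>y_{2\ell}$ together with the complex roots $z_m\in\C_+$ and their conjugates.

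We then deform each contour $C_0,\dots,C_N$ into steepest-descent paths through these saddle points. Using row and column operations that leave both determinants unchanged, as in Remark~\ref{rem:continuity-at-t=0+}, we arrange that each row is dominated by a single saddle point or by a pole $p_n$.

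\textbf{Case $\ell=0$.} The first row is dominated by $y_0$ and the remaining rows by the $N-1$ complex saddles $z_m$ and by poles. The common row factors cancel, and the leading ratio of determinants becomes a Cauchy-type determinant evaluated at these points. We expect it to evaluate to $\Pi$ of the Burgers branch, so that $2\mathrm{Re}$ gives $u_0^\mathrm{B}(t,x)+O(\varepsilon)$. To identify this determinant we would use the partial-fraction identity for $y+2tu_0(y)-x$, whose roots are precisely the saddle points.

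\textbf{Case $\ell=1$.} Here the contour $C_0$ must pass through two real saddles, $y_1$ and $y_2$, whose contributions are of comparable size. Their relative phase is $\ee^{-\ii(h(y_1)-h(y_2))/\varepsilon}=\ee^{-\ii\theta_1/\varepsilon}$. The ratio of determinants then takes the form $(\alpha+\beta\ee^{\ii\phi})/(\gamma+\delta\ee^{\ii\phi})$ with $\phi=\theta_1/\varepsilon+\varphi_1$. The phase correction $\varphi_1$ arises from the arguments of the Gaussian prefactors $h''(y_j)^{-1/2}$ and from the Cauchy-determinant factors, which produce the terms $\arg(y-p_n)$ and $\arg(y-z_m)$ appearing in $\Psi$. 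Taking $2\mathrm{Re}$ of this Möbius expression should reproduce exactly the Poisson-kernel profile $U(\phi;r)$ in \eqref{eq:r-and-periodic-profile}. The modulus $r$ should come out as the ratio of $|h''|^{-1/2}$ factors combined with the rational factors, and this ratio simplifies to $\sqrt{(u_1^\mathrm{B}-u_0^\mathrm{B})/(u_2^\mathrm{B}-u_0^\mathrm{B})}$.

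These computations give $u(t,x;\varepsilon)-u^\mathrm{ZD}(t,x;\varepsilon)=O(\varepsilon)$ locally uniformly for $x$ away from the caustic points (the finitely many $x$ where $\ell$ changes) and for $|x|$ bounded.

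To pass to $L^2$, we split $\R$ into three pieces:
\begin{itemize}
\item a compact set $K$ that avoids small neighborhoods of the caustic points, on which the pointwise convergence is uniform;
\item $\delta$-neighborhoods of the caustic points;
\item a neighborhood of infinity, $|x|>R$.
\end{itemize}
For the last two pieces we avoid uniform pointwise estimates. Instead we use the norm identities
\[
\|u(t)\|_{L^2}=\|u_0\|_{L^2}
\quad\text{and}\quad
\lim_{\varepsilon\to0}\|u^\mathrm{ZD}(t)\|_{L^2}^2=\|u_0\|_{L^2}^2 .
\]
The second identity is obtained by averaging $U^2$ over a period and then checking, via the characteristic parametrization $x=y+2tu_0(y)$, that the averaged energy of the model equals the Burgers energy. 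Together with the weak convergence of both $u$ and $u^\mathrm{ZD}$ to $\bar u$ from Theorem~\ref{thm:weak-small-dispersion}, these identities give
\[
\limsup_{\varepsilon\to0}\|u-u^\mathrm{ZD}\|^2_{L^2(\R\setminus K)}\;\le\;\lim\|u\|^2+\lim\|u^\mathrm{ZD}\|^2-2\lim\mathrm{Re}\langle u,u^\mathrm{ZD}\rangle\text{ restricted to }\R\setminus K .
\]
The right-hand side is small once the uniform convergence on $K$ is used. Letting $\delta\to0$ and $R\to\infty$ then completes the argument.

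The main obstacle is the $\ell=1$ saddle-point computation. Specifically, one must show that the $(N+1)\times(N+1)$ leading-order determinants collapse to the closed form $U$ with the precise phase $\varphi_1$ and modulus $r$. This requires a global choice of steepest-descent contours compatible with the branch structure of $\ee^{-\ii h/\varepsilon}$, together with nonvanishing of the denominator determinant. As a first attempt we would handle the one-pole example \eqref{eq:one-pole-rational} explicitly and then generalize using Cauchy determinant identities.
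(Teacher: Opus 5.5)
Your proposal follows the paper's route: Theorem~\ref{thm:strong-ZD-asymptotics} gives uniform $O(\varepsilon)$ convergence on compact sets avoiding the finitely many critical values of $y\mapsto y+2tu_0(y)$. Conservation of $\|u(t)\|_{L^2}$ then handles the rest, together with the single-phase average of $(u^\mathrm{ZD})^2$, namely $(u_0^\mathrm{B})^2-(u_1^\mathrm{B})^2+(u_2^\mathrm{B})^2$, which integrates to $\|u_0\|_{L^2}^2$ via $x=y+2tu_0(y)$; computing this average is exactly where $\ell\le 1$ is needed.

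Two remarks on the details:
\begin{itemize}
\item You can simply quote Theorem~\ref{thm:strong-ZD-asymptotics} instead of redoing the steepest descent. Your sketch also miscounts there: for $\ell=0$ the $N+1$ rows are dominated by $y_0$ and the $N$ roots in $\mathbb{C}_+$ (counted with multiplicity), not by poles. Pole domination is the $t\downarrow 0$ picture of Remark~\ref{rem:continuity-at-t=0+}.
\item The weak convergence to $\bar u$ plays no role. Cauchy--Schwarz bounds the cross term on $\mathbb{R}\setminus K$ once $\int_{\mathbb{R}\setminus K}\bigl((u_0^\mathrm{B})^2-(u_1^\mathrm{B})^2+(u_2^\mathrm{B})^2\bigr)\,\dd x$ is small.
\end{itemize}
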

This statement is precisely of the form described above; after subtracting off a suitable oscillatory model, one recovers convergence (to zero) in the strong $L^2(\R)$ topology.

\begin{theorem}[Strong pointwise and locally-uniform small-dispersion asymptotics, \protect{\cite[Theorem 1.5]{BlackstoneGGM26}}]
 Under the same conditions on $u_0$, if $(t,x)\in\mathbb{R}^2$ is a point such the characteristic equation $y+2tu_0(y)=x$ has only simple real solutions, then in particular $\ell(t,x)$ is a well-defined nonnegative integer and
\begin{equation}
u(t,x;\varepsilon)=u^\mathrm{ZD}(t,x;\varepsilon)+O(\varepsilon)
\end{equation}
holds pointwise and also locally uniformly in the small-dispersion limit $\varepsilon\to 0$.
\label{thm:strong-ZD-asymptotics}
\end{theorem}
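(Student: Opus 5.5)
Our plan is to start from the determinantal representation of Theorem~\ref{thm:inversion-formula-rational}, $\Pi u(t,x)=\det(\mathbf{A}(t,x))/\det(\mathbf{B}(t,x))$, and apply the classical steepest-descent method to every contour integral. The small parameter is $\varepsilon$, and the phase is $h(\diamond;t,x)$ from \eqref{eq:h-rational}. The starting observation is
\begin{equation*}
h'(y;t,x)=\frac{1}{2t}\bigl(y+2tu_0(y)-x\bigr).
\end{equation*}
Hence the critical points of $h$ are exactly the solutions of the characteristic equation: the real ones $y_0>\cdots>y_{2\ell}$, all simple by hypothesis, and the complex ones $z_m\in\C_+$ together with their conjugates. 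Counting degrees, the numerator of $h'$ has degree $2N+1$, so there are $2N+1$ critical points in total, of which $2\ell+1$ are real.

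The first step is to replace the contours $C_0,\dots,C_N$ by integer combinations of steepest-descent paths of $\mathrm{Re}(-\ii h)$. Such row operations leave both determinants unchanged. Each new contour passes through exactly one critical point, or, for exceptional indices, terminates at a pole $p_j$. We would carry this out by a topological analysis of the level sets of $\mathrm{Im}\,h$ in the plane cut at the poles. Since $t>0$, the term $(z-x)^2/4t$ fixes the valleys at infinity in the directions $\arg z=3\pi/4$ and $-\pi/4$. Every element of a given row then has a leading-order expansion of the form
\begin{equation*}
\ee^{-\ii h(\zeta)/\varepsilon}\sqrt{2\pi\varepsilon/|h''(\zeta)|}\,\ee^{\pm\ii\pi/4}\bigl(g(\zeta)+O(\varepsilon)\bigr),
\end{equation*}
where $\zeta$ is the critical point for that row and $g$ is the relevant integrand: $1$, $(z-p_k)^{-1}$, or, using \eqref{eq:Aj1-alternate}, $-(z-x)/2t$. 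The row-common prefactors cancel in the ratio of determinants. To leading order we are left with Cauchy/Vandermonde-type determinants in the variables $\zeta$ and $p_k$, and these can be evaluated in closed form.

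The second step separates the cases. If $\ell=0$, the dominant row contributions come from $y_0$ and from the $z_m$, and the ratio of Cauchy determinants collapses to $\Pi u=u_0(\cdot)$ evaluated at the appropriate critical point. Taking $2\mathrm{Re}$ should then give $u_0(y_0)$ plus $O(\varepsilon)$, which matches \eqref{eq:uZD-ell-0}. If $\ell=1$, two real critical points $y_1,y_2$ compete with exponentials of equal modulus. Their relative phase $\ee^{-\ii(h(y_1)-h(y_2))/\varepsilon}$ remains in both determinants, so each becomes a two-term sum
\begin{equation*}
D_1+D_2\,\ee^{\ii\theta_1/\varepsilon}\times(\text{phase from the }\sqrt{h''}\text{ and branch factors}).
\end{equation*}
In this sum, $\arg\sqrt{h''(y_j)}$ together with the arguments of the Cauchy factors $\prod(y_j-p_n)/\prod(y_j-z_m)$ should reproduce $\Psi$ and hence $\varphi_1$. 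The ratio of two such expressions, after taking $2\mathrm{Re}$, should produce exactly the Poisson-kernel profile $U(\phi;r)$. The modulus ratio $|D_1/D_2|$ should simplify to $r$ through the identity $h''(y_j)=(1+2tu_0'(y_j))/2t$ combined with partial-fraction identities for the characteristic polynomial.

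The main obstacle we expect is this final algebraic identification: showing that the leading Cauchy-determinant ratio equals precisely $u_0^\mathrm{B}+(u_2^\mathrm{B}-u_1^\mathrm{B})U$ with the stated $r$ and $\varphi_1$. We would first do it in the one-pole case \eqref{eq:one-pole-rational} to guess the mechanism, then generalize using the factorization $(y-x)\prod|y-p_n|^2+\ldots=\prod(y-y_k)\prod|y-z_m|^2$. A secondary technical point is to make sure the denominator $\det\mathbf{B}$ has no leading-order cancellation, since $|U|$ stays bounded because $r<1$. Uniformity of the $O(\varepsilon)$ error comes from the fact that the critical points depend smoothly on $(t,x)$ and remain separated on compact sets where the roots stay simple. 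Exceptional indices add endpoint contributions at $p_j$, and we expect these to be subdominant.
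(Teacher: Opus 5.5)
Your Step~1 fails as stated whenever $\ell\ge 1$, and this is a structural problem rather than a technicality. Suppose every row contour could be deformed through exactly one critical point $\zeta_j$. Then row $j$ of $\mathbf{A}$ would be a common factor $\kappa_j\ee^{-\ii h(\zeta_j)/\varepsilon}$ times $\bigl(u_0(\zeta_j),(\zeta_j-p_1)^{-1},\dots,(\zeta_j-p_N)^{-1}\bigr)$, and row $j$ of $\mathbf{B}$ the same factor times $\bigl(1,(\zeta_j-p_1)^{-1},\dots,(\zeta_j-p_N)^{-1}\bigr)$, up to $O(\varepsilon)$. All exponentials then cancel, and the leading term of $\det\mathbf{A}/\det\mathbf{B}$ is an $\varepsilon$-independent ratio of Cauchy-type determinants with nonvanishing denominator. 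Such a ratio can never reproduce the $O(1)$-amplitude oscillation of $u^\mathrm{ZD}$. It also contradicts your own Step~2, because $\ee^{\ii\theta_1/\varepsilon}$ survives only if one and the same row receives equal-modulus contributions from both $y_1$ and $y_2$. So the span of the row contours does not admit a basis of single-saddle contours. The structure used in \cite{BlackstoneGGM26}, summarized after the statement, is as follows: one row is dominated by $y_0$, $N-\ell$ rows are dominated by distinct complex critical points in $\C_+$, and $\ell$ rows are each dominated jointly by a consecutive pair $(y_{2j-1},y_{2j})$, giving $1+(N-\ell)+\ell=N+1$ rows. This pairing is exactly where the phases $\theta_j=h(y_{2j-1})-h(y_{2j})$ come from. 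Showing that the contours can be chosen this way is the real content of the first step. The danger is that two rows end up dominated by the same critical point; their expansions then agree to all orders, and the ratio cannot be resolved without beyond-all-orders terms. Appealing to ``a topological analysis of level sets'' does not address this. In addition, the row operations relating the $C_j$ to saddle contours carry monodromy coefficients such as $1-\ee^{2\pi(c_1+\cdots+c_j)/\varepsilon}$, not integers. These can be exponentially large or small as $\varepsilon\to0$ and must be tracked when deciding which contribution dominates a row.

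The statement also covers every value of $\ell(t,x)$, while you treat only $\ell=0,1$. For general $\ell$, multilinearity in the $\ell$ paired rows produces $2^\ell$ Cauchy-type terms carrying products of the phases $\ee^{\ii\theta_j/\varepsilon}$, and you need an organizing device to recognize the result. The paper uses Schur complements (the Woodbury formula) to reduce the limiting $(N+1)\times(N+1)$ determinants to $\ell\times\ell$ ones, which are then matched with $\det\mathbf{M}(t,x;\varepsilon)$ in \eqref{eq:uZD}. Your Gaussian expansion with $\sqrt{2\pi\varepsilon/|h''(\zeta)|}$ also presumes that every complex critical point is simple. The hypothesis only makes the real ones simple, and the multiple complex case needs the further argument of \cite{Cheng26+}. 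A smaller correction for $\ell=0$: with rows at $y_0,z_1,\dots,z_N$, the leading term of $\Pi u$ is not $\Pi u_0(y_0)$ but $\frac{1}{2t}\bigl(x-y_0+\sum_n p_n-\sum_m z_m\bigr)$. Its real part equals $\tfrac12u_0(y_0)$ only because of the root-sum identity $y_0+2\sum_m\mathrm{Re}(z_m)=x+2\sum_n\mathrm{Re}(p_n)$ for the characteristic polynomial. So even this case relies on the characteristic equation, not just on evaluating Cauchy determinants in closed form.
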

The proof of this theorem in \cite{BlackstoneGGM26} is based on the asymptotic expansion of the contour integral elements of the matrices $\mathbf{A}(t,x)=\mathbf{A}(t,x;\varepsilon)$ and $\mathbf{B}(t,x)=\mathbf{B}(t,x;\varepsilon)$ defined in \eqref{eq:Amatrix}--\eqref{eq:Bmatrix} using the method of steepest descent, which in turn requires knowledge of the (generally complex) critical points of the exponent function $z\mapsto h(z;t,x)$.  \emph{The key observation, which points toward the appearance of the multivalued solution of the inviscid Burgers equation in the small-dispersion asymptotics, is that $z\in\C$ is a critical point of $h(\diamond;t,x)$ if and only if it is a solution of the characteristic equation $z+2tu_0(z)=x$.} If integrals over two different contours are dominated by contributions from the same critical point of $h$, then the corresponding two rows of each matrix will have the same asymptotic expansion to all orders, which makes the asymptotic behavior of the fraction in \eqref{eq:Pi-u-fraction} appear to be indeterminate.  In reality, this means that the difference between the two rows in question lies beyond all orders in $\varepsilon$.  To avoid having to find the exponentially small terms, the first step is to show that suitable contours can be selected so that the integrals in each row are dominated by distinct complex critical points or pairs of consecutive simple real critical points.  A recent argument \cite{Cheng26+} shows that a version of this step applies when one or more complex critical points have higher multiplicity.  With this done, one can apply the standard saddle point approach to obtain the leading term as $\varepsilon\to 0$ of $\det(\mathbf{A}(t,x))/\det(\mathbf{B}(t,x))$. Then one observes that using Schur complements (or the Woodbury formula) reduces the dimension of the limiting determinants from $(N+1)\times (N+1)$ to $\ell(t,x)\times\ell(t,x)$, where $2\ell(t,x)+1$ is the number of real critical points of $h$.  After taking twice the real part, it remains to identify the resulting leading term with the expression $u^\mathrm{ZD}(t,x;\varepsilon)$ as defined above and in greater generality in Appendix~\ref{app:uZDgeneral}.  As was the case in the Korteweg-de Vries theory \cite{DeiftVZ97,DeiftVZ98}, the formula for the leading term follows directly from the asymptotic analysis of the solution representation (here the explicit formula \eqref{eq:Pi-u-fraction} rather than a Riemann-Hilbert problem) rather than being provided a priori as an ansatz.  

Accounting for the preservation of the $L^2$-norm of solutions of Benjamin-Ono, the locally uniform convergence result of Theorem~\ref{thm:strong-ZD-asymptotics} together with the condition $\ell(t,x)\le 1$ yields the mean-square convergence result in Theorem~\ref{thm:strong-L2} as a corollary.
The reason for the restriction to $\ell(t,x)\le 1$ is that in the proof given in \cite{BlackstoneGGM26}, one of the terms that needs to be estimated involves an average of the (generally multiphase) approximation.  This is straightforward to calculate in the zero-phase or single-phase cases, but in the multiphase case obtaining a similar result requires avoiding resonances that can change the values of averages.

A numerical comparison between $u(t,x;\varepsilon)$ for $u_0(x)=(2ax-2b)/(x^2+1)$ from Section~\ref{sec:examples} computed by numerical evaluation of the contour integrals in \eqref{eq:N=1-ContourIntegrals} and $u^\mathrm{ZD}(t,x;\varepsilon)$ calculated directly from \eqref{eq:uZD-ell-0}--\eqref{eq:r-and-periodic-profile} using Remark~\ref{rem:weak-one-pole} is shown in Figure~\ref{fig:uZDvsExact}.  
\begin{figure}[h]
\begin{center}
    \includegraphics[width=0.49\linewidth]{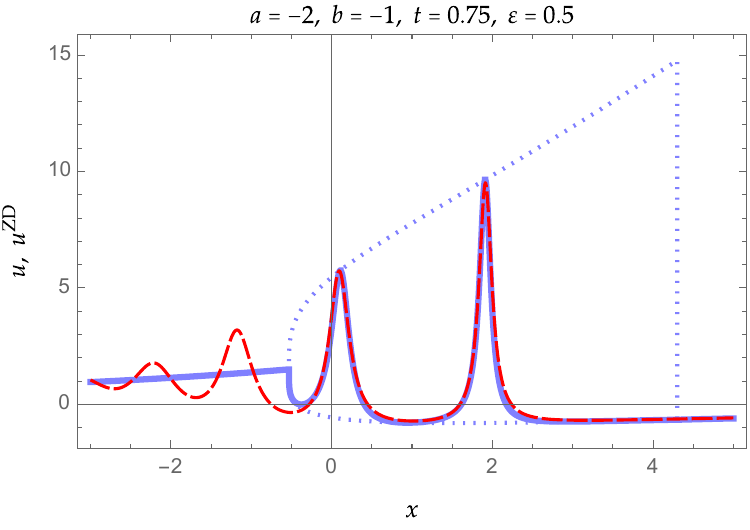}\hfill\includegraphics[width=0.49\linewidth]{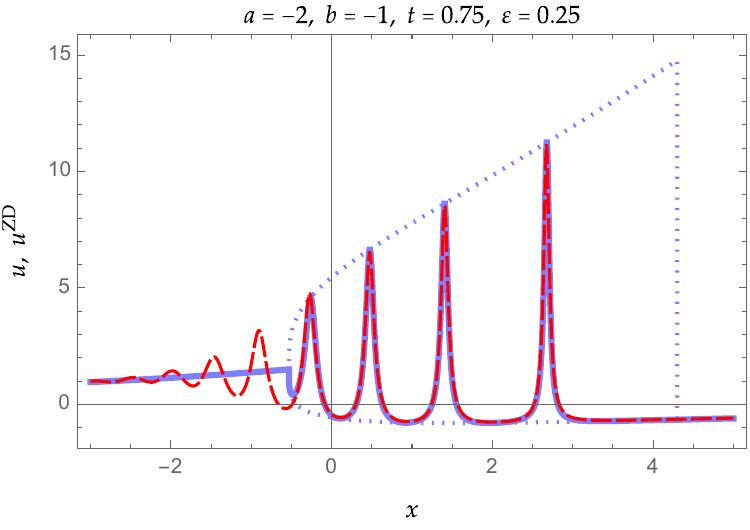}
\end{center}
\caption{Comparison between $u^\mathrm{ZD}(t,x;\varepsilon)$ (thick blue) and $u(t,x;\varepsilon)$ (red, dashed) for $u_0(x)=(2ax-2b)/(x^2+1)$ with $a=-2$ and $b=-1$ for $t=0.75$.  The upper and lower envelopes of $u^\mathrm{ZD}(t,x,\varepsilon)$ (independent of $\varepsilon$ and obtained by replacing $\cos(\phi)$ in \eqref{eq:r-and-periodic-profile} $1$ and $-1$ respectively) are plotted with dotted blue lines.  Left:  $\varepsilon=0.5$.  Right:  $\varepsilon=0.25$.}
\label{fig:uZDvsExact}
\end{figure}
One can clearly see that the approximation is least accurate near the point on the left where the upper and lower envelopes of the oscillatory dispersive shock wave meet, and although the jump discontinuity of $u^\mathrm{ZD}(t,x;\varepsilon)$ corresponding to the obvious jump of the upper envelope is so small as to be invisible in these plots, it is nonzero and therefore there is also a loss of accuracy there.  Improving the approximation near these points, which lie on the caustic locus for the fixed value of $t$ in question, is the subject of the next section.

\subsection{Small-dispersion asymptotics near the caustic locus and universality}
\label{sec:rational-universality}
The closed set $\mathcal{C}\subset\mathbb{R}^2$ of points in the real $(t,x)$-plane for which $x$ is a critical value of the mapping $\R\ni y\mapsto y+2tu_0(y)$ (equivalently, for which there exists a real critical point of $y\mapsto h(y;t,x)$ of higher multiplicity) is called the \emph{caustic locus} for the multivalued solution of the inviscid Burgers equation.  For each $\varepsilon>0$, the model function $u^\mathrm{ZD}(t,x;\varepsilon)$ of Theorem~\ref{thm:strong-ZD-asymptotics} is defined for $(t,x)\in\R^2\setminus\mathcal{C}$, and Theorem~\ref{thm:strong-ZD-asymptotics} is only applicable on this domain.  While this theorem asserts locally uniform accuracy of $u^\mathrm{ZD}(t,x;\varepsilon)$ for each $(t,x)\in\R\setminus\mathcal{C}$,
there is no assertion that accuracy persists upon allowing $(t,x)$ to approach a point of $\mathcal{C}$.  Although any lack of uniformity near $\mathcal{C}$ is not relevant for the $L^2$ convergence result in Theorem~\ref{thm:strong-L2}, still one may ask how the solution of the Cauchy problem \eqref{eq:BO}--\eqref{eq:IC} behaves when $\varepsilon$ is small and also $(t,x)$ is close to the caustic locus $\mathcal{C}$.  

Although there may be several gradient catastrophe points for the inviscid Burgers solution for given $u_0$, we now focus on the first-occurring such point for $t>0$.  Let $t_\mathrm{c}\in\R$ be defined by
\begin{equation}
    t_\mathrm{c}:=\min \{t>0\mid (t,x)\in\mathcal{C}\}.
\end{equation}
Assuming the minimum is finite (which is the same as the assumption that the inviscid Burgers solution is not globally classical for positive $t$), the simplest and most general situation is that there is a unique critical value $x=x_\mathrm{c}$ such that $(t_\mathrm{c},x_\mathrm{c})\in\mathcal{C}$.  The latter point is the earliest point of gradient catastrophe of the inviscid Burgers solution with initial data $u_0$.  At this point one has a unique solution $y=y_\mathrm{c}\in\R$ of $y+2t_\mathrm{c}u_0(y)=x_\mathrm{c}$ that also satisfies $1+2t_\mathrm{c}u_0'(y)=0$ and $2t_\mathrm{c}u_0''(y)=0$, while generically it also holds that $2t_\mathrm{c}u_0'''(y)> 0$.  In this situation, for small $t-t_\mathrm{c}>0$, there is a unique maximal open interval $x_\mathrm{h}(t)<x<x_\mathrm{s}(t)$ in which $\ell(t,x)=1$ while $\ell(t,x)=0$ both for $x<x_\mathrm{h}(t)$ and for $x>x_\mathrm{s}(t)$.  The interval $x_\mathrm{h}(t)<x<x_\mathrm{s}(t)$ contains the highly oscillatory dispersive shock wave.  We call the curve $x=x_\mathrm{h}(t)$ (respectively, $x=x_\mathrm{s}(t)$) the \emph{harmonic edge}  (respectively, the \emph{soliton edge}) of the Benjamin-Ono dispersive shock wave.  Referring to Figure~\ref{fig:uZDvsExact}, $x=x_\mathrm{h}(t)$ is the point at which the upper and lower envelopes of $u^\mathrm{ZD}(t,x;\varepsilon)$ come together, while $x=x_\mathrm{s}(t)>x_\mathrm{h}(t)$ is the point where the upper envelope has its jump discontinuity.

The basic description given above also holds for other equations supporting dispersive shock wave formation when the dispersion is small. For general weakly dispersive perturbations of the inviscid Burgers equation, the space-time point from which the first dispersive shock wave is generated is still the same gradient catastrophe point $(t_\mathrm{c},x_\mathrm{c})$.  However, in general the curves $x=x_\mathrm{h}(t)$ and $x=x_\mathrm{s}(t)$ emanating from the catastrophe point in the direction of increasing time $t$ bounding the dispersive shock wave are not related to the multivalued solution of the inviscid Burgers equation for initial data $u_0$.  Instead, they are defined as a degeneration locus for a solution of the $\ell=1$ Whitham modulation equations, a $3\times 3$ first-order hyperbolic system that, unlike in the Benjamin-Ono case, cannot be uncoupled even after being reduced to Riemann-invariant form. 
Nonetheless, for a large class of dispersive Hamiltonian perturbations of the inviscid Burgers equation that includes the Korteweg-de Vries equation \eqref{eq:KdV} as a special case, Dubrovin \cite{Dubrovin08} conjectured a remarkable universality property of the solution for general initial data in the small-dispersion limit with variables $(t,x)$ localized in a neighborhood of a generic gradient catastrophe point.  Specifically, the claim is that in scaled variables, the solution of the perturbed problem is asymptotic to a particular solution of the second equation in the Painlev\'e-I hierarchy.  In the special case of the Korteweg-de Vries equation \eqref{eq:KdV}, Claeys and Grava \cite{ClaeysG09} proved Dubrovin's conjecture (i.e., the universality of the limiting profile with respect to initial data for this specific equation was established) using Riemann-Hilbert methods.  The same authors subsequently analyzed the Korteweg-de Vries solution near the harmonic edge $x=x_\mathrm{h}(t)$ \cite{ClaeysG10a} and near the soliton edge $x_\mathrm{s}(t)$ \cite{ClaeysG10b}.  In both cases, it was proved that the solution exhibits universality with respect to the initial data $u_0$ with properties different at both edges but both being expressible in terms of known functions (the Hastings-McLeod solution of the homogeneous Painlev\'e-II equation modulating a sinusoidal oscillation for the harmonic edge, and a sum of $\mathrm{sech}^2$ soliton profiles with position shifts related to the norming constants of the Hermite polynomials for the soliton edge).  There is no conjecture in the literature asserting that the behavior near the curves $x=x_\mathrm{h}(t)$ or $x=x_\mathrm{s}(t)$ should be independent of the dispersive perturbation under consideration.

The following results are the analogues for Benjamin-Ono of those in \cite{ClaeysG09,ClaeysG10a,ClaeysG10b}.
\subsubsection{Universality near the gradient catastrophe point}
First, we give the Benjamin-Ono version of the result in \cite{ClaeysG09} for the Korteweg-de Vries equation.  Let $L$ be an ``elbow-shaped'' contour in the complex $\zeta$-plane originating from $\zeta=\infty\ee^{-7\pi\ii/8}$ and tending to infinity in the opposite direction toward $\zeta=\infty\ee^{-3\pi\ii/8}$, and denote by $\tau(T,X)$ the corresponding \emph{Pearcey integral}
\begin{equation}
    \tau(T,X):=\int_{L}\ee^{\ii(\zeta^4-T\zeta^2-X\zeta)}\,\dd\zeta.
    \label{eq:Pearcey-tau}
\end{equation}
This is an entire function of $(T,X)\in\C^2$ and it can be shown \cite{Cheng26+b} that $\tau(T,X)\neq 0$ for all $(T,X)\in\R^2$.
Now set
\begin{equation}
    U(T,X):=-2\mathrm{Im}\left(\frac{\partial}{\partial X}\log(\tau(T,X))\right),\quad (T,X)\in\R^2.     \label{eq:CatastropheU}
\end{equation}
\begin{theorem}[Universal small-dispersion asymptotics near the gradient catastrophe point, \protect{\cite[Theorem 1.7]{BlackstoneMM26}}]
Suppose that $u_0\in L^2(\R)$ is a rational function with simple poles of the form \eqref{eq:PN-rational-IC}, and let $u(t,x;\varepsilon)$ denote the solution of the Benjamin-Ono Cauchy problem \eqref{eq:BO}--\eqref{eq:IC} with initial condition $u_0$.  Assume that $(t_\mathrm{c},x_\mathrm{c})$ is a generic gradient catastrophe point.
Set $u_\mathrm{c}:=u_0(y_\mathrm{c})$ (the value of the inviscid Burgers solution at the catastrophe point for initial data $u_0$) and let $k:=(\frac{2}{3})^{1/4}t_\mathrm{c}u_0'''(y_\mathrm{c})^{1/4}>0$.  Defining localized coordinates by 
\begin{equation}
    T:=\frac{t-t_\mathrm{c}}{k^2\varepsilon^{1/2}}\quad\text{and}\quad X:=\frac{x-x_\mathrm{c}-2u_\mathrm{c}(t-t_\mathrm{c})}{k\varepsilon^{3/4}},
\end{equation}
the asymptotic formula
\begin{equation}
    u(t,x;\varepsilon)=u_\mathrm{c}+\frac{\varepsilon^{1/4}}{k}U(T,X)+O(\varepsilon^{1/2}),\quad\varepsilon\to 0
    \label{eq:catastrophe-asymp}
\end{equation}
holds uniformly for $(T,X)$ in compact subsets of $\R^2$. 
\label{thm:catastrophe}
\end{theorem}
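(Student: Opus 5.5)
We will work from the determinantal representation of Theorem~\ref{thm:inversion-formula-rational}, preferably in the logarithmic-derivative form \eqref{eq:Pi-u-derivative}, and carry out a steepest-descent analysis of the entries of $\mathbf{B}(t,x;\varepsilon)$ for $(t,x)$ in the shrinking window $t-t_\mathrm{c}=O(\varepsilon^{1/2})$, $x-x_\mathrm{c}-2u_\mathrm{c}(t-t_\mathrm{c})=O(\varepsilon^{3/4})$. Recall that the critical points of $h(\diamond;t,x)$ are exactly the roots of the characteristic equation $z+2tu_0(z)=x$. At $(t_\mathrm{c},x_\mathrm{c})$ the real root $y_\mathrm{c}$ is triple, and the remaining $N-1$ roots are complex. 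Generically they are simple, and they stay bounded away from $\R$ and from $y_\mathrm{c}$ throughout the window.

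\textbf{Step 1: contour deformation.} Following the strategy described after Theorem~\ref{thm:strong-ZD-asymptotics}, we use row operations (which leave $\det\mathbf{B}$ unchanged up to constant factors that disappear under $\partial_x\log$) to reduce to a basis of contours with a definite structure. Exactly one row will pass through the quartic degenerate saddle near $y_\mathrm{c}$. Each remaining row will be dominated either by a distinct simple complex saddle or by a pole $p_n$ in the exceptional case. This should be a perturbation of the configuration used in \cite{BlackstoneGGM26} for $t<t_\mathrm{c}$, where $\ell=0$ and the three real saddles merging at $y_\mathrm{c}$ were represented by a single row through $y_0$. We must check that the rows not passing near $y_\mathrm{c}$ are unaffected by the coalescence.

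\textbf{Step 2: local analysis at the degenerate saddle.} Taylor expand $h$ at $y_\mathrm{c}$ using $1+2t_\mathrm{c}u_0'(y_\mathrm{c})=0$ and $u_0''(y_\mathrm{c})=0$. Writing $z=y_\mathrm{c}+\varepsilon^{1/4}k^{-1}c\,\zeta$ for a suitable unimodular constant $c$, and expanding in $t-t_\mathrm{c}$ and $x-x_\mathrm{c}$, we get
\[
-\ii h(z;t,x)/\varepsilon=\mathrm{const}+\ii(\zeta^4-T\zeta^2-X\zeta)+O(\varepsilon^{1/4}).
\]
Here the constant depends on $(t,x)$ but is common to the whole row. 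The Galilean shift $2u_\mathrm{c}(t-t_\mathrm{c})$ in $X$ comes from the mixed term: $\partial_t h$ produces $(z-x)^2$-type contributions, and evaluating these at $z=x-2tu_0(z)$ gives the drift $2u_\mathrm{c}$. With $k$ fixed by matching the quartic coefficient $\tfrac{2}{3}\cdot\tfrac{1}{4!}\cdot$ (appropriate normalization), the rescaled contour becomes $L$. The row entries then read
\begin{align*}
B_{11}&\sim \alpha\,\varepsilon^{1/4}k^{-1}\bigl(\tau+O(\varepsilon^{1/4})\bigr),\\
B_{1,n+1}&\sim \alpha\,\varepsilon^{1/4}k^{-1}\bigl((y_\mathrm{c}-p_n)^{-1}\tau+\varepsilon^{1/4}\kappa_n\,\partial_X\tau+\cdots\bigr),
\end{align*}
with an explicit constant $\kappa_n$. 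This expansion must be carried to second order, because the leading term is a multiple of the same function $\tau$ in every column.

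\textbf{Step 3: reduction of the determinant.} The other rows have standard nondegenerate saddle or pole expansions whose leading parts depend smoothly on $(t,x)$, to the order needed. Taking $\partial_x=\varepsilon^{-3/4}k^{-1}\partial_X+\cdots$ in \eqref{eq:Pi-u-derivative}, we expect
\[
\log\det\mathbf{B}=\log\tau(T,X)+(\text{smooth in }(t,x))+O(\varepsilon^{1/4}).
\]
The smooth part, together with the Gaussian prefactor $\prod_n\ee^{\ii(p_n-x)^2/(4\varepsilon t)}$, should reproduce $\Pi u$ evaluated with the inviscid Burgers value. After taking $2\mathrm{Re}$ this gives $u_\mathrm{c}+O(\varepsilon^{1/2})$. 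The $\partial_X\log\tau$ term gives $2\mathrm{Re}\bigl(\ii\varepsilon\,\varepsilon^{-3/4}k^{-1}\partial_X\log\tau\bigr)=\varepsilon^{1/4}k^{-1}U$, by \eqref{eq:CatastropheU}.

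\textbf{Main obstacle.} The division by $\tau$ requires $\tau\neq0$ on compact sets, which is supplied by \cite{Cheng26+b}. The real difficulty lies elsewhere. We must show that the column operations extracting $\tau$ leave a residual $N\times N$ Schur complement that is nondegenerate, uniformly in the window, and we must control the error to relative order $\varepsilon^{1/4}$, so that after multiplying by $\varepsilon^{1/4}$ it becomes $O(\varepsilon^{1/2})$. The most delicate part is justifying the error bounds in Step 2 uniformly. The quartic approximation holds only in a region $|\zeta|\ll\varepsilon^{-1/12}$. Outside that region we will need steepest-descent estimates on the deformed contour to show that the tails are exponentially small, uniformly for $(T,X)$ in compact sets.
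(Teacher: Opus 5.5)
\textbf{Overall.} Your framework is the right one: the determinantal formula, a quartic unfolding at $y_\mathrm{c}$, Pearcey integrals, and nonvanishing from \cite{Cheng26+b}. However, Steps 1--2 contain two errors that happen to cancel each other.

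\textbf{The row count in Step 1.} $\mathbf{B}$ has $N+1$ rows, but away from $y_\mathrm{c}$ there are only $N-1$ critical points in $\C_+$. For $N=1$, the cubic $(z-x)(z^2+1)+4t(az-b)$ has all three roots at $y_\mathrm{c}$, so there is no other saddle at all. Exceptional indices do not help: in this limit the integrand vanishes at $p_j$ to order $-\ii c_j/\varepsilon\to\infty$, so those rows are saddle-dominated too. Hence \emph{two} rows are controlled by the cluster. They are the continuations of the rows through $y_0$ and through the complex saddle $z_N$ that merges with it for $t<t_\mathrm{c}$. (There is one real saddle near $y_\mathrm{c}$ there, not three.)

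\textbf{The normal form in Step 2.} Set $z=y_\mathrm{c}+sw$ with $s:=(24\varepsilon/u_0'''(y_\mathrm{c}))^{1/4}=2t_\mathrm{c}\varepsilon^{1/4}/k$, and use
$h'(y_\mathrm{c})=-(x-x_\mathrm{c}-2u_\mathrm{c}(t-t_\mathrm{c}))/(2t)$, $h''(y_\mathrm{c})=(t_\mathrm{c}-t)/(2tt_\mathrm{c})$, and $h'''(y_\mathrm{c})=u_0''(y_\mathrm{c})=0$.
You get $-\ii h(z)/\varepsilon=-\ii h(y_\mathrm{c})/\varepsilon-\ii(w^4-Tw^2-Xw)+O(\varepsilon^{1/4})$, with exactly the $T,X$ of the theorem. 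This is the \emph{complex conjugate} of the phase in $\tau$. No rescaling $w=c\zeta$ turns it into $\ii(\zeta^4-T\zeta^2-X\zeta)$ with real $T,X$, since that would require both $c^4=-1$ and $c^2=-1$.

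So each cluster row produces $\sigma_j(T,X):=\int_{\Gamma_j}\ee^{-\ii(w^4-Tw^2-Xw)}\,\dd w$. This solves $\partial_X^3\sigma+\frac{T}{2}\partial_X\sigma+\frac{\ii X}{4}\sigma=0$, not the equation $\partial_X^3\tau+\frac{T}{2}\partial_X\tau-\frac{\ii X}{4}\tau=0$ satisfied by $\tau$. A correct one-row computation would therefore give $-2\mathrm{Im}\,\partial_X\log\sigma$, which is not $U$.

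\textbf{The missing idea: how the two cluster rows combine.} At leading order both rows equal $\ee^{-\ii h(y_\mathrm{c})/\varepsilon}s\,\sigma_j\mathbf{g}$ with $\mathbf{g}=(1,(y_\mathrm{c}-p_1)^{-1},\dots,(y_\mathrm{c}-p_N)^{-1})$, so they are parallel. This is the real reason a second-order expansion is needed; for a single row you could simply factor the Pearcey integral out.

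Up to factors smooth in $(t,x)$, the leading term of $\det\mathbf{B}$ is then $s\,\mathcal{W}(T,X)$ times a confluent Cauchy-type determinant built from $\mathbf{g}$, $\mathbf{g}'$ and the $N-1$ far saddles. Here $\mathcal{W}:=\sigma_1\partial_X\sigma_2-\sigma_2\partial_X\sigma_1$.

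Because the $\sigma$-equation has no $\partial_X^2$ term, the $2\times 2$ Wronskian of two of its solutions solves the formal adjoint equation, which is exactly the $\tau$-equation. You must then check that, for the pair $\Gamma_1,\Gamma_2$ inherited from the two rows, $\mathcal{W}$ is a nonzero constant multiple of $\tau$ on the elbow contour $L$.

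Only after this identification does $2\mathrm{Re}(\ii\varepsilon^{1/4}k^{-1}\partial_X\log\mathcal{W})=\varepsilon^{1/4}U/k$ follow, and only then does \cite{Cheng26+b} supply the nonvanishing you need. Correspondingly, the error analysis must be done for the two-row block to one order beyond the Wronskian, including the quintic Taylor term of $h$ and the $G''$ terms. That is what produces the $O(\varepsilon^{1/2})$ remainder.
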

The coordinate $X$ is fixed in a frame of reference moving with the characteristic velocity $2u_\mathrm{c}$ of the inviscid solution at the catastrophe point.  The form \eqref{eq:catastrophe-asymp} was conjectured in \cite{MasoeroRA15} without an explicit formula for $U(T,X)$, but instead giving a formal argument that $U(T,X)$ should satisfy a specific equation, which in our scaling reads
\begin{equation}
    X-2UT+4U^3-6U|D_X|U-3|D_X|U^2-4\partial_X^2U=0,
    \label{eq:U-eqn}
\end{equation}
where $|D_X|$ denotes the Fourier multiplier defined in \eqref{eq:BO} acting on functions of $X$.
Numerical evidence described in \cite{BlackstoneMM26} suggests that indeed the function defined by \eqref{eq:CatastropheU} satisfies \eqref{eq:U-eqn}.  The Benjamin-Ono equation does not fall within the universality class of equations considered by Dubrovin \cite{Dubrovin08}, and according to \cite{MasoeroRA15}, for Benjamin-Ono the equation \eqref{eq:U-eqn} should play the role played instead for equations of the Dubrovin class by the second equation in the Painlev\'e-I hierarchy.

To illustrate the key idea of universality, we present in Figure~\ref{fig:catastrophe} some examples of the convergence asserted in Theorem~\ref{thm:catastrophe} for different initial data taken from the two-parameter family $u_0(x)=2(ax-b)/(x^2+1)$ described in Section~\ref{sec:examples}.  
\begin{figure}[h]
\begin{center}
    \includegraphics[width=0.3\linewidth]{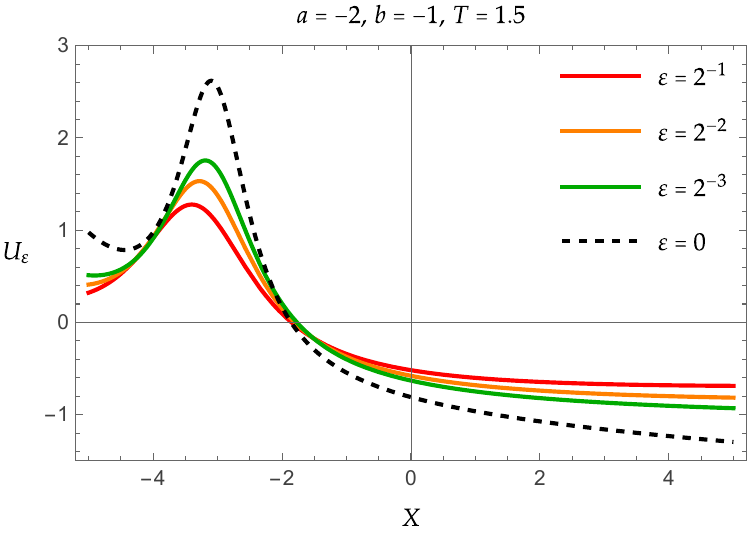}\hfill%
    \includegraphics[width=0.3\linewidth]{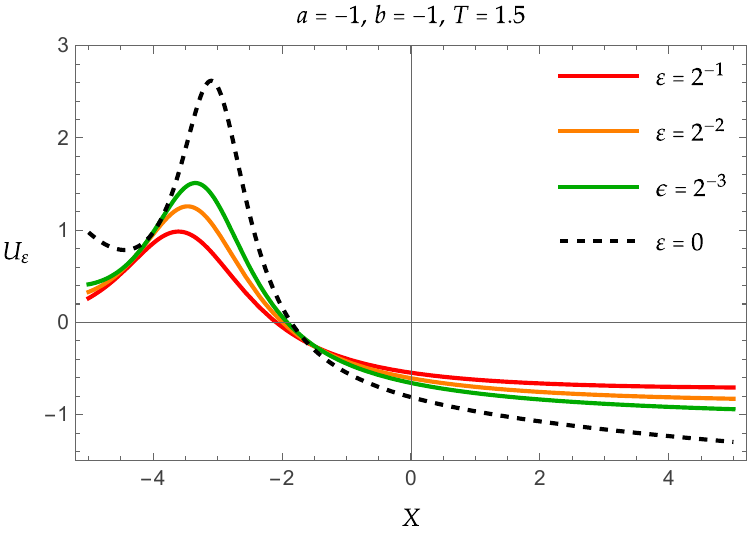}\hfill%
    \includegraphics[width=0.3\linewidth]{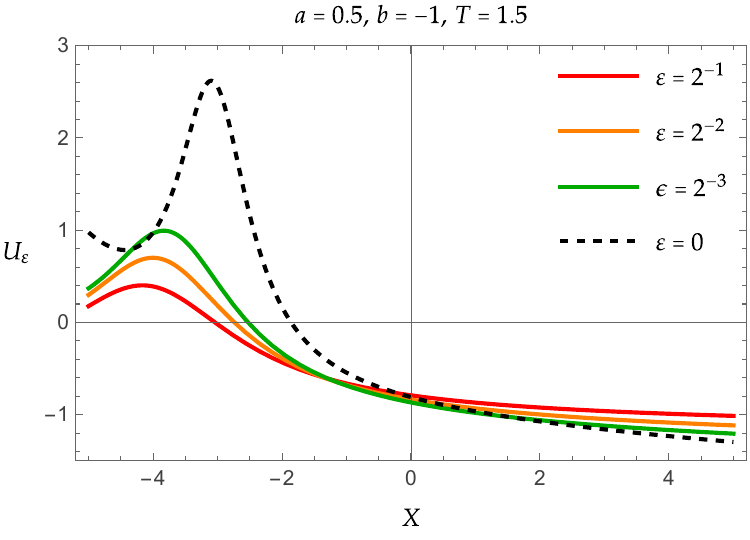}
\end{center}
\caption{Plots of $U_\varepsilon(T,X):=k\varepsilon^{-1/4}(u(t_\mathrm{c}+\varepsilon^{1/2}k^2T,x_\mathrm{c}+2\varepsilon^{1/2}k^2Tu_\mathrm{c}+\varepsilon^{3/4}kX;\varepsilon)-u_\mathrm{c})$ (colored curves) and $U_0(T,X):=U(T,X)$ (dashed black curve) for $T=1.5$ as functions of $X$.  Left:  $a=-2$, $b=-1$ (giving $t_\mathrm{c}=0.116$, $x_\mathrm{c}=0.467$, $u_\mathrm{c}=1.344$, and $k=0.235$).  Center:  $a=-1$, $b=-1$ (giving $t_\mathrm{c}=0.196$, $x_\mathrm{c}=0.804$, $u_\mathrm{c}=1.367$, and $k=0.344$).  Right:  $a=0.5$, $b=-1$ (giving $t_\mathrm{c}=0.473$, $x_\mathrm{c}=2.417$, $u_\mathrm{c}=1.701$, and $k=0.599$).  The convergence to the universal limit $U_0(T,X)$ is clear for each given case, but the rate does not appear to be uniform with respect to $a$.}
\label{fig:catastrophe}
\end{figure}
For $a=0$ and $b=-1$, such a study was presented in \cite[Figure 8]{BlackstoneMM26} using the fact that the exact solution $u(t,x;\varepsilon)$ for a discrete sequence $\varepsilon=\varepsilon_n$, $n=1,2,3,\dots$, $\varepsilon_n\to 0$, is an exact $n$-soliton solution for which Matsuno's formula \eqref{eq:Matsuno}--\eqref{eq:Matsuno-Matrix} applies.  This allows one to consider much smaller values of $\varepsilon$ than is numerically tractable using instead the evaluation of the contour integrals in \eqref{eq:N=1-ContourIntegrals}.  Nonetheless for $a\neq 0$ we computed these integrals for $\varepsilon=2^{-k}$ for $k=1,2,3$ to make the plots shown here.

\subsubsection{Universality near the harmonic edge}
Next, we consider the asymptotic behavior of $u(t,x;\varepsilon)$ for $(t,x)$ near the harmonic edge $x=x_\mathrm{h}(t)$.  For Benjamin-Ono, this curve is determined by the locally multivalued solution of the inviscid Burgers approximation as follows.  For $t-t_\mathrm{c}>0$ small enough, and assuming that $(t_\mathrm{c},x_\mathrm{c})$ is a generic gradient catastrophe point, the condition $x=x_\mathrm{h}(t)$ ensures that there exist two distinct real solutions $y$ of the characteristic equation $x=2tu_0(y)+y$, denoted $y=y_\mathrm{h}(t)$ (a double root with $u_0''(y_\mathrm{h}(t))>0$) and $y=y_2(t)$ (a simple root), with $y_\mathrm{h}(t)>y_2(t)$.  There are also $N-1$ complex solutions (counting with multiplicity) $y=z_{j,\mathrm{h}}(t)\in\mathbb{C}_+$, $j=2,\dots,N$, of the characteristic equation.  The corresponding branches of the inviscid Burgers solution are denoted $u_\mathrm{h}^\mathrm{B}(t):=u_0(y_\mathrm{h}(t))$ (really the value of the solution at the fold point where two smooth branches for $x>x_\mathrm{h}(t)$ meet with a vertical tangent) and $u_2^\mathrm{B}(t):=u_0(y_2(t))$ (the value of the branch that is smooth at $x=x_\mathrm{h}(t)$).  Note that $u_\mathrm{h}^\mathrm{B}(t)<u_2^\mathrm{B}(t)$.  
With this setup, 
we define a real phase by
\begin{equation}
    \Theta(t,x):=\left(u_2^\mathrm{B}(t)-u_\mathrm{h}^\mathrm{B}(t)\right)(x-x_\mathrm{h}(t)) + \left(u_2^\mathrm{B}(t)^2-u_\mathrm{h}^\mathrm{B}(t)^2\right)t-\int_{y_2(t)}^{y_\mathrm{h}(t)}u_0(y)\,\dd y,
\end{equation}
and a phase correction by 
\begin{multline}
    \varphi(t,X_\mathrm{h}):=\arg\left(\mathrm{Ai}(\ee^{\ii\pi/3}X_\mathrm{h})\right)-\frac{5\pi}{12} +\sum_{j=2}^N\left[\arg(z_{j,\mathrm{h}}(t)-y_\mathrm{h}(t))-\arg(z_{j,\mathrm{h}}(t)-y_2(t))\right] \\
    - \sum_{n=1}^N\left[\arg(y_\mathrm{h}(t)-p_n)-\arg(y_2(t)-p_n)\right],
\label{eq:harmonic-edge-phase-correction-rational}
\end{multline}
where $\mathrm{Ai}(\diamond)$ denotes the standard Airy function (see \cite[Chapter 9]{DLMF}).
We have the following result.
\begin{theorem}[Universal small-dispersion asymptotics near the harmonic edge, \protect{\cite[Theorem 1.5]{BlackstoneMM26}}]
Suppose that $u_0\in L^2(\R)$ is a rational function with simple poles of the form \eqref{eq:PN-rational-IC}, and let $u(t,x)=u(t,x;\varepsilon)$ denote the solution of the Benjamin-Ono Cauchy problem \eqref{eq:BO}--\eqref{eq:IC}.  Assume that $(t_\mathrm{c},x_\mathrm{c})$ is a generic gradient catastrophe point.
Set $k_\mathrm{h}(t):=2t[\frac{1}{2}u_0''(y_\mathrm{h}(t))]^{1/3}>0$, and define a localized spatial coordinate by 
\begin{equation}
    X_\mathrm{h}:=\frac{x-x_\mathrm{h}(t)}{k_\mathrm{h}(t)\varepsilon^{2/3}}.
\end{equation}
Then 
the asymptotic formula
\begin{equation}
    u(t,x;\varepsilon)=u_2^\mathrm{B}(t)+\varepsilon^{1/6}\sqrt{\frac{2(u_2^\mathrm{B}(t)-u_\mathrm{h}^\mathrm{B}(t))}{\pi k_\mathrm{h}(t)}}\frac{\cos(\varepsilon^{-1}\Theta(t,x)+\varphi(t,X_\mathrm{h}))}{|\mathrm{Ai}(\ee^{\ii\pi/3}X_\mathrm{h})|} + O(\varepsilon^{1/3})
    \label{eq:harmonic-edge-asymp}
\end{equation}
holds as $\varepsilon\to 0$ uniformly for $t-t_\mathrm{c}>0$ sufficiently small and $X_\mathrm{h}$ bounded.  
\label{thm:harmonic-edge}
\end{theorem}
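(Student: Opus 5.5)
The plan is to start from the determinantal formula \eqref{eq:Pi-u-fraction} of Theorem~\ref{thm:inversion-formula-rational} and apply steepest descent to the entries of $\mathbf{A}(t,x;\varepsilon)$ and $\mathbf{B}(t,x;\varepsilon)$, as in the proof of Theorem~\ref{thm:strong-ZD-asymptotics}. The new feature is that $x$ lies within $O(\varepsilon^{2/3})$ of $x_\mathrm{h}(t)$. There the two largest real critical points $y_0(t,x)>y_1(t,x)$ of $h(\diamond;t,x)$ coalesce into the double root $y_\mathrm{h}(t)$, since critical points of $h$ are exactly the solutions of $z+2tu_0(z)=x$. The simple real critical point $y_2(t)$ and the complex critical points $z_{j,\mathrm{h}}(t)\in\C_+$ stay separated.

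The first step is to choose the contours by row operations, which leave both determinants unchanged. Each of $N-1$ rows should be dominated by a single complex critical point $z_{j,\mathrm{h}}$, exactly as in \cite{BlackstoneGGM26}. One remaining row passes through the simple point $y_2$. One row passes through the coalescing pair near $y_\mathrm{h}$, using a contour that approaches it along the steepest descent directions of a cubic.

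For that row I would use a local cubic change of variables. Writing $h(z;t,x)-h(y_\mathrm{h};t,x_\mathrm{h})$ near $y_\mathrm{h}$ as $\frac13 \tfrac{1}{2}u_0''(y_\mathrm{h})\cdot 2t\,(z-y_\mathrm{h})^3\cdot(\text{const})$ plus a term linear in $x-x_\mathrm{h}$ plus higher order, and rescaling $z-y_\mathrm{h}=\varepsilon^{1/3}k_\mathrm{h}^{-1}(\dots)w$, should make the exponent become $\ii(w^3/3 - X_\mathrm{h}w)$ up to rotation. The integrals in this row then reduce to $\mathrm{Ai}(\ee^{\ii\pi/3}X_\mathrm{h})$, with derivative corrections of relative order $\varepsilon^{1/3}$. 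The normalization $k_\mathrm{h}(t)=2t[\frac12u_0''(y_\mathrm{h})]^{1/3}$ is exactly what makes this rescaling come out with unit coefficients.

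The second step is to reduce the determinant ratio. Every row except the Airy row and the $y_2$ row contributes a common factor and cancels in the Schur complement, as in the $\ell=0$ case. The remaining $2\times 2$ structure gives $\Pi u$ as the ``$\ell=0$'' value $u_2^\mathrm{B}$-type term plus a correction. That correction is proportional to the ratio of the Airy-row contribution to the $y_2$-row contribution. This ratio has modulus of order $\varepsilon^{1/6}/|\mathrm{Ai}(\ee^{\ii\pi/3}X_\mathrm{h})|$: the Airy integral scales like $\varepsilon^{1/3}$, the Gaussian integral at $y_2$ like $\varepsilon^{1/2}$, and the inversion puts the Airy integral in the denominator. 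Its phase is $\ee^{\ii(h(y_2)-h(y_\mathrm{h}))/\varepsilon}$, which after expansion in $x-x_\mathrm{h}$ becomes $\Theta(t,x)/\varepsilon$.

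The arguments of the remaining factors produce the phase correction $\varphi$ in \eqref{eq:harmonic-edge-phase-correction-rational}. The factor $\prod(y-p_n)$ enters through the column entries $1/(z-p_n)$ after Cauchy–Binet/Woodbury. The factors $z_{j,\mathrm{h}}-y$ enter through the Vandermonde-like determinant formed by the columns evaluated at the critical points. The constant $-5\pi/12$ comes from the steepest descent direction angles. Taking $2\mathrm{Re}$ then yields the cosine in \eqref{eq:harmonic-edge-asymp}.

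The main obstacle is the first step: proving that a contour deformation exists in which each row is dominated by a distinct saddle, with the Airy row not contaminated by $y_2$ or by complex saddles. This must hold uniformly for $t-t_\mathrm{c}$ small, where $y_2$ and $y_\mathrm{h}$ are themselves close, since they merge at the catastrophe point. I would first establish the topology of the level sets of $\mathrm{Im}\,h$ at $t=t_\mathrm{c}$ perturbatively, using the $\ell\le1$ contour choice of \cite{BlackstoneGGM26} away from $x_\mathrm{h}$. I would then carry it to $x=x_\mathrm{h}$ by continuity, with uniform error control coming from the nonvanishing of $\mathrm{Ai}(\ee^{\ii\pi/3}X)$ on $\R$. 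That nonvanishing ensures the Airy row never degenerates, and it also gives the $O(\varepsilon^{1/3})$ error.
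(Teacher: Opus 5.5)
Your framework — the determinantal formula, a cubic normal form at the coalescing pair, and Airy integrals — is the right one. But the mechanism you give for the oscillatory $\varepsilon^{1/6}$ term cannot work.

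\textbf{Why the proposed configuration fails.} In $\det\mathbf{A}/\det\mathbf{B}$, a row whose entries all have the form $\ee^{-\ii h(\zeta)/\varepsilon}$ times a non-oscillatory expansion contributes a factor that cancels between numerator and denominator. Suppose, as you propose, one row were dominated only by the coalescence region, another only by $y_2$, and the rest by single complex saddles. Then the ratio would be non-oscillatory to all orders. You would get $u_2^\mathrm{B}$ plus non-oscillatory $O(\varepsilon^{1/3})$ corrections, and nothing carrying $\ee^{\ii\Theta/\varepsilon}$. A ``ratio of the Airy-row contribution to the $y_2$-row contribution'' never forms in such a quotient.

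Moreover, that configuration does not exist. The row through $y_2$ is the continuation of the $C_0$-type row of the $\ell=0$ region, or equivalently of the $(y_1,y_2)$ pair row of the $\ell=1$ region. It therefore necessarily also crosses the coalescence region. Locally it carries a second Airy contour, e.g.\ the one joining the two lower valleys of the cubic, which gives $\mathrm{Ai}(-X_\mathrm{h})$ and is exponentially small only as $X_\mathrm{h}\to-\infty$. For bounded $X_\mathrm{h}$ that piece is $O(\varepsilon^{1/3})$, so it dominates the $O(\varepsilon^{1/2})$ Gaussian contribution from $y_2$. Two rows are then dominated by the same region, and the leading-order matrices are singular.

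\textbf{The missing step.} You need to eliminate this common leading part. Take the clean Airy row: the continuation of the $y_0$-thimble joining the upper valley to the lower-right one, proportional to $\mathrm{Ai}(\ee^{-\ii\pi/3}X_\mathrm{h})=\mathrm{Ai}(\ee^{\ii\pi/3}X_\mathrm{h})^*$. Subtract from the mixed row the appropriate $X_\mathrm{h}$-dependent, non-oscillatory multiple of it.

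What survives of the Airy part comes from the next-order $\mathrm{Ai}'$ terms, which you were discarding as $O(\varepsilon^{1/3})$ errors. They combine into the Wronskian of two Airy solutions (a constant) divided by $\mathrm{Ai}(\ee^{-\ii\pi/3}X_\mathrm{h})$. This residual is of size $\varepsilon^{2/3}$, carries the factor $\ee^{-\ii h(y_\mathrm{h})/\varepsilon}$, and multiplies a derivative vector at $y_\mathrm{h}$. Only now does the $\varepsilon^{1/2}\ee^{-\ii h(y_2)/\varepsilon}$ term dominate that row.

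The residual gives a relative correction $\varepsilon^{1/6}\ee^{\ii\Theta/\varepsilon}/\mathrm{Ai}(\ee^{-\ii\pi/3}X_\mathrm{h})$. This is the actual source of the amplitude $1/|\mathrm{Ai}(\ee^{\ii\pi/3}X_\mathrm{h})|$ and of the term $\arg\mathrm{Ai}(\ee^{\ii\pi/3}X_\mathrm{h})$ in $\varphi$. Your count $\varepsilon^{1/2}/\varepsilon^{1/3}$ matches $\varepsilon^{2/3}/\varepsilon^{1/2}$ only by coincidence.

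This also changes your ``main obstacle''. A deformation in which every row has its own saddle is impossible near $x_\mathrm{h}(t)$. What you need is one clean Airy row (the $y_0$-thimble) and one row containing both $y_2$ and the coalescence region. The nonvanishing of $\mathrm{Ai}(\ee^{-\ii\pi/3}X)$ on $\R$ matters because it is the pivot of this elimination.
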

We give an equivalent expression for the phase correction $\varphi(t,X_h)$ in equations \eqref{eq:g-h}--\eqref{eq:h-phase-correction} in  Appendix~\ref{app:uZDgeneral} that avoids any reference to the discrete data for the rational initial data $u_0$ and therefore applies to more general $u_0$. 
Theorem~\ref{thm:harmonic-edge} is an analogue for the Benjamin-Ono equation of a corresponding Korteweg-de Vries result obtained in \cite{ClaeysG10a}.  Both results display the dispersive solution as a correction to the continuous branch of the inviscid Burgers solution continued from the region to the left of the dispersive shock wave, and in both cases the correction is a modulated sinusoidal oscillation.  However in the Benjamin-Ono case the amplitude of the oscillatory correction is larger than in the Korteweg-de Vries case (proportional to $\varepsilon^{1/6}$ rather than $\varepsilon^{1/3}$).  Also, while the modulation of the oscillation is described in the Benjamin-Ono case by the classical special function $1/\mathrm{Ai}(\ee^{\ii\pi/3}X_\mathrm{h})$ (note that this expression is analytic for $X_\mathrm{h}\in\R$), it is instead described in the Korteweg-de Vries case by the Hastings-McLeod solution of the second Painlev\'e equation, a far more transcendental object.  In both the Benjamin-Ono and Korteweg-de Vries cases, the wavelength and frequency of the sinusoidal oscillation are proportional to $\varepsilon$, while the characteristic length scale of the modulation is proportional to $\varepsilon^{2/3}$.  For a more detailed description of the asymptotic formula \eqref{eq:harmonic-edge-asymp} including its relationship to formal linearization and group velocity propagation, and for further comparison with the Korteweg-de Vries case, we refer to \cite[Section 1.2]{BlackstoneMM26}.

\subsubsection{Universality near the soliton edge}
Finally, we describe the weakly-dispersive solution of Benjamin-Ono near the soliton edge $x=x_\mathrm{s}(t)$.  The curve $x=x_\mathrm{s}(t)$ is defined so that when $(t_\mathrm{c},x_\mathrm{c})$ is a generic gradient catastrophe point for the inviscid Burgers solution with initial data $u_0$, the conditions $x=x_\mathrm{s}(t)$ and $t-t_\mathrm{c}>0$ sufficiently small imply the existence of two distinct real solutions $y=y_\mathrm{s}(t)$ (a double root with $u_0''(y_\mathrm{s}(t))<0$) and $y_0(t)$ (a simple root) of the characteristic equation $x=2tu_0(y)+y$ such that $y_\mathrm{s}(t)<y_0(t)$.  The corresponding branches of the multivalued inviscid Burgers solution are $u^\mathrm{B}_\mathrm{s}(t):=u_0(y_\mathrm{s}(t))$ (the coalescence of two smooth branches for $x<x_\mathrm{s}(t)$ over $x=x_\mathrm{s}(t)$ with a vertical tangent) and a smooth branch $u^\mathrm{B}_0(t)$ representing the unique inviscid Burgers solution continued from the right to $x=x_\mathrm{s}(t)$.  We have $u_\mathrm{s}^\mathrm{B}(t)>u_0^\mathrm{B}(t)$.  There are also $N-1$ (counting with multiplicity) complex solutions $y=z_{j,\mathrm{s}}(t)\in\C_+$, $j=2,\dots,N$, of the characteristic equation.

To formulate our result, we need to recall the Lorentzian profile of the Benjamin-Ono soliton with velocity $v>0$:
\begin{equation}
    L(\xi;v):=\frac{2v}{v^2\xi^2+1},\quad\xi\in\R,\quad v>0,
\label{eq:LorentzianProfile}
\end{equation}
such that the Benjamin-Ono equation \eqref{eq:BO} has the exact solution
\begin{equation}
    u(t,x;\varepsilon)=b+L(\varepsilon^{-1}(x-x_0-(v+2b)(t-t_0);v),\quad t_0,x_0,b\in\R,\quad v>0
    \label{eq:BO-soliton-on-background}
\end{equation}
representing a soliton propagating on a constant background $b\in\R$.  The correct velocity $v$ will be given by 
\begin{equation}
    v(t):=2\left(u_\mathrm{s}^\mathrm{B}(t)-u_0^\mathrm{B}(t)\right)>0,
    \label{eq:modulated-soliton-speed}
\end{equation}
and we define a time-dependent position shift by
\begin{equation}
    \lambda(t):=\mathrm{Im}\left(\sum_{j=2}^N\frac{1}{y_\mathrm{s}-z_{j,\mathrm{s}}(t)}-\sum_{n=1}^N\frac{1}{y_\mathrm{s}(t)-p_n}\right).
    \label{eq:soliton-edge-lambda-rational}
\end{equation}
Then the result is the following.
\begin{theorem}[Universal small-dispersion asymptotics near the soliton edge, \protect{\cite[Corollary 1.4]{BlackstoneMM26}}]
Suppose that $u_0\in L^2(\R)$ is a rational function with simple poles of the form \eqref{eq:PN-rational-IC}, and let $u(t,x)=u(t,x;\varepsilon)$ denote the solution of the Benjamin-Ono Cauchy problem \eqref{eq:BO}--\eqref{eq:IC}.  Assume that $(t_\mathrm{c},x_\mathrm{c})$ is a generic gradient catastrophe point.
Set $k_\mathrm{s}(t):=2t[-\frac{1}{2}u_0''(y_\mathrm{s}(t))]^{1/3}>0$, define a localized spatial coordinate by
\begin{equation}
    X_\mathrm{s}:=\frac{x-x_\mathrm{s}(t)}{k_\mathrm{s}(t)\varepsilon^{2/3}},
\end{equation}
and denote by $a_1>a_2>a_3>\cdots$ the zeros of the Airy function $\mathrm{Ai}(\diamond)$.  
Then the asymptotic formula
\begin{equation}
    u(t,x;\varepsilon)=u_0^\mathrm{B}(t)+\sum_{n=1}^\infty L\left(\varepsilon^{-1/3}k_\mathrm{s}(t)(X_\mathrm{s}-a_n)+2t\lambda(t);v(t)\right) + O(\varepsilon^{1/3})
    \label{eq:modulated-solitons}
\end{equation}
holds as $\varepsilon\to 0$ uniformly for $t-t_\mathrm{c}>0$ sufficiently small and $X_\mathrm{s}$ bounded.  Moreover, for each $n=1,2,3,\dots$ and fixed $t_0$ with $t_0-t_\mathrm{c}>0$ sufficiently small
\begin{equation}
    u(t,x;\varepsilon)=u_0^\mathrm{B}(t_0) + L\left(\varepsilon^{-1}\left(x-x_0^{(n)}-2u_\mathrm{s}^\mathrm{B}(t_0)(t-t_0)\right);v(t_0)\right)+O(\varepsilon^{1/3})
    \label{eq:unmodulated-soliton}
\end{equation}
holds as $\varepsilon\to 0$ uniformly for $x-x_0^{(n)}=o(\varepsilon^{2/3})$ and $t-t_0=o(\varepsilon^{2/3})$, where an offset is defined by $x_0^{(n)}:=x_\mathrm{s}(t_0)+\varepsilon^{2/3}k_\mathrm{s}(t_0)a_n-2\varepsilon t_0\lambda(t_0)$.
\label{thm:soliton-train}
\end{theorem}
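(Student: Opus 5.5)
The plan is to start from the determinantal formula of Theorem~\ref{thm:inversion-formula-rational}, in the logarithmic-derivative form \eqref{eq:Pi-u-derivative}. The whole argument then reduces to an asymptotic description of $\det(\mathbf{B}(t,x))$ near the soliton edge: the zeros of this entire function of $x$ closest to $\R$ will produce the Lorentzian peaks. Fix $t$ with $t-t_\mathrm{c}>0$ small and $x=x_\mathrm{s}(t)+k_\mathrm{s}\varepsilon^{2/3}X_\mathrm{s}$. The critical points of $h(\diamond;t,x)$ are the roots of the characteristic equation, and near $x_\mathrm{s}(t)$ they are:
\begin{itemize}
\item the simple real root near $y_0(t)$;
\item a pair of nearly coalescing roots near the fold point $y_\mathrm{s}(t)$;
\item the $N-1$ complex roots near $z_{j,\mathrm{s}}(t)$.
\end{itemize}
As in the proof of Theorem~\ref{thm:strong-ZD-asymptotics}, the first step is to choose, by row operations, contours $C_0,\dots,C_N$ such that each row is dominated by a distinct group of critical points. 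One row, built from $C_0$, passes through $y_0$. One row passes through the coalescing pair, where a uniform Airy-type local expansion replaces the Gaussian one. The remaining $N-1$ rows each pass through one complex critical point.

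For the rows with ordinary saddles, apply standard steepest descent. Then use the Schur-complement reduction from the discussion after Theorem~\ref{thm:strong-ZD-asymptotics} to eliminate these rows. The ratio of determinants should reduce to a $1\times 1$ expression, namely a linear combination of two exponentially weighted contributions. The first is the $y_0$ saddle contribution. The second is the Airy-type integral $\int \ee^{-\ii h/\varepsilon}(\cdots)\,\dd z$ near $y_\mathrm{s}$, which after the cubic rescaling $z=y_\mathrm{s}+O(\varepsilon^{1/3})$ becomes $\mathrm{Ai}(X_\mathrm{s})$ times smooth factors, plus $O(\varepsilon^{1/3})$ corrections. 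Concretely, I would aim to show
\begin{equation*}
\det(\mathbf{B}(t,x))\prod_{n}\ee^{\ii(p_n-x)^2/(4\varepsilon t)} = G(t,x;\varepsilon)\left[\mathcal{F}(x;\varepsilon)+O(\varepsilon^{1/3})\right],
\end{equation*}
where $\ii\varepsilon\partial_x\log G$ reproduces $\Pi$ of the smooth background $u_0^\mathrm{B}(t)$. The factor $\mathcal{F}$ should be essentially an entire function of $x$ whose zeros lie at $x$ with $\mathrm{Re}\,X_\mathrm{s}\approx a_n$ and imaginary part $-\varepsilon(1/v+2t\lambda(t)\cdot\text{shift})$. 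The position shift $\lambda(t)$ in \eqref{eq:soliton-edge-lambda-rational} should emerge from the subleading terms: the amplitude factors $\prod(y-p_n)^{-1}$ from columns $2,\dots,N+1$ and $\prod (y-z_{j,\mathrm{s}})$ from the Schur complement are evaluated near $y_\mathrm{s}$ and contribute $\ii\varepsilon\partial_x\log$ of their ratio. The imaginary offset should be tied to $v(t)=2(u_\mathrm{s}^\mathrm{B}-u_0^\mathrm{B})$ through $\partial_x$ of the phase difference $h(y_\mathrm{s})-h(y_0)$, which equals $(u_\mathrm{s}^\mathrm{B}-u_0^\mathrm{B})$-type derivatives via $\partial_x h=-(y-x)/(2t)=u_0(y)$ at critical points.

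Then take $2\mathrm{Re}\,\ii\varepsilon\partial_x\log\mathcal{F}$. Each simple zero at $x_n-\ii\varepsilon/v$ contributes exactly the Lorentzian $\varepsilon\cdot 2\mathrm{Re}(\ii/(x-x_n+\ii\varepsilon/v))=L(\varepsilon^{-1}(x-x_n);v)$. The zeros are separated on scale $\varepsilon^{2/3}$ while the Lorentzians have width $\varepsilon$, so summing gives \eqref{eq:modulated-solitons}, with the tails controlled uniformly for bounded $X_\mathrm{s}$. The second statement \eqref{eq:unmodulated-soliton} should follow by Taylor-expanding $x_\mathrm{s}(t)$, $k_\mathrm{s}(t)$, $v(t)$, $\lambda(t)$ about $t_0$ for $t-t_0=o(\varepsilon^{2/3})$, using $x_\mathrm{s}'(t)=2u_\mathrm{s}^\mathrm{B}(t)$ (the caustic moves with the fold characteristic speed). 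In this window only the single term with the given $a_n$ matters.

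The main obstacle is the transition from locating zeros to getting an $O(\varepsilon^{1/3})$ pointwise error. The Airy factor vanishes at the peaks, so the naive relative error in the $1\times1$ reduced determinant is not small near $x_n$. I would handle this by proving the expansion for complex $x$ in an $O(\varepsilon)$-neighborhood of the real axis, then using Rouché to locate the zeros of $\mathcal{F}$ within $o(\varepsilon)$ of the predicted points. Finally, I would write $\mathcal{F}=(\text{zero-free factor})\times(x-x_n^\ast)$ locally and control the logarithmic derivative of the zero-free factor by Cauchy estimates. The delicate bookkeeping is the choice of contours giving distinct dominant points when two critical points nearly coalesce; I would first try the Chester–Friedman–Ursell cubic change of variables on the row through $y_\mathrm{s}$.
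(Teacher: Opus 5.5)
Your architecture matches the route the paper describes for \cite{BlackstoneMM26}: the log-derivative formula \eqref{eq:Pi-u-derivative}, contours whose rows are dominated by $y_0$, by the coalescing pair near $y_\mathrm{s}$, and by the $N-1$ points $z_{j,\mathrm{s}}$, a cubic normal form on the middle row, and Schur-complement elimination of the simple-saddle rows. The gap is in the step that produces the Lorentzians.

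The leading Airy term has real zeros at $X_\mathrm{s}=a_n$. The theorem says the zeros of $x\mapsto\det(\mathbf{B}(t,x))$ sit at $x_\mathrm{s}+\varepsilon^{2/3}k_\mathrm{s}a_n-2\varepsilon t\lambda-\ii\varepsilon/v$. That is a displacement of $O(\varepsilon)$ in $x$, i.e.\ $O(\varepsilon^{1/3})$ in $X_\mathrm{s}$, which is exactly the order you discard when you write $\mathcal{F}+O(\varepsilon^{1/3})$ and call the $\varepsilon^{1/3}$ corrections to the Airy integral errors. With that remainder, Rouch\'e localizes the zeros only to $O(\varepsilon)$, the size of the effect you must compute. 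Moreover, $L(\varepsilon^{-1}(x-x_n);v)$ has slope of order $\varepsilon^{-1}$, so localization within $o(\varepsilon)$ would give only an $o(1)$ pointwise error. The stated $O(\varepsilon^{1/3})$ needs the zeros to within $O(\varepsilon^{4/3})$.

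So you must keep the first correction explicitly. Let $z=z(w)$ be the Chester--Friedman--Ursell map, $G_k(w)=g_k(z(w))z'(w)$, $g_0=1$, $g_k=(z-p_k)^{-1}$, and $h_*$ the normal-form constant. Then the middle row is $\ee^{-\ii h_*/\varepsilon}\varepsilon^{1/3}[G_k(0)\mathrm{Ai}(X)+\ii\varepsilon^{1/3}G_k'(0)\mathrm{Ai}'(X)]$, up to terms of order $\varepsilon^{2/3}(|\mathrm{Ai}|+|\mathrm{Ai}'|)$, with $X=X_\mathrm{s}+O(\varepsilon^{2/3})$. The reduced quantity becomes $\mathrm{Ai}(X)+\ii\varepsilon^{1/3}\kappa\,\mathrm{Ai}'(X)+O(\varepsilon^{2/3})$, with zeros $X=a_n-\ii\varepsilon^{1/3}\kappa+O(\varepsilon^{2/3})$. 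Only then does Rouch\'e give the required accuracy.

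Your heuristics for where $v$ and $\lambda$ come from would not produce this. The row exponentials $\ee^{-\ii h(y_0)/\varepsilon}$, $\ee^{-\ii h_*/\varepsilon}$, \dots\ factor out of the whole determinant and feed only the background through $G$. A relative factor $\ee^{-\ii(h(y_\mathrm{s})-h(y_0))/\varepsilon}$ in the reduced expression would create zeros spaced $O(\varepsilon)$ apart, not Airy zeros spaced $O(\varepsilon^{2/3})$ apart. Likewise, $\ii\varepsilon\partial_x\log$ of a smooth nonvanishing amplitude ratio adds only an $O(\varepsilon)$ smooth term and cannot move a peak. Also, the $\lambda$-shift is real, not part of the imaginary offset.

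What actually happens, by multilinearity in the middle row, is $\kappa=a(\log C)'(y_\mathrm{s})+z''(0)/z'(0)$. Here $a=z'(0)=-2t/k_\mathrm{s}$, and $C(w)$ is the Cauchy-type determinant with first row $(g_k(w))_k$ and remaining rows $(g_k(y_0))_k$, $(g_k(z_{j,\mathrm{s}}))_k$, so $C(w)\propto (w-y_0)\prod_j(w-z_{j,\mathrm{s}})/\prod_n(w-p_n)$.

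The Jacobian term $z''(0)/z'(0)=-a\,h''''(y_\mathrm{s})/(6h'''(y_\mathrm{s}))$ is essential. Write $h'=(2t)^{-1}P/Q$, where $P$ has a double root at $y_\mathrm{s}$. Then $h''''/h'''=3[(y_\mathrm{s}-y_0)^{-1}+2\,\mathrm{Re}\,\Sigma]$ with $\Sigma:=\sum_{j}(y_\mathrm{s}-z_{j,\mathrm{s}})^{-1}-\sum_n(y_\mathrm{s}-p_n)^{-1}$. Hence $\kappa=a[\tfrac12(y_\mathrm{s}-y_0)^{-1}+\ii\,\mathrm{Im}\,\Sigma]=a[\tfrac12(y_\mathrm{s}-y_0)^{-1}+\ii\lambda(t)]$. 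Since $y_0-y_\mathrm{s}=tv$, this gives exactly $-\ii\varepsilon k_\mathrm{s}\kappa=-2\varepsilon t\lambda-\ii\varepsilon/v$.

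So $v$ enters through the algebraic cofactor $1/(y_\mathrm{s}-y_0)$, halved by the normal-form Jacobian, not through a phase difference. Without the Jacobian term you would get the wrong Lorentzian width: an imaginary offset $\varepsilon(2/v-2t\,\mathrm{Re}\,\Sigma)$ instead of $\varepsilon/v$.
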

In Appendix~\ref{app:uZDgeneral}, equations \eqref{eq:g-s}--\eqref{eq:lambda}, we give an alternate formula for $\lambda(t)$ that only refers to the values of $u_0$ on the real line and hence applies to more general initial data.
The formula \eqref{eq:modulated-solitons} displays the dispersive solution as a (not small) correction to the background inviscid Burgers solution given by an infinite sum (convergent) of Lorentzian pulses with temporally modulated velocities and position shifts.  The second formula \eqref{eq:unmodulated-soliton} is a further localization of the first formula around a fixed time $t_0$ and a chosen pulse with index $n$.  In this version of the formula, the leading term is an exact solution of the form \eqref{eq:BO-soliton-on-background} with $b=u_0^\mathrm{B}(t_0)$, $v=v(t_0)$ given by \eqref{eq:modulated-soliton-speed}, and $x_0=x_0^{(n)}$.

Theorem~\ref{thm:soliton-train} is the Benjamin-Ono analogue of the corresponding result from \cite{ClaeysG10b} for the Korteweg-de Vries equation \eqref{eq:KdV}.  Both results describe the dispersive solution near the soliton edge as a train of identical exact soliton solutions of the underlying equation. The solitons are boosted in velocity by propagating on the locally-constant background of the inviscid Burgers solution continued to the soliton edge $x=x_\mathrm{s}(t)$ from the right of the dispersive shock wave.  In both cases, the solitons have width proportional to $\varepsilon$.  

The key differences between the Benjamin-Ono and Korteweg-de Vries results lie in the spatial displacements of the solitons from the soliton edge.  In the Korteweg-de Vries case, the soliton spacing is a constant multiple of $\varepsilon\ln(\varepsilon^{-1})$ with a correction proportional to $\varepsilon$ that depends on the index $n$ counting the solitons going leftward from the edge.  This correction involves the leading coefficient of the $n$th normalized Hermite polynomial, and it grows rapidly as $n$ increases.   On the other hand, it has the opposite sign to the leading term, so the spacing between consecutive solitons decreases with $n$.  This effect is significant because it is what allows for a formal matching between the description of the dispersive solution near the soliton edge and the approximation valid in the bulk of the dispersive shock wave, which is a modulated cnoidal periodic wave of wavelength proportional to $\varepsilon$.  Indeed, at a formal level as the index $n$ grows, the spacing will eventually become reduced from $O(\varepsilon\ln(\varepsilon^{-1}))$ to $O(\varepsilon)$ and the individual solitons will merge into a periodic traveling wave with the latter wavelength.  

On the other hand, in the Benjamin-Ono case the soliton spacing is much larger, proportional to  $\varepsilon^{2/3}$, and a similar kind of ``compressive'' effect of the soliton train moving into the dispersive shock wave from the right is achieved instead through the fact that the displacement of the $n$th soliton from the edge is proportional to the $n$th zero $a_n$ of the Airy function; as is well known, the sequence $\{a_n\}_{n=1}^\infty$ is decreasing and unbounded, but $a_n-a_{n+1}$ is asymptotically given by (see \cite[Eqns.\@ 9.9.6 and 9.9.18]{DLMF}) $(3n/(2\pi^2))^{-1/3}(1+O(n^{-1}))$ as $n\to\infty$ so the spacing of the Airy zeros tends to zero.  See \cite[Section 1.1]{BlackstoneMM26} for more details about the asymptotic behavior of solutions of the Benjamin-Ono and Korteweg-de Vries equations in the small-dispersion limit near their respective (different, for the same $u_0$) soliton edge curves.

The proofs given in \cite{BlackstoneMM26} of Theorems~\ref{thm:catastrophe}, \ref{thm:harmonic-edge}, and \ref{thm:soliton-train} again rely on the representation of the Benjamin-Ono solution $u(t,x;\varepsilon)$ in terms of determinants with contour integral entries as in Theorem~\ref{thm:inversion-formula-rational}.  Since for $(t,x)$ in the caustic locus $\mathcal{C}\subset\R^2$ there are real critical points of the exponent $y\mapsto h(y;t,x)$ (i.e., real solutions $y$ of the critical point equation $y+2tu_0(y)=x$) that are not simple, the standard steepest-descent method based on simple saddle points that applies to prove Theorem~\ref{thm:strong-ZD-asymptotics} no longer suffices.  Therefore it is necessary for $(t,x)$ close to $\mathcal{C}$ to first find suitable normal forms for $h$ that unfold the coalescence of critical points.  Near the gradient catastrophe point $(t_\mathrm{c},x_\mathrm{c})$ this normal form is a quartic polynomial, and the corresponding contour integrals are Pearcey integrals similar to $\tau(T,X)$ defined in \eqref{eq:Pearcey-tau}.  However near the harmonic and soliton edges the normal form is instead a cubic polynomial, and the corresponding contour integrals are Airy functions, which manifest themselves differently in the expansion of $u(t,x;\varepsilon)$ near the two edges.

\begin{remark}
The formulations of Theorems~\ref{thm:catastrophe}, \ref{thm:harmonic-edge}, and \ref{thm:soliton-train} in \cite{BlackstoneMM26} included an extra condition, that all complex solutions of the characteristic equation are simple.  However, this condition is not actually required for the results to hold \cite{Cheng26+}. 
\label{rem:non-simple-complex-critical-points}    
\end{remark}

\section{Applications to large-time asymptotic behavior}
\label{sec:large-time}
Now we turn our attention to the asymptotic behavior of the solution of the Cauchy problem \eqref{eq:BO}--\eqref{eq:IC} in the large-$t$ regime for fixed dispersion $\varepsilon>0$.  Without loss of generality, we therefore set $\varepsilon=1$ in this section.  

\subsection{General soliton resolution result}
Another way to write the Benjamin-Ono soliton solution given by \eqref{eq:LorentzianProfile}--\eqref{eq:BO-soliton-on-background} with zero background $b=0$ and taking $x_0=t_0=0$ for $\varepsilon=1$ is
\begin{equation}\label{eq:solitonBO}
u(t,x)=S_\gamma(x-v_\gamma t),
\quad
S_\gamma(y):=\frac{2\mathrm{Im}(\gamma)}{|y+\gamma|^2},
\quad
v_\gamma:=\frac{1}{\mathrm{Im}(\gamma)},
\end{equation}
where $\gamma\in \C_+$ is a complex number with a positive imaginary part.
\begin{theorem}[Soliton resolution for general initial data~\cite{GassotGM26}]\label{thm:solitonres}
Let $u_0\in H^2(\R)$ be a real--valued function such that $\diamond \,u_0(\diamond)\in H^1(\R)$. Assume moreover that there exist $\alpha_0\in \R$ and $w_0\in L^2(\R)$ such that
  \begin{equation}\label{hypu}
 \forall x\in \R,\quad  x^2u_0(x)=\alpha_0+w_0(x).
 \end{equation}
 Let $u(t,x)$ denote the solution of the Cauchy problem \eqref{eq:BO}--\eqref{eq:IC} with $\varepsilon=1$.  Then there exist a nonnegative integer $M$, points $\gamma_1,\dots,\gamma_M$ in $\C_+$ with $\mathrm{Im}(\gamma_1)<\dots <\mathrm{Im}(\gamma_M)$, and  real-valued functions $u_\infty ^\pm $ in $H^2(\R) $ such that
\begin{equation}\label{eq:solitonres}
 u(t,x)-\sum_{j=1}^M S_{\gamma_j}(x-v_{\gamma_j}t) -\ee^{t\partial_x|D|}u_\infty^\pm  \longrightarrow 0 
 \end{equation}
in $H^2(\R) $ as $t\to \pm\infty$.
Here, $\ee^{t\partial_x|D|}u_\infty^\pm$ denotes the solution of the linearized Benjamin-Ono equation $u_t=\partial_x|D|u$ with initial data $u(0,x)=u_\infty^\pm(x)$.
\end{theorem}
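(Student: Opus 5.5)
We will prove Theorem~\ref{thm:solitonres} from the explicit formula of Theorem~\ref{thm:explicitformula}, combined with a spectral decomposition of the self-adjoint Lax operator $\mathcal{L}_{u_0}$ (with $\varepsilon=1$). The first step uses the hypotheses $\diamond\, u_0\in H^1$ and \eqref{hypu}. Their role is to show that $\mathcal{L}_{u_0}$ has finitely many negative eigenvalues $\lambda_1<\dots<\lambda_M<0$, and no embedded eigenvalues or singular continuous spectrum. The same hypotheses should make the eigenfunctions $\phi_j$ lie in $\mathrm{dom}(\mathcal{X}^*)$. They should also let us control the spectral measure of $\Pi u_0$ on the absolutely continuous part $[0,\infty)$, in a weighted (Mourre/limiting-absorption type) sense near the threshold $\lambda=0$.

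We then decompose $L^2_+(\R)=E_{\mathrm{pp}}\oplus E_{\mathrm{ac}}$, where $E_{\mathrm{pp}}$ is the span of the $\phi_j$, and split $\Pi u_0$ accordingly. On $E_{\mathrm{pp}}$ the compression of $\mathcal{X}^*$ is an $M\times M$ matrix $\mathbf{Q}$ with entries $\langle \mathcal{X}^*\phi_k,\phi_j\rangle$. The commutator identity $[\mathcal{X}^*,\mathcal{L}_{u_0}]$ acts as $\ii\,\mathrm{Id}$ plus a rank-one term involving $I_+$ and $\Pi u_0$. This identity gives the off-diagonal entries $\ii/(\lambda_k-\lambda_j)$-type, exactly as in Matsuno's matrix \eqref{eq:Matsuno-Matrix}. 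Consequently, the finite-dimensional part of $(\mathcal{X}^*-2t\mathcal{L}_{u_0}-z)^{-1}$ is the resolvent of $\mathbf{Q}-2t\,\mathrm{diag}(\lambda_j)$.

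As $t\to\pm\infty$, the eigenvalues of $\mathbf{Q}-2t\,\mathrm{diag}(\lambda_j)$ separate linearly in $t$. Perturbation theory gives them as $-2\lambda_jt-\gamma_j^*+O(1/t)$, with $\mathrm{Im}(\gamma_j)=-1/(2\lambda_j)$, which yields the velocities $v_{\gamma_j}=-2\lambda_j$. Inserting this into the formula $\Pi u(t,z)=\frac{1}{2\pi\ii}I_+[\cdots]$ produces $\sum_j \Pi S_{\gamma_j}(x-v_{\gamma_j}t)$ up to $o(1)$ errors. The $o(1)$ holds in $H^2$ because the poles are well separated. Note that the $\gamma_j$ may differ between $t\to+\infty$ and $t\to-\infty$.

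The remaining part is the one governed by $E_{\mathrm{ac}}$. Here we want to show that $u(t)-\sum_j S_{\gamma_j}(\diamond-v_{\gamma_j}t)$ is asymptotically a free linear wave. The cleanest route is probably not to compute the limit directly from the formula. Instead, we would prove two things. First, the non-soliton remainder has vanishing $L^\infty$ or local norms, which we obtain from the explicit formula via the decay of $\ee^{-2\ii t\mathcal{L}_{u_0}}$ on $E_{\mathrm{ac}}$ (RAGE or local decay). Second, we invoke a Cook-type argument: the Duhamel integral of the nonlinearity $\partial_x(u^2)$ acting on the remainder converges in $H^2$. That gives the scattering states $u_\infty^\pm$, defined by $\ee^{-t\partial_x|D|}(u(t)-\text{solitons})\to u_\infty^\pm$.

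Interactions between the solitons and radiation decay because the solitons move with speeds $v_{\gamma_j}>0$, while linear waves of the dispersion $\omega=-\xi|\xi|$ have group velocity $-2|\xi|\le0$. Uniform $H^2$ bounds come from the conserved quantities of the Benjamin-Ono hierarchy, namely $\langle\mathcal{L}^k_{u}\Pi u,\Pi u\rangle$.

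The main obstacle is the threshold behavior at $\lambda=0$ of the continuous spectrum. Low-frequency radiation travels slowly and could accumulate near the solitons, and eigenvalues could accumulate at $0$. Hypothesis \eqref{hypu}, where the constant $\alpha_0$ encodes the $x^{-2}$ tail, is what we would use to obtain a Jost-type analysis at zero energy. We expect that analysis to show finiteness of $M$ and a limiting absorption bound with weights, and this step we expect to require the most delicate estimates.
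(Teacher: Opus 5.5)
\paragraph{The reduction to a finite matrix fails.} The space $E_{\mathrm{pp}}=\mathrm{span}\{\phe_j\}$ is invariant under $\mathcal{L}_{u_0}$. It is \emph{not} invariant under $\mathcal{X}^*$ unless $u_0$ is reflectionless. For general data, $\mathcal{X}^*\phe_j$ has a nonzero continuous-spectrum component, proportional to $\beta(\zeta)/(\zeta-\lambda_j)$ in the $m_-$ expansion; see \eqref{eq:XstarPhi-expand}.

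Consequently, $\mathcal{X}^*-2t\mathcal{L}_{u_0}-z\mathrm{Id}$ is not block diagonal for $E_{\mathrm{pp}}\oplus E_{\mathrm{ac}}$. Its inverse compressed to $E_{\mathrm{pp}}$ is not the resolvent of $\mathbf{Q}-2t\,\mathrm{diag}(\lambda_j)$. It is a Schur complement, and that complement involves the inverse of the unbounded, non-self-adjoint block $P_{\mathrm{ac}}(\mathcal{X}^*-2t\mathcal{L}_{u_0}-z)P_{\mathrm{ac}}$. On that block $2t\mathcal{L}_{u_0}$ gives no control near the threshold $\lambda=0$. In addition, $I_+$ of the full inverse applied to $\Pi u_0$ mixes both components. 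Your Matsuno-type computation is valid only for pure multisolitons.

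What does work is to localize frame by frame, which is what the paper does. Write $\Pi u(t,z-2t\lambda_j)=\frac{1}{2\pi\ii}I_+\left[(\mathcal{X}^*-2t(\mathcal{L}_{u_0}-\lambda_j\mathrm{Id})-z\mathrm{Id})^{-1}\Pi u_0\right]$. Because $\lambda_j$ is isolated, heuristically only the $\phe_j$-component survives as $t\to+\infty$, and \eqref{eq:Wu} then gives exactly \eqref{soliton}. For $\lambda>0$ one only gets \eqref{scat}, and only weakly in $L^2(0,+\infty)$.

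Two smaller points. First, your eigenvalue asymptotics should read $-2\lambda_jt-\gamma_j$: a pole at $-2\lambda_jt-\gamma_j^*\in\C_+$ is impossible for $\Pi u$. Second, the statement requires a single set of $\gamma_j$ for both $t\to\pm\infty$. This holds because $\gamma_j=-\langle\mathcal{X}^*\phe_j,\phe_j\rangle$ is intrinsic (Theorem~\ref{thm:main-spectral}), so you may not let the $\gamma_j$ differ.

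\paragraph{The upgrade to norm convergence is missing.} You still have no way to pass from pointwise or weak information to norm convergence. Separation of the poles controls only your approximant, not the error.

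Your Cook argument does not close. The radiation decays at best like $t^{-1/2}$ in $L^\infty$ (compare \eqref{eq:radiation-rational}). So the natural bound $\|\partial_x(w^2)\|_{H^2}\lesssim\|w\|_{W^{1,\infty}}\|w\|_{H^3}$ is $O(t^{-1/2})$, which is not integrable in time. It also costs a derivative beyond your $H^2$ control, and RAGE gives only time-averaged local decay with no rate. Proving unmodified linear scattering this way would require a normal-form or gauge analysis for a derivative-losing quadratic nonlinearity, together with control of the algebraically decaying Lorentzian tails. None of this is sketched.

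The paper needs no dispersive estimates at all. Apply the distorted Plancherel identity \eqref{Planchereldist} to $\Pi u_0$, and use $|\langle\phe_j,\Pi u_0\rangle|^2=-2\pi\lambda_j=\pi/\mathrm{Im}(\gamma_j)=\|\Pi S_{\gamma_j}\|_{L^2}^2$ together with $\frac{1}{2\pi}\int_0^\infty|\widetilde{\Pi u_0}^-(\lambda)|^2\,\dd\lambda=\|\Pi u_\infty^+\|_{L^2}^2$, where $\widehat{u_\infty^+}=\widetilde{\Pi u_0}^-$. These show that the weak limits found in the various frames exhaust the conserved mass $\|\Pi u(t)\|_{L^2}^2=\|\Pi u_0\|_{L^2}^2$. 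The limits are asymptotically orthogonal because they travel at different speeds. Weak convergence without loss of norm is strong convergence. Then \eqref{eq:stationary-phase} turns the radiation limit into $\ee^{t\partial_x|D|}u_\infty^+$, and $t\to-\infty$ follows from the symmetry $u(-t,-x)$.

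The spectral inputs you planned to derive via Mourre or limiting-absorption estimates are simply imported: finiteness and simplicity of the eigenvalues from Wu, and completeness of $m_\pm$ from the distorted Plancherel theorem. The threshold analysis is therefore not where the real difficulty lies.
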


The assumptions on $u_0$ in Theorem \ref{thm:solitonres} can be slightly relaxed if one is only interested in $L^2$ convergence in \eqref{eq:solitonres}. The boundedness and decay conditions on $u_0$ are crucial. The specific assumptions on the initial data in the statement of the theorem are designed so that these conditions are preserved by the Benjamin-Ono dynamics.

In fact, it is possible to describe precisely the soliton parameters $\gamma_1,\dots,\gamma_M$ and the radiation profiles $u_\infty^\pm$ from the spectral data of the Lax operator $\mathcal{L}_{u_0}$ defined in~\eqref{eq:Lax-operator}. To this end, we denote by $\lambda_1<\cdots< \lambda_M<0$ the eigenvalues of the Lax operator $\mathcal{L}_{u_0}$ on $L^2_+(\R)$, and by $\phe_1,\dots, \phe_M$ the corresponding eigenfunctions, normalized in $L^2$. The fact that under the hypotheses on $u_0$ in force the eigenvalues are simple and negative and that their number $M$ is finite has been established in \cite[Theorem 1.2]{Wu16a}.
For every $\lambda>0$, we also denote by $m_\pm=m_\pm(x;\lambda)$ the unique solution~\cite[Theorem 26]{BlackstoneGGM24a} in the space $L^\infty(\R)$  of the equation
\begin{equation}\label{def:m}
 (\mathcal{L}_{u_0}-\lambda\,\mathrm{Id})m_\pm=0\ ,\quad \lim_{x\to \pm\infty}{\ee^{-\ii\lambda x}m_\pm(x;\lambda)}=1.
 \end{equation}
For every $f\in L^2_+(\R)$, two \emph{distorted Fourier transforms}  $\widetilde f^\pm $ of $f$ are defined as the extension to $L_+^2(\mathbb{R})$ from $L_+^2(\mathbb{R})\cap L_+^1(\mathbb{R})$ of
\begin{equation}\label{distortedFT}
 \widetilde f^{\pm} (\lambda)=\int_\R f(x) m_\pm(x;\lambda)^*\,\dd x=\langle f,m_\pm(\diamond;\lambda)\rangle,
 \quad \lambda >0.
 \end{equation}
The following distorted Plancherel identity was proven in~\cite[Theorem 26]{BlackstoneGGM24a}:
\begin{equation}\label{Planchereldist}
\sum_{j=1}^M \vert \langle f,\phe_j\rangle \vert ^2+\int_0^\infty \vert \widetilde f^\pm(\lambda)\vert ^2 \frac{\dd\lambda}{2\pi} =\int_\R |f(x)|^2\,\dd x,\quad f\in L_+^2(\R).
\end{equation}
Recall now the notation $\mathcal{X}^*$ and $I_+[\diamond]$ from Section~\ref{sec:Xstar-and-Iplus}.
\begin{theorem}[Soliton resolution and spectral data~\cite{GassotGM26}]\label{thm:main-spectral}
The parameters in Theorem~\ref{thm:solitonres} are given as follows.
\begin{itemize}
    \item For every $j\in \{1,\dots, M\}$, we have
$\gamma_j=-\langle \mathcal{X}^*\phe_j,\phe_j\rangle$.
\item The real-valued functions $u_\infty^\pm \in L^2(\R)$ satisfy
$ \widehat {u_\infty^\pm} (\lambda )=\widetilde{\Pi u_0}^\mp(\lambda)$ for every $\lambda>0$.
\end{itemize}
\end{theorem}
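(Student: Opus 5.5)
The plan is to work entirely from the explicit formula of Theorem~\ref{thm:explicitformula} with $\varepsilon=1$, after diagonalizing $\mathcal{L}_{u_0}$ by the spectral decomposition behind the distorted Plancherel identity \eqref{Planchereldist}. I would write $\Pi u_0=\sum_j\langle\Pi u_0,\phe_j\rangle\phe_j+\frac{1}{2\pi}\int_0^\infty\widetilde{\Pi u_0}^\pm(\lambda)m_\pm(\diamond;\lambda)\,\dd\lambda$. In this basis the operator $\mathcal{X}^*-2t\mathcal{L}_{u_0}-z$ acts as follows. The term $-2t\mathcal{L}_{u_0}$ becomes multiplication by $-2t\lambda$. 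The operator $\mathcal{X}^*$ becomes, on the point spectrum, the $M\times M$ matrix $\langle\mathcal{X}^*\phe_k,\phe_j\rangle$ plus coupling terms to the continuum. On the continuous spectrum it acts essentially as $\ii\partial_\lambda$, by the commutator identity $[\mathcal{X}^*,\mathcal{L}_{u_0}]=\ii\,\mathrm{Id}+(\text{rank-one term involving }I_+)$ obtained from $[x,-\ii\partial_x]=\ii$ and the structure of the Toeplitz term.

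First, I would establish these commutation relations precisely, and use the asymptotics \eqref{def:m} of $m_\pm$ to compute $\mathcal{X}^*m_\pm$ and $I_+[m_\pm]$ in distributional form. Second, I would analyze the resolvent equation as $t\to\pm\infty$. On the $j$-th eigendirection, the equation reads $(\langle\mathcal{X}^*\phe_j,\phe_j\rangle-2t\lambda_j-z)a_j+(\text{off-diagonal})=\langle\Pi u_0,\phe_j\rangle$. Off-diagonal couplings become negligible once the distinct phases $2t\lambda_j$ separate linearly in $t$. Thus, for $z$ near $-2\lambda_j t-\langle\mathcal{X}^*\phe_j,\phe_j\rangle$, the contribution to $\Pi u(t,z)$ is a simple pole. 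It behaves like $\frac{1}{2\pi\ii}I_+[\phe_j]\langle\Pi u_0,\phe_j\rangle/(\gamma_j^{\mathrm{eff}}-z)$, with $\gamma_j^{\mathrm{eff}}=-\langle\mathcal{X}^*\phe_j,\phe_j\rangle-2\lambda_jt$, up to the normalization that one checks equals $\ii$ using the identities $I_+[\phe_j]\overline{\langle \Pi u_0,\phe_j\rangle}$-type relations (for a single soliton these are forced by exactness). Taking $2\mathrm{Re}$ yields $S_{\gamma_j}(x-v_{\gamma_j}t)$. This requires $v_{\gamma_j}=-2\lambda_j$, i.e.\ $\mathrm{Im}\gamma_j=-1/(2\lambda_j)$, which I would verify from the virial-type identity $\mathrm{Im}\langle\mathcal{X}^*\phe,\phe\rangle=-|I_+[\phe]|^2/(4\pi)$ together with $\lambda_j$ expressed through $I_+[\phe_j]$. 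The ordering $\mathrm{Im}\gamma_1<\dots<\mathrm{Im}\gamma_M$ then follows from $\lambda_1<\dots<\lambda_M<0$.

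Third, I would identify the continuous-spectrum part, where $\mathcal{X}^*\approx\ii\partial_\lambda$. The resolvent then reduces to the ODE $\ii\partial_\lambda g-(2t\lambda+z)g=\widetilde{\Pi u_0}^\pm$, which is exactly the Fourier-side equation from the Cauchy-formula remark. Its solution reproduces the free evolution $\ee^{t\partial_x|D|}$ applied to the function whose Fourier transform is the distorted transform. The choice of $m_\mp$ for $t\to\pm\infty$ is dictated by stationary phase: for $t\to+\infty$ the radiation leaves to the left, where the normalization at $-\infty$ is relevant.

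The main obstacle is making the continuous-spectrum step rigorous, meaning controlling the error between $\mathcal{X}^*$ in the distorted representation and $\ii\partial_\lambda$. I would attempt to show that the error is given by a kernel smooth in $\lambda$, which becomes negligible after integration against the oscillating factor $\ee^{-\ii t\lambda^2}$, using the decay hypothesis \eqref{hypu} to get the necessary regularity of $\widetilde{\Pi u_0}^\pm$. Alternatively, I would bypass this step. The idea is to use the already-proved existence statement of Theorem~\ref{thm:solitonres} and identify $u_\infty^\pm$ by testing with wave operators, computing $\lim\ee^{-t\partial_x|D|}(u-\text{solitons})$ via the intertwining of $\mathcal{L}_{u(t)}$ with $\mathcal{L}_{u_0}$ from the Lax pair.
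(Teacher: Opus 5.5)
Your soliton step follows the route the paper sketches. You project the resolvent equation in the frame $z-2t\lambda_j$ onto $\phe_j$; the term $2t\langle(\mathcal{L}_{u_0}-\lambda_j)f,\phe_j\rangle$ drops by self-adjointness. This gives the limit $\frac{1}{2\pi\ii}I_+[\phe_j]\langle\Pi u_0,\phe_j\rangle/(\langle\mathcal{X}^*\phe_j,\phe_j\rangle-z)$.

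The normalization you leave vague is not ``forced by exactness''. It comes from Wu's identities \eqref{eq:Wu}, which give $I_+[\phe_j]\langle\Pi u_0,\phe_j\rangle=-|\langle\phe_j,\Pi u_0\rangle|^2/\lambda_j=2\pi$, hence the limit $\ii/(z-\langle\mathcal{X}^*\phe_j,\phe_j\rangle)$. There is also a sign slip in $\gamma_j^{\mathrm{eff}}$. The diagonal entry you wrote gives the denominator $\langle\mathcal{X}^*\phe_j,\phe_j\rangle-2t\lambda_j-z=-(z-v_{\gamma_j}t+\gamma_j)$, with pole in $\C_-$. Your $\gamma_j^{\mathrm{eff}}-z$ instead puts the pole at $\gamma_j+v_{\gamma_j}t\in\C_+$, where $\Pi u(t,\diamond)$ is holomorphic.

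Two things must still be supplied in this step.
\begin{enumerate}
\item $I_+$ is unbounded on $L^2_+(\R)$, so showing the resolvent solution is $L^2$-close to a multiple of $\phe_j$ is not enough. You need closeness in the graph norm of $\mathcal{X}^*$, or a rewriting of $I_+$ as a bounded functional (e.g.\ via \eqref{eq:Cauchy-generalization} and the resolvent identity). The coupling to the continuum through the unbounded $\mathcal{X}^*$ must be controlled in that sense.
\item To identify the $M$ of Theorem~\ref{thm:solitonres} with the number of eigenvalues, you must exclude solitons at other speeds. The paper gets this from mass accounting: $\|\Pi S_{\gamma_j}\|_{L^2}^2=\pi/\mathrm{Im}(\gamma_j)=-2\pi\lambda_j=|\langle\Pi u_0,\phe_j\rangle|^2$. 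So by \eqref{Planchereldist} the eigenvalue solitons and the radiation already exhaust $\|\Pi u_0\|_{L^2}^2$.
\end{enumerate}

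The genuine gap is the continuous-spectrum step, and the repair you propose cannot work. By \eqref{eq:Gamma-of-lambda} and \eqref{distortedFT}, $\widetilde f^{+}$ and $\widetilde f^{-}$ differ by a unimodular function of $\lambda$. Hence ``$\mathcal{X}^*\approx\ii\partial_\lambda$'' in one representation becomes $\ii\partial_\lambda$ plus multiplication by a real function in the other. The approximation is representation dependent, and the $\mp$ in $\widehat{u_\infty^\pm}=\widetilde{\Pi u_0}^{\mp}$ lives entirely in what you discard.

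Worse, the mechanism you describe is symmetric under $t\mapsto-t$. An error with a smooth kernel, killed by integration against $\ee^{-\ii t\lambda^2}$, is negligible for both signs of $t$. The resolvent ODE would then give free evolution of the same profile in both directions, i.e.\ $u_\infty^+=u_\infty^-$. That contradicts the statement whenever $\widetilde{\Pi u_0}^+\neq\widetilde{\Pi u_0}^-$. So in any fixed representation, the non-$\ii\partial_\lambda$ part of $\mathcal{X}^*$ must contribute at order one in at least one time direction.

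Your own commutator identity shows what this part is. Write, formally, $\mathcal{X}^*=\ii\partial_\lambda+K$ in the spectral representation of $\mathcal{L}_{u_0}$. Then $[K,\lambda]$ must represent the rank-one term $\frac{1}{2\pi\ii}\Pi u_0\,I_+[\diamond]$. So $K$ has a kernel with a $(\mu-\lambda)^{-1}$ singularity multiplied by $\widetilde{\Pi u_0}(\lambda)$. This is a Cauchy-type operator whose $\pm\ii0$ regularization responds to the oscillation asymmetrically in time; compare $\int_\R\ee^{\ii s\mu}(\mu-\ii0)^{-1}\,\dd\mu=2\pi\ii$ for $s>0$ and $0$ for $s<0$. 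This term, not a heuristic about where the radiation travels, is what selects $m_-$ for $t\to+\infty$. For rational data the selection is visible in Proposition~\ref{prop:pi-frac}, where the large-$t$ analysis produces $\det(\mathbf{E}^-(\lambda))$ and not $\det(\mathbf{E}^+(\lambda))$.

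Reading off $I_+$ in the distorted representation would also require the threshold behaviour of $m_\pm(\diamond;\lambda)$ as $\lambda\downarrow0$, which you do not control. The paper avoids both problems. It proves \eqref{scat}, a weak limit in $L^2(0,+\infty)$ in the variable $\lambda$ labelling the frame $z-2t\lambda$, which needs no pointwise threshold information. Only afterwards does it convert to $\ee^{t\partial_x|D|}u_\infty^+$ by stationary phase.

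Your wave-operator alternative is a legitimate classical strategy, but as stated it omits both of its essential inputs:
\begin{itemize}
\item that the Lax flow conjugates the distorted transforms of $\mathcal{L}_{u_0}$ and $\mathcal{L}_{u(t)}$ up to an explicit phase, with the normalizations of $m_\pm$ at $\pm\infty$ tracked;
\item a localization estimate showing that, for the time-dependent potential $u(t)$, the $m_-$-transform of the left-moving radiation tends to its ordinary Fourier transform while the solitons contribute nothing. This is where all the difficulty lies.
\end{itemize}
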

\begin{remark}[The functions $\lambda\mapsto \widetilde{\Pi u_0}^\mp(\lambda)$ and the Fokas-Ablowitz theory]
\label{rem:beta-of-lambda}
    In the Fokas-Ablowitz inverse-scattering formalism for the Benjamin-Ono equation \cite{FokasA83} (see also Kaup-Matsuno \cite{KaupM98} and Wu \cite{Wu17}) a \emph{reflection coefficient} $\beta(\lambda)$ is defined for $\lambda>0$, and one can check that in our notation, $\beta(\lambda)=\ii\widetilde{\Pi u_0}^+(\lambda)$.  The other distorted Fourier transform of $\Pi u_0$, $\alpha(\lambda):=\widetilde{\Pi u_0}^-(\lambda)$ has the same modulus as does $\beta(\lambda)$:  $|\alpha(\lambda)|=|\beta(\lambda)|$, for all $\lambda>0$.  See \cite[Remark 15]{BlackstoneGGM24a} for further information.
\end{remark}
Note that Wu's identities~\cite[Lemma 2.5 and (2.23)]{Wu16a} (see also \cite{BadreddineKV26orbital} for $u$ less  regular) imply 
\begin{equation}\label{eq:Wu} |\langle \phe_j, \Pi u_0\rangle |^2=-2\pi \lambda_j\ ,\ \lambda_j I_+[\phe_j]=-\langle \phe_j,\Pi u_0\rangle,\quad j=1,\dots,M.
\end{equation}
Consequently, expressing $\langle \mathcal{X}^*\phe_j,\phe_j\rangle$ on the Fourier side and integrating by parts, the reciprocals of the soliton velocities satisfy
\begin{equation}\label{Imp}
\mathrm{Im}(\gamma_j)=\frac{\vert I_+[\phe_j]\vert ^2}{4\pi}=-\frac{1}{2\lambda_j}>0,\quad j=1,\dots,M.
\end{equation}
Because the imaginary parts of each complex phase constant $\gamma_j$ is determined from the corresponding Lax eigenvalue, the notation $\gamma_j$ is used in some other papers to denote the independent real-valued quantity written here as $\mathrm{Re}(\gamma_j)$.

The proof of Theorem~\ref{thm:solitonres} is based on two weak convergence results in the limit $t\to+\infty$ (the case of $t\to -\infty$ can be deduced from the opposite limit using the fact that $u(-t,-x)$ solves the Benjamin-Ono equation \eqref{eq:BO} whenever $u(t,x)$ does).
For the soliton part, we show that for every $j\in \{1,\dots, M\}$ and for every $z\in \C_+$, we have
\begin{equation}\label{soliton} \Pi u(t,z-2t\lambda_j)\longrightarrow \frac{\ii}{z-\langle \mathcal{X}^*\phe_j,\phe_j\rangle },\quad t\to+\infty. 
\end{equation}
For the radiation part, we show that for every $z\in \C_+$ with $|z|$ small enough, we have
\begin{equation}\label{scat} (2t)^{1/2}\mathrm{e}^{\ii t\lambda^2}\Pi u(t,z-2t\lambda)
	\longrightarrow \frac{\mathrm{e}^{\ii\pi/4}}{\sqrt{2\pi}}\mathrm{e}^{\ii\lambda z}\widetilde{\Pi u_0}^-({\lambda}), \quad t\to+\infty
\end{equation}
in the sense of weak convergence in $L^2(0,+\infty )$ in the $\lambda$ variable.
Strong convergence of the remainder term is then obtained thanks to an orthogonality property of the objects at hand as $t\to+\infty$ (they travel at different speeds), combined with the distorted Plancherel identity~\eqref{Planchereldist} applied to $\Pi u_0$.  Then the stationary-phase convergence \eqref{eq:stationary-phase} below can be applied to the limiting object in \eqref{scat} yielding the term $\ee^{t\partial_x|D|}u^+_\infty$ in \eqref{eq:solitonres}. Indeed, taking $z\to x_0\in\R$ and replacing $\lambda$ with $(x_0-x)/(2t)$ in \eqref{scat} with $\widehat{\Pi u^+_\infty}=\widetilde{\Pi u_0}^-$ gives the approximation
\begin{equation}
    \Pi u(t,x)=\frac{\ee^{\ii\pi/4}}{\sqrt{4\pi t}}\ee^{-\ii x_0^2/(4t)}\ee^{\ii x_0^2/(4t)}\widehat{\Pi u_\infty^+}\left(\frac{x_0-x}{2t}\right) + o(t^{-1/2}),\quad t\to+\infty.
\end{equation}

A generalization of the soliton resolution result to the case of an infinite-soliton ($M=\infty$) was studied in~\cite{GassotG26}. A related result of asymptotic stability of multisolitons under low-regularity perturbations was proven in~\cite{BadreddineKV26asymptotic}. 

\subsection{Pointwise asymptotics for rational initial data in the limit \texorpdfstring{$t\to\pm\infty$}{t to infinity}}
\subsubsection{Large \texorpdfstring{$t$}{t} with \texorpdfstring{$|x|>\delta|t|$}{|x|>delta |t|}}
In the case of rational initial data, it is possible to establish pointwise and locally uniform decay rates on the expression tending to zero as $t\to\pm\infty$ in \eqref{eq:solitonres} in Theorem~\ref{thm:solitonres}. The analysis is based on the simplified explicit formula for rational initial data~\cite{BlackstoneGGM24a} reproduced in Theorem~\ref{thm:inversion-formula-rational}. In particular, we have the following new result.

\begin{theorem}[Pointwise soliton resolution for rational initial data]\label{thm:soliton-rational} Let $u_0\in L^2(\R)$ with $\diamond\, u_0(\diamond)\in L^2(\R)$ be a rational initial condition with simple poles, of the form \eqref{eq:PN-rational-IC}, and let $u(t,x)$ denote the solution of the Cauchy problem \eqref{eq:BO}--\eqref{eq:IC} for $\varepsilon=1$. Let $\gamma_j$, $1\leq j\leq M$ and $u_\infty^\pm\in L^2(\R)$ be as in Theorem~\ref{thm:main-spectral}. Then for every $x_0\in\R$, we have as $t\to+\infty$
\begin{equation}\label{eq:soliton-rational}
    u(t,x_0+v_{\gamma_j} t)=S_{\gamma_j}(x_0)+O(t^{-1}),
    \quad j=1,\dots,N,
\end{equation}
and for every $v>0$ such that $v\not\in\{v_{\gamma_1},\dots,v_{\gamma_M}\}$,
\begin{equation}\label{eq:soliton-rational2}
    u(t,x_0+v t)=O(t^{-1}).
\end{equation}
Moreover, for every $\lambda>0$,  and $x_0\in\R$, we have 
\begin{equation}\label{eq:radiation-rational}
    \Pi u(t,x_0-2\lambda t)
    =\ee^{\ii\pi/4-\ii t \lambda^2+\ii\lambda x_0}\frac{\widetilde{\Pi u_0}^-(\lambda)}{\sqrt{4\pi t}}
    +O(t^{-1}).
\end{equation}
The $O(t^{-1})$ error terms are uniform for $x_0$ in compact subsets of $\R$.
\end{theorem}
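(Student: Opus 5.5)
We will work directly from the determinantal formula of Theorem~\ref{thm:inversion-formula-rational} with $\varepsilon=1$. We substitute $x=x_0+vt$ (with $v=v_{\gamma_j}$, a non-soliton velocity $v>0$, or $v=-2\lambda<0$) and carry out a large-$t$ steepest-descent analysis of the contour integrals in $\mathbf{A}(t,x)$ and $\mathbf{B}(t,x)$. After rescaling $z=t w$, the exponent becomes
\begin{equation*}
-\ii h(tw)=-\ii t\frac{(w-v)^2}{4}+\cdots,
\end{equation*}
up to logarithmic terms and a shift by $x_0/t$. The product $\prod_n (z-p_n)^{-\ii c_n}(z-p_n^*)^{-\ii c_n^*}$ then behaves like a power of $t$ times a slowly varying factor. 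The critical points of $h$ solve $z+2tu_0(z)=x$. As $t\to\infty$ these consist of one large critical point $z\approx x-2t\cdot O(1/x)\approx vt$, together with $N$ critical points that converge to the poles $p_n$ or $p_n^*$. Each of the latter satisfies $z-p_n\approx -2t c_n/(x-p_n)^{\ast}$, up to the appropriate conjugation, so it sits at distance $O(1)$ from the pole.

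The plan proceeds in the following steps.

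\emph{First}, we perform row operations as in Remark~\ref{rem:continuity-at-t=0+}, replacing each nonexceptional contour $C_j$ by a loop $\widetilde{C}_j$ around the single pole $p_j$. We also replace $u_0$ by $\Pi u_0$ in the first column of $\mathbf{A}$. The rows indexed by $j\ge1$ are then localized near $p_j$, on a fixed scale, and they carry no $x$-dependence except through the factor $\ee^{-\ii(z-x)^2/(4t)}$. Since $|x|\sim |v|t$, we expand this factor as
\begin{equation*}
\ee^{-\ii(p_j-x)^2/(4t)}\,\ee^{\ii (z-p_j)(x-p_j)/(2t)}\,(1+O(t^{-1})).
\end{equation*}
Its linear part, $\ee^{\ii v(z-p_j)/2}$, is an $O(1)$ exponential. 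As a result, row $j$ converges to an explicit row of $t$-independent integrals of $\ee^{\ii v(z-p_j)/2}\prod(\cdots)\,r(z)\,\dd z$, with $r\in\{1,\Pi u_0,(z-p_k)^{-1}\}$, and the corrections are $O(t^{-1})$. The common prefactors $\ee^{-\ii(p_j-x)^2/(4t)}$ cancel in the ratio, in view of~\eqref{eq:Pi-u-derivative}.

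\emph{Second}, the row for $C_0$ is dominated by the far critical point near $z=x$. There the rational factors are $1,\ \Pi u_0(x)=O(t^{-1}),\ (x-p_k)^{-1}=O(t^{-1})$, and the Gaussian saddle gives a common factor $\sqrt{4\pi t}\,\ee^{\ii\pi/4}\times(\cdots)$ with relative errors $O(t^{-1})$. We then expand $\det\mathbf{A}/\det\mathbf{B}$ along the first row. This yields $\Pi u$ as a ratio in which the $N\times N$ minor $\mathbf{M}(v)$ built from the limiting pole rows appears. That minor is essentially the Evans-type determinant of Proposition~\ref{PN-prop-MW16} evaluated at $\lambda=-1/(2v)$ in the case $v>0$.

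\emph{Third}, we split into cases.
\begin{itemize}
\item If $\det\mathbf{M}(v)\ne0$, then $\det\mathbf{B}\approx \alpha_1\det\mathbf{M}$, while $\det\mathbf{A}=O(t^{-1})\alpha_1$. This gives~\eqref{eq:soliton-rational2}.
\item When $v=v_{\gamma_j}$, the minor degenerates, because $-1/(2v_{\gamma_j})=\lambda_j$ is a zero of the Evans function. The next-order terms in the first row then matter. We expand $\ee^{\ii(z-x_0)\cdots}$ to first order in $x_0$ and use a Schur complement, which reduces the leading term to a $2\times2$ computation. Identifying the resulting constants with $\gamma_j=-\langle\mathcal{X}^*\phe_j,\phe_j\rangle$ by means of \eqref{eq:Wu}--\eqref{Imp} should reproduce $\ii/(x_0+\gamma_j^{\ast})$-type terms, hence $S_{\gamma_j}(x_0)$. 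The $O(t^{-1})$ bound follows because all neglected terms are down by integer powers of $t^{-1}$.
\item For $v=-2\lambda<0$, the minor is nondegenerate, since there are no eigenvalues at positive $\lambda$. The leading term is then of size $t^{-1/2}$, coming from the $1/\sqrt t$ mismatch between the Gaussian saddle normalization of the first row and the $O(t^{-1})$ entries. We identify the coefficient with $\widetilde{\Pi u_0}^-(\lambda)$ by matching against the weak limit~\eqref{scat} of Theorem~\ref{thm:main-spectral}. Because the pointwise expansion exists, its coefficient must agree with the weak limit, which avoids computing $m_-$ directly.
\end{itemize}

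\emph{Main obstacle.} We expect the hardest step to be the degenerate case $v=v_{\gamma_j}$. There we must show that the zero of $\det\mathbf{M}(v)$ is simple and locate the Evans zero exactly at $v_{\gamma_j}$, without any $O(1/t)$ drift, so that the soliton is centred correctly. We must also identify the phase constant $\mathrm{Re}(\gamma_j)$. To do this, we would first compare with Matsuno's formula \eqref{eq:Matsuno} in the pure soliton case, and then argue in general via the weak soliton limit \eqref{soliton}: the explicit expansion gives a meromorphic limit profile, and \eqref{soliton} fixes its pole at $-\gamma_j^{\ast}$. Uniformity for $x_0$ in compact sets is automatic, since every expansion is analytic in $x_0$.
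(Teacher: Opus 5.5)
For $v>0$ your scheme is essentially the paper's. After the row operation that removes the loop part $C_J$ from $C_0$, the first row becomes a Gaussian integral at $z=x$, and the other rows tend to $t$-independent integrals of $\ee^{-\ii h^\infty(z;v)}$. There are, however, two genuine gaps. The first is the negative-velocity case \eqref{eq:radiation-rational}, which does not work as you set it up. For $v=-2\lambda<0$ the factor $\ee^{\ii vz/2}=\ee^{-\ii\lambda z}$ grows along $\arg z=3\pi/4$, which is exactly where your loops $\widetilde C_j$ go to infinity. Consequently the $t$-independent row integrals you propose diverge, your minor $\mathbf M(v)$ is undefined, and the finite-$t$ entries are not approximated by them.

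What is needed is to reroute the rows to contours ending at $\infty\ee^{-\ii\pi/4}$. These are the paper's $U_j^-$, obtained via the identity \eqref{eq:C0CjUj-}, which subtracts the multiple $1-\ee^{2\pi(c_1+\cdots+c_j)}$ of the first row from row $j+1$. One must also write $C_0=D_--\Gamma_N$, with $\Gamma_N$ encircling the conjugate poles $p_1^*,\dots,p_N^*$. Unlike $C_J$ in the case $v>0$, the $\Gamma_N$ part is not a combination of other rows, and it contributes an $O(1)$ term to every first-row entry. Together with $\widetilde B_{11}\sim 2\sqrt{\pi t}\,\ee^{-\ii\pi/4}$, this is exactly what yields $\Pi u\approx-\ee^{\ii\pi/4}\ee^{-\ii t\lambda^2+\ii x_0\lambda}\det\mathbf A^\infty_{\Pi u_0}(\lambda)/(2\sqrt{\pi t}\,\det\mathbf E^-(\lambda))$. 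If the first row were dominated by the saddle at $z=x$, as you claim, all its entries except $B_{11}$ would be $O(t^{-1/2})$, and you would find $\Pi u=O(t^{-1})$, i.e.\ no radiation term at all. Once the correct leading term is in hand, you can identify its coefficient with $\widetilde{\Pi u_0}^-(\lambda)$ by testing against \eqref{scat}, at complex $x_0$ and with errors uniform for $\lambda$ in compact subsets of $(0,\infty)$. That is a legitimate shortcut around Proposition~\ref{prop:distordu-frac}, at the price of depending on the general results of \cite{GassotGM26}.

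The second gap concerns the soliton velocities. Note that $v_{\gamma_j}=-2\lambda_j$ by \eqref{Imp}, not $\lambda_j=-1/(2v_{\gamma_j})$. At such a velocity the whole computation hinges on $\Sigma_0(v_\mathrm{s})\neq0$, which is equivalent to simplicity of the zero of $E(v)=\det\mathbf E(v)$, and you leave this open. Matching with \eqref{soliton} does not supply it. If $\Sigma_0(v_\mathrm{s})=0$ but $\Sigma_1(v_\mathrm{s})\neq0$, your expansion gives $\Pi u\to0$, which does contradict \eqref{soliton}. But if $\Sigma_1(v_\mathrm{s})=0$ as well, both determinants lose their leading terms and you would need the next order in $t^{-1}$. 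In any case \eqref{soliton} gives no rate. Geometric simplicity (Wu's theorem plus the Miller--Wetzel isomorphism $\mathrm{Ker}\,\mathbf E(v_\mathrm{s})\cong\mathrm{Ker}(\mathcal L_{u_0}-\lambda_\mathrm{s}\mathrm{Id})$) does not exclude a Jordan chain.

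The missing idea is the paper's direct argument:
\begin{itemize}
\item A Jordan chain, combined with $\mathbf 1^\top\boldsymbol\phi=\lambda_\mathrm{s}\neq0$, produces $\mathbf a$ with $\mathbf E(v_\mathrm{s})\mathbf a=\mathbf f(v_\mathrm{s})$.
\item This yields a bounded $g$, analytic in $\C_+$, with $(\mathcal L_{u_0}-\lambda_\mathrm{s}\mathrm{Id})g=\ii$.
\item Pairing with the eigenfunction $\Phi$ forces $\langle\Pi u_0,\Phi\rangle=0$, contradicting $\int_\R\Phi u_0\,\dd x=2\pi\ii\lambda_\mathrm{s}\neq0$.
\end{itemize}
Once simplicity is known, your use of \eqref{soliton} to identify $\mathrm{Re}(\gamma_j)$ is a valid alternative to Proposition~\ref{prop:sigma1/sigma0} and Lemma~\ref{lem:PN-phase-constant}. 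Note that the limiting pole sits at $-\gamma_j\in\C_-$, not at $-\gamma_j^*$.
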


Notice that, for initial data of the form \eqref{eq:PN-rational-IC},
the condition $\diamond u_0(\diamond)\in L^2(\R)$ is equivalent to 
\begin{equation}
    \mathrm{Re}(c_1+\cdots+c_N)=0,
    \label{eq:sum-of-residues-zero}
\end{equation}
which guarantees in particular that $u_0(x)=O(x^{-2})$ as $x\to\pm\infty$. 

Theorem \ref{thm:soliton-rational} implies in particular weak convergence in $L^2$ of the left hand sides of \eqref{eq:soliton-rational} and \eqref{eq:radiation-rational}, which again can be  converted into strong convergence using the same arguments as Theorem \ref{thm:solitonres}. On the other hand, the proof of Theorem \ref{thm:soliton-rational} is quite different in character from that of Theorem~\ref{thm:solitonres} in \cite{GassotGM26}, being based on the asymptotic expansion for large $t>0$ of contour integrals appearing in the representation of $u(t,x)$ given by Theorem~\ref{thm:inversion-formula-rational}.  We prove \eqref{eq:soliton-rational}--\eqref{eq:soliton-rational2} in Appendix~\ref{app:soliton-limit}, and we prove \eqref{eq:radiation-rational} in Appendix~\ref{app:radiation}.

Note that for every $f\in L^2(\R)$, we can write 
\begin{equation}
\ee^{-\ii t\partial_x^2}f(x)-\ee^{-\ii x^2/4t}\frac{\ee^{ \ii \pi/4}}{\sqrt{4\pi t}}\widehat f\left (\frac{-x}{2t}\right )
    =o(1),\quad t\to+\infty
\label{eq:stationary-phase}
\end{equation}
in $L^2(\R)$ and, under additional mild hypotheses, also uniformly for $x=x_0-2\lambda t$ where $x_0$ lies in a compact set. Note that for $f\in L_+^2(\R)$, we have $\ee^{-\ii t\partial_x^2}f=\ee^{t\partial_x|D|}f$.  One could make the estimate more quantitative by specifying a rate of decay for the error term, under stronger decay assumptions on $f$, or equivalently, regularity assumptions on $\widehat{f}$. Hence the right-hand side of~\eqref{eq:radiation-rational} should be compared to the radiation term $\ee^{t \partial_x|D|}u_\infty^+(x_0-2\lambda t)$.
However, the link between the behavior of the generalized eigenfunctions $m_\pm (\diamond ;\lambda)$ given by \eqref{def:m} as $\lambda \to 0$ and this type of estimate is still unclear to us.

\subsubsection{Universality revisited:  large \texorpdfstring{$t$}{t} with \texorpdfstring{$x=O(\sqrt{t})$}{x=O(sqrt(t))}}
Recall from Section~\ref{sec:rational} the description of the contours $C_0,\dots,C_N$ for rational initial data $u_0$ of the form \eqref{eq:PN-rational-IC}, as well as the notions of exceptional and nonexceptional indices.  Without loss of generality (by reordering the indices), we assume that if there exists any nonexceptional index, then $N$ is nonexceptional.  Moreover, we assume the additional condition \eqref{eq:sum-of-residues-zero}.
which implies the existence of an antiderivative of $u_0$ considered as a function of the complex variable $z$ and denoted $L(z)$, that is analytic in a neighborhood of $z=\infty$ and satisfies $L(\infty)=0$.  We define a quantity $E_n$ for $n=1,\dots,N$ as follows:
\begin{equation}
    E_n:=\begin{cases}\ee^{2\pi(c_1+\cdots+c_n)},& \text{$n$ nonexceptional}\\
    0,&\text{$n$ exceptional}.
    \end{cases}
\label{eq:En-def}
\end{equation}
Then, we split each of the contours $C_1,\dots,C_N$ at an arbitrary break point so that $C_j=C_j^-\sqcup C_j^+$ for $j=1,\dots,N$, where $C_j^-$ is the initial arc originating from $z=\infty$, while $C_j^+$ terminates at $z=\infty$ if $j$ is a nonexceptional index and at $p_j$ in the exceptional case.  A matrix $\mathbf{J}$ of dimension $N\times N$ and a vector $\mathbf{d}\in\C^N$ are then defined as follows:
\begin{equation}
    J_{mn}:=\int_{C_m^-}\frac{\ee^{-\ii L(z)}-1}{z-p_n}\,\dd z +\int_{C_m^+}\frac{\ee^{-\ii L(z)}-E_m}{z-p_n}\,\dd z +2\pi\ii E_m\delta_{m\ge n},\quad 1\le m,n\le N
\end{equation}
and
\begin{equation}
    \mathbf{d}=(1-E_1,1-E_2,\dots,1-E_N)^\top.
    \label{def:d-vector}
\end{equation}
Then, let 
\begin{equation}
\Delta:=\sum_{n=1}^N\det(\mathbf{J}\mathop{\longleftarrow}^n\mathbf{d})
\end{equation}
where the $n$th summand denotes the determinant of $\mathbf{J}$ replacing column $n$ with $\mathbf{d}$.  It is easy to check that $\Delta$ does not depend on the arbitrary choice of break point for each contour $C_j$.  It is proved in \cite[Theorem 1.1]{GassotGM26b} that rational initial data $u_0$ for which the condition $\Delta\neq 0$ holds are dense in $L^2$ and are \emph{generic} potentials in the sense defined in \cite{Wu17} (i.e., their reflection coefficients $\beta(\lambda)$ exhibit generic behavior as $\lambda\downarrow 0$).  With these notions, we have the following.
\begin{theorem}[Linearized self-similar universality, \protect{\cite[Theorems 1.3, 1.4]{GassotGM26b}}]
Let $u_0$ be a rational initial condition of the form \eqref{eq:PN-rational-IC} for which also the condition \eqref{eq:sum-of-residues-zero} holds and for which $\Delta\neq 0$.  If $u(t,x)$ denotes the corresponding solution of the Cauchy problem \eqref{eq:BO}--\eqref{eq:IC} with $\varepsilon=1$, then
\begin{equation}
    \lim_{t\to+\infty}t^{1/2}\ln(t)u(t,2t^{1/2}\xi)=U(\xi)
\label{eq:LSSU-limit}
\end{equation}
holds in the $L^\infty_\mathrm{loc}(\R)$ sense with respect to $\xi$, where $U(\xi)$ is the unique solution of 
\begin{equation}
    -\xi U(\xi)=\frac{1}{2}|D|U(\xi)
    \label{eq:LSSU-U-eqn}
\end{equation}
given explicitly by the convergent improper Fresnel-type integral
\begin{equation}
    U(\xi):=-4\int_\xi^{+\infty}\cos(w^2-\xi^2)\,\dd w.
\end{equation}
\label{thm:linearized-self-similar-universality}
\end{theorem}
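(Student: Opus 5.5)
We will work directly from the determinantal representation of Theorem~\ref{thm:inversion-formula-rational} with $\varepsilon=1$, writing $\Pi u(t,x)=\det(\mathbf{A}(t,x))/\det(\mathbf{B}(t,x))$ and setting $x=2t^{1/2}\xi$ with $\xi$ in a compact set. In the exponent $-\ii h(z)$ we rescale $z=t^{1/2}w$. The Gaussian factor then becomes $\ee^{-\ii(w-2\xi)^2/4}$, independent of $t$. The product of powers $(z-p_n)^{-\ii c_n}(z-p_n^*)^{-\ii c_n^*}$ becomes, for $|w|$ bounded away from $0$, $\ee^{-\ii L(z)}$ up to a constant. This uses \eqref{eq:sum-of-residues-zero}: it makes the total exponent of $z$ at infinity purely imaginary and cancels the $\log z$ growth, so that $L(z)=O(1/z)$ and $\ee^{-\ii L(t^{1/2}w)}=1-\ii L(t^{1/2}w)+\cdots$ with $L(t^{1/2}w)\sim -\ii\Sigma/(t^{1/2}w)$. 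Here $\Sigma$ denotes the (imaginary) total residue.

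The plan is to perform row and column operations so that each row splits into a part at the scale $|z|\sim t^{1/2}$ and a part localized near the poles at the scale $|z|=O(1)$. We subtract suitable multiples of row $C_0$ from rows $C_1,\dots,C_N$, using the jump/monodromy factors $E_m$ incurred when $C_m$ encircles $p_1,\dots,p_m$. The rows $m\geq1$ then become $t$-independent at leading order, and their entries approach the matrix $\mathbf{J}$ together with the column $\mathbf{d}$ arising from the first-column integrals. The piecewise definition with $C_m^\pm$ and the $2\pi\ii E_m\delta_{m\ge n}$ term is exactly what results from deforming $C_m$ to a contour near the poles plus rays to infinity carrying the factors $1$ and $E_m$.

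Row $C_0$ is treated by expansion in $t^{-1/2}$. Its entries are $t^{1/2}$ times Fresnel integrals $\int\ee^{-\ii(w-2\xi)^2/4}(\cdots)\,\dd w$, with $1/(z-p_n)\approx t^{-1/2}/w$ plus corrections. The first column is handled via \eqref{eq:Aj1-alternate}. Expanding the determinants by Cramer/Schur complement along the first column yields $\Pi u=\mathcal{N}/\mathcal{D}$. The denominator $\mathcal{D}$ picks up a contribution $\Delta$ times a singular term: the integral $\int \ee^{-\ii(w-2\xi)^2/4}\,\dd w/w$ evaluated across the region $|w|\lesssim t^{-1/2}$ produces a logarithm $\tfrac12\ln t$. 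This is where the $\ln t$ in \eqref{eq:LSSU-limit} comes from, and where $\Delta\neq0$ is needed so that this term does not vanish at leading order.

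The main obstacle is the precise bookkeeping of this logarithmic term. It requires a matched splitting of the $C_0$ integrals at an intermediate radius $t^{1/2-\delta}$, with an inner region where $\ee^{-\ii L}$ is kept exactly (giving $\mathbf{J}$-type integrals) and an outer region where the Fresnel variable is used, together with uniform control of the cross terms so that they contribute $O(1)$ relative to $\ln t$. Once this is done, the leading term of $t^{1/2}\ln(t)\,\Pi u$ is an explicit combination of $\int\ee^{-\ii(w-2\xi)^2/4}w^{-1}\,\dd w$-type Fresnel functions. Taking $2\mathrm{Re}$ should identify it with $-4\int_\xi^\infty\cos(w^2-\xi^2)\,\dd w$. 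We will confirm the identification by checking directly that this function solves \eqref{eq:LSSU-U-eqn}, and uniqueness fixes the normalization through its behavior as $\xi\to+\infty$. Uniformity in compact $\xi$-sets follows because all estimates are uniform in the Gaussian center $2\xi$.
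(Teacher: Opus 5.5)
Your overall route is the natural one: a large-$t$ analysis of $\det(\mathbf{A})/\det(\mathbf{B})$ at the scale $z=t^{1/2}w$, with $\ln t$ coming from the shrinking pole cluster at $w=O(t^{-1/2})$ and $\Delta\neq0$ as the nondegeneracy condition. The survey only quotes this theorem from \cite{GassotGM26b}, but its objects $C_m^\pm$, $E_m$, $\mathbf{J}$, $\mathbf{d}$ are of exactly this type. However, you put the logarithm in the wrong place. It is produced by every contour that enters the cluster from infinity and leaves it with a different branch factor. On $C_m$ the arc $C_m^-$ carries the factor $1$ and $C_m^+$ carries $E_m$, so $\int_{C_m}\ee^{-\ii h}(z-p_n)^{-1}\,\dd z=d_m\ell+O(1)$ with $\ell:=-\frac12\ee^{-\ii\xi^2}\ln t$, independent of $n$. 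On $C_0$ the coefficient is instead $1-\ee^{2\pi\ii\,\mathrm{Im}(c)}$. Thus columns $2,\dots,N+1$ of both matrices equal $\ell\,\widetilde{\mathbf{d}}\mathbf{1}^\top+O(1)$, where $\widetilde{\mathbf{d}}:=(1-\ee^{2\pi\ii\,\mathrm{Im}(c)},d_1,\dots,d_N)^\top$. The $C_0$ entry of $\widetilde{\mathbf{d}}$ can vanish while $\Delta\neq0$. For example, take $u_0=-2m/(x^2+1)$ with $m\in\mathbb{Z}_{>0}$, so $c_1=\ii m$ is exceptional and $\Delta=d_1=1$. Then the $C_0$ row contains no logarithm at all, and your reduction ``subtract multiples of row $C_0$ so that rows $m\ge1$ become $t$-independent'' cannot be carried out.

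The more serious gap is that you never examine $\det(\mathbf{A})$. It shares columns $2,\dots,N+1$ with $\mathbf{B}$, so it would generically also be of size $\ln t$. That would give $\Pi u=O(t^{-1/2})$ with a $u_0$-dependent profile, i.e.\ neither the claimed rate nor universality. What actually saves the argument is two structural facts.
\begin{itemize}
\item By \eqref{eq:Aj1-alternate} (equivalently $u_0\ee^{-\ii L}=\ii\,\dd\ee^{-\ii L}/\dd z$), the first column of $\mathbf{A}$ is $-\ii\ee^{-\ii\xi^2}\widetilde{\mathbf{d}}+O(t^{-1/2}\ln t)$. This is parallel to the very vector multiplying $\ln t$ in the other columns, so the logarithms cancel and $\det(\mathbf{A})=O(1)$.
\item The first-row entries of columns $2,\dots,N+1$ are asymptotically independent of $n$. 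For $\xi>0$, after replacing $C_0$ by $C_N\cup D_+$ as in Appendix~\ref{app:soliton-limit}, they equal $\ee^{2\pi\ii\,\mathrm{Im}(c)}\Psi(\xi)+o(1)$. Here $\Psi(\xi):=\int\ee^{-\ii(w-2\xi)^2/4}\,\dd w/w$ over the line through $2\xi$ in direction $\ee^{-\ii\pi/4}$.
\end{itemize}
Together these make both $\det(\mathbf{A})$ and the $t^{1/2}\ln t$ coefficient of $\det(\mathbf{B})$ equal to $\Delta$ times universal factors. Then $\Delta$ cancels, and one gets $\Pi u(t,2t^{1/2}\xi)\sim-2(t^{1/2}\ln t)^{-1}\int_\xi^{+\infty}\ee^{\ii(w^2-\xi^2)}\,\dd w$. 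Its doubled real part is exactly $U(\xi)/(t^{1/2}\ln t)$. Your proposal asserts this outcome (``an explicit combination of Fresnel functions'') without the two facts that produce it, and those facts are precisely where both the extra $1/\ln t$ and the independence from $u_0$ come from.

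Two smaller points. Equation \eqref{eq:LSSU-U-eqn} is linear and homogeneous, so it cannot fix the constant $-4$; that has to come out of the computation itself. Also, the $1/z$ coefficient of $L$ is $-\sum_n(c_np_n+c_n^*p_n^*)$, which is real, not the total residue; the total residue $c+c^*$ vanishes.
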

This result asserts \emph{universality} of the large-$t$ asymptotic behavior because the limiting profile $U(\xi)$ is independent of $u_0$ within the class of data considered.  The terminology of \emph{linearized self-similarity} refers to the equation \eqref{eq:LSSU-U-eqn} satisfied by $U(\xi)$; this equation can be regarded as the linearization of another equation:
\begin{equation}
    -\xi U(\xi)+U(\xi)^2=\frac{1}{2}|D|U(\xi),
\label{eq:SSU-U-eqn}
\end{equation}
which characterizes the self-similar solutions of the Benjamin-Ono equation \eqref{eq:BO} with $\varepsilon=1$, i.e., exact solutions of the form $u(t,x)=t^{-1/2}U(\xi)$, $\xi=x/(2t^{1/2})$.  Observe that the existence of the limit in \eqref{eq:LSSU-limit} implies that $u(t,x)$ decays faster by a factor of $1/\ln(t)$ than the exact self-similar solutions.  This provides some heuristic for why the profile $U(\xi)$ might satisfy the linearized form of the exact self-similar profile equation \eqref{eq:SSU-U-eqn}.

For the Korteweg-de Vries equation, there is a similar notion of generic initial data, for which one again observes universal long-time asymptotics \cite{AblowitzS77}, in this case in the region $x=O(t^{1/3})$ and with a profile related to the Hastings-McLeod solution of the Painlev\'e-II equation.  However, in this situation, the rate of decay of the solution is commensurate with (nonlinear) self-similar solutions. 

\begin{remark}[Loss of universality and a Benjamin-Ono soliton gas]
\label{rem:soliton-gas}
    The condition \eqref{eq:sum-of-residues-zero} is essential for Theorem~\ref{thm:linearized-self-similar-universality}.  Indeed, for $u_0(x)=2(ax-b)/(x^2+1)$ with $a,b\in\R$ and $a\neq 0$, a different result entirely holds:
    letting
    \begin{equation}
    U(\xi;a):= -\frac{\dd}{\dd\xi}\mathrm{Im}\left(\log(\tau(\xi;a))\right),\quad \tau(\xi;a):=\int_{D}\ee^{-\ii (\zeta-\xi)^2}\zeta^{-2\ii a}\,\dd\zeta,
\end{equation}
where $D$ is the contour parametrized by $\zeta=1+\ee^{-\ii\pi/4}w$, $-\infty<w<+\infty$, we have
        \begin{equation}    \lim_{t\to+\infty}t^{1/2}u(t,2t^{1/2}\xi)=U(\xi;a),
        \label{eq:non-universal-limit}
\end{equation}
locally uniformly with respect to $\xi$.
This solution decays at the slower, self-similar rate of $t^{-1/2}$, and the limit in \eqref{eq:non-universal-limit} is not universal, depending on the real value of $a\in\R$.  In particular, when $a<0$ the limiting profile can exhibit very large peaks.  See Figure~\ref{fig:non-universal}.
\begin{figure}[h]
\begin{center}
    \includegraphics[width=0.3\linewidth]{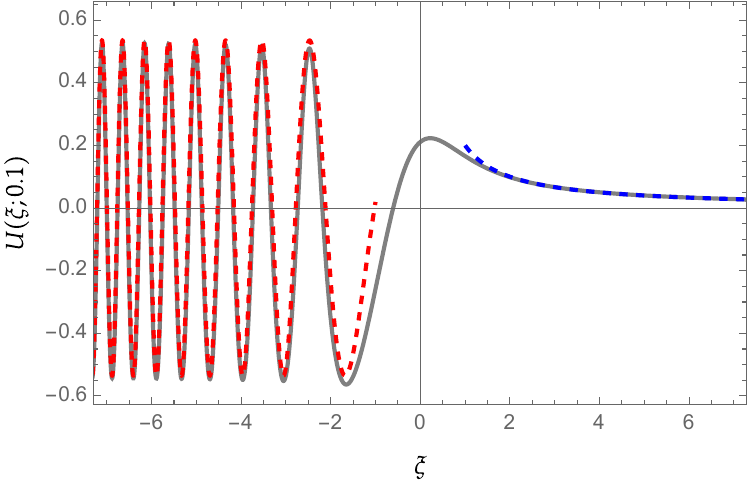}\hfill%
    \includegraphics[width=0.3\linewidth]{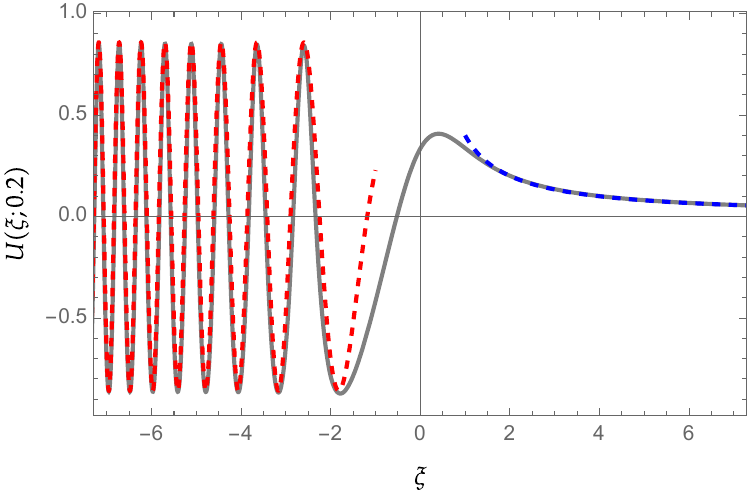}\hfill%
    \includegraphics[width=0.3\linewidth]{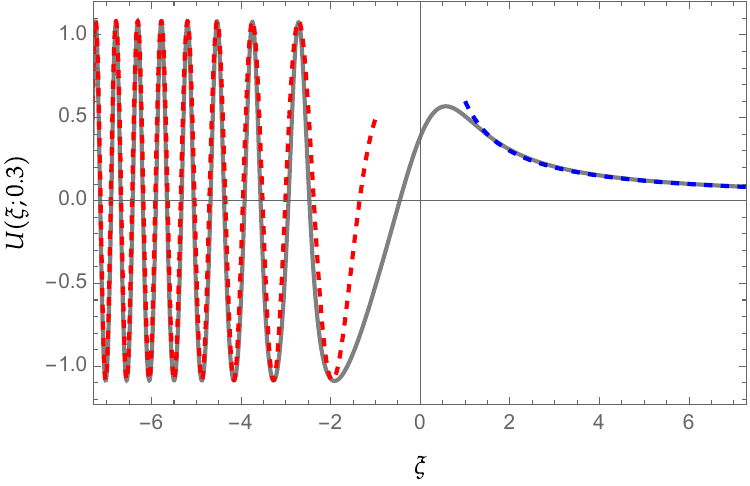}\\
    \includegraphics[width=0.3\linewidth]{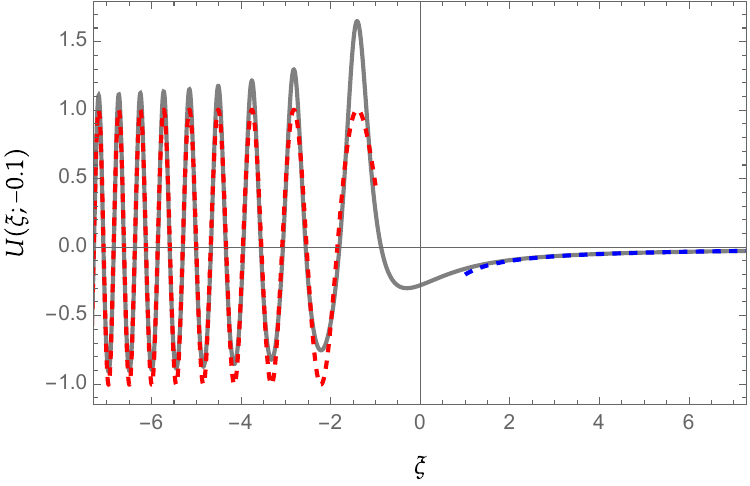}\hfill%
    \includegraphics[width=0.3\linewidth]{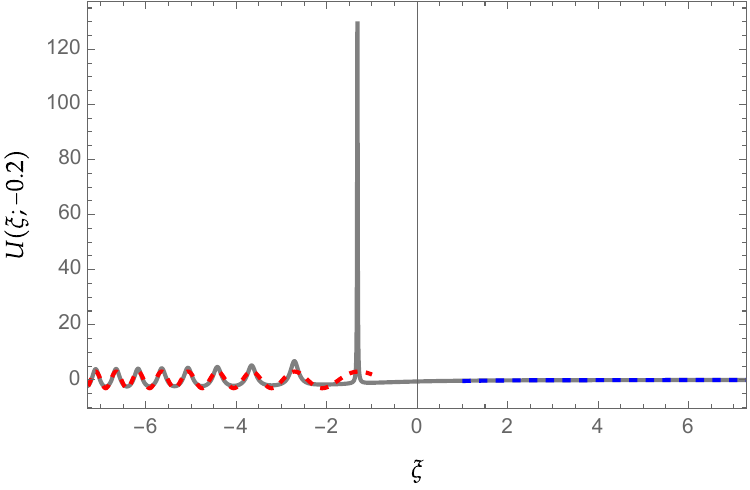}\hfill%
    \includegraphics[width=0.3\linewidth]{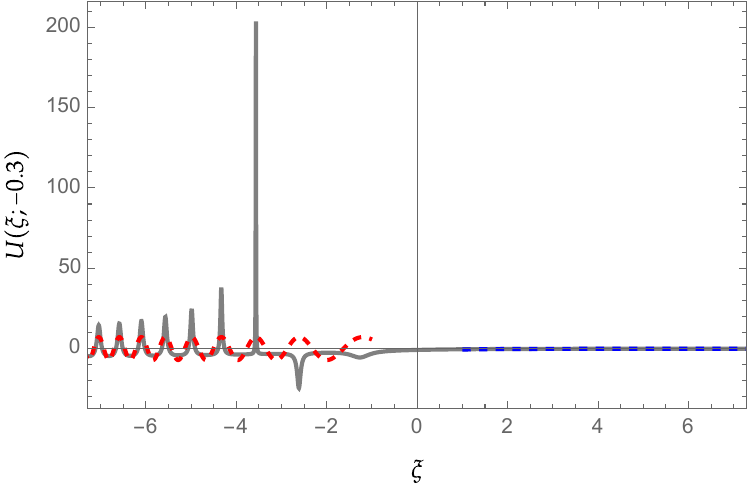}    
\end{center}
\caption{The function $U(\xi;a)$ from Remark~\ref{rem:soliton-gas} (gray) along with asymptotes for large negative (dashed red) and positive (dashed blue) $\xi$.  Top row, left-to-right:  $a=0.1$, $a=0.2$, $a=0.3$.  Bottom row, left-to-right:  $a=-0.1$, $a=-0.2$, $a=-0.3$.}
\label{fig:non-universal}
\end{figure}
Also note that $U(\xi;0)$ vanishes identically, which is consistent with the faster rate of decay in Theorem~\ref{thm:linearized-self-similar-universality}, which indeed applies for generic $b\in\R$ if $a=0$.
On the spectral side, a distinguishing characteristic of this initial condition is that for $a\neq 0$ there are infinitely many Lax eigenvalues with $\lambda=0$ as an accumulation point.  Thus the corresponding solution can be viewed as a deterministic soliton gas composed predominantly of small-amplitude slowly moving solitons.
These preliminary results will be presented in more detail in a subsequent publication.
\end{remark}

\appendix
\section{Obtaining multisoliton solutions from Theorem~\ref{thm:inversion-formula-rational}}
\label{app:Nsoliton}
In this Appendix, we give the details of the derivation of Matsuno's formula \eqref{eq:Matsuno}--\eqref{eq:Matsuno-Matrix} from Theorem~\ref{thm:inversion-formula-rational} applied to the $N$-soliton initial condition \eqref{eq:rational-soliton} as discussed in Section~\ref{sec:examples}.

To write the formula \eqref{eq:Pi-u-derivative} in the situation that $c_n=-\ii$ for all $n=1,\dots,N$, first observe that the exponential factor $\ee^{-\ii h(z;t,x)/\varepsilon}$ defined in \eqref{eq:PN-exph} is given by
\begin{equation}
\ee^{-\ii h(z;t,x)/\varepsilon}
	=\ee^{-\ii (z-x)^2/(4 \varepsilon t)}R(z),
\quad
R(z):=\prod_{n=1}^N\frac{z-p_n^*}{z-p_n},
\label{eq:meromorphic-exp-factor}
\end{equation}
which is single-valued meromorphic, with simple poles (respectively, zeros) at $z=p_n$ (respectively at $z=p_n^*$), $n=1,\dots,N$.  Moreover, none of the indices are exceptional.  Therefore, by row operations, $\det(\mathbf{B}(t,x))=\det(\mathbf{C}(t,x))$ where $C_{jk}(t,x)=B_{jk}(t,x)$ for rows $j=1,2$ while $C_{jk}(t,x)=B_{jk}(t,x)-B_{j-1,k}(t,x)$ for $3\le j\le N+1$. The elements of $\mathbf{C}(t,x)$ have the same form as those of $\mathbf{B}(t,x)$, except that each integration contour $C_n$ for $n=1,\dots,N$ is replaced with $L_n$, a positively-oriented loop enclosing only one pole, $z=p_n$.  
Equation~\eqref{eq:Pi-u-derivative} then becomes
\begin{equation}
\Pi u(t,x)=\ii\varepsilon\frac{\partial}{\partial x}\log\left(\det(\mathbf{C}(t,x))\prod_{n=1}^N\ee^{\ii (p_n-x)^2/(4 \varepsilon t)}\right).
    \label{eq:Pi-u-derivative2}
\end{equation}
The rational function defined as 
\begin{equation}
R_n(z):=(z-p_n)R(z),\quad n=1,\dots,N,
\label{eq:Rn-define}
\end{equation}
has a removable singularity at $z=p_n$, and $R_n(p_n)\neq 0$ because $\mathrm{Im}(p_n)>0$ and $R_n(z)$ vanishes only at the points $p_1^*,\dots,p_N^*$. With the help of this function, we can write the partial fraction expansions
\begin{equation}
R(z)=1+\sum_{n=1}^N\frac{R_n(p_n)}{z-p_n},
	\label{eq:R}
\end{equation}
\begin{equation}
\frac{R(z)}{z-p_k}=\frac{R_k(p_k)}{(z-p_k)^2}+\frac{R_k'(p_k)}{z-p_k}+\mathop{\sum_{n=1}^N}_{n\neq k}\frac{R_n(p_n)}{(p_n-p_k)(z-p_n)}.
\label{eq:Rk}
\end{equation}
We will next use these expansions to compute $\det(\mathbf{C}(t,x))$.

\subsection{Simplification of rows $2,\dots,N+1$ of $\mathbf{C}(t,x)$}
  Since $\ee^{-\ii h(z;t,x)/\varepsilon}$ is meromorphic, all of the integrals in rows $j=2,\dots,N+1$ of the matrix $\mathbf{C}(t,x)$ can be evaluated explicitly by residues: for $j=1,\dots,N$, 
  \begin{equation}
    C_{j+1,1}(t,x)
    =\int_{L_{j}}\ee^{-\ii (z-x)^2/(4 \varepsilon t)}R(z)\,\dd z
    =2\pi\ii\mathop{\mathrm{Res}}_{z=p_{j}}\left[\ee^{-\ii (z-x)^2/(4 \varepsilon t)}R(z)\right],
\label{eq:C-first-column-residues}
\end{equation}
and, for $j,k=1,\dots,N$,
\begin{equation}
        C_{j+1,k+1}(t,x)=\int_{L_{j}}\frac{\ee^{-\ii (z-x)^2/(4 \varepsilon t)}}{z-p_{k}}R(z)\,\dd z
    =2\pi\ii\mathop{\mathrm{Res}}_{z=p_{j}}\left[\frac{\ee^{-\ii (z-x)^2/(4 \varepsilon t)}}{z-p_{k}}R(z)\right].
    \label{eq:C-other-columns-residues}
\end{equation}
Using the partial fractions expansions \eqref{eq:R}--\eqref{eq:Rk} gives
\begin{equation}
    C_{j+1,1}(t,x)
    =2\pi\ii\ee^{-\ii (p_{j}-x)^2/(4 \varepsilon t)}R_j(p_j),
    \quad j=1,\dots,N,
\label{eq:C-first-column-residues2}
\end{equation}
and for $j,k=1,\dots,N$,
\begin{equation}
        C_{j+1,k+1}(t,x)=
\begin{cases}\displaystyle 
        2\pi\ii \ee^{-\ii (p_j-x)^2/(4 \varepsilon t)}\frac{R_j(p_j)}{p_j-p_{k}}, & j\neq k,
\\ \displaystyle 
	2\pi \ii \ee^{-\ii (p_j-x)^2/(4 \varepsilon t)} \left(R_j'(p_j)-\frac{\ii}{2\varepsilon t}(p_j-x)R_j(p_j)\right), & j=k.
\end{cases}
    \label{eq:C-other-columns-residues2}
\end{equation}
Row $j+1$ of $\mathbf{C}$ can therefore be written as
\begin{multline}
\begin{bmatrix}
C_{j+1,1}(t,x) & C_{j+1,2}(t,x) & \dots & C_{j+1,N}(t,x)\end{bmatrix}
\\
	=\frac{\pi}{\varepsilon t} \ee^{-\ii (p_j-x)^2/(4 \varepsilon t)}R_j(p_j)
	\begin{bmatrix}
2\ii\varepsilon t &  (2t \mathbf{M}-x\mathbb{I}+\mathbf{D})_{j1} & \dots  &(2 t\mathbf{M}-x\mathbb{I}+\mathbf{D})_{jN}
\end{bmatrix}
\label{eq:C-other-rows}
\end{multline}
where $\mathbb{I}$ is the $N\times N$ identity matrix,  $\mathbf{D}$ is the diagonal matrix
\begin{equation}
\mathbf{D}:=\mathrm{diag}(p_1,\dots,p_N),
\label{eq:D-define}
\end{equation}
and the matrix $\mathbf{M}$ is defined as follows: for $j,k=1,\dots,N$,
\begin{equation}
M_{jk}=
\begin{cases}
\displaystyle 
\frac{\ii\varepsilon }{p_j-p_k}, &j\neq k,
\\
\displaystyle 
\ii\varepsilon \frac{R_j'(p_j)}{R_j(p_j)}, &j=k.
\end{cases}
\label{def:M}
\end{equation}

\subsection{Simplification of row $1$ of $\mathbf{C}(t,x)$}
Now recall that the elements of the first row of $\mathbf{C}(t,x)$ are contour integrals over $C_0$: 
\begin{equation}
    C_{11}(t,x)
    =\int_{C_0}\ee^{-\ii (z-x)^2/(4\varepsilon t)}R(z)\,\dd z,
\quad
     C_{1,k+1}(t,x)=\int_{C_0}\frac{\ee^{-\ii (z-x)^2/(4 \varepsilon t)}}{z-p_{k}}R(z)\,\dd z,\; k=1,\dots,N.
\end{equation}
These cannot be evaluated by residues.  However, the partial fraction expansions \eqref{eq:R}--\eqref{eq:Rk} show that these elements are linear combinations of the $x$-independent Gaussian integral
\begin{equation}
I(t)=\int_{C_0}\ee^{-\ii(z-x)^2/(4\varepsilon t)}\dd z
=2\ee^{-\pi\ii/4}\sqrt{\pi\varepsilon  t},
\end{equation}
the integrals 
\begin{equation}
J_j(t,x):=\int_{C_0}\ee^{-\ii(z-x)^2/(4\varepsilon t)}\frac{\dd z}{z-p_j},\quad j=1,\dots,N,
\end{equation}
and $N$ additional integrals 
\begin{equation}
\begin{aligned}
\int_{C_0}\ee^{-\ii(z-x)^2/(4\varepsilon t)}\frac{\dd z}{(z-p_j)^{2}}
	&=-\frac{\ii}{2\varepsilon t}\int_{C_0}(z-x)\ee^{-\ii(z-x)^2/(4\varepsilon t)}\frac{\dd z}{z-p_j}\\
	&=-\frac{\ii}{2\varepsilon t}I(t)
	-\frac{\ii(p_j-x)}{2\varepsilon t}J_j(t,x),\quad j=1,\dots,N,
\end{aligned}
\end{equation}
using integration by parts and writing $z-x=(z-p_j)+(p_j-x)$.
Therefore, from the expansion \eqref{eq:R} of $R(z)$,
\begin{equation}
\begin{aligned}
C_{11}(t,x)
	&=I(t)+\sum_{n=1}^N R_n(p_n) J_{n}(t,x)\\
	&=I(t)+\frac{1}{2\pi\ii}\sum_{n=1}^N J_{n}(t,x) \ee^{\ii(p_n-x)^2/(4\varepsilon t)} C_{n+1,1}(t,x),
\end{aligned}
\end{equation}
where we used also \eqref{eq:C-first-column-residues2},
and similarly combining the expansion \eqref{eq:Rk} of $R(z)/(z-p_k)$ with \eqref{eq:C-other-columns-residues2}, for $k=1,\dots,N$,
\begin{equation}
    \begin{aligned}
C_{1,k+1}(t,x)
	=&\frac{1}{2\ii\varepsilon t}R_k(p_k)I(t)
	+\left(R_k'(p_k)-\frac{\ii}{2\varepsilon t}(p_k-x)R_k(p_k)\right)J_{k}(t,x)
+\mathop{\sum_{n=1}^N}_{n\neq k}\frac{R_n(p_n)}{p_n-p_k}J_{n}(t,x)
	\\
	=&\frac{1}{2\ii\varepsilon t}R_k(p_k)I(t)
	+\frac{1}{2\pi\ii}\sum_{n=1}^NJ_{n}(t,x)\ee^{\ii(p_n-x)^2/(4\varepsilon t)} C_{n+1,k+1}(t,x).
\end{aligned}
\end{equation}
Therefore, we see that 
\begin{multline}
    \begin{bmatrix}C_{11}(t,x) & C_{12}(t,x) & \cdots & C_{1,N+1}(t,x)\end{bmatrix}=\frac{ I(t)}{2\ii \varepsilon t}\begin{bmatrix}2\ii\varepsilon t & R_1(p_1) &\cdots & R_N(p_N)\end{bmatrix} \\
    {}+\frac{1}{2\pi\ii}\sum_{n=1}^NJ_n(t,x)\ee^{\ii (p_n-x)^2/(4\varepsilon t)}\begin{bmatrix}C_{n+1,1}(t,x) & C_{n+1,2}(t,x) & \cdots & C_{n+1,N+1}\end{bmatrix}, 
    \label{eq:C-first-row}
\end{multline}
And clearly the terms on the second line make no contribution to $\det(\mathbf{C}(t,x))$.

\subsection{Simplification of $\det(\mathbf{C}(t,x))$}
Now we combine \eqref{eq:C-other-rows} with \eqref{eq:C-first-row} to calculate $\det(\mathbf{C}(t,x))$.
Let us denote 
\begin{equation}
\mathbf{r}:=[R_1(p_1),\dots,R_N(p_N)]^{\top}\quad\text{and}\quad \mathbf{1}:=[1,\dots,1]^{\top}.
\label{eq:r1-define}
\end{equation}
By multilinearity with respect to the rows and the first column, we have that
\begin{equation}
\det(\mathbf{C}(t,x))=\left(\frac{\pi}{\varepsilon t}\right)^N I(t)\left(\prod_{n=1}^N \ee^{-\ii (p_n-x)^2/(4 t)}R_n(p_n)\right) 
    \begin{vmatrix} 1 & \mathbf{r}^{\top}\\
    \mathbf{1} & 2t\mathbf{M}-x\mathbb{I}+\mathbf{D}
    \end{vmatrix}.
\end{equation}
Furthermore, Schur's formula implies
\begin{equation}
\begin{vmatrix} 1 & \mathbf{r}^{\top}\\
    \mathbf{1} & 2t\mathbf{M}-x\mathbb{I}+\mathbf{D}
    \end{vmatrix}
    	=\det(2t\mathbf{M}-x\mathbb{I}+\mathbf{D}-\mathbf{1}\mathbf{r}^{\top}).
\end{equation}
Therefore, returning to \eqref{eq:Pi-u-derivative2}, since the factor $(\pi/(\varepsilon t))^NI(t)R_1(p_1)\dots R_N(p_N)$ is independent of $x$, we get:
\begin{equation}
\Pi u(t,x)=\ii\varepsilon \frac{\partial}{\partial x}\log\det(2t\mathbf{M}-x\mathbb{I}+\mathbf{D}-\mathbf{1}\mathbf{r}^{\top}).
\label{eq:Pi-u-soliton-pre-Matsuno}
\end{equation}

This formula is reminiscent of Matsuno's $N$-soliton formula \cite{Matsuno84} (see \eqref{eq:Matsuno}--\eqref{eq:Matsuno-Matrix}).  But the matrices $\mathbf{M}$ and $\mathbf{D}-\mathbf{1r}^\top$ are constructed from the ``direct data'' $p_1,\dots,p_N\in\C_+$, whereas the corresponding matrices in Matsuno's formula are instead constructed from Lax eigenvalues and phase constants for $u_0$.    It remains to connect these two representations.

\subsection{Link with Matsuno's $N$-soliton formula}
To show that \eqref{eq:Pi-u-soliton-pre-Matsuno} in fact coincides with Matsuno's formula, we will make a link between $\mathbf{M}$ and the action of the Lax operator $\mathcal{L}_{u_0}$ on an adequate $N$-dimensional space, and between $\mathbf{D}-\mathbf{1}\mathbf{r}^{\top}$ and the action of the operator $\mathcal{X}^*$ on the same space.

First, we identify the eigenfunctions of the Lax operator $\mathcal{L}_{u_0}$ associated to $u_0=\Pi u_0 + (\Pi u_0)^*$ with $\Pi u_0$ given by \eqref{eq:rational-soliton}.  Recall the definition of $R_k(z)$ in \eqref{eq:meromorphic-exp-factor} and \eqref{eq:Rn-define}.
\begin{lemma}
Every eigenfunction $\varphi\in L_+^2(\R)$ of $\mathcal{L}_{u_0}$ belongs to $F:=\mathrm{span}\{f_k\}_{k=1}^N$, where 
\begin{equation}
f_k(z):=\frac{R_k(p_k)}{R_k(z)},
\quad z\in\C_+.
\label{eq:fk-define}
\end{equation}
Conversely, $F$ is spanned by eigenfunctions $\varphi$ of $\mathcal{L}_{u_0}$.
\end{lemma}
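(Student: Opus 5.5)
The plan is to reduce the eigenvalue equation $\mathcal{L}_{u_0}\varphi=\lambda\varphi$ to a first-order ODE with an explicit integrating factor, exactly as in the proof idea sketched after Theorem~\ref{thm:inversion-formula-rational}. Throughout I use $c_n=-\ii\varepsilon$, so $\Pi u_0(x)=\ii\varepsilon\sum_n (x-p_n^*)^{-1}$ and $(\Pi u_0)^*(x)=-\ii\varepsilon\sum_n (x-p_n)^{-1}$.

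\emph{Step 1: the Toeplitz term.} For $\varphi\in L^2_+(\R)$ I compute $\mathcal{T}_{u_0}\varphi$ explicitly.
\begin{itemize}
\item $\Pi(\Pi u_0\,\varphi)=\Pi u_0\,\varphi$, since this product is analytic in $\C_+$ and in $L^2$.
\item Subtracting the principal parts at $p_n$ gives
\begin{equation*}
\Pi\bigl(\varphi(\diamond)(\diamond-p_n)^{-1}\bigr)=\bigl(\varphi(\diamond)-\varphi(p_n)\bigr)(\diamond-p_n)^{-1},
\end{equation*}
because the difference $\varphi(p_n)(\diamond-p_n)^{-1}$ has Fourier transform supported on $\xi<0$.
\end{itemize}
Hence
\begin{equation*}
\mathcal{T}_{u_0}\varphi=u_0\varphi-\ii\varepsilon\sum_n\frac{\varphi(p_n)}{x-p_n}.
\end{equation*}
Using $u_0=\ii\varepsilon R'/R$, which follows from \eqref{eq:meromorphic-exp-factor}, the eigenvalue equation becomes
\begin{equation*}
-\ii\varepsilon\frac{(R\varphi)'}{R}-\lambda\varphi=-\ii\varepsilon\sum_n\frac{\varphi(p_n)}{x-p_n}.
\end{equation*}
Equivalently, with $g:=R\varphi$,
\begin{equation*}
g'-\frac{\ii\lambda}{\varepsilon}\,g=R(x)\sum_n\frac{\varphi(p_n)}{x-p_n}.
\end{equation*}

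\emph{Step 2: $g$ is rational.} Observe that $F=R^{-1}\,\mathrm{span}\{(z-p_k)^{-1}\}_{k=1}^N$, since $f_k=R_k(p_k)/((z-p_k)R(z))$. So the claim is that $g=\sum_k a_k(z-p_k)^{-1}$ for some constants $a_k$.

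The function $g$ is analytic in $\C_+$ except for at most simple poles at the $p_k$, where $R$ has simple poles and $\varphi$ is analytic. Set
\begin{equation*}
P(z):=\sum_k\frac{R_k(p_k)\varphi(p_k)}{z-p_k},\qquad h:=g-P.
\end{equation*}
Then $h$ is analytic in $\C_+$. Since $R$ is bounded on a neighborhood of $\overline{\C_+}$ minus discs around the poles and $R\to1$ at $\infty$, $h\in L^2_+(\R)$.

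Substituting into the ODE gives
\begin{equation*}
h'-\frac{\ii\lambda}{\varepsilon}\,h=\rho,\qquad \rho:=R\sum_n\frac{\varphi(p_n)}{z-p_n}-P'+\frac{\ii\lambda}{\varepsilon}P,
\end{equation*}
where $\rho$ is rational and decaying. Any poles of $\rho$ at the $p_n$ must cancel, because the left side is analytic in $\C_+$. Therefore $\rho$ has poles only at the $p_n^*$, i.e.\ $\rho$ lies in the conjugate Hardy space with $\widehat{\rho}$ supported in $\xi\le0$. On the other hand $\widehat{h'-\ii\lambda h/\varepsilon}$ is supported in $\xi\ge0$. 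So both sides vanish, and $(\xi-\lambda/\varepsilon)\widehat{h}(\xi)=0$ on $\xi>0$.

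Because the eigenvalues of $\mathcal{L}_{u_0}$ are negative (as recalled before \eqref{Planchereldist}), $\lambda/\varepsilon<0$, and hence $\widehat{h}=0$. Thus $h=0$, $g=P$, and $\varphi\in F$.

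I expect the main obstacle to be the bookkeeping in this step. One must check carefully that $\rho$ genuinely has no poles in $\C_+$, including at the $p_n$ where second-order terms from $P'$ appear, and that $h$ is truly in $L^2_+(\R)$. If the pole cancellation is awkward to verify directly, the fallback is to argue purely by analyticity: $\rho$ equals an $L^2_+$-type expression minus something analytic in $\C_+$, so its partial-fraction decomposition can contain only $p_n^*$ terms.

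\emph{Step 3: converse.} I will show that $F$ is invariant under $\mathcal{L}_{u_0}$. Applying the Step~1 formula to $f_k$ and using $R f_k=R_k(p_k)/(z-p_k)$ gives
\begin{equation*}
\mathcal{L}_{u_0}f_k=-\ii\varepsilon R^{-1}\bigl(Rf_k\bigr)'+\ii\varepsilon\sum_n\frac{f_k(p_n)}{z-p_n}.
\end{equation*}
The first term is $\ii\varepsilon R_k(p_k)\,R^{-1}(z-p_k)^{-2}$. The second term, after being written as $R^{-1}\cdot R(z)\sum_n f_k(p_n)(z-p_n)^{-1}$, is expanded with \eqref{eq:R}--\eqref{eq:Rk}. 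In the sum, the double poles must cancel so that the total is $R^{-1}$ times a combination of the simple fractions $(z-p_j)^{-1}$, i.e.\ an element of $F$. This is the same computation that yields the matrix $\mathbf{M}$ in \eqref{def:M}.

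Since the $f_k$ lie in $\mathrm{dom}(\mathcal{L}_{u_0})$ and $\mathcal{L}_{u_0}$ is self-adjoint, its restriction to the finite-dimensional invariant subspace $F$ is a Hermitian operator. It is therefore diagonalizable with an orthonormal eigenbasis, so $F$ is spanned by eigenfunctions of $\mathcal{L}_{u_0}$.
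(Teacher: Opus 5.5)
Your Step 2 fails at its key inference. After the poles at the $p_n$ cancel, you conclude that $\rho$ ``has poles only at the $p_n^*$, i.e.\ lies in the conjugate Hardy space with $\widehat\rho$ supported in $\xi\le0$''. This has the half-planes backwards. A decaying rational function whose poles lie in $\C_-$ is analytic in $\C_+$, so it belongs to $L^2_+(\R)$ and has Fourier support in $\xi\ge0$ (compare $\Pi u_0=\ii\varepsilon\sum_n(x-p_n^*)^{-1}$). If $\rho$ were such a function there would be no contradiction: $\widehat h(\xi)=\widehat\rho(\xi)/(\ii(\xi-\lambda/\varepsilon))$ would be a legitimate nonzero solution in $L^2_+(\R)$, and $h=0$ would not follow.

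What rescues the argument is a fact you did not note: $\rho$ has \emph{no} poles at the $p_n^*$. Every ingredient ($R$, $P$, $P'$, $(z-p_n)^{-1}$) is singular only at the $p_n$, and $R$ actually vanishes at the $p_n^*$. So $\rho$ is a decaying rational function with poles only in $\C_+$, and $\widehat\rho$ is supported in $\xi\le0$. Since $\rho=h'-\ii\lambda h/\varepsilon$ also has Fourier support in $\xi\ge0$, it follows that $\rho=0$. Equivalently, the poles at the $p_n$ cancel, so $\rho$ is a polynomial tending to $0$. Then $(\xi-\lambda/\varepsilon)\widehat h(\xi)=0$ for a.e.\ $\xi>0$ gives $\widehat h=0$ for any real $\lambda$, so negativity of the eigenvalues is not needed. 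A cleaner proof that $h\in H^1(\R)\cap L^2_+(\R)$ is the identity $h=\varphi+\sum_nR_n(p_n)\,\Pi\bigl(\varphi/(\diamond-p_n)\bigr)$, which follows from \eqref{eq:R}.

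There is also a sign error in Step 1 that breaks Step 3 as written. From $(\Pi u_0)^*=-\ii\varepsilon\sum_n(x-p_n)^{-1}$ one gets $\mathcal{T}_{u_0}\varphi=u_0\varphi+\ii\varepsilon\sum_n\varphi(p_n)(x-p_n)^{-1}$. The forcing for $g$ is therefore $-R\sum_n\varphi(p_n)(x-p_n)^{-1}$, as in \eqref{eq:phi-multisol}; this is harmless in Step 2. In Step 3, however, the correct formula is $\mathcal{L}_{u_0}f_k=-\ii\varepsilon R^{-1}(Rf_k)'-\ii\varepsilon\sum_nf_k(p_n)(z-p_n)^{-1}$. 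Note that $f_k(p_n)=\delta_{kn}$, since $1/R_k$ vanishes at $p_n$ for $n\neq k$. Using this and \eqref{eq:Rk}, the double pole $\ii\varepsilon R_k(p_k)R^{-1}(z-p_k)^{-2}$ of the first term cancels that of the second. What remains is $\mathcal{L}_{u_0}f_k=-\ii\varepsilon\frac{R_k'(p_k)}{R_k(p_k)}f_k-\ii\varepsilon\sum_{n\neq k}\frac{f_n}{p_n-p_k}$, i.e.\ the matrix $-\mathbf{M}$ of \eqref{def:M}. With your sign the double poles add instead, leaving a term $2\ii\varepsilon R_k(p_k)R^{-1}(z-p_k)^{-2}$. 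Your formula would then give $\mathcal{L}_{u_0}f_k\notin F$, and the claimed invariance fails.

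Once both points are fixed, your proof is correct and differs from the paper's. The paper integrates \eqref{eq:phi-multisol} from $+\ii\infty$, using $\lambda<0$ to kill the integration constant. It then imposes zero residues and uses large-$z$ decay to show $R\varphi$ is rational. For the converse it cites Sun's theorem that there are exactly $N$ simple eigenvalues. Your Hardy-space argument avoids contour integration. Your converse, via invariance of $F$ and the finite-dimensional spectral theorem for the Hermitian restriction $\mathcal{L}_{u_0}|_F$, is self-contained and directly yields the $\mathcal{L}_{u_0}$-part of Lemma~\ref{lem:L-and-Xstar-fs}. It does not give the count or simplicity of the eigenvalues, but the lemma does not need these.
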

Note that the functions $f_k$ are linearly independent, and hence they form a basis of the $N$-dimensional space $F$.
\begin{proof}
Let $\varphi\in L_+^2(\R)$ be an eigenfunction of $\mathcal{L}_{u_0}$ associated to the eigenvalue $\lambda<0$. Then using the identity 
\begin{equation}
\Pi \left(\frac{\varphi}{\diamond-p}\right)(z)=\frac{\varphi(z)-\varphi(p)}{z-p},\quad p\in \C_+,    
\end{equation}
in $\mathcal{L}_{u_0}\varphi=\lambda\varphi$ leads to
\begin{equation}
\varepsilon \varphi'(z)-\ii(\lambda+u_0)\varphi(z)=-\varepsilon\sum_{n=1}^N\frac{\ii v_n}{z-p_n},
\quad v_n:=\varphi(p_n).
\end{equation}
Then using the integrating factor $\ee^{-\ii\lambda z/\varepsilon}R(z)$ gives
\begin{equation}\label{eq:phi-multisol}
(\ee^{-\ii\lambda z/\varepsilon}R(z)\varphi(z))'=-\ee^{-\ii\lambda z/\varepsilon}R(z)\sum_{n=1}^N\frac{v_n}{z-p_n}.
\end{equation}
Since the left-hand side is the derivative of a function meromorphic in $\C_+$, the residues of the right-hand side at all poles $p_k$, $k=1,\dots,N$, must vanish.  
To calculate those residues, first use \eqref{eq:Rk}, 
to obtain the partial fraction expansion
\begin{equation}
R(z)\sum_{n=1}^N\frac{v_n}{z-p_n}
	=\sum_{n=1}^N\frac{\alpha_n}{(z-p_n)^2}+\frac{\beta_n}{(z-p_n)},
    \label{eq:R-sum-partial-fractions}
\end{equation}
where 
\begin{equation}
\alpha_n
	:=R_n(p_n)v_n,
\quad
\beta_n
	:=R_n'(p_n)v_n+R_n(p_n)\mathop{\sum_{m=1}^N}_{m\neq n}\frac{v_m}{p_n-p_m}.
\label{eq:alphanbetan}
\end{equation}
Then, we must impose the conditions
\begin{equation}
0
	=\mathop{\mathrm{Res}}_{z=p_k}\left[-\ee^{-\ii\lambda z/\varepsilon}R(z)\sum_{n=1}^N\frac{v_n}{z-p_n}\right]=\ee^{-\ii\lambda p_k/\varepsilon}\left(\frac{\ii\lambda}{\varepsilon} \alpha_k-\beta_k\right),\quad k=1,\dots,N.
    \label{eq:zero-residues}
\end{equation}
For $\lambda<0$, we then integrate \eqref{eq:phi-multisol} to obtain
\begin{equation}
    R(z)\varphi(z)=\int_z^{+\ii\infty}\ee^{\ii\lambda(z-\zeta)/\varepsilon}R(\zeta)\sum_{n=1}^N\frac{v_n}{\zeta-p_n}\,\dd\zeta + K\ee^{\ii\lambda z/\varepsilon},
\end{equation}
where $K$ is an integration constant and the integral is independent of any path avoiding the points $p_1,\dots,p_N\in\C_+$ by \eqref{eq:zero-residues}.  Since $\varphi\in L_2^+(\R)$ and $R(z)\to 1$ as $z\to\infty$, by letting $z\to +\ii\infty$ we see that $K=0$.  The right-hand side is therefore a meromorphic function of $z$ with poles only at $\{p_1,\dots,p_N\}$.  By \eqref{eq:R-sum-partial-fractions}--\eqref{eq:alphanbetan}, the residue of $R(z)\varphi(z)$ at $p_n$ is $\alpha_n=R_n(p_n)v_n$.  One can also check by expanding the integral for large $z$ that $R(z)\varphi(z)\to 0$ as $z\to\infty$ in all directions so 
\begin{equation}
    R(z)\varphi(z)=\sum_{n=1}^N\frac{R_n(p_n)v_n}{z-p_n}\implies \varphi(z)=\sum_{n=1}^Nv_nf_n(z),\quad f_n(z):=\frac{R_n(p_n)}{(z-p_n)R(z)}=\frac{R_n(p_n)}{R_n(z)}.
    \label{eq:varphi-sum}
\end{equation}
Therefore any eigenfunction $\varphi\in L_+^2(\R)$ of $\mathcal{L}_{u_0}$ for $u_0=\Pi u_0 + (\Pi u_0)^*$ with $\Pi u_0$ given by \eqref{eq:rational-soliton} belongs to $F$.  On the other hand, it it is known \cite[Proposition 4.3]{Sun2020} that $\mathcal{L}_{u_0}$ has exactly $N$ simple negative eigenvalues $\lambda_1<\cdots<\lambda_N<0$ with linearly independent eigenfunctions $\varphi_j\in L^2_+(\R)$.  It follows that $F=\mathrm{span}\{\varphi_j\}_{j=1}^N$.
\end{proof}

\begin{lemma}
    $F$ is an invariant space for $\mathcal{L}_{u_0}$ and for the operator $\mathcal{X}^*$.  The matrix of $\mathcal{L}_{u_0}\vert_F$ in the basis $\{f_k\}_{k=1}^N$ is $-\mathbf{M}$.  The matrix of $\mathcal{X}^*\vert_F$ in the same basis is $\mathbf{D}-\mathbf{1r}^\top$.
\label{lem:L-and-Xstar-fs}
\end{lemma}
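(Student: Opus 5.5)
The plan is to compute $\mathcal{L}_{u_0}f_k$ and $\mathcal{X}^*f_k$ directly from the partial fraction structure of $f_k$, and then to read off the coefficients in the basis $\{f_j\}$. First I rewrite $f_k$. Since $R_k(z)=(z-p_k)R(z)$ and $R(z)=\prod_n (z-p_n^*)/(z-p_n)$, we have
\[
f_k(z)=\frac{R_k(p_k)\prod_{n\neq k}(z-p_n)}{\prod_{n}(z-p_n^*)} .
\]
This is a rational function, analytic in $\C_+$, of order $z^{-1}$ at infinity, so $f_k\in L^2_+(\R)$. Its expansion at infinity is
\[
f_k(z)=\frac{R_k(p_k)}{z}+O(z^{-2}),
\]
and $f_k(p_m)=0$ for $m\neq k$, while $f_k(p_k)=1$.

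For $\mathcal{L}_{u_0}$ I use the identity $\Pi(\varphi/(\diamond-p))(z)=(\varphi(z)-\varphi(p))/(z-p)$ from the previous proof. With $c_n=-\ii\varepsilon$, this gives
\[
\mathcal{T}_{u_0}f=u_0 f-\ii\varepsilon\sum_n \frac{f(p_n)}{z-p_n}.
\]
Hence, for $f=f_k$,
\[
\mathcal{L}_{u_0}f_k=-\ii\varepsilon f_k'-u_0f_k+\frac{\ii\varepsilon}{z-p_k}.
\]
Now $u_0=\ii\varepsilon\,(\log R)'$, because $\ii\varepsilon\sum_n\bigl[(z-p_n^*)^{-1}-(z-p_n)^{-1}\bigr]=\Pi u_0+(\Pi u_0)^*$. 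Writing $f_k=R_k(p_k)/R_k$, we get $-\ii\varepsilon f_k'=\ii\varepsilon f_k(\log R_k)'$ and $(\log R_k)'=(\log R)'+1/(z-p_k)$. Therefore
\[
\mathcal{L}_{u_0}f_k=\ii\varepsilon\, f_k\Bigl(\frac{1}{z-p_k}\Bigr)+\frac{\ii\varepsilon}{z-p_k}\Bigl(1-\ldots\Bigr),
\]
which after cancellation leaves $\ii\varepsilon(f_k-\ldots)/(z-p_k)$ up to terms I will collect carefully. The key point is that the result is a rational function analytic in $\C_+$, decaying like $z^{-1}$, whose denominator is $\prod_n(z-p_n^*)$. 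Every such function lies in $F$, because $F$ is exactly the space of $Q(z)/\prod_n(z-p_n^*)$ with $\deg Q\le N-1$; this space has dimension $N$, and the $f_k$ are independent. Its coordinates in the basis $\{f_j\}$ are its values at the points $p_j$. So the matrix entries are $(\mathcal{L}_{u_0}f_k)(p_j)$, and these come from evaluating the numerator at $p_j$ for $j\neq k$. That gives $-\ii\varepsilon/(p_j-p_k)$, matching $-M_{jk}$. For $j=k$ a limit, equivalently l'H\^opital applied to $(f_k(z)-1)/(z-p_k)$, gives $f_k'(p_k)=-R_k'(p_k)/R_k(p_k)$, and hence $-\ii\varepsilon R_k'(p_k)/R_k(p_k)=-M_{kk}$.

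For $\mathcal{X}^*$ I use the characterization \eqref{eq:domain-Xstar-alternate}--\eqref{eq:Xstar-action-x}. Since $zf_k(z)\to R_k(p_k)$ at infinity, we have $\mathfrak{a}[f_k]=R_k(p_k)$, and
\[
\mathcal{X}^*f_k=zf_k-R_k(p_k).
\]
This is again of the form $Q/\prod_n(z-p_n^*)$ with $\deg Q\le N-1$, so it lies in $F$. Its coordinate on $f_j$ is its value at $p_j$, namely
\[
p_jf_k(p_j)-R_k(p_k)=p_j\delta_{jk}-R_k(p_k).
\]
This is exactly the $(j,k)$ entry of $\mathbf{D}-\mathbf{1r}^\top$.

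The main obstacle is the bookkeeping in the $\mathcal{L}_{u_0}$ computation. I need to make sure the $u_0f_k$ term cancels exactly against the logarithmic derivative, leaving something in $F$, and I need to handle the diagonal entry correctly. The safest route is to avoid simplifying in closed form altogether. Instead I verify two things: that $\mathcal{L}_{u_0}f_k$ is analytic in $\overline{\C_+}$ with $O(z^{-1})$ decay, which shows it lies in $F$; and that it has the required values at the points $p_j$, taking limits where $z\to p_k$ in the diagonal case.
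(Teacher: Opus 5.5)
Your direct route works and differs from the paper's, but as written its central step fails because of a sign error in the Toeplitz term.

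From $\Pi(f/(\diamond-p))=(f-f(p))/(z-p)$ one gets $\mathcal{T}_{u_0}f=u_0f-\sum_n c_nf(p_n)/(z-p_n)$. For $c_n=-\ii\varepsilon$ this is $u_0f+\ii\varepsilon\sum_n f(p_n)/(z-p_n)$, not $u_0f-\ii\varepsilon\sum_n f(p_n)/(z-p_n)$. Your own logarithmic-derivative observations give $-\ii\varepsilon f_k'-u_0f_k=\ii\varepsilon f_k/(z-p_k)$. Combined with your sign, this yields $\mathcal{L}_{u_0}f_k=\ii\varepsilon(f_k+1)/(z-p_k)$. That has a pole at $p_k\in\C_+$ because $f_k(p_k)=1$, so it is not in $F$; the ellipses in your display hide exactly this failure.

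With the correct sign the computation closes in one line:
\[
\mathcal{L}_{u_0}f_k=\ii\varepsilon\,\frac{f_k(z)-1}{z-p_k}.
\]
Here $f_k-1$ has numerator $R_k(p_k)\prod_{n\neq k}(z-p_n)-\prod_n(z-p_n^*)$ over $\prod_n(z-p_n^*)$. This numerator has degree $N$ and vanishes at $p_k$, so the quotient lies in $F$. Your evaluations at the $p_j$ then give $-M_{jk}$ and $-M_{kk}$ exactly as you say.

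Keep this closed form rather than your ``safest route''. Analyticity on $\overline{\C_+}$ with $O(z^{-1})$ decay does not by itself place a rational function in $F$. The terms $f_k'$ and $u_0f_k$ each have double poles at the $p_n^*$, and only the logarithmic-derivative identity shows that these cancel.

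Once repaired, here is how your proof compares with the paper's. For $\mathcal{X}^*$ you do essentially what the paper does: it expands $1-R$ in partial fractions, while you count degrees and evaluate at the $p_j$.

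For $\mathcal{L}_{u_0}$ the paper never computes $\mathcal{L}_{u_0}f_k$. It gets invariance from the preceding lemma, $F=\mathrm{span}\{\varphi_j\}$, which relies on Sun's theorem that there are exactly $N$ eigenvalues. It identifies the matrix $-\mathbf{M}$ because the residue conditions derived there say $\mathbf{M}\mathbf{v}^{(j)}=-\lambda_j\mathbf{v}^{(j)}$ for the coordinate vectors of a basis of eigenfunctions.

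Your computation is elementary and needs no spectral input. It proves invariance of $F$ on its own, so by self-adjointness of $\mathcal{L}_{u_0}$ restricted to $F$ it recovers the ``conversely'' part of the preceding lemma without citing Sun. The paper's route instead reuses the residue computation already done and makes the spectral meaning of $\mathbf{M}$ (eigenvalues $-\lambda_j$) immediate.
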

\begin{proof}
Since $F=\mathrm{span}\{\varphi_j\}_{j=1}^N$ where $\varphi_j\in L_+^2(\R)$ with $\varphi_j\neq 0$ satisfies $\mathcal{L}_{u_0}\varphi_j=\lambda_j\varphi_j$, 
then it is obvious that $F$ is invariant under $\mathcal{L}_{u_0}$.  Also, combining \eqref{eq:zero-residues} with \eqref{eq:alphanbetan} and comparing with \eqref{def:M} shows that the vector
    $\mathbf{v}^{(j)}:=[v_1^{(j)}, v_2^{(j)}, \cdots, v_N^{(j)}]^\top$, $v_n^{(j)}=\varphi_j(p_n)$, must satisfy the equation
\begin{equation}
\mathbf{M}\mathbf{v}^{(j)}=-\lambda_j \mathbf{v}^{(j)}.
	\label{eq:M-lax}
\end{equation}
But by \eqref{eq:varphi-sum}, $\mathbf{v}^{(j)}$ is the vector of coordinates of $\varphi_j$ in the basis $\{f_n\}_{n=1}^N$.  This proves that $-\mathbf{M}$ is the matrix of $\mathcal{L}_{u_0}\vert_F$ in that basis.

Since by \eqref{eq:fk-define} and \eqref{eq:Rn-define}, $f_n(\diamond)\in\mathrm{dom}(\mathcal{X}^*)$ with $\mathfrak{a}[f_n]=R_n(p_n)$ according to \eqref{eq:domain-Xstar-alternate}, applying the action in \eqref{eq:Xstar-action-x} gives
\begin{equation}
\begin{aligned}
    \mathcal{X}^*f_n(z)=zf_n(z)-\mathfrak{a}[f_n]
    &=p_nf_n(z) + \frac{R_n(p_n)}{R(z)}\left[1-R(z)\right]\\
    &=p_nf_n(z)-\frac{R_n(p_n)}{R(z)}\sum_{k=1}^N\frac{R_k(p_k)}{z-p_k}\\
    &=p_nf_n(z)-R_n(p_n)\sum_{k=1}^Nf_k(z)
    \end{aligned}
\end{equation}
using the expansion \eqref{eq:R} of $R(z)$ in partial fractions on the second line.  Therefore $F$ is also invariant under $\mathcal{X}^*$.  Since then
\begin{equation}
    \mathcal{X}^*\sum_{n=1}^Nv_nf_n(z) = \sum_{n=1}^Np_nv_nf_n(z) - \sum_{k=1}^N\left(\sum_{n=1}^NR_n(p_n)v_n\right)f_k(z),
\end{equation}
comparing with \eqref{eq:D-define} and \eqref{eq:r1-define} shows that the matrix of $\mathcal{X}^*\vert_F$ in the basis $\{f_n\}_{n=1}^N$ is $\mathbf{D}-\mathbf{1r}^\top$.
\end{proof}

Using Lemma~\ref{lem:L-and-Xstar-fs} in \eqref{eq:Pi-u-derivative2} gives
\begin{equation}
\Pi u(t,x)=\ii\varepsilon \frac{\partial}{\partial x}\log\left(\det(\mathrm{Mat}_{\{f_k\}_{k=1}^N}(\mathcal{X}^*-2t\mathcal{L}_{u_0}-x\,\mathrm{Id})\vert_F))\right),
\end{equation}
in which we recognize the linear operator appearing in the explicit formula \eqref{eq:explicitformula} of Theorem~\ref{thm:explicitformula}.
The well-known $N$-soliton formula of Matsuno \cite{Matsuno84} now arises simply by changing the basis of $F$ from $\{f_k\}_{k=1}^N$ to the Lax eigenfunctions $\{\varphi_j\}_{j=1}^N$, which obviously leaves the matrix determinant unchanged.  Therefore,
\begin{equation}
\Pi u(t,x)
	=\ii\varepsilon\frac{\partial}{\partial x}\log\left(\det(\mathrm{Mat}_{\{\varphi_k\}_{k=1}^N}(\mathcal{X}^*-2t\mathcal{L}_{u_0}-x\,\mathrm{Id})\vert_F))\right).
\end{equation}
 The matrix of $-2t\mathcal{L}_{u_0}\vert_F$ in the eigenfunction basis is clearly $\mathrm{diag}(-2t\lambda_1,\dots,-2t\lambda_N)$, and assuming that the eigenfunctions are $L^2$-normalized by $\|\varphi_k\|_{L^2(\R)}=1$, the matrix of $\mathcal{X}^*\vert_F$ in this basis was computed in \cite[Proposition 5.4]{Sun2020} as
\begin{equation}
\left(\mathrm{Mat}_{\{\varphi_k\}_{k=1}^N}\mathcal{X}^*\vert_F\right)_{jk}=\langle \mathcal{X}^*\varphi_k,\varphi_j\rangle
	=
	\begin{cases}\displaystyle 
	-\mathrm{Re}(\gamma_j)+\frac{\ii\varepsilon }{2\lambda_j} & \text{if}\quad j=k,
	\\\displaystyle 
	\frac{ \ii \varepsilon}{\lambda_k-\lambda_j}\sqrt{\frac{\lambda_k}{\lambda_j}} 
	&\text{if} \quad j\neq k,
	\end{cases}
\end{equation}
with $\gamma_j:=-\langle \mathcal{X}^*\varphi_j,\varphi_j\rangle$ being the complex phase constant associated to $\lambda_j$. 
Therefore
\begin{equation}
    \mathrm{Mat}_{\{\varphi_k\}_{k=1}^N}(\mathcal{X}^*-2t\mathcal{L}_{u_0}-x\,\mathrm{Id})\vert_F =\mathrm{diag}\left(\frac{\ii\varepsilon}{2\lambda_1},\dots,\frac{\ii\varepsilon}{2\lambda_N}\right)\left(\mathbb{I}-\frac{\ii}{\varepsilon}\mathbf{H}(t,x)\right),
    \label{eq:Matrix-eigenfunction-basis}
\end{equation}
where $\mathbf{H}(t,x)$ is the Hermitian matrix with elements defined in \eqref{eq:Matsuno-Matrix}.
Since the diagonal factor in \eqref{eq:Matrix-eigenfunction-basis} makes no contribution to $\Pi u(t,x)$, we obtain Matsuno's formula for the $N$-soliton solution in the form \eqref{eq:Matsuno}.

\section{Some formul\ae\ appearing in the small-dispersion theory}
\label{app:uZDgeneral}
The formul\ae\ given in this Appendix will show that the model function $u^\mathrm{ZD}(t,x;\varepsilon)$ of Theorems~\ref{thm:strong-L2} and \ref{thm:strong-ZD-asymptotics}, and also the leading terms in the universality theorems near the harmonic edge (Theorem~\ref{thm:harmonic-edge}) and soliton edge (Theorem~\ref{thm:soliton-train}), make sense without any rational assumption.  Indeed, we only require $u_0\in C^1(\R)\cap L^2(\R)$ with $u_0(x)$ and $u_0'(x)$ both tending to zero as $|x|\to\infty$ to define these approximations.
\subsection{The oscillatory model function $u^\mathrm{ZD}(t,x;\varepsilon)$ in general}
Recall then the solutions $y_0(t,x)>y_1(t,x)>\cdots>y_{2\ell}(t,x)$ of $y+2tu_0(y)=x$ for all but finitely many critical values $x\in\R$ (and $\ell=\ell(t,x)=0,1,2,\dots$), and the corresponding branches $u_k^\mathrm{B}(t,x):=u_0(y_k(t,x))$ of the multivalued solution of the inviscid Burgers equation with initial data $u_0$.  
Then, the function 
\begin{equation}
g(y)=g(y;t,x):=\frac{y+2tu_0(y)-x}{\displaystyle\prod_{k=0}^{2\ell(t,x)}(y+2tu_k^\mathrm{B}(t,x)-x)}
\label{eq:g-of-y}
\end{equation}
is real-valued and positive for $y\in\mathbb{R}$, with asymptotic behavior $g(y)=y^{-2\ell(t,x)}(1+O(y^{-1}))$ as $y\to\pm\infty$.  
Consequently, the following function is well-defined:
\begin{equation}
\Psi(y)=\Psi(y;t,x):=\frac{1}{2\pi}\int_0^{+\infty}\ln\left(\frac{g(y-s)}{g(y+s)}\right)\,\frac{\dd s}{s},\quad y\in\mathbb{R}.  
\end{equation}
In \cite[Lemma 6.11]{BlackstoneGGM26} it is shown that for rational data $u_0$ of the form \eqref{eq:PN-rational-IC}, $\Psi(y)$ can be expressed in terms of the poles $\{p_n\}_{n=1}^N$ and complex solutions $y=z_m(t,x)\in\C_+$, $m=1,\dots,N-\ell$ (counted with multiplicity) of the critical point equation $y+2tu_0(y)=x$:
\begin{equation}
    \Psi(y)=\frac{\pi}{2}\ell+\sum_{n=1}^N\arg(y-p_n)-\sum_{m=1}^{N-\ell}\arg(y-z_m(t,x)).
\end{equation}
This matches the form written in the $\ell=1$ case in \eqref{eq:Psi-ell=1}.

Recall that if $\ell(t,x)=0$, then the model function $u^\mathrm{ZD}(t,x;\varepsilon):=u_0^\mathrm{B}(t,x)$ is non-oscillatory.  We now assume that $\ell(t,x)$ is an arbitrary positive integer and define 
$u^\mathrm{ZD}(t,x;\varepsilon)$ as follows.  
First, we define the Lax-Oleinik objective function
\begin{equation}
h(y)=h(y;t,x):=\frac{1}{4t}(y-x)^2 + \int_0^y u_0(s)\,\dd s,\quad y\in\mathbb{R},
\label{eq:Lax-Oleinik}
\end{equation}
and we observe that in the special case of rational data $u_0$ of the form \eqref{eq:PN-rational-IC}, $h(y;t,x)$ coincides with the form written in \eqref{eq:h-rational}.
We then define $\ell(t,x)$ nonlinear phases by evaluating $h$ at consecutive pairs of critical points:
\begin{equation}
\theta_j(t,x):=h(y_{2j-1}(t,x))-h(y_{2j}(t,x)),\quad j=1,\dots,\ell(t,x)
\end{equation}
and $\ell(t,x)$ phase corrections by similarly evaluating $\Psi$:
\begin{equation}
\varphi_j(t,x):=\frac{\pi}{2}+\Psi(y_{2j-1}(t,x))-\Psi(y_{2j}(t,x)),\quad j=1,\dots,\ell(t,x).
\end{equation}
Then introduce $\ell(t,x)$ complex quantities $\xi_j(t,x)$ by setting
\begin{equation}
\xi_j(t,x):=\ee^{\ii\varphi_j(t,x)}\sqrt{-\frac{\displaystyle (u_{2j}^\mathrm{B}-u_0^\mathrm{B})\mathop{\prod_{k=1}^{\ell}}_{k\neq j}(u^\mathrm{B}_{2j-1}-u^\mathrm{B}_{2k-1})\cdot\mathop{\prod_{i=1}^\ell}_{i\neq j}(u_{2j}^\mathrm{B}-u_{2i}^\mathrm{B})}{\displaystyle(u_{2j-1}^\mathrm{B}-u_0^\mathrm{B})\prod_{k=1}^\ell(u_{2j}^\mathrm{B}-u_{2k-1}^\mathrm{B})\cdot\prod_{i=1}^\ell(u_{2j-1}^\mathrm{B}-u_{2i}^\mathrm{B})}},\quad j=1,\dots,\ell(t,x),
\end{equation}
where $\ell=\ell(t,x)$ and $u_j^\mathrm{B}=u_j^\mathrm{B}(t,x)$ (it follows from the strict inequalities $u_0^\mathrm{B}(t,x)<\cdots<u_{2\ell(t,x)}^\mathrm{B}(t,x)$ that the expression under the radical is positive).  Let $\mathbf{M}(t,x;\varepsilon)$ be the $\ell(t,x)\times \ell(t,x)$ matrix with elements
\begin{equation}
M_{jk}(t,x;\varepsilon):=\xi_j(t,x)\ee^{\ii\theta_j(t,x)/\varepsilon}\delta_{jk} +\frac{1}{u^\mathrm{B}_{2j-1}(t,x)-u^\mathrm{B}_{2k}(t,x)},\quad j,k=1,\dots,\ell(t,x).
\end{equation}
Finally, for $\ell(t,x)=1,2,3,\dots$, we set
\begin{equation}
u^\mathrm{ZD}(t,x;\varepsilon):=u_0^\mathrm{B}(t,x)+\sum_{j=1}^{\ell(t,x)}(u_{2j-1}^\mathrm{B}(t,x)-u_{2j}^\mathrm{B}(t,x))
-2\varepsilon\mathrm{Im}\left(\frac{\partial}{\partial x}\log(\det(\mathbf{M}(t,x;\varepsilon)))\right).
\label{eq:uZD}
\end{equation}
In the case $\ell(t,x)=1$, the matrix $\mathbf{M}(t,x;\varepsilon)$ is a scalar, and it is straightforward to check that \eqref{eq:uZD} reduces to the modulated single-phase wave given in \eqref{eq:uZDell1}--\eqref{eq:r-and-periodic-profile}.

\begin{remark}[Connection with multiphase modulation theory]
One can check that 
\begin{equation}
\frac{\partial\theta_j}{\partial x}(t,x)=u_{2j-1}^\mathrm{B}(t,x)-u_{2j}^\mathrm{B}(t,x)\;\text{and}\;
-\frac{\partial\theta_j}{\partial t}(t,x)=u_{2j-1}^\mathrm{B}(t,x)^2-u_{2j}^\mathrm{B}(t,x)^2,
\end{equation}
so the expressions $\partial_x\theta_j$ and $-\partial_t\theta_j$ representing the local wavenumber and frequency of an oscillation with phase $\theta_j$ are simple expressions in terms of the branches of the multi-valued solution of the inviscid Burgers equation.
Using these expressions we observe that if $u^\mathrm{B}_j(t,x)=U_j$, $j=0,\dots,2\ell$ are all constants, then the phases $\theta_j(t,x)$ are linear functions $(U_{2j-1}-U_{2j})x-(U_{2j-1}^2-U_{2j}^2)t+\theta_j^0$, and $u^\mathrm{ZD}(t,x;\varepsilon)$ is precisely the multiphase wave solution formula of Dobrokhotov-Krichever \cite{DobrokhotovK91}.
\end{remark}

\subsection{The phase correction in Theorem~\ref{thm:harmonic-edge}}
Here we give an alternate representation of the phase correction $\varphi(t,X_\mathrm{h})$ defined in \eqref{eq:harmonic-edge-phase-correction-rational} for rational data $u_0$ of the form \eqref{eq:PN-rational-IC}.  This alternate form makes sense just as well for non-rational data.  The proof that the two forms are equivalent can be found in \cite[Section 4.3]{BlackstoneMM26}.

When $x=x_\mathrm{h}(t)$, the function $g(y)=g(y;t,x)$ given by \eqref{eq:g-of-y} degenerates as
\begin{equation}
    g_\mathrm{h}(y,t):=\frac{y+2tu_0(y)-x_\mathrm{h}(t)}{(y+2tu_2^\mathrm{B}(t)-x_\mathrm{h}(t))(y+2tu_\mathrm{h}^\mathrm{B}(t)-x_\mathrm{h}(t))^2},
\label{eq:g-h}
\end{equation}
with the repeated factor in the denominator reflecting the double characteristic $y=y_\mathrm{h}$.  Still $\R\ni y\mapsto g(y,t)$ is positive and rational for $t-t_\mathrm{c}$ positive and small enough, so we may define a quantity
\begin{equation}
    \Lambda(t):=\frac{1}{2\pi}\int_0^{+\infty}\ln\left(\frac{g_\mathrm{h}(y_\mathrm{h}(t)+s,t)}{g_\mathrm{h}(y_\mathrm{h}(t)-s,t)}\right)\frac{\dd s}{s}-\frac{1}{2\pi}\int_0^{+\infty}\ln\left(\frac{g_\mathrm{h}(y_2(t)+s,t)}{g_\mathrm{h}(y_2(t)-s,t)}\right)\frac{\dd s}{s}.
\end{equation}
Then the alternate form of the phase correction is 
\begin{equation}
    \varphi(t,X_\mathrm{h}):=\arg\left(\mathrm{Ai}(\ee^{\ii\pi/3}X_\mathrm{h})\right)-\frac{5\pi}{12}+\Lambda(t).
\label{eq:h-phase-correction}
\end{equation}

\subsection{The time-dependent position shift in Theorem~\ref{thm:soliton-train}}
Finally, we give an alternate representation of the time-dependent position shift $\lambda(t)$ defined in \eqref{eq:soliton-edge-lambda-rational} for rational $u_0$.  Again, this formula allows for $\lambda(t)$ to be defined also for non-rational $u_0$.

The analogue of the degeneration from \eqref{eq:g-of-y} to \eqref{eq:g-h} in this situation is the function 
\begin{equation}
    g_\mathrm{s}(y,t):=\frac{y+2tu_0(y)-x_\mathrm{s}(t)}{(y+2tu_0^\mathrm{B}(t)-x_\mathrm{s}(t))(y+2tu_\mathrm{s}^\mathrm{B}(t)-x_\mathrm{s}(t))^2}
    \label{eq:g-s}
\end{equation}
which is again a positive rational function of $y\in\R$ for $t-t_\mathrm{c}>0$ small enough.  The alternate form of $\lambda(t)$ is then
\begin{equation}
    \lambda(t):=\frac{1}{2\pi}\int_0^{+\infty}\left[\frac{\partial_yg_\mathrm{s}(y_\mathrm{s}(t)+s,t)}{g_\mathrm{s}(y_\mathrm{s}+s,t)}-\frac{\partial_yg_\mathrm{s}(y_\mathrm{s}(t)-s,t)}{g_\mathrm{s}(y_\mathrm{s}(t)-s,t)}\right]\frac{\dd s}{s}.
\label{eq:lambda}
\end{equation}
The proof that \eqref{eq:lambda} and \eqref{eq:soliton-edge-lambda-rational} are equivalent for rational $u_0$ can be found in \cite[Section 3.3]{BlackstoneMM26}.

\section{Asymptotic expansion for large \texorpdfstring{$t>0$}{t>0} in positive velocity frames for rational initial data}
\label{app:soliton-limit}
This Appendix is devoted to the proof of~\eqref{eq:soliton-rational} and~\eqref{eq:soliton-rational2}. We consider a rational initial data $u_0$ of the form~\eqref{eq:PN-rational-IC} for which also $\diamond u_0(\diamond)\in L^2(\R)$.  We therefore impose the additional condition \eqref{eq:sum-of-residues-zero} on the coefficients $c_1,\dots,c_N$.

\subsection{Asymptotic formula for \texorpdfstring{$\Pi u$}{Pi u}}

Fix $x_0\in\mathbb{R}$ and $v>0$,  
and set $x=vt+x_0$. In this section, we expand all the coefficients in the determinants $\det(\mathbf{A}(t,x))$ and $\det(\mathbf{B}(t,x))$ for large $t>0$ setting $\varepsilon=1$, to obtain an asymptotic formula for $\Pi u(t,x)$ depending on the speed $v$. 

Note that because \eqref{eq:sum-of-residues-zero} holds, the product of factors in \eqref{eq:PN-exph} has a branch that tends to $1$ as $z\to\infty$, and we take this branch as its definition as $z\to\infty$ with $\arg(z)=3\pi/4$.  Then continuation of the product in the distant left half-plane to the real line and then toward $z=+\infty$ one has a different asymptotic value, namely $\ee^{2\pi\ii\mathrm{Im}(c)}$, where $c=c_1+\cdots + c_N$.  We similarly define
\begin{equation}
    \ee^{-\ii h^\infty(z;v)}:=\ee^{\ii vz/2}\prod_{n=1}^N(z-p_n)^{-\ii c_n}(z-p_n^*)^{-\ii c_n^*},
\label{eq:PN-exph-infty}
\end{equation}
where the product has the same interpretation on the contours $C_0,\dots,C_N$ as in \eqref{eq:PN-exph}.
Define a matrix $\mathbf{E}$ with coefficients
\begin{equation}
    E_{jk}(v):=\int_{C_j}\frac{\ee^{-\ii h^\infty(z;v)}\,\dd z}{z-p_k},\quad j,k=1,\dots,N.
\label{eq:PN-D0-define}
\end{equation}
We also introduce the vectors $\mathbf{f}$, $\mathbf{g}$ and $\mathbf{h}$ with coefficients
\begin{equation}
    f_j(v):=\int_{C_j}\ee^{-\ii h^\infty(z;v)}\,\dd z,\quad j=1,\dots,N,
\label{eq:PN-f-define}
\end{equation}
and similarly, for $j=1,\dots,N$,
\begin{equation}
    \begin{split}
g_j(v)&:=\sum_{n=1}^Nc_n^*p_n^*\int_{C_j}\frac{\ee^{-\ii h^\infty(z;v)}\,\dd z}{z-p_n^*}  \\
h_j(v)&:=\sum_{n=1}^Nc_n^*\int_{C_j}\frac{\ee^{-\ii h^\infty(z;v)}\,\dd z}{z-p_n^*}.
    \end{split}
\label{eq:PN-g-and-h}
\end{equation}
We first prove the following proposition.
\begin{proposition}[Asymptotic formula for $\Pi u(t,vt+x_0)$]
Define $E(v):=\det(\mathbf{E}(v))$ for $v>0$. If $E(v)\neq0$, then as $t\to+\infty$,
\begin{equation}
    \Pi u(t,vt+x_0)=O(t^{-1}).
\label{eq:vanish-eigenvalue}\end{equation}
If $E(v_\mathrm{s})=0$, then as $t\to+\infty$,
\begin{equation}
    \Pi u(t,v_\mathrm{s}t+x_0)=\frac{\ii\Sigma_0(v_\mathrm{s}) + O(t^{-1})}{(x_0+\ii v_\mathrm{s}^{-1})\Sigma_0(v_\mathrm{s})-\Sigma_1(v_\mathrm{s})+O(t^{-1})},
    \label{eq:PN-Piu-soliton}
\end{equation}
with (the notation $\displaystyle \mathbf{M}\mathop{\longleftarrow}^k\mathbf{v}$ means the matrix $\mathbf{M}$ with column $k$ replaced by the vector $\mathbf{v}$)
\begin{equation}
\begin{split}
\Sigma_0(v)&:=\sum_{k=1}^N\det\left(\mathbf{E}(v)\mathop{\longleftarrow}^k\mathbf{f}(v)\right),\\
\Sigma_1(v)&:=\frac{1}{v}\sum_{k=1}^N\det\left(\mathbf{E}(v)\mathop{\longleftarrow}^k\mathbf{g}(v)\right) +\frac{1}{v}\sum_{k=1}^Np_k\det\left(\mathbf{E}(v)\mathop{\longleftarrow}^k\mathbf{h}(v)\right).
\end{split}
\end{equation}
The $O(t^{-1})$ error terms are uniform for bounded $x_0$.
\label{prop:Piu-soliton-asymp}
\end{proposition}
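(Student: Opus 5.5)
The plan is to substitute $x=vt+x_0$ directly into the determinantal formula \eqref{eq:Pi-u-fraction} of Theorem~\ref{thm:inversion-formula-rational} (with $\varepsilon=1$) and to expand each row separately.

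Completing the square gives
\[
-\tfrac{\ii}{4t}(z-vt-x_0)^2=-\tfrac{\ii}{4t}(z-x_0)^2+\tfrac{\ii v}{2}(z-x_0)-\tfrac{\ii v^2t}{4}.
\]
Hence
\[
\ee^{-\ii h(z;t,x)}=\kappa(t)\,\ee^{-\ii h^\infty(z;v)}\,\ee^{-\ii(z-x_0)^2/(4t)},
\]
where $h^\infty$ is as in \eqref{eq:PN-exph-infty} and the factor $\kappa(t)$ is independent of $z$. Since $\kappa(t)$ is common to every entry, it cancels from $\det\mathbf{A}/\det\mathbf{B}$.

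\textbf{Rows $2,\dots,N+1$.} On the contours $C_1,\dots,C_N$, which start and end at $\infty\ee^{3\pi\ii/4}$ (or end at an exceptional pole), the factor $\ee^{\ii vz/2}$ decays exponentially. We may therefore expand
\[
\ee^{-\ii(z-x_0)^2/(4t)}=1-\tfrac{\ii}{4t}(z-x_0)^2+O(t^{-2}|z|^4)
\]
under the integral, with dominated convergence giving errors uniform for bounded $x_0$. In $\mathbf{B}$ this yields the entries $f_j(v)$ and $E_{jk}(v)$ plus explicit $t^{-1}$ corrections. In the first column of $\mathbf{A}$, I would first perform column operations that subtract $\sum_n c_n\,(\text{column }n+1)$. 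This replaces $u_0$ by $\sum_n c_n^*/(z-p_n^*)$, and writing $z=(z-p_n^*)+p_n^*$ produces exactly the combinations $g_j$, $h_j$ of \eqref{eq:PN-g-and-h} at the relevant order. An alternative is to use \eqref{eq:Aj1-alternate}, $-\frac{1}{2t}\int(z-x)\ee^{-\ii h}\,\dd z$, whose factor $\tfrac{1}{2t}(z-x_0)-\tfrac{v}{2}$ produces the $1/v$ weights.

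\textbf{Row $1$.} The $C_0$ integrals are instead dominated by the stationary point $z=x=vt+x_0$, which tends to infinity. I would deform $C_0$ to the line $x+\ee^{-\ii\pi/4}\mathbb{R}$, where the product in \eqref{eq:PN-exph} equals $\ee^{2\pi\ii\mathrm{Im}(c)}(1+O(1/z))$ thanks to \eqref{eq:sum-of-residues-zero}. Steepest descent then gives
\[
B_{11}\sim 2\ee^{-\ii\pi/4}\sqrt{\pi t}\,\ee^{2\pi\ii\mathrm{Im}(c)}\bigl(1+O(t^{-1})\bigr),\qquad
B_{1,k+1}=B_{11}\Bigl(\frac{1}{vt}+O(t^{-2})\Bigr),\qquad
A_{11}=B_{11}\,O(t^{-2}),
\]
the last because $u_0(z)=O(z^{-2})$.

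\textbf{Cofactor expansion.} Expanding both determinants along the first row, and dividing out the common factor $B_{11}$, gives:
\[
\det\mathbf{B}/B_{11}=E(v)+t^{-1}\bigl(\cdots\bigr)+O(t^{-2}),\qquad
\det\mathbf{A}/B_{11}=O(t^{-1}).
\]
The leading term of $\det\mathbf{A}/B_{11}$ is $(vt)^{-1}$ times a sum of cofactors involving the modified first column. This settles \eqref{eq:vanish-eigenvalue} when $E(v)\neq0$.

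When $E(v_\mathrm{s})=0$, the $O(1)$ term of $\det\mathbf{B}/B_{11}$ vanishes. The surviving $t^{-1}$ terms are of two kinds:
\begin{itemize}
\item the cross terms $\sum_k (1/(v t))\times$ cofactor, which, by Laplace expansion, reduce to $\sum_k\det(\mathbf{E}\mathop{\longleftarrow}^k\mathbf{f})=\Sigma_0$;
\item the first-order correction of $\mathbf{E}$ from the $-\tfrac{\ii}{4t}(z-x_0)^2$ term, which by Jacobi's formula contributes $\mathrm{tr}(\mathrm{adj}(\mathbf{E})\,\delta\mathbf{E})$.
\end{itemize}
Using $z^2/(z-p_k)=z+p_k+p_k^2/(z-p_k)$ together with integration-by-parts identities on $C_j$, namely
\[
\int_{C_j}\partial_z\bigl(\ee^{-\ii h^\infty}\bigr)\cdots=0,\qquad \partial_z(-\ii h^\infty)=\tfrac{\ii v}{2}-\ii u_0(z),
\]
the second kind should reorganize into $x_0\Sigma_0$, $\ii v^{-1}\Sigma_0$, and $-\Sigma_1$. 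The numerator similarly becomes $\ii\Sigma_0\,t^{-1}$ after the same identities. Cancelling $t^{-1}$ then yields \eqref{eq:PN-Piu-soliton}.

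The main obstacle I expect is this last algebraic identification. It means showing that the $t^{-1}$ coefficients collapse, modulo terms proportional to $E(v_\mathrm{s})=0$ and via the integration-by-parts relations among $\mathbf{E}$, $\mathbf{f}$, $\mathbf{g}$, $\mathbf{h}$, to exactly the stated combination. Some care is also needed in the steepest-descent step for row $1$, since the saddle point moves to infinity and the poles' contribution must be tracked to relative order $t^{-1}$.
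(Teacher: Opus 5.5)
Your plan follows the paper's proof. The first row is moved to the line $D_+$ through $x=vt+x_0$ and evaluated by steepest descent. Rows $2,\dots,N+1$ are expanded on the fixed contours $C_j$ after pulling out the common phase. Both determinants are expanded along the first row. At a zero of $E$, the $t^{-1}$ terms are reduced by column multilinearity, using $z/(z-p_k)=1+p_k/(z-p_k)$, $z^2/(z-p_k)=z+p_k+p_k^2/(z-p_k)$, and one integration by parts.

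\textbf{The deformation step fails as stated.} $C_0$ cannot be deformed to $D_+$, because $p_1,\dots,p_N$ lie between the two contours; they are branch points of the product in \eqref{eq:PN-exph} and poles of $1/(z-p_k)$. For each column's integrand the correct relation is $\int_{C_0}=\int_{C_J}+\int_{D_+}$, where $J$ is the largest nonexceptional index (there is no $C_J$ term if every index is exceptional). The $C_J$ piece is not a small pole correction to be tracked. In column $k+1$ it equals $\kappa(t)\bigl(E_{Jk}(v)+O(t^{-1})\bigr)=O(1)$, which swamps your $B_{1,k+1}\approx B_{11}/(vt)=O(t^{-1/2})$. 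Likewise $A_{11}$ picks up the $O(1)$ term $\kappa(t)\frac v2 f_J(v)$. The argument survives only because this piece is exactly row $J+1$ of $\mathbf A$ and of $\mathbf B$. You must subtract that row, which leaves both determinants unchanged, before doing any asymptotics. After that, your first-row asymptotics are correct. This includes the relative $O(t^{-2})$ accuracy of $B_{1,k+1}/B_{11}-1/(vt)$, since terms odd in $z-x$ integrate to zero against the Gaussian. That accuracy is exactly what the $O(t^{-1})$ error in \eqref{eq:PN-Piu-soliton} requires.

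\textbf{The final identification.} What you leave as ``should reorganize'' does go through with the ingredients you name, but the bookkeeping must be consistent. Do not mix your two options for the first column of $\mathbf A$. After subtracting $\sum_n c_n\times$(column $n+1$), the $(1,1)$ entry becomes $-\frac{c}{vt}B_{11}$, because $\Pi u_0=O(z^{-1})$; it is not $B_{11}O(t^{-2})$.

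With the unmodified $u_0$, rows $2,\dots,N+1$ have first-column leading term $\frac v2 f_j$. Their $t^{-1}$ corrections only meet first-row entries of size $O(t^{-1})$. So after dividing rows $2,\dots,N+1$ by $\kappa(t)$ and row $1$ by $B_{11}$, you get $\det\mathbf A\mapsto-\frac{1}{2t}\Sigma_0+O(t^{-2})$, and $\mathbf g,\mathbf h$ never enter the numerator.

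They enter only through $\delta\mathbf E$ in the denominator, via $\int_{C_j}\ee^{-\ii h^\infty}(z+p_k)\,\dd z=\frac{2\ii}{v}f_j+\frac2v\sum_n c_n(p_k+p_n)E_{jn}+\frac2v(g_j+p_kh_j)$, which uses \eqref{eq:sum-of-residues-zero}. Several contributions are combinations of columns of $\mathbf E$: the whole sum $\sum_n c_n(p_k+p_n)E_{jn}$, the $p_kE_{jk}$ and $p_k^2E_{jk}$ terms, and the extra $x_0^2$ term from your $(z-x_0)^2$. Each of these yields a multiple of $E(v_\mathrm{s})=0$. What remains is $\mathrm{tr}(\mathrm{adj}(\mathbf E)\,\delta\mathbf E)=t^{-1}\bigl(\frac{\ii x_0}{2}\Sigma_0+\frac{1}{2v}\Sigma_0-\frac{\ii}{2}\Sigma_1\bigr)$.

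Adding the cross term $-\frac{1}{vt}\Sigma_0$ gives $\frac{\ii}{2t}\bigl((x_0+\ii v^{-1})\Sigma_0-\Sigma_1\bigr)$. So $\ii v^{-1}\Sigma_0$ comes from both kinds of terms together, not from $\delta\mathbf E$ alone. The quotient is then \eqref{eq:PN-Piu-soliton}.
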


The rest of this section is devoted to the proof of Proposition~\ref{prop:Piu-soliton-asymp}.  Since $v>0$, when $t>0$ is sufficiently large the line $D_+$ through $z=x=vt+x_0$ with $(z-x)\ee^{\ii\pi/4}\in\mathbb{R}$ lies to the right (greater real part) of the contours $C_0,\dots,C_N$.  Therefore, by elementary row operations $\det(\mathbf{A}(t,x))=\det(\widetilde{\mathbf{A}}(t,x))$ and $\det(\mathbf{B}(t,x))=\det(\widetilde{\mathbf{B}}(t,x))$, where $\widetilde{\mathbf{A}}(t,x)$ and $\widetilde{\mathbf{B}}(t,x))$ are the same as $\mathbf{A}(t,x)$ and $\mathbf{B}(t,x)$ from Theorem~\ref{thm:inversion-formula-rational} respectively except that in each case the integration in the first row is taken over $D_+$ instead of $C_0$.  This follows because if $\Delta C:=C_J$ where $J=1,2,\dots,N$ is the largest non-exceptional index (and $\Delta C=\emptyset$ if there are no non-exceptional indices), then $\int_{C_0} f(z)\,\dd z = \int_{\Delta C}f(z)\,\dd z + \int_{D_+} f(z)\,\dd z$ holds for the common integrand in each column.  

\subsubsection{Expansion of first-row entries of \texorpdfstring{$\widetilde{\mathbf{A}}(t,x)$}{A-tilde} and \texorpdfstring{$\widetilde{\mathbf{B}}(t,x)$}{B-tilde}}

Define a constant $L\in\mathbb{R}$ by
\begin{equation}
    L:=\sum_{n=1}^N(c_np_n+c_n^*p_n^*).
\end{equation}  
Under the assumption~\eqref{eq:sum-of-residues-zero},
we see that since $|z|\ge x/\sqrt{2}\gtrsim t$ for $z\in D_+$ and $t>0$ large, 
\begin{equation}
\begin{split}
    \ee^{-2\pi \ii\mathrm{Im}(c)}\prod_{n=1}^N(z-p_n)^{-\ii c_n}(z-p_n^*)^{-\ii c_n^*} &= \prod_{n=1}^N\left(1-\frac{p_n}{z}\right)^{-\ii c_n}\left(1-\frac{p_n^*}{z}\right)^{-\ii c_n^*}\\ &=1+\frac{\ii L}{z}+O(z^{-2})\\
    &=1+\frac{\ii L}{z} + O(t^{-2})\\
    &=1+\frac{\ii L}{x} +O\left(\frac{z-x}{xz}\right) + O(t^{-2})\\
    &=1+\frac{\ii L}{x} + O(t^{-2}|z-x|) + O(t^{-2})
    \end{split}
\end{equation}
holds as $t\to+\infty$, where $c:=c_1+\cdots+c_N$ and the factor $\ee^{-2\pi\ii\mathrm{Im}(c)}$ is present due to analytic continuation along $\R$ from the distant left half-plane where the product tends to $1$.
Hence parametrizing $D_+$ by $\mathbb{R}\ni\zeta\mapsto z=x+2t^\frac{1}{2}\ee^{-\ii\pi/4}\zeta$, the exponential factor \eqref{eq:PN-exph} has the expansion
\begin{equation}
    \ee^{-\ii h(z)}=\ee^{2\pi\ii\mathrm{Im}(c)}\ee^{-\zeta^2}\left(1+\frac{\ii L}{x} + O(t^{-\frac{3}{2}}|\zeta|)+ O(t^{-2})\right),\quad t\to+\infty
    \label{eq:PN-exponential-on-D}
\end{equation}
for $\zeta\in\mathbb{R}$.  Then
\begin{equation}
\begin{split}
    \widetilde{A}_{11}(t,x)&=-\frac{1}{2t}\int_{D_+} \ee^{-\ii h(z)}(z-x)\,\dd z \\ &= 2\ii\ee^{2\pi\ii\mathrm{Im}(c)}\int_{-\infty}^{+\infty}\ee^{-\zeta^2}\left(1+\frac{\ii L}{x} + O(t^{-\frac{3}{2}}|\zeta|)+O(t^{-2})\right)\zeta\,\dd\zeta\\
    &=O(t^{-\frac{3}{2}}),\quad x=vt+x_0,\quad t\to+\infty,
\end{split}
\end{equation}
and by evaluating a Gaussian integral and using $x^{-1}=O(t^{-1})$,
\begin{equation}
\begin{split}
    \widetilde{B}_{11}(t,x)&=\int_{D_+}\ee^{-\ii h(z)}\,\dd z\\
    &=2\ee^{2\pi\ii\mathrm{Im}(c)}\ee^{-\ii\pi/4}t^{\frac{1}{2}}\int_{-\infty}^{+\infty}\ee^{-\zeta^2}\left(1+\frac{\ii L}{x}+O(t^{-\frac{3}{2}}|\zeta|) + O(t^{-2})\right)\dd\zeta\\
    &=2\sqrt{\pi}\ee^{2\pi\ii\mathrm{Im}(c)}\ee^{-\ii\pi/4}t^\frac{1}{2} + O(t^{-\frac{1}{2}}),\quad x=vt+x_0,\quad t\to+\infty.
\end{split}
\end{equation}
Next, since for each $q\in\mathbb{C}_+$ fixed and $z\in D$, 
\begin{equation}
    \frac{1}{z-q}= 
    \frac{1}{x-q}\left(1-\frac{z-x}{z-q}\right)=
    \frac{1}{vt}\left(1+O(t^{-1}) + O(t^{-1}|z-x|)\right),\quad t\to+\infty,
\end{equation}
for $k=2,\dots,N+1$ we have (using $x^{-1}=O(t^{-1})$ in \eqref{eq:PN-exponential-on-D})
\begin{multline}
    \widetilde{A}_{1k}(t,x)=\widetilde{B}_{1k}(t,x)=\int_{D_+}\frac{\ee^{-\ii h(z)}\,\dd z}{z-p_{k-1}}\\
\begin{aligned}
    &=2\ee^{2\pi\ii\mathrm{Im}(c)}\ee^{-\ii\pi/4}t^\frac{1}{2}\frac{1}{vt}\int_{-\infty}^{+\infty}\ee^{-\zeta^2}\left(1+O(t^{-1}) + O(t^{-\frac{3}{2}}|\zeta|)\right)\left(1+O(t^{-1})+O(t^{-\frac{3}{2}}|\zeta|)\right)\,\dd\zeta\\
    &=\frac{2\sqrt{\pi}\ee^{2\pi\ii\mathrm{Im}(c)}\ee^{-\ii\pi/4}}{vt^\frac{1}{2}} + O(t^{-1}),\quad x=vt+x_0,\quad t\to+\infty.
\end{aligned}
\end{multline}
In other words, the first row of $\widetilde{\mathbf{A}}(t,vt+x_0)$ has the expansion
\begin{multline}
    \begin{bmatrix}
        \widetilde{A}_{11}(t,vt+x_0) & \widetilde{A}_{12}(t,vt+x_0) & \cdots & \widetilde{A}_{1,N+1}(t,vt+x_0)
    \end{bmatrix} \\= \frac{2\sqrt{\pi}\ee^{2\pi\ii\mathrm{Im}(c)}\ee^{-\ii\pi/4}}{vt^\frac{1}{2}}\begin{bmatrix}
    O(t^{-1}) & 1+O(t^{-1}) & \cdots & 1+O(t^{-1})\end{bmatrix},\quad t\to+\infty,
\end{multline}
and the first row of $\widetilde{\mathbf{B}}(t,vt+x_0)$ has the expansion
\begin{multline}
    \begin{bmatrix}
        \widetilde{B}_{11}(t,vt+x_0) & \widetilde{B}_{12}(t,vt+x_0)& \cdots & 
        \widetilde{B}_{1,N+1}(t,vt+x_0)
    \end{bmatrix}\\=\frac{2\sqrt{\pi}\ee^{2\pi\ii\mathrm{Im}(c)}\ee^{-\ii\pi/4}}{vt^\frac{1}{2}}\begin{bmatrix}vt+O(1) & 1 +O(t^{-1}) & \cdots & 1+O(t^{-1})\end{bmatrix},\quad t\to+\infty.
\end{multline}

\subsubsection{Expansion of remaining rows of \texorpdfstring{$\widetilde{\mathbf{A}}(t,x)$}{A-tilde} and \texorpdfstring{$\widetilde{\mathbf{B}}(t,x)$}{B-tilde}}
For the remaining $N$ rows of $\widetilde{\mathbf{A}}(t,vt+x_0)$ and $\widetilde{\mathbf{B}}(t,vt+x_0)$ (which agree with the corresponding rows of $\mathbf{A}(t,vt+x_0)$ and $\mathbf{B}(t,vt+x_0)$ respectively), we maintain the original contours $C_1,\dots,C_N$ which are independent of $t$, and instead write $\ee^{-\ii h(z)}$ in the form
\begin{equation}
\begin{split}
    \ee^{-\ii h(z)}&=\ee^{-\ii(z-x_0-vt)^2/(4t)}\prod_{n=1}^N(z-p_n)^{-\ii c_n}(z-p_n^*)^{-\ii c_n^*}\\
    &=\ee^{-\ii (tv^2/4+vx_0/2+x_0^2/(4t))}\ee^{-\ii (z^2-2x_0z)/(4t)}\ee^{-\ii h^\infty(z;v)}, 
\end{split}
\end{equation}
where $ \ee^{-\ii h^\infty(z;v)}$ is defined in~\eqref{eq:PN-exph-infty}.
Then, since the factor $\ee^{-\ii (z^2-2x_0z)/(4t)}$ decays exponentially as $z\to\infty$ along $C_{j-1}$, we have 
\begin{equation}
    \ee^{-\ii (z^2-2x_0z)/(4t)}=1-\frac{\ii}{4t}(z^2-2x_0z)+O\left(\frac{1+|z|^4}{t^2}\right),\quad t\to+\infty,\quad z\in C_{j-1}
\end{equation}
for $j=2,\dots,N+1$.  Because $\ee^{-\ii h^\infty(z;v)}$ is independent of $t$ and exponentially decaying as $z\to\infty$ along $C_{j-1}$, we then get
\begin{equation}
\begin{split}
    \ee^{\ii (tv^2/4+vx_0/2+x_0^2/(4t))}\widetilde{A}_{j1}(t,vt+x_0)&=\frac{v}{2}\int_{C_{j-1}}\ee^{-\ii h^\infty(z;v)}\ee^{-\ii (z^2-2x_0z)/(4t)}\left(1-\frac{z-x_0}{vt}\right)\,\dd z \\ &= \frac{v}{2}f_{j-1}(v) + O(t^{-1}),\quad t\to+\infty,\quad j=2,\dots,N+1,
\end{split}
\end{equation}
where $\mathbf{f}$ is defined in~\eqref{eq:PN-f-define}. 
Likewise, 
\begin{equation}
    \begin{split}
        \ee^{\ii (tv^2/4+vx_0/2+x_0^2/(4t))}\widetilde{B}_{j1}(t,vt+x_0)&=\int_{C_{j-1}}\ee^{-\ii h^\infty(z;v)}\ee^{-\ii (z^2-2x_0z)/(4t)}\,\dd z\\
        &=f_{j-1}(v) + O(t^{-1}),\quad
        t\to+\infty,\quad j=2,\dots,N+1,
    \end{split}
\end{equation}
and also,
\begin{equation}
    \begin{split}
       \ee^{\ii (tv^2/4+vx_0/2+x_0^2/(4t))}\widetilde{A}_{jk}(t,vt+x_0)&=\int_{C_{j-1}}\frac{\ee^{-\ii h^\infty(z;v)}}{z-p_{k-1}}\ee^{-\ii (z^2-2x_0z)/(4t)}\,\dd z\\
        &=E_{j-1,k-1}(v)+O(t^{-1}),\quad t\to+\infty,\quad j,k=2,\dots,N+1,
    \end{split}
\end{equation}
where $\mathbf{E}(v)$ is defined by~\eqref{eq:PN-D0-define}.
Since $\widetilde{B}_{jk}(t,x)=\widetilde{A}_{jk}(t,x)$ for $j,k=2,\dots,N+1$, we also have (keeping one more term) as $t\to+\infty$ and for $j,k=2,\dots,N+1$,
\begin{multline}
    \ee^{\ii (tv^2/4+vx_0/2+x_0^2/(4t))}\widetilde{B}_{jk}(t,vt+x_0)\\
    =E_{j-1,k-1}(v)+\frac{\ii x_0}{2t}E_{j-1,k-1}^{[1]}(v)-\frac{\ii}{4t}E_{j-1,k-1}^{[2]}(v) + O(t^{-2}),
\end{multline}
where for $p\geq 0$, the matrix $\mathbf{E}^{[p]}(v)$ generalizes~\eqref{eq:PN-D0-define} with coefficients
\begin{equation}
    E^{[p]}_{jk}(v):=\int_{C_j}\frac{\ee^{-\ii h^\infty(z;v)}z^p\,\dd z}{z-p_k},\quad j,k=1,\dots,N.
\label{eq:PN-Dp-define}
\end{equation}

\subsubsection{Expansion of \texorpdfstring{$\Pi u$}{Pi u}}
Now we combine the results and expand both determinants along the first row:
\begin{multline}
    \frac{\displaystyle\det(\widetilde{\mathbf{A}}(t,vt+x_0))}{\displaystyle
    \frac{2\sqrt{\pi}\ee^{2\pi\ii\mathrm{Im}(c)}\ee^{-\ii\pi/4}}{vt^\frac{1}{2}}\ee^{-\ii N(tv^2/4+vx_0/2+x_0^2/(4t))}}
    =\begin{vmatrix}
        O(t^{-1}) & \mathbf{1}^\top+O(t^{-1}) \\
        \frac{1}{2}v\mathbf{f}(v)+O(t^{-1}) & \mathbf{E}(v)+O(t^{-1})
     \end{vmatrix}\\
         =-\frac{v}{2}\sum_{k=1}^{N}\det\left(\mathbf{E}(v)\mathop{\longleftarrow}^k
    \mathbf{f}(v)\right)
    + O(t^{-1}),\quad t\to+\infty,
    \label{eq:PN-det-Atilde}
\end{multline}
where $\mathbf{1}=(1,\dots,1)^\top\in\C^N$. 
Similarly, we have
\begin{multline}
    \frac{\displaystyle\det(\widetilde{\mathbf{B}}(t,vt+x_0))}{\displaystyle \frac{2\sqrt{\pi}\ee^{2\pi\ii\mathrm{Im}(c)}\ee^{-\ii\pi/4}}{vt^\frac{1}{2}}\ee^{-\ii N(tv^2/4+vx_0/2+x_0^2/(4t))}}
    \\=\begin{vmatrix}
    vt+O(1) & \mathbf{1}^\top+O(t^{-1}) \\
    \mathbf{f}(v)+O(t^{-1}) &\displaystyle  \mathbf{E}(v)+\frac{\ii x_0}{2t}\mathbf{E}^{[1]}(v)-\frac{\ii}{4t}\mathbf{E}^{[2]}(v)+O(t^{-2})
    \end{vmatrix}\\
    =(vt+O(1))\det\left(\mathbf{E}(v)+\frac{\ii x_0}{2t}\mathbf{E}^{[1]}(v)-\frac{\ii}{4t}\mathbf{E}^{[2]}(v)+O(t^{-2})\right) \\- \sum_{k=1}^N\det\left(\mathbf{E}(v)\mathop{\longleftarrow}^k
    \mathbf{f}(v)
    \right) + O(t^{-1}),\quad t\to+\infty.
    \label{eq:PN-det-Btilde}
\end{multline}
Now observe that
\begin{equation}
    E^{[1]}_{jk}(v)=\int_{C_j}\frac{\ee^{-\ii h^\infty(z;v)}z\,\dd z}{z-p_k}=\int_{C_j}\ee^{-\ii h^\infty(z;v)}\,\dd z+p_k\int_{C_j}\frac{\ee^{-\ii h^\infty(z;v)}\,\dd z}{z-p_k} = 
    f_j(v)
    + p_kE_{jk}(v).
\label{eq:PN-D1}
\end{equation}
Similarly,
\begin{equation}
    E^{[2]}_{jk}(v)=\int_{C_j}\ee^{-\ii h^\infty(z;v)}(z+p_k)\,\dd z + p_k^2E_{jk}(v),
    \label{eq:PN-D2}
\end{equation}
and integrating by parts using the definition \eqref{eq:PN-exph-infty} and $\ee^{\ii v z/2}=-(2\ii/v)\dd\ee^{\ii v z/2}/\dd z$,
\begin{multline}
    \int_{C_j}\ee^{-\ii h^\infty(z;v)}(z+p_k)\,\dd z \\= \frac{2\ii}{v}\int_{C_j}\ee^{-\ii h^\infty(z;v)}\left[1-\ii\sum_{n=1}^N\left(\frac{c_n(z+p_k)}{z-p_n} +\frac{c_n^*(z+p_k)}{z-p_n^*}\right)\right]\,\dd z\\
    =\frac{2\ii}{v}f_j(v) + \frac{2}{v}\sum_{n=1}^Nc_n(p_k+p_n)E_{jn}(v)+\frac{2}{v}\sum_{n=1}^Nc_n^*(p_k+p_n^*)\int_{C_j}\frac{\ee^{-\ii h^\infty(z;v)}\,\dd z}{z-p_n^*},
\end{multline}
where in the second equality we used the assumption \eqref{eq:sum-of-residues-zero}.  
Therefore, we may rewrite \eqref{eq:PN-D2} using the vectors~\eqref{eq:PN-g-and-h} as
\begin{equation}
    E^{[2]}_{jk}(v)=\frac{2\ii}{v}f_j(v)+\frac{2}{v}\sum_{n=1}^Nc_n(p_k+p_n)E_{jn}(v) +\frac{2p_k}{v}h_j(v) +\frac{2}{v}g_j(v) + p_k^2 E_{jk}(v).
\label{eq:PN-D2-final}
\end{equation}
With \eqref{eq:PN-D1} and \eqref{eq:PN-D2-final} in hand, we can use column multilinearity to expand
\begin{multline}
    \det\left(\mathbf{E}(v)+\frac{\ii x_0}{2t}\mathbf{E}^{[1]}(v)-\frac{\ii}{4t}\mathbf{E}^{[2]}(v)+O(t^{-2})\right)=\det(\mathbf{E}(v)) \\+\frac{\ii x_0}{2t}\sum_{k=1}^N\left[\det\left(\mathbf{E}(v)\mathop{\longleftarrow}^k\mathbf{f}(v)\right)+p_k\det(\mathbf{E}(v))\right] \\-\frac{\ii}{4t}\sum_{k=1}^N
    \left[\frac{2\ii}{v}\det\left(\mathbf{E}(v)\mathop{\longleftarrow}^k\mathbf{f}(v)\right) + \frac{2p_k}{v}\det\left(\mathbf{E}(v)\mathop{\longleftarrow}^k\mathbf{h}(v)\right) \right.\\
    \left.+\frac{2}{v}\det\left(\mathbf{E}(v)\mathop{\longleftarrow}^k\mathbf{g}(k)\right) + \left(\frac{4c_kp_k}{v}+p_k^2\right)\det(\mathbf{E}(v))\right] + O(t^{-2}),\quad t\to+\infty,
\end{multline}
since from the sum in \eqref{eq:PN-D2-final}, only the $n=k$ term yields a nonzero contribution.  

Now it is clear that $\Pi u(t,vt+x_0)$ takes two different asymptotic forms, depending upon whether or not $E(v)=\det(\mathbf{E}(v))$ vanishes for given $v>0$.  In the general case that $E(v)\neq 0$, we may simplify \eqref{eq:PN-det-Atilde} as
\begin{equation}
\frac{\displaystyle\det(\widetilde{\mathbf{A}}(t,vt+x_0))}{\displaystyle
    \frac{2\sqrt{\pi}\ee^{2\pi\ii\mathrm{Im}(c)}\ee^{-\ii\pi/4}}{vt^\frac{1}{2}}\ee^{-\ii N(tv^2/4+vx_0/2+x_0^2/(4t))}} = O(1),\quad t\to+\infty
\end{equation}
and \eqref{eq:PN-det-Btilde} as
\begin{equation}
\frac{\displaystyle\det(\widetilde{\mathbf{B}}(t,vt+x_0))}{\displaystyle \frac{2\sqrt{\pi}\ee^{2\pi\ii\mathrm{Im}(c)}\ee^{-\ii\pi/4}}{vt^\frac{1}{2}}\ee^{-\ii N(tv^2/4+vx_0/2+x_0^2/(4t))}}
=vtE(v) + O(1),\quad t\to+\infty,
\end{equation}
and hence $\Pi u(t,vt+x_0)=\det(\widetilde{\mathbf{A}}(t,vt+x_0))/\det(\widetilde{\mathbf{B}}(t,vt+x_0))$ satisfies~\eqref{eq:vanish-eigenvalue}, with the error term being uniform for bounded $x_0$.  On the other hand, if $v=v_\mathrm{s}>0$ is a zero of $E(v)$, then we retain the full form of \eqref{eq:PN-det-Atilde} (replacing $v$ with $v_\mathrm{s}$) but \eqref{eq:PN-det-Btilde} simplifies instead as
\begin{multline}    \frac{\displaystyle\det(\widetilde{\mathbf{B}}(t,v_\mathrm{s}t+x_0))}{\displaystyle \frac{2\sqrt{\pi}\ee^{2\pi\ii\mathrm{Im}(c)}\ee^{-\ii\pi/4}}{v_\mathrm{s}t^\frac{1}{2}}\ee^{-\ii N(tv_\mathrm{s}^2/4+v_\mathrm{s}x_0/2+x_0^2/(4t))}}
=\left(\frac{\ii v_\mathrm{s}x_0}{2}-\frac{1}{2}\right)\sum_{k=1}^N\det\left(\mathbf{E}(v_\mathrm{s})\mathop{\longleftarrow}^k\mathbf{f}(v_\mathrm{s})\right)\\
-\frac{\ii}{2}\sum_{k=1}^N\det\left(\mathbf{E}(v_\mathrm{s})\mathop{\longleftarrow}^k\mathbf{g}(v_\mathrm{s})\right)-\frac{\ii}{2}\sum_{k=1}^Np_k\det\left(\mathbf{E}(v_\mathrm{s})\mathop{\longleftarrow}^k\mathbf{h}(v_\mathrm{s})\right) + O(t^{-1}),\quad t\to+\infty.
\end{multline}
Then we obtain~\eqref{eq:PN-Piu-soliton}.  This completes the proof of Proposition~\ref{prop:Piu-soliton-asymp}.

The statement \eqref{eq:vanish-eigenvalue} in Proposition~\ref{prop:Piu-soliton-asymp} is equivalent to \eqref{eq:soliton-rational2}.  To prove \eqref{eq:soliton-rational} it remains to show that it is implied by \eqref{eq:PN-Piu-soliton}, which is the subject of the next section.

\subsection{Further simplification of \texorpdfstring{$\Pi u$}{Pi u} when \texorpdfstring{$v=v_\mathrm{s}$}{v=vs}}
The first step is to show that when $E(v_\mathrm{s})=0$, $\Sigma_0(v_\mathrm{s})\neq 0$.  
\subsubsection{The inequality $\Sigma_0(v_\mathrm{s})\neq 0$ and interpretation of $\lambda_\mathrm{s}=-\frac{1}{2}v_\mathrm{s}$ as a Lax eigenvalue}
Differentiation of $E(v)$ using column multi-linearity gives
\begin{equation}
\begin{split}
    E'(v)&=\frac{\dd}{\dd v}\det(\mathbf{E}(v))\\ &=\frac{\ii}{2}\sum_{k=1}^N\det\left(\mathbf{E}(v)\mathop{\longleftarrow}^k\mathbf{f}(v)\right) +\frac{\ii}{2}\det(\mathbf{E}(v))\sum_{k=1}^Np_k\\ &=\frac{\ii}{2}\Sigma_0(v)+\frac{\ii}{2}E(v)\sum_{k=1}^Np_k.
\end{split}
\end{equation}
Hence we have
\begin{equation}
E(v_\mathrm{s})=0 \implies \Sigma_0(v_\mathrm{s})=-2\ii E'(v_\mathrm{s}),
    \label{eq:PN-Sigma0-Deltaprime}
\end{equation}
so $\Sigma_0(v_\mathrm{s})\neq 0$ means that $v_\mathrm{s}>0$ is a simple root of $E(v)$.

We will show that all roots $v>0$ of $E(v)$ are simple, and at the same time show that the roots are proportional to the eigenvalues of $\mathcal{L}_{u_0}$.  Under the condition \eqref{eq:sum-of-residues-zero} on the rational data $u_0$ of the form \eqref{eq:PN-rational-IC}, it follows from \cite{Wu16a} that the discrete spectrum of $\mathcal{L}_{u_0}$ is a finite set of negative simple eigenvalues $\lambda<0$.

\begin{siamprop}[\protect{\cite[Section 3.3]{MillerW16a}}]
Let $u_0$ be a rational function of the form \eqref{eq:PN-rational-IC} subject to the condition \eqref{eq:sum-of-residues-zero} on $c_1,\dots,c_N$.
Then, all roots $v=v_\mathrm{s}>0$ of $v\mapsto E(v):=\det(\mathbf{E}(v))$ are simple, and $\lambda<0$ is an eigenvalue of the Lax operator $\mathcal{L}_{u_0}$ if and only if $E(-2\lambda)=0$.
\label{PN-prop-MW16}
\end{siamprop}

The notation in \cite{MillerW16a} is slightly different but equivalent to ours.  To make the connection between $E(v)$ and the Evans function defined in that paper, one only needs to observe that by Cauchy's theorem the contours $C_1,\dots,C_N$ 
can be rotated so that they tend to $z=\infty$ in the upward vertical direction of the complex plane.  The proof of simplicity of the roots given in \cite{MillerW16a} is not convincing, so below we give a new proof of that part of the statement of Proposition~\ref{PN-prop-MW16}.

But first, we point out that since \eqref{eq:PN-Sigma0-Deltaprime} and Proposition~\ref{PN-prop-MW16} give that $\Sigma_0(v_\mathrm{s})\neq 0$ whenever $E(v_\mathrm{s})=0$, \eqref{eq:PN-Piu-soliton} can be written equivalently in the form 
\begin{equation}
    \Pi u(t,v_\mathrm{s}t+x_0)=\frac{\ii + O(t^{-1})}{x_0+\ii v_\mathrm{s}^{-1}-\Sigma_1(v_\mathrm{s})/\Sigma_0(v_\mathrm{s})+O(t^{-1})},\quad t\to+\infty,\quad E(v_\mathrm{s})=0.
    \label{eq:PN-Piu-almost-done}
\end{equation}

\begin{proof}[Proof of simplicity of roots of $E$ in Proposition~\ref{PN-prop-MW16}]
According to \cite{Wu16a}, for every eigenvalue  $\lambda=\lambda_\mathrm{s}<0$ of $\mathcal{L}_{u_0}$ we have $\dim(\mathrm{Ker}(\mathcal{L}_{u_0}-\lambda_\mathrm{s}\mathrm{Id}))=1$, and by \cite[Section 3.3]{MillerW16a}, $\mathrm{Ker}(\mathbf{E}(v_\mathrm{s}))$ is isomorphic to $\mathrm{Ker}(\mathcal{L}_{u_0}-\lambda_\mathrm{s}\mathrm{Id})$ for $v_\mathrm{s}=-2\lambda_\mathrm{s}$, so there is a nonzero vector $\boldsymbol{\phi}\in\mathbb{C}^N$ such that $\mathrm{Ker}(\mathbf{E}(v_\mathrm{s}))=\mathrm{span}\{\boldsymbol{\phi}\}$.  It then follows from the theory of holomorphic matrix pencils (see \cite[Section 2.2]{KollarM14} for instance) that the two conditions $E(v_\mathrm{s})=E'(v_\mathrm{s})=0$ imply the existence of a vector $\boldsymbol{\psi}\in\mathbb{C}^N$ such that
\begin{equation}
    \mathbf{E}(v_\mathrm{s})\boldsymbol{\psi} + \mathbf{E}^{\prime}(v_\mathrm{s})\boldsymbol{\phi}=\mathbf{0}.
\end{equation}
Now differentiation with respect to $v$ gives $E_{jk}^{\prime}(v)=\frac{1}{2}\ii E_{jk}^{[1]}(v)$, so using \eqref{eq:PN-D1} we can write the equation on $\boldsymbol{\psi}$ in the form
\begin{equation}
    \mathbf{E}(v_\mathrm{s})\boldsymbol{\psi} +\frac{1}{2}\ii \mathbf{f}(v_\mathrm{s})\mathbf{1}^\top\boldsymbol{\phi} + \frac{1}{2}\ii\mathbf{E}(v_\mathrm{s})\mathrm{diag}(p_1,\dots,p_N)\boldsymbol{\phi}=\mathbf{0}.
    \label{eq:PN-psi-vector-equation}
\end{equation}
An eigenfunction $\Phi\in \mathrm{Ker}(\mathcal{L}_{u_0}-\lambda_\mathrm{s}\mathrm{Id})$ can be normalized so that $x\Phi(x)\to 1$ as $x\to\infty$ in $\mathbb{C}_+$, and the isomorphism with $\mathrm{Ker}(\mathbf{E}(v_\mathrm{s}))$ developed in \cite[Section 3.3]{MillerW16a} shows that the corresponding nonzero nullvector $\boldsymbol{\phi}$ satisfies
$\mathbf{1}^\top\boldsymbol{\phi}=\lambda_\mathrm{s}\neq 0$.  Therefore the existence of $\boldsymbol{\psi}$ solving \eqref{eq:PN-psi-vector-equation} is equivalent to the existence of $\mathbf{a}\in\mathbb{C}^N$ given explicitly by
\begin{equation}
    \mathbf{a}:= \frac{2\ii \boldsymbol{\psi}-\mathrm{diag}(p_1,\dots,p_N)\boldsymbol{\phi}}{\mathbf{1}^\top\boldsymbol{\phi}}
\end{equation}
that satisfies $\mathbf{E}(v_\mathrm{s})\mathbf{a}=\mathbf{f}(v_\mathrm{s})$.  We will show that existence of such $\mathbf{a}\in\mathbb{C}^N$ leads to a contradiction, implying that $E(v_\mathrm{s})$ and $E'(v_\mathrm{s})$ cannot both vanish.

To this end, consider the function $g(x)$ defined by the formula
\begin{equation}
    g(x):=\ee^{\ii h^\infty(x;v_\mathrm{s})}\int_{\ee^{3\pi\ii/4}\infty}^x\ee^{-\ii h^\infty(z;v_\mathrm{s})}\left(\sum_{k=1}^N\frac{a_k}{z-p_k}-1\right)\,\dd z.
\end{equation}
The hypothesis that $\mathbf{E}(v_\mathrm{s})\mathbf{a}=\mathbf{f}(v_\mathrm{s})$ implies that $g$ is the boundary value of a function analytic in the upper half $x$-plane (see \cite[Section 3.2]{MillerW16a}).  In fact, $g\in L^\infty_+(\mathbb{R})$, i.e., it is bounded on the closed upper half-plane, and one can verify that $g(x)\to -\ii\lambda_\mathrm{s}^{-1}$ as $x\to\infty$ in $\mathbb{C}_+$.  Therefore also $u_0g\in L^2(\mathbb{R})$.  We then compute
\begin{equation}
    (\mathcal{L}_{u_0}-\lambda_\mathrm{s}\mathrm{Id})g(x)=-\ii g'(x)-\Pi(u_0g)(x)-\lambda_\mathrm{s} g(x).
\end{equation}
Since $g\in L_+^\infty(\mathbb{R})$, we have
\begin{equation}
    \Pi(u_0g)(x)=u_0(x)g(x)-\sum_{k=1}^N\frac{c_kg(p_k)}{x-p_k}.
\end{equation}
The second term depends on $g$ through its analytic continuation to the points $\{p_k\}\subset\mathbb{C}_+$, but it is a rational function of $x$.  Therefore,
\begin{equation}
\begin{split}
    (\mathcal{L}_{u_0}-\lambda_\mathrm{s}\mathrm{Id})g(x)&=\ee^{\ii h^\infty(x;v_\mathrm{s})}\left[\ee^{-\ii h^\infty(x;v_\mathrm{s})}\left(-\ii g'(x)-u_0(x)g(x)-\lambda_\mathrm{s}g(x)\right)\right]+\sum_{k=1}^N\frac{c_kg(p_k)}{x-p_k}\\
    &=-\ii\ee^{\ii h^\infty(x;v_\mathrm{s})}\frac{\dd}{\dd x}\left[\ee^{-\ii h^\infty(x;v_\mathrm{s})}g(x)\right] + \sum_{k=1}^N\frac{c_kg(p_k)}{x-p_k}\\
    &=-\ii\left(\sum_{k=1}^N\frac{a_k}{x-p_k}-1\right) + \sum_{k=1}^N\frac{c_kg(p_k)}{x-p_k}.
\end{split}
\end{equation}
On the other hand, since by hypothesis $g\in L_+^\infty(\mathbb{R})$, the left-hand side has to be the boundary value for $x\in\mathbb{R}$ of a function analytic in the upper half $x$-plane, which implies that 
$c_kg(p_k)=\ii a_k$ for all $k$, and hence $g(x)$ satisfies
\begin{equation}
    (\mathcal{L}_{u_0}-\lambda_\mathrm{s}\mathrm{Id})g(x)=\ii,
\label{eq:PN-g-equation}
\end{equation}
a constant nonzero function.  

We can write $g(x)=-\ii\lambda_\mathrm{s}^{-1} + h(x)$, where $h\in L_+^2(\mathbb{R})$.  Now for $\epsilon>0$, let $\chi_\epsilon\in L_+^2(\mathbb{R})$ be defined by
\begin{equation}
    \chi_\epsilon(x):=\frac{1}{1-\ii\epsilon x},\quad\epsilon>0.
    \label{eq:PN-chi-epsilon-def}
\end{equation}
Note that $\chi_\epsilon(x)\to 1$ as $\epsilon\downarrow 0$ for every $x\in\mathbb{R}$ and that $|\chi_\epsilon(x)|\le 1$.
Recalling the eigenfunction $\Phi\in \mathrm{Ker}(\mathcal{L}_{u_0}-\lambda_\mathrm{s}\mathrm{Id})$ normalized so that 
$\Phi(x)=x^{-1}+O(x^{-2})$ as $x\to\infty$ in $\mathbb{C}_+$, multiplying \eqref{eq:PN-g-equation} by $\Phi(x)^*\chi_\epsilon(x)$ and integrating over $\mathbb{R}$ implies that
\begin{equation}
    \langle\chi_\epsilon (\mathcal{L}_{u_0}-\lambda_\mathrm{s}\mathrm{Id})g,\Phi\rangle = \ii\langle\chi_\epsilon,\Phi\rangle,\quad\epsilon>0.
\end{equation}
Using $g(x)=-\ii\lambda_\mathrm{s}^{-1}+h(x)$, where the action of $\mathcal{L}_{u_0}-\lambda_\mathrm{s}\mathrm{Id}$ on the constant function $1$ is well defined:
\begin{equation}\label{eq:Lax-one}
    (\mathcal{L}_{u_0}-\lambda_\mathrm{s}\mathrm{Id})1(x)=-\Pi u_0(x)-\lambda_\mathrm{s},
\end{equation} 
we get that $h$ is in the domain of $\mathcal{L}_{u_0}$, and
\begin{equation}
\langle\chi_\epsilon (\mathcal{L}_{u_0}-\lambda_\mathrm{s}\mathrm{Id})h,\Phi\rangle -\ii \lambda_\mathrm{s}^{-1}\langle\chi_\epsilon(\mathcal{L}_{u_0}-\lambda_\mathrm{s}\mathrm{Id})1,\Phi\rangle=\ii\langle\chi_\epsilon,\Phi\rangle,\quad\epsilon>0.
\label{eq:PN-inner-product-equation}
\end{equation}
Using~\eqref{eq:Lax-one}, we arrive at the identity
\begin{equation}
    \langle\chi_\epsilon(\mathcal{L}_{u_0}-\lambda_\mathrm{s}\mathrm{Id})h,\Phi\rangle +\ii\lambda_\mathrm{s}^{-1}\langle\chi_\epsilon\Pi u_0,\Phi\rangle = 0,\quad\epsilon>0.
\end{equation}
Because $h$ is in the domain of $\mathcal{L}_{u_0}$ while $\Phi$ and $\Pi u_0$ are both in $L_+^2(\mathbb{R})$, a dominated convergence argument shows that the limit $\epsilon\downarrow 0$ can be taken in both terms, replacing $\chi_\epsilon$ by its pointwise limit $1$ and yielding
\begin{equation}
    \langle (\mathcal{L}_{u_0}-\lambda_\mathrm{s}\mathrm{Id})h,\Phi\rangle  +\ii\lambda_\mathrm{s}^{-1}\langle \Pi u_0,\Phi\rangle=0.
\label{eq:after-dominated-convergence}
\end{equation}
Now since $\mathcal{L}_{u_0}-\lambda_\mathrm{s}\mathrm{Id}$ is self adjoint we have
\begin{equation}
    \langle (\mathcal{L}_{u_0}-\lambda_\mathrm{s}\mathrm{Id})h,\Phi\rangle=\langle h,(\mathcal{L}_{u_0}-\lambda_\mathrm{s}\mathrm{Id})\Phi\rangle = \langle h,0\rangle = 0
\end{equation}
because $\mathcal{L}_{u_0}\Phi = \lambda_\mathrm{s}\Phi$, and since $-\ii\lambda_\mathrm{s}\neq 0$, \eqref{eq:after-dominated-convergence} gives
\begin{equation}
    \langle\Pi u_0,\Phi\rangle = 0.
    \label{eq:PN-penultimate-identity}
\end{equation}
Then, using
$\langle \Pi u_0,\Phi\rangle = \langle u_0,\Pi\Phi\rangle = \langle u_0,\Phi\rangle$, \eqref{eq:PN-penultimate-identity} becomes
\begin{equation}
    \int_\mathbb{R} \Phi(x)u_0(x)\,\dd x = 0
\end{equation}
because $u_0$ is real valued.  On the other hand using the normalization $\Phi(x)=x^{-1}+O(x^{-2})$ and the singular integral representation \eqref{eq:Pi-singular-integral} of $\Pi$ to expand the Lax equation $\mathcal{L}_{u_0}\Phi=\lambda_\mathrm{s}\Phi$ as $x\to\infty$ in $\C_+$ shows that (see also \cite[Lemma 2.5 and (2.30)]{Wu16a})
\begin{equation}
    \int_{\mathbb{R}}\Phi(x)u_0(x)\,\dd x = 2\pi\ii\lambda_\mathrm{s}\neq 0.
\end{equation}
This is the desired contradiction.  Hence the hypothesis that there exists $\mathbf{a}\in \mathbb{C}^N$ such that $\mathbf{E}(v_\mathrm{s})\mathbf{a}=\mathbf{f}(v_\mathrm{s})$ is false, and simplicity of the roots of the determinant $E(v)$ is proved.
\end{proof}

We have therefore proved \eqref{eq:PN-Piu-almost-done}.  Comparing \eqref{eq:PN-Piu-almost-done} with \eqref{eq:soliton-rational}, it remains to identify $\Sigma_1(v_\mathrm{s})/\Sigma_0(v_\mathrm{s})$ with $2\mathrm{Re}(\langle\mathcal{X}^*\varphi,\varphi\rangle)$, where $\varphi$ is a Lax eigenfunction  for the eigenvalue $\lambda_\mathrm{s}=-\frac{1}{2}v_\mathrm{s}$ normalized so that $\|\varphi\|_2^2=1$. 

\subsubsection{Simplification of $\Sigma_1(v_\mathrm{s})/\Sigma_0(v_\mathrm{s})$}
We will make the required identification in two steps, first relating $\Sigma_1(v_\mathrm{s})/\Sigma_0(v_\mathrm{s})$ to a complex phase constant $\gamma_\mathrm{s}$ associated with $\lambda_\mathrm{s}=-\frac{1}{2}v_\mathrm{s}$ via Proposition~\ref{prop:sigma1/sigma0}, and then providing an alternate formula for $\gamma_\mathrm{s}$ in Lemma~\ref{lem:PN-phase-constant}.

Associated with each Lax eigenvalue $\lambda_\mathrm{s}<0$ there is a complex \emph{phase constant} $\gamma_\mathrm{s}$ that we define as follows (see, for instance \cite[Equation (10)]{MillerW16a} or \cite[Corollary 2.7]{Wu16a}).  Consider the Jost solution $W(x;\lambda)$, which for given $\lambda\in\rho(\mathcal{L}_{u_0})$ is the unique solution of the equation $(\mathcal{L}_{u_0}-\lambda\,\mathrm{Id})W=-\lambda$, where $W=1+w$, $w\in L_+^2(\mathbb{R})\cap H^1(\mathbb{R})$.  Near an eigenvalue $\lambda=\lambda_\mathrm{s}<0$ one has a Laurent expansion of $\lambda\mapsto W(x;\lambda)$ exhibiting a simple pole at $\lambda=\lambda_\mathrm{s}$:
\begin{equation}
    W(x;\lambda)=-\frac{\ii\Phi(x)}{\lambda-\lambda_\mathrm{s}} + (x+\gamma_\mathrm{s})\Phi(x) + O(\lambda-\lambda_\mathrm{s}),\quad\lambda\to\lambda_\mathrm{s},
\label{eq:PN-W-Laurent}
\end{equation}
which serves to define $\gamma_\mathrm{s}$.
Here $\Phi\in L_+^2(\mathbb{R})\cap H^1(\mathbb{R})$ is the unique element of $\mathrm{Ker}(\mathcal{L}_{u_0}-\lambda_\mathrm{s}\,\mathrm{Id})$ that is normalized so that $\Phi(x)=x^{-1}+O(x^{-2})$ as $x\to\infty$ in $\mathbb{C}_+$.
\begin{siamprop}
Let $\gamma_\mathrm{s}$ denote the phase constant associated with the eigenvalue $\lambda_\mathrm{s}=-\frac{1}{2}v_\mathrm{s}$.  Then
\begin{equation}
    \frac{\Sigma_1(v_\mathrm{s})}{\Sigma_0(v_\mathrm{s})}=-\mathrm{Re}(\gamma_\mathrm{s}).
    \label{eq:PN-Sigma1-over-Sigma0-final}
\end{equation}
\label{prop:sigma1/sigma0}
\end{siamprop}

\begin{proof}
Recall $\mathbf{1}:=(1,1,\dots,1)^\top$ and set $\mathbf{p}:=(p_1,p_2,\dots,p_N)^\top$.
Then note that if $E(v)\neq 0$, by Cramer's rule,
\begin{equation}
    \frac{\Sigma_0(v)}{E(v)} = \mathbf{1}^\top\mathbf{w}(v)\quad\text{and}\quad\frac{v\Sigma_1(v)}{E(v)}=\mathbf{1}^\top\mathbf{x}(v)+\mathbf{p}^\top\mathbf{y}(v),
\end{equation}
where $\mathbf{w}(v)$, $\mathbf{x}(v)$, $\mathbf{y}(v)$ are the unique solutions of 
\begin{equation}
    \mathbf{E}(v)\mathbf{w}(v)=\mathbf{f}(v),\quad
    \mathbf{E}(v)\mathbf{x}(v)=\mathbf{g}(v),\quad
    \mathbf{E}(v)\mathbf{y}(v)=\mathbf{h}(v),\quad E(v)\neq 0.
\end{equation}
Therefore also
\begin{equation}
    \frac{\Sigma_1(v)}{\Sigma_0(v)} = \frac{\mathbf{1}^\top\mathbf{x}(v)+\mathbf{p}^\top\mathbf{y}(v)}{v\mathbf{1}^\top\mathbf{w}(v)},\quad E(v)\neq 0.
\end{equation}
Since $\Sigma_j(v)$ are analytic in $v$ at $v=v_\mathrm{s}>0$ and $\Sigma_0(v_\mathrm{s})\neq 0$, we can take the limit to obtain
\begin{equation}
    \frac{\Sigma_1(v_\mathrm{s})}{\Sigma_0(v_\mathrm{s})} = \frac{1}{v_\mathrm{s}}\lim_{v\to v_\mathrm{s}}\frac{\mathbf{1}^\top\mathbf{x}(v)+\mathbf{p}^\top\mathbf{y}(v)}{\mathbf{1}^\top\mathbf{w}(v)}.
\label{eq:PN-Sigma1Sigma0-limit}
\end{equation}

Since $E(v)=\det(\mathbf{E}(v))$ has a simple zero at $v=v_\mathrm{s}$, the vectors $\mathbf{w}(v)$, $\mathbf{x}(v)$, and $\mathbf{y}(v)$ have at worst simple poles at $v=v_\mathrm{s}$.  For instance $\mathbf{w}(v)$ has a Laurent expansion of the form
\begin{equation}
    \mathbf{w}(v)=\frac{1}{v-v_\mathrm{s}}\mathbf{w}^{[-1]} + \mathbf{w}^{[0]} + O(v-v_\mathrm{s}),
\end{equation}
and using the Taylor expansions
\begin{equation}
    \begin{split}
        \mathbf{E}(v)&=\mathbf{E}(v_\mathrm{s}) + (v-v_\mathrm{s})\mathbf{E}^{\prime}(v_\mathrm{s}) + O((v-v_\mathrm{s})^2)\\
        \mathbf{f}(v)&=\mathbf{f}(v_\mathrm{s}) + O(v-v_\mathrm{s}),
    \end{split}
\end{equation}
we substitute into $\mathbf{E}(v)\mathbf{w}(v)=\mathbf{f}(v)$ to obtain
\begin{equation}
    \mathbf{E}(v_\mathrm{s})\mathbf{w}^{[-1]}=\mathbf{0},
\label{eq:PN-w-leading}
\end{equation}
\begin{equation}
    \mathbf{E}(v_\mathrm{s})\mathbf{w}^{[0]}=\mathbf{f}(v_\mathrm{s}) -\mathbf{E}^{\prime}(v_\mathrm{s})\mathbf{w}^{[-1]},
\label{eq:PN-w-correction}
\end{equation}
and so on.  Clearly \eqref{eq:PN-w-leading} implies that we need $\mathbf{w}^{[-1]}\in \mathrm{Ker}(\mathbf{E}(v_\mathrm{s}))=\mathrm{span}\{\boldsymbol{\phi}\}$.  Normalizing $\boldsymbol{\phi}$ by assuming that $\mathbf{1}^\top\boldsymbol{\phi}=\lambda_\mathrm{s}=-\frac{1}{2}v_\mathrm{s}$, we may therefore write $\mathbf{w}^{[-1]}=\tau\boldsymbol{\phi}$ for some scalar $\tau$.  To determine $\tau$, let $\mathbf{m}$ be a nonzero left nullvector of $\mathbf{E}(v_\mathrm{s})$, i.e., $\mathbf{m}^\top\mathbf{E}(v_\mathrm{s})=\mathbf{0}^\top$.  Then the solvability condition for existence of $\mathbf{w}^{[0]}$ is obtained by multiplying \eqref{eq:PN-w-correction} on the left by $\mathbf{m}^\top$, leading to
\begin{equation}
\tau=\frac{\mathbf{m}^\top\mathbf{f}(v_\mathrm{s})}{\mathbf{m}^\top\mathbf{E}^{\prime}(v_\mathrm{s})\boldsymbol{\phi}}\implies \mathbf{w}(v)=\frac{1}{v-v_\mathrm{s}}\frac{\mathbf{m}^\top\mathbf{f}(v_\mathrm{s})}{\mathbf{m}^\top\mathbf{E}^{\prime}(v_\mathrm{s})\boldsymbol{\phi}}\boldsymbol{\phi}+O(1).
\end{equation}
Similarly, one finds that 
\begin{equation}
    \mathbf{x}(v)=\frac{1}{v-v_\mathrm{s}}\frac{\mathbf{m}^\top\mathbf{g}(v_\mathrm{s})}{\mathbf{m}^\top\mathbf{E}^{\prime}(v_\mathrm{s})\boldsymbol{\phi}}\boldsymbol{\phi}+O(1),\quad\mathbf{y}(v)=\frac{1}{v-v_\mathrm{s}}\frac{\mathbf{m}^\top\mathbf{h}(v_\mathrm{s})}{\mathbf{m}^\top\mathbf{E}^{\prime}(v_\mathrm{s})\boldsymbol{\phi}}\boldsymbol{\phi}+O(1).
\end{equation}
By $E_{jk}^{\prime}(v)=\frac{1}{2}\ii E_{jk}^{[1]}(v)$ and \eqref{eq:PN-D1} we have
\begin{equation}
    \mathbf{m}^\top\mathbf{E}^{\prime}(v_\mathrm{s})\boldsymbol{\phi} =\frac{1}{2}\ii\mathbf{m}^\top\mathbf{f}(v_\mathrm{s})\mathbf{1}^\top\boldsymbol{\phi} +\frac{1}{2}\ii\mathbf{m}^\top\mathbf{E}(v_\mathrm{s})\mathrm{diag}(p_1,\dots,p_N)\boldsymbol{\phi} = -\frac{1}{4}\ii v_\mathrm{s} \mathbf{m}^\top\mathbf{f}(v_\mathrm{s}),
\end{equation}
where we used $\mathbf{m}^\top\mathbf{E}(v_\mathrm{s})=\mathbf{0}^\top$ and $\mathbf{1}^\top\boldsymbol{\phi}=-\frac{1}{2}v_\mathrm{s}$.  The condition $\mathbf{m}^\top\mathbf{f}(v_\mathrm{s})=0$ is equivalent to the existence of $\mathbf{a}\in\mathbb{C}^N$ such that $\mathbf{E}(v_\mathrm{s})\mathbf{a}=\mathbf{f}(v_\mathrm{s})$, and it was shown in the proof of simplicity of zeros of $E(v)$ that no such $\mathbf{a}$ can exist.  
Therefore, cancelling nonzero common denominators, \eqref{eq:PN-Sigma1Sigma0-limit} becomes
\begin{equation}
    \frac{\Sigma_1(v_\mathrm{s})}{\Sigma_0(v_\mathrm{s})}=\frac{1}{v_\mathrm{s}}\frac{\mathbf{m}^\top\mathbf{g}(v_\mathrm{s})\mathbf{1}^\top\boldsymbol{\phi} + \mathbf{m}^\top\mathbf{h}(v_\mathrm{s})\mathbf{p}^\top\boldsymbol{\phi}}{\mathbf{m}^\top\mathbf{f}(v_\mathrm{s})\mathbf{1}^\top\boldsymbol{\phi}}=\frac{v_\mathrm{s}\mathbf{m}^\top\mathbf{g}(v_\mathrm{s})-2\mathbf{m}^\top\mathbf{h}(v_\mathrm{s})\mathbf{p}^\top\boldsymbol{\phi}}{v_\mathrm{s}^2\mathbf{m}^\top\mathbf{f}(v_\mathrm{s})}.
\label{eq:PN-Sigma1-Sigma0-no-limit}
\end{equation}

We focus first on the term $\mathbf{m}^\top\mathbf{h}(v_\mathrm{s})$.  Because $\mathbf{m}^\top\mathbf{E}(v_\mathrm{s})=\mathbf{0}^\top$, we can add some terms to $\mathbf{h}(v_\mathrm{s})$ without changing the value of $\mathbf{m}^\top\mathbf{h}(v_\mathrm{s})$:
\begin{equation}
    \mathbf{m}^\top\mathbf{h}(v_\mathrm{s}) = \mathbf{m}^\top\left(\mathbf{E}(v_\mathrm{s})\mathbf{c} +\mathbf{h}(v_\mathrm{s})\right),\quad \mathbf{c}:=(c_1,\dots,c_N)^\top.
\end{equation}
Using \eqref{eq:PN-Dp-define} and \eqref{eq:PN-g-and-h}, the components of $\mathbf{E}(v_\mathrm{s})\mathbf{c} +\mathbf{h}(v_\mathrm{s})$ are
\begin{equation}
\begin{split}
    \left(\mathbf{E}(v_\mathrm{s})\mathbf{c} +\mathbf{h}(v_\mathrm{s})\right)_j&=\int_{C_j}\ee^{-\ii h^\infty(z;v_\mathrm{s})}\left(\sum_{n=1}^N\frac{c_n}{z-p_n} + \sum_{n=1}^N\frac{c_n^*}{z-p_n^*}\right)\,\dd z \\ &= \int_{C_j}\ee^{-\ii h^\infty(z;v_\mathrm{s})}u_0(z)\,\dd z,\quad j=1,\dots,N,
\end{split}
\end{equation}
where on the second line we used \eqref{eq:PN-rational-IC}.  Since $(h^{\infty})^{\prime}(z;v_\mathrm{s})=u_0(z)-\frac{1}{2}v_\mathrm{s}$ and 
\begin{equation}
    \int_{C_j}\ee^{-\ii h^\infty(z;v_\mathrm{s})}(h^{\infty})^{\prime}(z;v_\mathrm{s})\,\dd z=0,\quad j=1,\dots,N,
\end{equation}
we can also write
\begin{equation}
    \left(\mathbf{E}(v_\mathrm{s})\mathbf{c} +\mathbf{h}(v_\mathrm{s})\right)_j=\frac{1}{2}v_\mathrm{s}\int_{C_j}\ee^{-\ii h^\infty(z;v_\mathrm{s})}\,\dd z = \frac{1}{2}v_\mathrm{s}f_j(v_\mathrm{s})
\end{equation}
according to \eqref{eq:PN-f-define}.  Thus, $\mathbf{m}^\top\mathbf{h}(v_\mathrm{s})=\frac{1}{2}v_\mathrm{s}\mathbf{m}^\top\mathbf{f}(v_\mathrm{s})$, so \eqref{eq:PN-Sigma1-Sigma0-no-limit} becomes
\begin{equation}
    \frac{\Sigma_1(v_\mathrm{s})}{\Sigma_0(v_\mathrm{s})}=\frac{\mathbf{m}^\top\mathbf{g}(v_\mathrm{s})}{v_\mathrm{s}\mathbf{m}^\top\mathbf{f}(v_\mathrm{s})} - \frac{\mathbf{p}^\top\boldsymbol{\phi}}{v_\mathrm{s}}.
\label{eq:PN-Sigma1-Sigma0-again}
\end{equation}

Next, for the term $\mathbf{m}^\top\mathbf{g}(v_\mathrm{s})$ we start by calculating from \eqref{eq:PN-f-define} that
\begin{equation}
\begin{split}
    v_\mathrm{s}f_j'(v_\mathrm{s}) &= \frac{1}{2}\ii v_\mathrm{s}\int_{C_j}z\ee^{-\ii h^\infty(z;v_\mathrm{s})}\,\dd z\\
    &=\int_{C_j}z\frac{\dd}{\dd z}\ee^{-\ii h^\infty(z;v_\mathrm{s})}\,\dd z+\ii\int_{C_j}zu_0(z)\ee^{-\ii h^\infty(z;v_\mathrm{s})}\,\dd z,\quad j=1,\dots,N.
\end{split}
\end{equation}
Integrating by parts and using \eqref{eq:PN-rational-IC} and \eqref{eq:PN-f-define} gives
\begin{equation}
v_\mathrm{s}f_j'(v_\mathrm{s})+f_j(v_\mathrm{s}) = \ii \int_{C_j}\ee^{-\ii h^\infty(z;v_\mathrm{s})}\sum_{n=1}^N\left(\frac{c_nz}{z-p_n}+\frac{c_n^*z}{z-p_n^*}\right)\,\dd z.
\end{equation}
Using the assumed condition \eqref{eq:sum-of-residues-zero} on $c_1,\dots,c_N$, 
\begin{equation}
    \sum_{n=1}^N\left(\frac{c_nz}{z-p_n}+\frac{c_n^*z}{z-p_n^*}\right)=\sum_{n=1}^N\left(\frac{c_np_n}{z-p_n}+\frac{c_n^*p_n^*}{z-p_n^*}\right).
\end{equation}
Therefore, again using that $\mathbf{m}^\top \mathbf{E}(v_\mathrm{s})=\mathbf{0}^\top$ and recalling \eqref{eq:PN-g-and-h} we deduce the identity
\begin{equation}
v_\mathrm{s}\mathbf{m}^\top\mathbf{f}'(v_\mathrm{s}) +\mathbf{m}^\top\mathbf{f}(v_\mathrm{s}) = \ii\mathbf{m}^\top\mathbf{g}(v_\mathrm{s}).
\end{equation}
Using this in \eqref{eq:PN-Sigma1-Sigma0-again} gives
\begin{equation}
    \frac{\Sigma_1(v_\mathrm{s})}{\Sigma_0(v_\mathrm{s})}=\frac{1}{\ii v_\mathrm{s}}-\ii\frac{\mathbf{m}^\top\mathbf{f}'(v_\mathrm{s})}{\mathbf{m}^\top\mathbf{f}(v_\mathrm{s})} -\frac{\mathbf{p}^\top\boldsymbol{\phi}}{v_\mathrm{s}}.
\end{equation}
In the notation of \cite{MillerW16a}, we have $\mathbf{b}^<(\lambda)=-\lambda\mathbf{f}(-2\lambda)$, and combining \cite[Proposition 3]{MillerW16a} with \cite[Corollary 5]{MillerW16a} gives~\eqref{eq:PN-Sigma1-over-Sigma0-final}.
\end{proof}

To complete the proof of \eqref{eq:soliton-rational} we only need to obtain an alternate representation of the phase constant $\gamma_\mathrm{s}$, which is the content of the following lemma.
\begin{lemma}
    Let $\varphi$ be a Lax eigenfunction for the eigenvalue $\lambda_\mathrm{s}<0$, where $v_\mathrm{s}=-2\lambda_\mathrm{s}$, normalized so that $\|\varphi\|_2^2=1$.  Then recalling the operator $\mathcal{X}^*$ from Definition~\ref{def:Xstar},
    \begin{equation}
        \gamma_\mathrm{s}=-\langle \mathcal{X}^*\varphi,\varphi\rangle.
    \end{equation}
    \label{lem:PN-phase-constant}
\end{lemma}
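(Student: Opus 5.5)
The plan is to read off $\gamma_\mathrm{s}$ from the next order of the Laurent expansion \eqref{eq:PN-W-Laurent} of the Jost solution, via a Fredholm-type solvability condition. First I reduce to a resolvent statement. Since $(\mathcal{L}_{u_0}-\lambda\,\mathrm{Id})1=-\Pi u_0-\lambda$ by \eqref{eq:Lax-one}, the equation $(\mathcal{L}_{u_0}-\lambda\,\mathrm{Id})W=-\lambda$ with $W=1+w$ is equivalent to
\[
w(\lambda)=(\mathcal{L}_{u_0}-\lambda\,\mathrm{Id})^{-1}\Pi u_0 .
\]
Because $\mathcal{L}_{u_0}$ is self-adjoint and $\lambda_\mathrm{s}$ is an isolated simple eigenvalue, the resolvent is meromorphic near $\lambda_\mathrm{s}$ with a simple pole, and its residue is minus the orthogonal projection onto $\mathrm{span}\{\Phi\}$. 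Hence
\[
w(\lambda)=\frac{w^{[-1]}}{\lambda-\lambda_\mathrm{s}}+w^{[0]}+(\lambda-\lambda_\mathrm{s})w^{[1]}+O((\lambda-\lambda_\mathrm{s})^2)
\]
in $H^1_+(\R)=\mathrm{dom}(\mathcal{L}_{u_0})$, with every coefficient $w^{[k]}\in\mathrm{dom}(\mathcal{L}_{u_0})$. Comparing with \eqref{eq:PN-W-Laurent} gives $w^{[-1]}=-\ii\Phi$ and $1+w^{[0]}=(x+\gamma_\mathrm{s})\Phi$.

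Next I rewrite the regular coefficient with $\mathcal{X}^*$. The normalization $\Phi(x)=x^{-1}+O(x^{-2})$ shows that $x\Phi(x)-1\in L^2(\R)$. By \eqref{eq:domain-Xstar-alternate}--\eqref{eq:Xstar-action-x}, $\Phi\in\mathrm{dom}(\mathcal{X}^*)$ with $\mathfrak{a}[\Phi]=1$, so $x\Phi=\mathcal{X}^*\Phi+1$. Therefore
\[
w^{[0]}=\mathcal{X}^*\Phi+\gamma_\mathrm{s}\Phi\in L^2_+(\R).
\]

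Now I substitute the expansion into $(\mathcal{L}_{u_0}-\lambda\,\mathrm{Id})w=\Pi u_0$ and collect powers of $\lambda-\lambda_\mathrm{s}$. Order $(\lambda-\lambda_\mathrm{s})^{-1}$ gives $(\mathcal{L}_{u_0}-\lambda_\mathrm{s}\mathrm{Id})\Phi=0$. Order $(\lambda-\lambda_\mathrm{s})^{0}$ gives $(\mathcal{L}_{u_0}-\lambda_\mathrm{s}\mathrm{Id})w^{[0]}-\ii\Phi=\Pi u_0$; this is automatically consistent and does not determine $\gamma_\mathrm{s}$. The decisive equation is the first-order one:
\[
(\mathcal{L}_{u_0}-\lambda_\mathrm{s}\,\mathrm{Id})w^{[1]}=w^{[0]}=\mathcal{X}^*\Phi+\gamma_\mathrm{s}\Phi .
\]
Take the inner product with $\Phi$. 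Both $w^{[1]}$ and $\Phi$ lie in $\mathrm{dom}(\mathcal{L}_{u_0})$, so self-adjointness applies directly, with no $\chi_\epsilon$ regularization needed, and the left-hand side becomes $\langle w^{[1]},(\mathcal{L}_{u_0}-\lambda_\mathrm{s})\Phi\rangle=0$. The solvability condition is therefore $0=\langle\mathcal{X}^*\Phi,\Phi\rangle+\gamma_\mathrm{s}\|\Phi\|_2^2$.

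Finally, the eigenspace is one-dimensional by \cite{Wu16a}, so $\varphi=c\Phi$ with $|c|\,\|\Phi\|_2=1$. The quotient $\langle\mathcal{X}^*\Phi,\Phi\rangle/\|\Phi\|_2^2$ is invariant under scaling $\Phi\mapsto c\Phi$, so $\gamma_\mathrm{s}=-\langle\mathcal{X}^*\varphi,\varphi\rangle$.

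The main point needing care is the first step: that $w^{[1]}$ is a genuine element of $\mathrm{dom}(\mathcal{L}_{u_0})$, and that the Laurent expansion can be differentiated term by term in the graph norm. Both follow from the standard Laurent expansion of the resolvent of a self-adjoint operator at an isolated simple eigenvalue, $R(\lambda)=-P/(\lambda-\lambda_\mathrm{s})+S+(\lambda-\lambda_\mathrm{s})S^2+\cdots$, where $P$ is the eigenprojection and $S$ the reduced resolvent. Applying this to the fixed vector $\Pi u_0\in L^2_+(\R)$, $w^{[1]}=S^2\Pi u_0\in\mathrm{dom}(\mathcal{L}_{u_0})$, and the identification of $w^{[0]}$ then comes from \eqref{eq:PN-W-Laurent}.
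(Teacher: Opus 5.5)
Your argument is correct, and it takes a genuinely different route from the paper.

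\textbf{The paper's route.} The paper writes $W(x;\lambda)$ explicitly as $1$, plus a Cauchy integral over the continuous spectrum, plus pole terms over all the eigenvalues. The Cauchy integral is built from the jump of $W$ across $(0,+\infty)$, the reflection coefficient $\beta$, and the relation $m_-=\Gamma m_+$. Subtracting the pole at $\lambda_\mathrm{s}$ and letting $\lambda\to\lambda_\mathrm{s}$ expresses $\mathcal{X}^*\Phi$ as $-\gamma_\mathrm{s}\Phi$, plus the other eigenfunctions, plus a continuous-spectrum integral. The eigenfunction expansion theorem of \cite{BlackstoneGGM24a} then gives $\langle\mathcal{X}^*\Phi,\Phi\rangle=-\gamma_\mathrm{s}\|\Phi\|_2^2$.

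\textbf{Your route.} You bypass the scattering machinery. You identify $W-1$ with $(\mathcal{L}_{u_0}-\lambda\,\mathrm{Id})^{-1}\Pi u_0$. Then the regular part $w^{[0]}=\mathcal{X}^*\Phi+\gamma_\mathrm{s}\Phi$ is the reduced resolvent $S$ applied to $\Pi u_0$, hence orthogonal to the eigenspace. This uses only self-adjointness, the fact that $\lambda_\mathrm{s}$ is an isolated simple eigenvalue \cite{Wu16a}, and the defining expansion \eqref{eq:PN-W-Laurent}. It needs neither rationality of $u_0$, nor the reflection coefficient, nor completeness of the generalized eigenfunctions, so it is shorter and more general.

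\textbf{What each buys.} The paper's route yields the complete spectral decomposition of $\mathcal{X}^*\Phi$, including its distorted Fourier transform. Even so, the components along the other eigenfunctions also follow from your setup: $\langle\mathcal{X}^*\Phi,\Phi_n\rangle=\langle\Pi u_0,\Phi_n\rangle/(\lambda_n-\lambda_\mathrm{s})$ for $\lambda_n\neq\lambda_\mathrm{s}$.

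\textbf{Minor remarks.}
\begin{enumerate}
\item Your order-zero equation has a sign slip. The collected coefficient is $-w^{[-1]}=\ii\Phi$, so it should read $(\mathcal{L}_{u_0}-\lambda_\mathrm{s}\mathrm{Id})w^{[0]}+\ii\Phi=\Pi u_0$. With your sign, pairing with $\Phi$ and using $\int_\R\Phi u_0\,\dd x=2\pi\ii\lambda_\mathrm{s}$ would force $\|\Phi\|_2^2=2\pi\lambda_\mathrm{s}<0$. You never use this equation, so nothing downstream is affected.
\item The detour through $w^{[1]}$ is unnecessary. Since $w^{[0]}=S\Pi u_0$ and $PS=0$, you get $\langle w^{[0]},\Phi\rangle=0$ at once. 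Equivalently, $\langle w(\lambda),\Phi\rangle=\langle\Pi u_0,\Phi\rangle/(\lambda_\mathrm{s}-\lambda)$ for $\lambda$ near $\lambda_\mathrm{s}$, which has no regular part.
\item Membership $\Phi\in\mathrm{dom}(\mathcal{X}^*)$ with $\mathfrak{a}[\Phi]=1$ comes for free. From $w^{[0]}\in L^2$ and $1+w^{[0]}=(x+\gamma_\mathrm{s})\Phi$ you get $x\Phi-1\in L^2(\R)$. So you do not need the pointwise normalization of $\Phi$ on $\R$.
\end{enumerate}
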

Recall the domain of $\mathcal{X}^*$ in the form \eqref{eq:domain-Xstar-alternate} and its action as given in \eqref{eq:Xstar-action-x}.  
Since $\varphi$ is proportional to $\Phi$ with $\Phi(x)=x^{-1}+O(x^{-2})$ because $\mathrm{dim}(\mathrm{Ker}(\mathcal{L}_{u_0}-\lambda_\mathrm{s}\,\mathrm{Id}))=1$ \cite{Wu16a}, we certainly have $\Phi,\varphi\in\mathrm{Dom}(\mathcal{X}^*)$.
\begin{proof}
    The difference of the boundary values taken by $\lambda\mapsto W(x;\lambda)$ as $\lambda$ approaches the positive real axis from above and below can be expressed in the form~\cite[Equation (7)]{MillerW16a}
    \begin{equation}
        \lim_{\delta\downarrow 0}\left(W(x;\lambda+\ii\delta)-W(x;\lambda-\ii\delta)\right) = \beta(\lambda)m_+(x;\lambda)
        ,\quad\lambda>0,
    \end{equation}
    where $\beta(\lambda)$ is the \emph{reflection coefficient} and $x\mapsto m_+(x;\lambda)$ is the generalized eigenfunction defined in \eqref{def:m}.
    According to \cite{Wu17,BlackstoneGGM24a}, there is a function $\Gamma(\lambda)$ (denoted $\ell_+(\lambda)$ in~\cite{BlackstoneGGM24a}) such that 
    \begin{equation}
    m_-(x;\lambda)=\Gamma(\lambda)m_+(x;\lambda),\quad |\Gamma(\lambda)|=1,\quad\lambda>0.    
    \label{eq:Gamma-of-lambda}
    \end{equation}
    Then
    \begin{equation}
        \lim_{\delta\downarrow 0}\left(W(x;\lambda+\ii\delta)-W(x;\lambda-\ii\delta)\right)  
    =\frac{\beta(\lambda)}{\Gamma(\lambda)}m_-(x;\lambda)
        ,\quad\lambda>0.
    \end{equation}
    It follows from the Plemelj formula and partial fraction expansion using \eqref{eq:PN-W-Laurent} that 
    $W(x;\lambda)$ can be written in the form (see also \cite[Equation (20)]{KaupM98}):
    \begin{equation}
        W(x;\lambda) = 1+\frac{1}{2\pi\ii}\int_0^\infty\frac{\beta(\zeta)m_-(x;\zeta)\,\dd\zeta}{\Gamma(\zeta)(\zeta-\lambda)}-\sum_{n=1}^{N_\mathrm{s}}\frac{\ii\Phi_n(x)}{\lambda-\lambda_n},
    \end{equation}
    where $\Phi_n(x)=x^{-1}+O(x^{-2})$ are the normalized eigenfunctions corresponding to the $N_\mathrm{s}<\infty$ (by \cite{Wu16a})  eigenvalues $\lambda_n<0$.  Subtracting off the term with $\lambda_n=\lambda_\mathrm{s}$, a given Lax eigenvalue with eigenfunction $\Phi$, and taking the limit $\lambda\to\lambda_\mathrm{s}$ gives the identity
    \begin{equation}
        (x+\gamma_\mathrm{s})\Phi(x) = 1+\frac{1}{2\pi\ii}\int_0^\infty\frac{\beta(\zeta)m_-(x;\zeta)\,\dd\zeta}{\Gamma(\zeta)(\zeta-\lambda_\mathrm{s})}-\mathop{\sum_{n=1}^{N_\mathrm{s}}}_{\lambda_n\neq\lambda_\mathrm{s}}\frac{\ii\Phi_n(x)}{\lambda_\mathrm{s}-\lambda_n}.
    \label{eq:PN-Phi-identity-1}
    \end{equation}
    Since $\Phi\in\mathrm{Dom}(\mathcal{X}^*)$, using \eqref{eq:Xstar-action-x} with constant $\mathfrak{a}[\Phi]=1$, \eqref{eq:PN-Phi-identity-1} can be rewritten as
    \begin{equation}
        \mathcal{X}^*\Phi(x)= -\gamma_\mathrm{s}\Phi(x)-\mathop{\sum_{n=1}^{N_\mathrm{s}}}_{\lambda_n\neq\lambda_\mathrm{s}}\frac{\ii\Phi_n(x)}{\lambda_\mathrm{s}-\lambda_n}+\frac{1}{2\pi\ii}\int_0^\infty\frac{\beta(\zeta)m_-(x;\zeta)\,\dd\zeta}{\Gamma(\zeta)(\zeta-\lambda_\mathrm{s})}.
        \label{eq:XstarPhi-expand}
    \end{equation}
    The right-hand side of this identity represents the spectral decomposition of $\mathcal{X}^*\Phi\in L_+^2(\mathbb{R})$ relative to the self-adjoint operator $\mathcal{L}_\mathrm{u_0}$.  Indeed, \cite[Theorem 7.2]{BlackstoneGGM24a} implies that for each $f\in L_+^2(\mathbb{R})$, 
    \begin{equation}        f(x)=\sum_{n=1}^{N_\mathrm{s}}\frac{\langle f,\Phi_n\rangle}{\|\Phi_n\|_2^2}\Phi_n(x) + \frac{1}{2\pi}\int_0^\infty \widetilde{f}^-(\lambda)m_-(x;\lambda)\,\dd\lambda,
    \end{equation}
    where $m_-$ is defined in~\eqref{def:m} and $\widetilde{f}^-$ is the distorted Fourier transform of $f$ given by~\eqref{distortedFT}.
Comparing with \eqref{eq:XstarPhi-expand}, we obtain in particular
    $-\gamma_\mathrm{s}\|\Phi\|_2^2=\langle \mathcal{X}^*\Phi,\Phi\rangle$.
    Taking $\varphi:=\Phi/\|\Phi\|_2$ completes the proof.
\end{proof}

Combining Proposition~\ref{prop:sigma1/sigma0} with Lemma~\ref{lem:PN-phase-constant} shows that if $E(v_\mathrm{s})=0$ for $v_\mathrm{s}>0$, then $\Sigma_1(v_\mathrm{s})/\Sigma_0(v_\mathrm{s})=-\mathrm{Re}(\gamma_\mathrm{s})=\mathrm{Re}(\langle\mathcal{X}^*\varphi,\varphi\rangle)$ for an $L^2$-normalized eigenfunction $\varphi$ of $\mathcal{L}_{u_0}$ for eigenvalue $\lambda_\mathrm{s}=-\frac{1}{2}v_\mathrm{s}$.  Using this in \eqref{eq:PN-Piu-almost-done} completes the proof of \eqref{eq:soliton-rational}.

\section{Asymptotic expansion for large \texorpdfstring{$t>0$}{t>0} in negative velocity frames for rational initial data}
\label{app:radiation}

Now fix $x_0\in\mathbb{R}$ and $\lambda>0$, and set $x=x_0-2\lambda t$, i.e., the fixed velocity of the frame of reference is $v=-2\lambda<0$.
The goal of this Appendix is to prove~\eqref{eq:radiation-rational} using Theorem~\ref{thm:inversion-formula-rational} for rational data $u_0$ of the form \eqref{eq:PN-rational-IC} subject to the condition \eqref{eq:sum-of-residues-zero}.  To shorten the proof, we assume that none of the indices $n=1,\dots,N$ are exceptional, leaving the modifications for exceptional indices to the reader.
 
\subsection{Strategy}
We define a contour $\Gamma_N$ beginning and ending at $z=\infty\ee^{-\ii\pi/4}$ encircling the only the poles $p_1^*,\dots,p_N^*$ once in the counterclockwise direction. Let $\Omega$ denote the domain to the right of $\Gamma_N$ by its orientation.  $\Omega$ has two distinct components near $z=\infty\ee^{-\ii\pi/4}$; $\Omega^-$ (resp., $\Omega^+$) has the arc of $\Gamma_N$ incoming from (resp., outgoing to) $z=\infty\ee^{-\ii\pi/4}$ on its boundary.  We define for $j=1,\dots,N$ two contours $U_j^\pm$ beginning and ending at $z=\infty\ee^{-\ii\pi/4}$ in $\Omega^\pm$ and both encircling only the poles $p_1,\dots,p_j$ once in the counterclockwise direction.
See
Figure~\ref{fig:Uj}. 
\begin{figure}[h]
\centering
\begin{tikzpicture}[>=Stealth, scale=0.75]

  \draw[gray, thin, ->] (-8,0) -- (8,0);
  \draw[gray, thin, ->] (0,-2.5) -- (0,2.5);
  \fill (0.5,1.8) circle (2.5pt); \node[below,right] at (0.5,1.8) {$p_1$};
  \fill (2,1) circle (2.5pt); \node[above,right] at (2,1) {$p_2$};
  \fill (5,1) circle (2.5pt); \node[above,right] at (5,1) {$p_3$};

  \fill (0.5,-1.8) circle (2.5pt); \node[below,right] at (0.5,-1.8) {$p_1^*$};
  \fill (2,-1) circle (2.5pt);
  \node[below] at (2,-1) {$p_2^*$};
  \fill (5,-1) circle (2.5pt); \node[below,left] at (5,-1) {$p_3^*$};

    \draw[red, thick, decoration={markings, mark=at position 0.75 with {\arrow[red,thick]{Stealth}}},
  postaction={decorate}
  ] plot [smooth] coordinates {(0,-3) (-3,0) (3,0.5) (2,2.5) (-1,2.5) (-4,0) (-1,-3)};
  
      \draw[olive, thick, decoration={markings, mark=at position 0.3 with {\arrow[olive,thick]{Stealth}}},
  postaction={decorate}
  ] plot [smooth] coordinates {(9,-3) (6,0) (2,2) (-1,2) (1,1) (5,0) (8,-3)};

        \draw[blue, thick, decoration={markings, mark=at position 0.55 with {\arrow[blue,thick]{Stealth}}},
  postaction={decorate}
  ] plot [smooth] coordinates {(7.6,-3) (5,-0.4) (1,-0.4) (0.2,-2) (1,-3)};

    \draw[gray] node at (7,2) {$\boxed{z}$};

  \node[above,right] at (-3.8,2) {$\color{red}U_2^+$};

    \node[above,right] at (3.8,1.7) {$\color{olive}U_2^-$};

    \node[below] at (2.8,-0.3) {$\color{blue}\Gamma_N$};

    \node[left] at (1,-3.2) {$\color{blue}\Omega^+$};

    \node[right] at (8,-3.2) {$\color{blue}\Omega^-$};
  
\end{tikzpicture}
\caption{The contours $U_j^+$, $U_j^-$, and $\Gamma_N$, and the asymptotic domains $\Omega^\pm$.
}\label{fig:Uj}
\end{figure}

We recall from~\eqref{eq:PN-exph-infty} the function $\ee^{-\ii h^\infty(z;v)}$ which in this Appendix will be evaluated for $v=-2\lambda<0$.  To define it on the new contours, we take its value continued along $C_0$ to large $|z|$ with $\arg(z)=-\pi/4$, and then set that as the initial value in the asymptotic domain $\Omega^-$ to define it on $\{U_j^-\}_{j=1}^N$ and $\Gamma_N$ originating from $\Omega^-$ by continuation along each contour.  To define $\ee^{-\ii h^\infty(z;v)}$ on $\{U_j^+\}_{j=1}^N$, we first continue from $\Omega^-$ to $\Omega^+$ along $\Gamma_N$ and then continue along $U_j^+$, $j=1,\dots,N$.  We apply the same continuation procedure to define $\ee^{-\ii h(z;t,x)}$ on the new contours.

For $f\in L^2_+(\mathbb{R})$ admitting a meromorphic extension to $\mathbb{C}$ without poles on any of the contours and with subexponential growth as $z\to\infty$ with $\arg(z)=-\pi/4$, we denote by $\MA^\infty_f(\lambda)$, $\lambda>0$, the matrix with coefficients:
\begin{equation}\label{eq:A-f}
\MA^\infty_f(\lambda)
	=
\begin{bmatrix}\displaystyle 
	\int_{\Gamma_N}\ee^{-\ii h^{\infty}(z;-2\lambda)}f(z)\dd z 	
	& 	\displaystyle \int_{\Gamma_N}\frac{\ee^{-\ii h^{\infty}(z;-2\lambda)}\,\dd z}{z-p_1} 
	& \dots
	& 	\displaystyle \int_{\Gamma_N}\frac{\ee^{-\ii h^{\infty}(z;-2\lambda)}\,\dd z}{z-p_N} \\
	\displaystyle \int_{{U}_1^+}\ee^{-\ii h^{\infty}(z;-2\lambda)}f(z)\dd z 
	& 	\displaystyle \int_{{U}_1^+}\frac{\ee^{-\ii h^{\infty}(z;-2\lambda)}\,\dd z}{z-p_1}
	& \dots
	& 	\displaystyle \int_{{U}_{1}^+}\frac{\ee^{-\ii h^{\infty}(z;-2\lambda)}\,\dd z}{z-p_N} \\
	\vdots 
	&\vdots 
	&
	&\vdots
	\\  \displaystyle 
	\int_{{U}_N^+}\ee^{-\ii h^{\infty}(z;-2\lambda)}f(z)\dd z 
	& 	\displaystyle \int_{{U}_N^+}\frac{\ee^{-\ii h^{\infty}(z;-2\lambda)}\,\dd z}{z-p_1}
	& \dots
	& \displaystyle 	\int_{{U}_N^+}\frac{\ee^{-\ii h^{\infty}(z;-2\lambda)}\,\dd z}{z-p_N}\\
\end{bmatrix}.
\end{equation}
We also define
\begin{equation}\label{eq:D-negative}
\ME^\pm(\lambda)
	=\begin{bmatrix}\displaystyle 
		\int_{U_1^\pm}\frac{\ee^{-\ii h^{\infty}(z;-2\lambda)}\,\dd z}{z-p_1}
	& \dots
	& 	\displaystyle \int_{U_{1}^\pm}\frac{\ee^{-\ii h^{\infty}(z;-2\lambda)}\,\dd z}{z-p_N} \\
	\vdots 
	&
	&\vdots
	\\  
		\displaystyle \int_{U_N^\pm}\frac{\ee^{-\ii h^{\infty}(z;-2\lambda)}\,\dd z}{z-p_1}
	& \dots
	& 	\displaystyle \int_{U_N^\pm}\frac{\ee^{-\ii h^{\infty}(z;-2\lambda)}\,\dd z}{z-p_N}\\
\end{bmatrix},\quad\lambda>0.
\end{equation}

Our proof decomposes into two claims.  Let $c:=c_1+\cdots+c_N$, and note that \eqref{eq:sum-of-residues-zero} gives $\mathrm{Re}(c)=0$.
\begin{proposition}[Asymptotic expansion]\label{prop:pi-frac}
Fix $\lambda>0$.  Then  
\begin{equation}\label{eq:pi-frac}
\Pi u(t,x_0-2t\lambda)
	=- \ee^{\ii\pi/4}\frac{\ee^{-\ii t \lambda^2+\ii x_0\lambda}}{2\sqrt{\pi t}}
\frac{\det (\MA^\infty_{\Pi u_0}(\lambda))}{\det (\ME^-(\lambda))}+ O(t^{-1}),\quad t\to+\infty.
\end{equation}
The $O(t^{-1})$ error term is uniform for $x_0\in\R$ in any compact set.
\end{proposition}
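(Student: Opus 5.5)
We follow the same template as the proof of Proposition~\ref{prop:Piu-soliton-asymp}, but now in a negative-velocity frame $x=x_0-2\lambda t$. The critical point $z=x$ of the Gaussian factor moves to $-\infty$ along the real axis, so it lies to the \emph{left} of all the poles. The plan is first to deform the contours in Theorem~\ref{thm:inversion-formula-rational}, using row operations that leave $\det\mathbf{A}$ and $\det\mathbf{B}$ invariant. One row is an integral over a steepest-descent line $D_-$ through $z=x$, parametrized by $z=x+2t^{1/2}\ee^{-\ii\pi/4}\zeta$. The remaining $N+1$ rows are integrals over $t$-independent contours: $\Gamma_N$ and $U_j^+$ (or $U_j^-$), $j=1,\dots,N$, all tending to $\infty\ee^{-\ii\pi/4}$, on which $\ee^{-\ii(z-x)^2/(4t)}$ decays.

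Concretely, I would first write $C_0=D_-\sqcup(\text{loop at }\infty\ee^{-\ii\pi/4})$. This loop encircles the poles $p_1^*,\dots,p_N^*$, and suitable combinations of $C_0$ and the $C_j$ become $\Gamma_N$ and the $U_j^\pm$. The nonexceptional assumption makes the $C_j$ full loops, so Cauchy's theorem allows each of them to be swung around to open toward $\infty\ee^{-\ii\pi/4}$. This produces $N+2$ contours for $N+1$ rows. One linear relation, $U_j^+-U_j^-$ being homologous to a multiple of $\Gamma_N$ with branch-factor constants such as $\ee^{2\pi(c_1+\cdots+c_j)}$, lets us keep either the $U^+$ or the $U^-$ family as convenient.

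Next, on $D_-$ I expand exactly as before. We have $\prod(\cdots)=\text{const}\,(1+\ii L/x+O(t^{-2}|z-x|)+O(t^{-2}))$, so the $D_-$-row of $\mathbf{B}$ behaves like $2\sqrt{\pi}\ee^{-\ii\pi/4}t^{1/2}\,\text{const}\,[\,1,\ (x-p_k)^{-1}+\dots]$. In $\mathbf{A}$ the first entry is $O(t^{-3/2})$ after using \eqref{eq:Aj1-alternate}, but I would rather use the column operation of Remark~\ref{rem:continuity-at-t=0+} replacing $u_0$ by $\Pi u_0$. On the fixed contours, the identity $\ee^{-\ii(z-x)^2/(4t)}=\ee^{-\ii(\lambda^2t-\lambda x_0+x_0^2/(4t))}\ee^{-\ii(z^2-2x_0z)/(4t)}\ee^{-\ii h^\infty(z;-2\lambda)}$ holds, with $(z^2-2x_0z)/t$ small on compact parts. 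Expanding to first order then gives entries equal to those of $\MA^\infty_{\Pi u_0}(\lambda)$ or $\ME^\pm(\lambda)$ up to $O(t^{-1})$, times a common exponential.

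Expanding both determinants along the $D_-$ row, $\det\mathbf{B}$ is dominated by $t^{1/2}\cdot1\cdot\det(\text{fixed-contour minor})$. This minor should reduce, after the row relation, to $\det\ME^-(\lambda)$, possibly times the $\Gamma_N$ integral of the first column, which I would need to handle. In $\det\mathbf{A}$, the $D_-$ row is $O(t^{-1/2})$ in every entry, so the leading term is $t^{-1/2}\times$ the cofactor sum, i.e.\ essentially $\det\MA^\infty_{\Pi u_0}$. The ratio then gives $t^{-1/2}\det\MA^\infty/\det\ME^-$ times $-\ee^{\ii\pi/4}/(2\sqrt\pi)$, with the phase $\ee^{-\ii t\lambda^2+\ii x_0\lambda}$ arising from the ratio of the exponential prefactors between rows. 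The error is $O(t^{-1})$ provided $\det\ME^-(\lambda)\neq0$.

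The main obstacle will be this bookkeeping step. One part is choosing the row combination so that the surviving minor of $\mathbf{B}$ is exactly $\ME^-$, with the $\mathbf{A}$-numerator exactly $\MA^\infty_{\Pi u_0}$. The other part is justifying $\det\ME^-(\lambda)\neq0$ for $\lambda>0$. I expect the latter to follow from uniqueness of the Jost-type solution $m_-(\cdot;\lambda)$ in \eqref{def:m}, via an argument like the Miller--Wetzel isomorphism used for Proposition~\ref{PN-prop-MW16}: a null vector of $\ME^-$ would produce a nontrivial bounded solution of $(\mathcal{L}_{u_0}-\lambda)m=0$ decaying at $-\infty$, which is impossible. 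I would try to establish this first.
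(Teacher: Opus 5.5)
You have the right ingredients: the loop $C_0-D_-$ around $p_1^*,\dots,p_N^*$, the relation \eqref{eq:Uj+-} between $U_j^\pm$ and $\Gamma_N$, the Gaussian expansion on $D_-$, the $O(t^{-1})$ expansion on fixed contours, and the column operation $u_0\to\Pi u_0$. However, the row bookkeeping, which you correctly identify as the crux, is set up wrongly, and as written it does not give the statement.

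$D_-$ can never be a row of its own. As chains $C_0=D_- - \Gamma_N$, and $\Gamma_N$, which encircles the conjugate poles, is not in general a constant-coefficient combination of $C_0,\dots,C_N$. So no $t$-independent row operation separates the two pieces. There are never ``$N+2$ contours for $N+1$ rows'', and the $U^+/U^-$ relation would not remove $D_-$ or $\Gamma_N$ anyway.

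Your own scalings expose the problem. Suppose a row of $\mathbf{A}$ lived purely on $D_-$ (all entries $O(t^{-1/2})$) while the same row of $\mathbf{B}$ had leading entry $\sim t^{1/2}$. Then you would get $\Pi u=O(t^{-1})$, which is the non-soliton outcome of Proposition~\ref{prop:Piu-soliton-asymp}, not a $t^{-1/2}$ radiation term. Moreover, $N\times N$ cofactors cannot assemble into the $(N+1)\times(N+1)$ determinant $\det\mathbf{A}^\infty_{\Pi u_0}(\lambda)$. So the ratio $t^{-1/2}\det\mathbf{A}^\infty/\det\mathbf{E}^-$ you state does not follow from your intermediate claims.

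The missing idea is to keep row $1$ on $C_0$ and transform only the other rows. By \eqref{eq:C0CjUj-}, subtracting $(1-\ee^{2\pi(c_1+\cdots+c_j)})$ times row $1$ from row $j+1$ turns $C_j$ into $U_j^-$, so the $N+1$ rows are exactly $C_0,U_1^-,\dots,U_N^-$.

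Each first-row entry is then $\int_{D_-}-\int_{\Gamma_N}$. The $\Gamma_N$ part is $\ee^{-\ii(\lambda^2t-x_0\lambda)}$ times an $O(1)$ quantity. It dominates the $O(t^{-1/2})$ contribution of $D_-$ in every entry except the $(1,1)$ entry of $\mathbf{B}$. There $D_-$ gives $2\sqrt{\pi}\ee^{-\ii\pi/4}t^{1/2}+O(t^{-1/2})$; your unspecified constant is exactly $1$, because $D_-$ lies to the left of all poles, where the product is normalized to tend to $1$.

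Consequences:
\begin{itemize}
\item $\det\mathbf{A}$ is $O(1)$, with leading term minus the determinant with rows $\Gamma_N,U_1^-,\dots,U_N^-$. By \eqref{eq:Uj+-} this equals $\det\mathbf{A}^\infty_{\Pi u_0}(\lambda)$.
\item $\det\mathbf{B}$ is $2\sqrt{\pi}\ee^{-\ii\pi/4}t^{1/2}$ times its $(1,1)$ minor, up to $O(1)$. That minor is exactly $\det\mathbf{E}^-(\lambda)$, with no $\Gamma_N$ contribution.
\item The numerator carries one more factor $\ee^{-\ii(\lambda^2t-x_0\lambda)}$ than the denominator. This yields the stated prefactor $-\ee^{\ii\pi/4}\ee^{-\ii t\lambda^2+\ii x_0\lambda}/(2\sqrt{\pi t})$ and an $O(t^{-1})$ error.
\end{itemize}

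Your plan to prove $\det\mathbf{E}^-(\lambda)\neq0$ through uniqueness of $m_-$ is a workable variant of Lemma~\ref{lem:detD-nonzero}. The paper instead builds an $L^2_+$ eigenfunction with positive eigenvalue.
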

We will prove that $\det (\ME^-(\lambda))\neq 0$ in Lemma~\ref{lem:detD-nonzero}.

\begin{proposition}[Formula for the distorted Fourier transform]\label{prop:distordu-frac}
Assume the condition \eqref{eq:sum-of-residues-zero}. If $f\in L^2_+(\mathbb{R})$ admits a meromorphic extension to $\mathbb{C}$ that is sub-exponentially bounded in the direction $z\to\infty\ee^{-\ii\pi/4}$, then
\begin{equation}\label{eq:distordu-frac}
\widetilde{f}^-(\lambda)
	=-
    \frac{\det(\MA^\infty_f(\lambda))}{\det(\ME^-(\lambda))},\quad\lambda>0.
\end{equation}
\end{proposition}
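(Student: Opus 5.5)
We need to prove Proposition~\ref{prop:distordu-frac}: the formula $\widetilde f^-(\lambda)=-\det(\MA^\infty_f(\lambda))/\det(\ME^-(\lambda))$. The strategy is to construct $m_-(\diamond;\lambda)$ explicitly for rational $u_0$, in the same way the eigenfunctions were built in Appendix~\ref{app:Nsoliton}. We then compute $\langle f,m_-\rangle$ by a contour deformation that turns it into a linear combination of the integrals $\int_{\Gamma_N}\ee^{-\ii h^\infty}f$. Finally, Cramer's rule packages that combination as a ratio of determinants.

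\emph{Step 1: explicit generalized eigenfunction.} For $\lambda>0$, we write the equation $(\mathcal{L}_{u_0}-\lambda)m=0$ using the identity $\Pi(u_0 m)=u_0m-\sum_n c_n m(p_n)/(\diamond-p_n)$, which holds for bounded $m$ analytic in $\C_+$. This gives the first-order ODE
\[
-\ii m'-(u_0+\lambda)m=-\sum_{n}\frac{c_n m(p_n)}{z-p_n}.
\]
The integrating factor is $\ee^{-\ii h^\infty(z;-2\lambda)}=\ee^{-\ii\lambda z}\prod_n(z-p_n)^{-\ii c_n}(z-p_n^*)^{-\ii c_n^*}$. Hence
\[
m(z)=\ee^{\ii h^\infty(z)}\Bigl(K+\int^z \ee^{-\ii h^\infty(w)}\sum_n\frac{b_n\,\dd w}{w-p_n}\Bigr),
\]
with unknown constants $b_n$ and $K$. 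The requirements are threefold: $m$ must be single-valued and analytic at each $p_j$, which imposes vanishing of suitable monodromy/loop integrals over the loops $U_j^-$; $m$ must be consistent, $b_n=\ii c_n m(p_n)$; and the normalization $\ee^{-\ii\lambda x}m\to1$ must hold as $x\to-\infty$. We take the base point at $\infty\ee^{-\ii\pi/4}$ in $\Omega^-$. Under these conditions the analyticity constraints become the linear system $\ME^-(\lambda)\mathbf{b}=K\cdot(\text{explicit vector})$, with the normalization fixing $K$. We need $\det\ME^-(\lambda)\neq0$ for unique solvability. This is Lemma~\ref{lem:detD-nonzero}, which we expect to follow from uniqueness of $m_-$ in $L^\infty$ (\cite[Theorem 26]{BlackstoneGGM24a}): a kernel vector would produce a nonzero bounded solution decaying relative to $\ee^{\ii\lambda x}$ at $-\infty$, which is impossible for $\lambda>0$ in the continuous spectrum.

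\emph{Step 2: pairing with $f$.} We compute $\widetilde f^-(\lambda)=\int_\R f(x)m_-(x;\lambda)^*\,\dd x$. Using Schwarz reflection, $m_-(x)^*$ is the boundary value of a function meromorphic in $\C_-$ built from $\ee^{+\ii h^\infty(z^*)^*}$. For real $x$, and after accounting for the branch continuations defined above, this equals $\ee^{-\ii h^\infty(x)}$ times explicit rational factors. Because of the term $\ee^{\ii\lambda z}$, the integrand decays in the direction $\infty\ee^{-\ii\pi/4}$, so we close the real line downward onto $\Gamma_N$, picking up the singularities at $p_n^*$. The growth hypothesis on $f$ is used exactly here. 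Then $\widetilde f^-$ becomes $\int_{\Gamma_N}\ee^{-\ii h^\infty}f\,\dd z$ times a constant, plus $\sum_n b_n^*$-type terms of the form $\int_{\Gamma_N}\ee^{-\ii h^\infty}/(z-p_n)$, all weighted by the solution $\mathbf b$ of Step 1. This is the first row of $\MA^\infty_f$ paired with the vector $(1,\mathbf b)$.

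\emph{Step 3: determinant identity.} The remaining rows of $\MA^\infty_f$, over $U_j^+$, must be related to $\ME^-$. Continuing along $\Gamma_N$ from $\Omega^-$ to $\Omega^+$ multiplies the branch by the monodromy $\ee^{2\pi\mathrm{Re}(c)}$-type factor, which equals $1$ by \eqref{eq:sum-of-residues-zero}. In addition, $U_j^+$ and $U_j^-$ differ by arcs homotopic to pieces of $\Gamma_N$. Consequently, row operations express the first column of $\MA^\infty_f$ in the $U_j^+$ rows, and the $\ME^+$ block, in terms of $\ME^-$ and the $\Gamma_N$ row. Expanding along the first column and applying Cramer's rule with the system of Step 1 then yields $-\det\MA^\infty_f/\det\ME^-$.

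The main obstacle is the bookkeeping of branches and orientation in Steps 2--3. The key points are: identifying $m_-^*$ on $\R$ with the correct continuation of $\ee^{-\ii h^\infty}$; checking that the rows over $U_j^+$ encode exactly the analyticity conditions for $m_-$ (up to the $\Gamma_N$ row); and tracking the overall sign. We would first verify everything in the case $N=1$ with explicit contours, then generalize.
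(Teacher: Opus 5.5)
Your Steps~2--3 contain a genuine gap, and they carry the entire content of the proposition.

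The claim that $m_-(x;\lambda)^*$ equals $\ee^{-\ii h^\infty(x;-2\lambda)}$ times explicit rational factors is false. By Lemma~\ref{lem:m}, $m_-$ contains the incomplete integral $\int_{\ell_0^+(x)}\ee^{-\ii h^\infty(z;-2\lambda)}\sum_n w_n^-(\lambda)(z-p_n)^{-1}\,\dd z$. This is not elementary (already for $N=1$ it is an incomplete confluent hypergeometric integral), so $\langle f,m_-\rangle$ is a genuine double integral. Moreover, $z\mapsto m_-(z^*;\lambda)^*$ is analytic in $\C_-$, because the branch points of its prefactor at the $p_n^*$ cancel against the bracket. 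So pushing $\R$ downward only sees the poles of $f$ and returns values of $m_-$ at reflected points, not $\int_{\Gamma_N}\ee^{-\ii h^\infty}f\,\dd z$.

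There is also a structural obstruction. Write $q_0:=\int_{\Gamma_N}\ee^{-\ii h^\infty}f\,\dd z$, $q_n:=\int_{\Gamma_N}\ee^{-\ii h^\infty}(z-p_n)^{-1}\,\dd z$ and $b_j^-:=\int_{U_j^-}\ee^{-\ii h^\infty}f\,\dd z$. Then the target is $-\det(\MA^\infty_f)/\det(\ME^-)=-q_0+\Mq^\top(\ME^-)^{-1}\Mb^-$, so $f$ enters through $N+1$ independent linear functionals. Your Step~2 output is $q_0$ times a constant plus $\sum_n\beta_nq_n$, where $\boldsymbol{\beta}$ is the $f$-independent coefficient vector of Step~1. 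That expression is affine in $f$ and never sees $\Mb^\pm$. Step~3 cannot repair this: \eqref{eq:Uj+-} only trades the $U_j^+$ rows for $U_j^-$ rows. The first-column entries $\int_{U_j^\pm}\ee^{-\ii h^\infty}f\,\dd z$ cannot be eliminated by row operations in favour of $\ME^-$ and the $\Gamma_N$ row. Separately, the monodromy acquired along $\Gamma_N$ is $\ee^{2\pi c^*}=\ee^{-2\pi\ii\,\mathrm{Im}(c)}$. Condition \eqref{eq:sum-of-residues-zero} makes it unimodular but not $1$; compare $K_+$ in Lemma~\ref{lem:m}.

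The missing idea is that the $U_j^\pm$-integrals of $f$ only arise when $f$ appears as a \emph{source} in an inhomogeneous equation. The paper works with the limiting resolvents $\varphi^\pm=\lim_{\epsilon\downarrow 0}(\mathcal{L}_{u_0}-(\lambda\pm\ii\epsilon)\mathrm{Id})^{-1}f$, whose analyticity conditions are $\ME^\pm\My^\pm=\Mb^\pm$ (Lemma~\ref{lem:phi}). Using $\ME^{-}=\ME^{+}+\Md\Mq^{\top}$, the Sherman--Morrison formula and the matrix determinant lemma, it proves $\varphi^+-\varphi^-=-\ii\,\bigl(\det(\MA^\infty_f)/\det(\ME^-)\bigr)\,m_-$ (Proposition~\ref{prop:phi-m}). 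Pairing with $f$ via \eqref{eq:dmuf}--\eqref{eq:dmuf2} then yields $|\widetilde f^-(\lambda)|^2=-\bigl(\det(\MA^\infty_f)/\det(\ME^-)\bigr)\,\widetilde f^-(\lambda)^*$.

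If you want to keep a direct pairing, a workable substitute is a Green identity. Write $f=(\mathcal{L}_{u_0}-\lambda\mathrm{Id})\varphi^+$ and integrate by parts against $m_-$ on $[-R,R]$; the Toeplitz terms cancel by Cauchy's theorem. Since $\varphi^+$ decays as $x\to-\infty$, this gives $\widetilde f^-(\lambda)=-\ii\lim_{x\to+\infty}\varphi^+(x;\lambda)\,m_-(x;\lambda)^*$. The $\Gamma_N$- and $U_j^+$-integrals of $\ee^{-\ii h^\infty}f$ then enter through the large-$x$ asymptotics of $\varphi^+$.

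Your Step~1 uses base point in $\Omega^-$ with $\ME^-$, where the paper uses $\Omega^+$ with $\ME^+$. That is an acceptable alternative parametrization of $m_-$, but it is not where the difficulty lies.
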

We prove  Proposition~\ref{prop:pi-frac} in Section~\ref{subsec:asymptotic-y<0} and Proposition~\ref{prop:distordu-frac} in Sections~\ref{subsec:distorted} and~\ref{ss:distorted}.
Specializing Proposition~\ref{prop:distordu-frac} to $f=\Pi u_0$, we conclude that~\eqref{eq:radiation-rational} holds.

\begin{remark}[Integral identities]
\label{rem:integral-identities}
The following identities hold for any meromorphic function $G$ without poles on any of the contours and for which all integrals converge:
\begin{equation}\label{eq:Uj+-}
\int_{U_j^-}\ee^{-\ii h^{\infty}(z;-2\lambda)}G(z)\dd z=\int_{U_j^+}\ee^{-\ii h^{\infty}(z;-2\lambda)}G(z)\dd z+(1-\ee^{2\pi(c_1+\dots+c_j)})\int_{\Gamma_N}\ee^{-\ii h^{\infty}(z;-2\lambda)}G(z)\dd z,
\end{equation}
for $\lambda>0$ and 
\begin{equation}
- (1-\ee^{2\pi(c_1+\dots+c_j)})
\int_{C_0}\ee^{-\ii h(z;t,x)}G(z)\dd z
	+\int_{\uC_j}\ee^{-\ii h(z;t,x)}G(z)\dd z	
	=\int_{U_j^-}\ee^{-\ii h(z;t,x)}G(z)\dd z
    \label{eq:C0CjUj-}
\end{equation}
for $x\in\R$ and $t>0$.  The identity \eqref{eq:Uj+-} implies that if $\lambda>0$, $\det(\mathbf{A}_f^\infty(\lambda))=\det(\widetilde{\mathbf{A}}_f^\infty(\lambda))$ where $\widetilde{\mathbf{A}}_f^\infty(\lambda)$ is the same as $\mathbf{A}_f^\infty(\lambda)$ except that all the contours of integration in rows $j=2,\dots,N+1$ are changed from $U_{j-1}^+$ to $U_{j-1}^-$.  Note that $u_0-\Pi u_0$ is a linear combination of $\{(z-p_n)^{-1}\}_{n=1}^N$.  This fact together with the identity \eqref{eq:C0CjUj-} implies that if $t>0$, $\det(\mathbf{A}(t,x))=\det(\widetilde{\mathbf{A}}(t,x))$ and $\det(\mathbf{B}(t,x))=\det(\widetilde{\mathbf{B}}(t,x))$, where 
\begin{itemize}
    \item $\widetilde{\mathbf{A}}(t,x)$ is the same as $\mathbf{A}(t,x)$ except that in rows $j=2,\dots,N+1$ the integration contour $C_{j-1}$ is replaced with $U_{j-1}^-$ and in the integrals in the first column the function $u_0$ is replaced with $\Pi u_0$, and
    \item $\widetilde{\mathbf{B}}(t,x)$ is the same as $\mathbf{B}(t,x)$ except that in rows $j=2,\dots,N+1$ the integration contour $C_{j-1}$ is replaced with $U_{j-1}^-$.
\end{itemize}
\end{remark}

\subsection{Long-time asymptotic expansion}\label{subsec:asymptotic-y<0}
We show that as $t\to+\infty$, $\widetilde{B}_{11}(t,x_0-2t\lambda)$ is proportional to $\sqrt{t}$ while all other elements of $\widetilde{\mathbf{A}}(t,x_0-2t\lambda)$ and of $\widetilde{\mathbf{B}}(t,x_0-2t\lambda)$ remain bounded; moreover the $(1,1)$ principal minor determinant of $\widetilde{\mathbf{B}}(t,x_0-2t\lambda)$ is bounded away from zero.  
This produces
the desired  $t^{-1/2}$ rate of decay in the long-time asymptotics.

\begin{proof}[Proof of Proposition~\ref{prop:pi-frac}]
Consider the first rows of $\widetilde{\mathbf{A}}(t,x_0-2t\lambda)$ and $\widetilde{\mathbf{B}}(t,x_0-2t\lambda)$.  Define the line $D_-$ through $z=x=x_0-2t\lambda$ with $(z-x)\ee^{\ii\pi/4}\in\mathbb{R}$, which lies to the left (lesser real part) of all of the contours for $t$ large enough, so that for either $G(z)=\Pi u_0(z)$ or $G(z)=(z-p_n)^{-1}$, $n=1,\dots,N$,
\begin{equation}
\int_{C_0}\ee^{-\ii h(z;t,x)}G(z)\dd z= \int_{D_-}\ee^{-\ii h(z;t,x)}G(z)\dd z
- \int_{\Gamma_N}\ee^{-\ii h(z;t,x)}G(z)\dd z.
\end{equation}
Using the same method as the positive-velocity case,
 we note that for $x=x_0-2\lambda t$ and $t\to+\infty$,
\begin{equation}
\begin{aligned}
\int_{D_-}\ee^{-\ii h(z;t,x)}\dd z
	&=2\sqrt{\pi}\ee^{-\ii\pi/4}t^{1/2}
    +O(t^{-1/2}),
\\
\int_{D_-}\ee^{-\ii h(z;t,x)}\Pi u_0(z)\dd z
	&=O(t^{-1/2}),
\\
\int_{D_-}\ee^{-\ii h(z;t,x)}\frac{\dd z}{z-p_n}
	&=O(t^{-1/2}),
	\quad 1\leq n\leq N,
\end{aligned}
\label{eq:D-integrals}
\end{equation}
because the product in \eqref{eq:PN-exph} converges to $1$ as $x\to-\infty$ uniformly on $D_-$.
Conversely, for $x=x_0-2\lambda t$ and $t\to+\infty$, all the integrals over $\Gamma_N$ remain bounded:
\begin{equation}
\int_{\Gamma_N}\ee^{-\ii h(z;t,x)}G(z)\dd z
	=\ee^{-\ii \lambda^2 t+\ii x_0\lambda}\int_{\Gamma_N}\ee^{-\ii h^{\infty}(z;-2\lambda)}G(z)\dd z+O(t^{-1}),
\end{equation}
and similarly, we have
\begin{equation}
\int_{U_j^-}\ee^{-\ii h(z;t,x)}G(z)\dd z
	=\ee^{-\ii \lambda^2 t+\ii x_0\lambda}\int_{U_j^-}\ee^{-\ii h^{\infty}(z;-2\lambda)}G(z)\dd z+O(t^{-1}).
\end{equation}
Hence one can neglect all the contributions of the integrals over the contour $D_-$ compared to those over $\Gamma_N$ or $U_j^-$, except in the element $\widetilde{B}_{11}(t,x_0-2\lambda t)$, for which the first equation in \eqref{eq:D-integrals} gives the leading contribution.

Putting the results together,
we get:
\begin{multline}
-\ee^{\ii(N+1)( \lambda^2 t- x_0\lambda)}\left(\det\left(\widetilde{\mathbf{A}}(t,x)\right)+O(t^{-1/2})\right)=\\
	 \begin{vmatrix}\displaystyle 
	\int_{\Gamma_N}\ee^{-\ii h^{\infty}(z;-2\lambda)}\Pi u_0(z)\dd z 	
	& 	\displaystyle \int_{\Gamma_N}\frac{\ee^{-\ii h^{\infty}(z;-2\lambda)}\,\dd z}{z-p_{1}}
	& \dots
	& 	\displaystyle \int_{\Gamma_N}\frac{\ee^{-\ii h^{\infty}(z;-2\lambda)}\,\dd z}{z-p_{N}}  \\ \displaystyle
	\int_{{U_1^-}}\ee^{-\ii h^{\infty}(z;-2\lambda)}\Pi u_0(z)\dd z 
	& 	\displaystyle \int_{{U_1^-}}\frac{\ee^{-\ii h^{\infty}(z;-2\lambda)}\,\dd z}{z-p_{1}}
	& \dots
	& 	\displaystyle \int_{{U_1^-}}\frac{\ee^{-\ii h^{\infty}(z;-2\lambda)}\,\dd z}{z-p_{N}} \\
	\vdots 
	&\vdots 
	&
	&\vdots
	\\  \displaystyle 
	\int_{{U_N^-}}\ee^{-\ii h^{\infty}(z;-2\lambda)}\Pi u_0(z)\dd z 
	& 	\displaystyle \int_{{U_N^-}}\frac{\ee^{-\ii h^{\infty}(z;-2\lambda)}\,\dd z}{z-p_{1}}
	& \dots
	& \displaystyle 	\int_{{U_N^-}}\frac{\ee^{-\ii h^{\infty}(z;-2\lambda)}\,\dd z}{z-p_{N}}\\
\end{vmatrix}.
\end{multline}
We recognize the right-hand side as $\det(\widetilde{\mathbf{A}}^\infty_{\Pi u_0}(\lambda))$, which by Remark~\ref{rem:integral-identities} coincides with $\det(\mathbf{A}^\infty_{\Pi u_0}(\lambda))$.
Similarly, we get
\begin{multline}\label{eq:B<0-2}
-\ee^{\ii (N+1)( \lambda^2 t- x_0\lambda)}\left(\det \left(\widetilde{\mathbf{B}}(t,x)\right)+O(t^{-1/2})\right)=\\
	\begin{vmatrix}
	-2\sqrt{\pi t}\ee^{-\ii\pi/4}\ee^{\ii (\lambda^2t-x_0\lambda)}
	& 	\displaystyle \int_{\Gamma_N}\frac{\ee^{-\ii h^{\infty}(z;-2\lambda)}\,\dd z}{z-p_{1}}
	& \dots
	& 	\displaystyle \int_{\Gamma_N}\frac{\ee^{-\ii h^{\infty}(z;-2\lambda)}\,\dd z}{z-p_{N}} \\ \displaystyle 
	\int_{{U_1^-}}\ee^{-\ii h^{\infty}(z;-2\lambda)}\dd z 
	& 	\displaystyle \int_{{U_1^-}}\frac{\ee^{-\ii h^{\infty}(z;-2\lambda)}\,\dd z}{z-p_{1}}
	& \dots
	& \displaystyle	\int_{{U_{1}^-}}\frac{\ee^{-\ii h^{\infty}(z;-2\lambda)}\,\dd z}{z-p_{N}} \\
	\vdots 
	&\vdots 
	&
	&\vdots
	\\  
	\displaystyle \int_{{U_N^-}}\ee^{-\ii h^{\infty}(z;-2\lambda)}\dd z 
	& 	\displaystyle \int_{{U_N^-}}\frac{\ee^{-\ii h^{\infty}(z;-2\lambda)}\,\dd z}{z-p_{1}}
	& \dots
	& 	\displaystyle \int_{{U_N^-}}\frac{\ee^{-\ii h^{\infty}(z;-2\lambda)}\,\dd z}{z-p_{N}}\\
\end{vmatrix}.
\end{multline}
Combining these and using $\det(\mathbf{A}(t,x))=\det(\widetilde{\mathbf{A}}(t,x))$ and $\det(\mathbf{B}(t,x))=\det(\widetilde{\mathbf{B}}(t,x))$ from Remark~\ref{rem:integral-identities} completes the proof of \eqref{eq:pi-frac}.
\end{proof}

\subsection{Spectral theory of the Lax operator}\label{subsec:distorted}
In this section, we note that the condition \eqref{eq:sum-of-residues-zero}, i.e., $\mathrm{Re}(c)=0$, implies that $\diamond\, u_0(\diamond)\in L^2(\mathbb{R})$, and we prove Proposition~\ref{prop:distordu-frac}.
We first justify that the determinant of $\mathbf{E}^\pm(\lambda)$ does not vanish for $\lambda>0$.
\begin{lemma}\label{lem:detD-nonzero}
For every $\lambda>0$, we have $\det(\mathbf{E}^\pm(\lambda))\neq 0$. 
\end{lemma}

\begin{proof}
We show that every $\lambda>0$ such that $\det(\mathbf{E}^\pm(\lambda))=0$ is an eigenvalue of $\mathcal{L}_{u_0}$, in contradiction with the fact that all eigenvalues are negative \cite{Wu16a}. The rest of the proof is very similar to the proof of~\cite[Proposition 1]{MillerW16a}, therefore we only recall the ideas.
The eigenvalue equation $\mathcal{L}_{u_0}g=\lambda g$ can be written as an ordinary differential equation on the complex $x$-plane:
\begin{equation}
-\ii g'(x)-u_0(x)g(x)-\lambda g(x)=-\sum_{k=1}^N\frac{c_ng(p_n)}{x-p_n}.
\label{eq:spectral-ODE}
\end{equation}
Integrating yields two particular solutions $g=g_\pm(x)$ that have to satisfy
\begin{equation}
g_\pm(x)
    =\ee^{\ii h^{\infty}(x;-2\lambda)}\int_{\ell_0^\pm(x)}\ee^{-\ii h^{\infty}(x;-2\lambda)}\sum_{n=1}^N\frac{v_n^\pm(\lambda)}{z-p_n}\dd z,\quad v_n^\pm(\lambda)=-\ii c_ng_\pm(p_n),\quad n=1,\dots,N,
\end{equation}
where $\ell_0^\pm(x)$ is a contour from $z=\infty\ee^{-\ii\pi/4}\in\Omega^\pm$  to $x\in\C_+$. As in the proof of~\cite[Corollary 1]{MillerW16a}, one can show that $g_\pm$ is analytic on the upper half-plane if and only if
\begin{equation}\label{eq:Uj-f}
\int_{U_j^\pm}\ee^{-\ii h^{\infty}(x;-2\lambda)}\sum_{n=1}^N\frac{v_n^\pm(\lambda)}{z-p_n}\dd z
    =0,
    \quad j=1,\dots,N.
\end{equation}
If this holds for some nonzero vector $\mathbf{v}^\pm(\lambda)=(v_1^\pm(\lambda),\dots,v_N^\pm(\lambda))^\top\in\C^N$, one can check that $g_\pm\in L_+^2(\R)$ defines an eigenfunction of $\mathcal{L}_{u_0}$ associated to the eigenvalue $\lambda>0$, which is impossible.
It remains to observe that if $\lambda>0$, the existence of $\mathbf{v}^\pm(\lambda)\neq\mathbf{0}$ satisfying \eqref{eq:Uj-f} is equivalent to the assertion that $\det(\mathbf{E}^\pm(\lambda))=0$.
\end{proof}

We recall the relationship \eqref{eq:Gamma-of-lambda} between the generalized eigenfunctions $m_\pm(x;\lambda)$ defined in~\eqref{def:m}.
We also recall~\cite[Theorem 26]{BlackstoneGGM24a} that the spectral measure $\dd\mu_f$ of $f$ for $\mathcal{L}_{u_0}$ satisfies
\begin{equation}\label{eq:dmuf}
\dd\mu_f(\lambda)
	=\frac{1}{2\ii\pi}\lim_{\epsilon\to 0^+}\langle \varphi(\cdot;\lambda+\ii\epsilon)-\varphi(\cdot;\lambda-\ii\epsilon), f \rangle\,\dd\lambda,
\end{equation}
where given $\epsilon>0$, $\varphi(\cdot;\lambda\pm \ii\epsilon)\in L^2(\mathbb{R})$ are given by
\begin{equation}
\varphi(\cdot;\lambda\pm \ii\epsilon):=(\mathcal{L}_{u_0}-(\lambda\pm\ii \epsilon)\,\mathrm{Id})^{-1}f.
\end{equation}
Moreover, for either of the two distorted Fourier transforms (which have the same modulus),
\begin{equation}\label{eq:dmuf2}
\dd \mu_f(\lambda)
	=\frac{1}{2\pi}|\widetilde{f}^\pm(\lambda)|^2\,\dd \lambda.
\end{equation}

For rational $u_0$ subject to the condition \eqref{eq:sum-of-residues-zero}, we observe that the characterizations and $m_\pm(\diamond;\lambda)$ and $\varphi(\diamond;\lambda\pm\ii\epsilon)$ are very similar to ~\cite[Proposition 1]{MillerW16a}.
In particular, we have the following lemmas.  
\begin{lemma}[Generalized eigenfunction]\label{lem:m}
Let $\lambda>0$, and assume that $u_0$ is rational of the form \eqref{eq:PN-rational-IC} with coefficients $c_1,\dots,c_N$ subject to the condition \eqref{eq:sum-of-residues-zero}.  Then with $\ee^{\ii h^\infty(x;-2\lambda)}$ being the branch of \eqref{eq:PN-exph-infty} for which the product tends to $1$ as $x\to-\infty$ analytically continued along $x\in\R$,
\begin{equation}
m_\pm(x;\lambda)= K_\pm\ee^{\ii  h^\infty(x;-2\lambda)}
\left(1	+\int_{{\ell_0^\mp(x)}} \ee^{-\ii h^\infty(z;-2\lambda)}\sum_{n=1}^N \frac{w_n^\pm(\lambda)}{z-p_n}\dd z\right) 
\label{eq:mpm-contours}
\end{equation}
with 
\begin{equation}
    K_\pm:=\lim_{x\to\pm\infty}\ee^{\ii\lambda x}\ee^{-\ii h^\infty(x;-2\lambda)},
    \label{eq:Kpm-consts}
\end{equation}
so that $K_-=1$ and $K_+=\ee^{2\pi\ii\mathrm{Im}(c)}$, $c=c_1+\dots+c_N$,
where $\ell_0^\pm(x)$ are the contours described in the proof of Lemma~\ref{lem:detD-nonzero}.  
The coefficients $w_n^\pm(\lambda)$ are given by
\begin{equation}\label{eq:wk}
w_n^\pm(\lambda)=-\frac{\det({\ME^\mp}(\lambda)\longleftarroww{n} \Md)}{\det ({\ME^\mp}(\lambda))},
\end{equation}
where $\Md$ was defined in~\eqref{def:d-vector} as
\(
\Md=(1-\ee^{2\pi c_1},1-\ee^{2\pi (c_1+c_2)},\dots, 1-\ee^{2\pi (c_1+\cdots+c_N)})^\top.
\)
\end{lemma}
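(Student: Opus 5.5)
The plan is to solve the defining problem \eqref{def:m} directly, mirroring the proof of Lemma~\ref{lem:detD-nonzero}, and then to impose analyticity in $\C_+$ together with the boundary condition at $\pm\infty$. Since $m_\pm(\diamond;\lambda)\in L^\infty$ is the boundary value of a bounded analytic function in $\C_+$, the identity $\Pi(u_0 m)=u_0m-\sum_n c_n m(p_n)/(\diamond-p_n)$ (valid for bounded analytic $m$, as used in the simplicity proof in Appendix~\ref{app:soliton-limit}) turns $(\mathcal{L}_{u_0}-\lambda)m=0$ into the first-order ODE \eqref{eq:spectral-ODE}. Its integrating factor is $\ee^{-\ii h^\infty(x;-2\lambda)}$, because $(h^\infty)'(z;-2\lambda)=u_0(z)+\lambda$. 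Integrating gives
\begin{equation*}
m(x)=\ee^{\ii h^\infty(x;-2\lambda)}\left(K+\int_{\ell(x)}\ee^{-\ii h^\infty(z;-2\lambda)}\sum_{n=1}^N\frac{w_n}{z-p_n}\,\dd z\right),\qquad w_n=-\ii c_nm(p_n)/K,
\end{equation*}
where $K$ is a constant and $\ell(x)$ is a path from $\infty\ee^{-\ii\pi/4}$ to $x$. The integral converges because $\ee^{-\ii h^\infty}$ contains $\ee^{-\ii\lambda z}$, which decays in the direction $\ee^{-\ii\pi/4}$.

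Next I fix the constant. As $x\to\pm\infty$ along $\R$, the integral tends to $0$ when $\ell(x)=\ell_0^\mp(x)$: the endpoint lies in the sector where the arc of integration can be shrunk toward $\infty\ee^{-\ii\pi/4}$ without crossing $\Gamma_N$. The point to verify is that $\Omega^-$ is the side reached from $x\to+\infty$ and $\Omega^+$ the side reached from $x\to-\infty$, consistent with the continuation conventions in the strategy of Appendix~\ref{app:radiation}. With that choice, $\ee^{-\ii\lambda x}m_\pm(x)\to K\lim\ee^{-\ii\lambda x}\ee^{\ii h^\infty}$, so the normalization in \eqref{def:m} forces $K=K_\pm$ as in \eqref{eq:Kpm-consts}. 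The values $K_-=1$ and $K_+=\ee^{2\pi\ii\mathrm{Im}(c)}$ follow from the chosen branch: the product tends to $1$ at $-\infty$, and continuing along $\R$ to $+\infty$ contributes $\ee^{2\pi\ii\mathrm{Im}(c)}$, as computed in Appendix~\ref{app:soliton-limit}. Here \eqref{eq:sum-of-residues-zero} makes the product tend to a unimodular constant. I must also check that the overall constant $K_\pm$ multiplying the bracket is consistent with the inverse convention for the exponential in \eqref{eq:Kpm-consts}.

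The main step is determining $w_n^\pm$. The formula defines a function that is analytic and single-valued in $\C_+$ away from the poles $p_j$ only if it has no monodromy or singularity there. Following \cite[Corollary 1]{MillerW16a} and the proof of Lemma~\ref{lem:detD-nonzero}, I compare continuations around $p_1,\dots,p_j$. The multivalued factor $\ee^{\ii h^\infty}$ acquires $\ee^{-2\pi(c_1+\cdots+c_j)}$ on encircling $p_1,\dots,p_j$, so the constant term $1$ does not vanish here, unlike the homogeneous eigenvalue case. The conditions therefore take the inhomogeneous form
\begin{equation*}
\int_{U_j^\mp}\ee^{-\ii h^\infty(z;-2\lambda)}\sum_{n=1}^N\frac{w_n}{z-p_n}\,\dd z=-\bigl(1-\ee^{2\pi(c_1+\cdots+c_j)}\bigr),\qquad j=1,\dots,N,
\end{equation*}
that is, $\ME^\mp(\lambda)\mathbf{w}=-\Md$. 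By Lemma~\ref{lem:detD-nonzero} this system is uniquely solvable, and Cramer's rule gives \eqref{eq:wk}.

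I expect this monodromy bookkeeping to be the main obstacle: getting the sign and the exact form $1-\ee^{2\pi(c_1+\cdots+c_j)}$ right, which depends on orientations and on which of $\Omega^\pm$ is used. Once the $w_n$ are fixed, three checks remain: the resulting function is bounded in $\C_+$; consistency $w_n=-\ii c_nm(p_n)/K$ holds automatically, because residues of the ODE right-hand side must match; and uniqueness in $L^\infty$, cited from \cite[Theorem 26]{BlackstoneGGM24a}, identifies this function with $m_\pm$.
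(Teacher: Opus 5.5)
Your proposal is correct and follows the paper's proof. Both integrate \eqref{eq:spectral-ODE} with the factor $\ee^{-\ii h^\infty(z;-2\lambda)}$ along $\ell_0^\mp(x)$, fix the constant from the normalization at $\pm\infty$ (which gives $K_\pm$), convert analyticity in $\C_+$ into the system $\ME^\mp(\lambda)\mathbf{w}=-\Md$, and solve it by Cramer's rule using Lemma~\ref{lem:detD-nonzero}.

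The differences are only bookkeeping. The paper imposes analyticity one pole at a time, using loops $V_k^+$ around single poles and a telescoping sum to reach $U_j^+$. Your collective monodromy around $p_1,\dots,p_j$ gives the $U_j^\mp$ conditions directly, and a short check confirms both the sign and the factor $1-\ee^{2\pi(c_1+\cdots+c_j)}$. Finally, since the existence of $m_\pm$ is taken from the cited theorem, only the necessity direction is needed, so your closing sufficiency checks are optional.
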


\begin{proof}
For simplicity we only focus on $m_-(\diamond;\lambda)$. We seek solutions $m\in L^{\infty}(\mathbb{R})$ to $(\mathcal{L}_{u_0}-\lambda\,\mathrm{Id})m=0$, by following the approach in the proof of Lemma~\ref{lem:detD-nonzero}, but now solving the differential equation \eqref{eq:spectral-ODE} with an arbitrary integration constant denoted $\mu\in\C$:  
\begin{multline}
m(x)= \mu\ee^{\ii  h^\infty(x;-2\lambda)}\left( 1	+\int_{{\ell_0^+(x)}} \ee^{-\ii h^\infty(z;-2\lambda)}\sum_{n=1}^N \frac{w_n(\lambda)}{z-p_n}\dd z\right),\quad w_n(\lambda):=-\frac{\ii c_nm(p_n)}{\mu},\\ n=1,\dots,N.
\end{multline}
The second term in parentheses tends to zero as $x\to-\infty$, and since $\mathrm{Re}(c_1+\cdots+c_N)=0$ according to \eqref{eq:sum-of-residues-zero}, it is easy to check that the limit $K_-$ in \eqref{eq:Kpm-consts} exists, and that
\begin{equation}
    \lim_{x\to-\infty}m(x)\ee^{-\ii\lambda x}=\frac{\mu }{K_-}.
\end{equation}
Hence we take $\mu=K_-$ and then $m(x)=m_-(x;\lambda)$, so we have proved \eqref{eq:mpm-contours}.

We claim that the function $m_-(\diamond,\lambda)$ is analytic on $\mathbb{C}_+$ if and only if  the coefficients $w_n(\lambda)$ satisfy the system of equations  
\begin{equation}\label{eq:m-holo}
 1-\ee^{2\pi (c_1+\cdots+c_j)}+ \int_{{U_j^+}} \ee^{-\ii h^\infty(z;-2\lambda)}\sum_{n=1}^N \frac{w_n^-(\lambda)}{z-p_n}\dd z=0,
\quad j=1,\dots, N.
\end{equation}
Using this claim, it only remains to solve the system of equations~\eqref{eq:m-holo} thanks to Cramer's rule to get~\eqref{eq:wk}.

The proof of the claim follows that of~\cite[Proposition 1]{MillerW16a}, so we only recall the ideas. Fixing $j=1,\dots,N$, we require that $m_-(\cdot,\lambda)$ be analytic at $p_j$. We first assume that $\operatorname{Re}(\ii c_j)<0$. Then $\ee^{-\ii h^\infty(z;-2\lambda)}$ vanishes to some positive fractional order at $z=p_j$ but is not single-valued there (because $j$ is not an exceptional index); therefore 
for $x$ in the vicinity of $p_j$:
\begin{multline}
m_-(x,\lambda)
	=K_-\ee^{\ii  h^\infty(x;-2\lambda)}
    \left(1 	+\int_{\ell_0^+(p_j)} \ee^{-\ii h^\infty(z;-2\lambda)}\sum_{n=1}^N \frac{w_n^-(\lambda)}{z-p_n}\dd z\right)\\
	+K_- 
    \ee^{\ii  h^\infty(x;-2\lambda)}\int_{p_j}^x\ee^{-\ii h^\infty(z;-2\lambda)}\sum_{n=1}^N \frac{w_n^-(\lambda)}{z-p_n}\dd z.
\end{multline}
It turns out that the last term on the right hand side is analytic at $p_j$. However, the first term in the right hand side is analytic at $p_j$ if and only if 
\begin{equation}
1	+\int_{\ell_0^+(p_j)} \ee^{-\ii h^\infty(z;-2\lambda)}\sum_{n=1}^N \frac{w_n^-(\lambda)}{z-p_n}\dd z
 	=0.
    \label{eq:first-pass}
\end{equation}
Now we introduce contours $V_j^\pm$ analogous to $U_j^\pm$ except that $V_j^\pm$ encircles only the pole $z=p_j$, and we define $\ee^{-\ii h^\infty(z;-2\lambda)}$ on these contours similarly as for $U_j^\pm$.  Then for $G(z)=(z-p_n)^{-1}$, $n=1,\dots,N$, we have the identity
\begin{equation}
    \int_{V_j^\pm}\ee^{-\ii h^\infty(z;-2\lambda)}G(z)\,\dd z = (1-\ee^{2\pi c_j})\int_{\ell_0^\pm(p_j)}\ee^{-\ii h^\infty(z;-2\lambda)} G(z)\,\dd z,
\end{equation}
so \eqref{eq:first-pass} can be written equivalently as
\begin{equation}
    1-\ee^{2\pi c_j} + \int_{V_j^+}\ee^{-\ii h^\infty(z;-2\lambda)}\sum_{n=1}^N\frac{w_n^-(\lambda)}{z-p_n}\,\dd z = 0.
    \label{eq:second-pass}
\end{equation}
In the case $\operatorname{Re}(\ii c_j)\geq 0$, we get the same conclusion after first integrating by parts.  Now we relate integration over $U_j^\pm$ to integration over $V_k^\pm$, $k=1,\dots,j$:
\begin{equation}
\begin{aligned}
    \int_{U_j^\pm}\ee^{-\ii h^\infty(z;-2\lambda)}\sum_{n=1}^N\frac{w_n^-(\lambda)}{z-p_n}\,\dd z &= \sum_{k=1}^j\ee^{2\pi(c_j+c_{j-1}+\cdots+c_{k+1})}\int_{V_k^\pm}\ee^{-\ii h^\infty(z;-2\lambda)}\sum_{n=1}^N\frac{w_n^-(\lambda)}{z-p_n}\,\dd z \\
    &=-\sum_{k=1}^j\ee^{2\pi(c_j+c_{j-1}+\cdots+c_{k+1})}(1-\ee^{2\pi c_k})\\
    &=-(1-\ee^{2\pi(c_1+\cdots+c_j)}),
\end{aligned}
\end{equation}
where on the second line we used \eqref{eq:second-pass} and on the third line we noted a telescoping sum.  This establishes \eqref{eq:m-holo} and completes the proof.
\end{proof}

Similarly, the eigenfunctions $\varphi(\diamond,\lambda\pm\ii \epsilon)$ for $\epsilon>0$ have the following limiting structure.
\begin{lemma}[Boundary limits of complex inverses]\label{lem:phi}
Let $f$ admit continuation to $\C$ as a meromorphic function with a sub-exponential bound as $z\to\infty\ee^{-\ii\pi/4}$.
The functions $\varphi(\diamond;\lambda \pm \ii\epsilon)$ have a limit as $\epsilon\to 0^+$ given by
\begin{equation}\label{eq:W+}
\varphi^{\pm}(x;\lambda)
	=\lim_{\epsilon\to 0^+}\varphi(x;\lambda\pm\ii\epsilon)
	=-\ii \ee^{\ii {h}^\infty(x;-2\lambda)}\int_{\ell_0^{\pm}(x)} \ee^{-\ii {h}^\infty(z;-2\lambda)}\left(-f(z)+\sum_{k=1}^N\frac{y_n^{\pm}(\lambda)}{z-p_n}\right) \dd z,
\end{equation}
with $\ee^{\ii h^\infty(x;-2\lambda)}$ as in Lemma~\ref{lem:m} and 
\begin{equation}
y_n^{\pm}(\lambda )=\frac{\det(   \ME^\pm(\lambda)\longleftarroww{n} \Mb^{\pm}(\lambda))}{\det(\ME^\pm(\lambda))}, \quad n=1,\dots,N
\label{eq:ypm-n}
\end{equation}
where $\Mb^\pm(\lambda)=(b_1^\pm(\lambda),\dots,b_N^\pm(\lambda))^\top$ is given by
\begin{equation}
b^{\pm}_j(\lambda)=\int_{U_j^{\pm}}
 \ee^{-\ii h^\infty(z;-2\lambda)}f(z)\dd z.
 \label{eq:bpm-elements}
\end{equation}
\end{lemma}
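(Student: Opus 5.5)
We will follow the same template as the proof of Lemma~\ref{lem:m}, now for the inhomogeneous resolvent equation. For $\epsilon>0$, the function $\varphi=\varphi(\diamond;\lambda\pm\ii\epsilon)\in L^2_+(\R)\cap H^1(\R)$ solves $(\mathcal{L}_{u_0}-(\lambda\pm\ii\epsilon))\varphi=f$. We use the identity $\Pi(u_0\varphi)=u_0\varphi-\sum_n c_n\varphi(p_n)/(\diamond-p_n)$, valid because $\Pi u_0^*$ has poles only at $p_n$. This turns the resolvent equation into the ODE
\[
-\ii\varphi'-u_0\varphi-(\lambda\pm\ii\epsilon)\varphi=f-\sum_{n=1}^N\frac{c_n\varphi(p_n)}{\diamond-p_n}
\]
on $\C_+$. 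Multiplying by the integrating factor $\ee^{-\ii h^\infty(x;-2(\lambda\pm\ii\epsilon))}$ and setting $y_n:=-\ii c_n\varphi(p_n)$ (up to the sign convention in \eqref{eq:W+}), we obtain an explicit integral representation. The integration constant is fixed by requiring decay. For $\pm\epsilon>0$ the factor $\ee^{\ii(\lambda\pm\ii\epsilon)x}$ decays as $x\to\mp\infty$ on $\R$, so the base point of integration must be taken at $z=\infty\ee^{-\ii\pi/4}$ in the asymptotic domain $\Omega^\pm$. This yields the formula \eqref{eq:W+} at complex spectral parameter, with contour $\ell_0^\pm(x)$ and unknown constants $y_n^\pm(\lambda\pm\ii\epsilon)$.

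Next we impose analyticity of $\varphi$ at each $p_j$. Exactly as in the proof of Lemma~\ref{lem:m} (Case $\mathrm{Re}(\ii c_j)<0$ directly, otherwise after integrating by parts), analyticity at $p_j$ is equivalent to the vanishing of $\int_{\ell_0^\pm(p_j)}$ of the integrand. The identity relating $V_j^\pm$ to $(1-\ee^{2\pi c_j})\ell_0^\pm(p_j)$ converts this into a loop condition. The telescoping relation between $U_j^\pm$ and $V_1^\pm,\dots,V_j^\pm$ then gives the linear system
\[
\int_{U_j^\pm}\ee^{-\ii h^\infty}\Bigl(-f+\sum_n\frac{y_n^\pm}{z-p_n}\Bigr)\dd z=0,\quad j=1,\dots,N,
\]
that is, $\ME^\pm\mathbf{y}^\pm=\Mb^\pm$. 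Unlike the homogeneous case, there is no constant term here, so no $\Md$ appears. Cramer's rule gives \eqref{eq:ypm-n} at complex parameter, provided the relevant determinant is nonzero.

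Then we let $\epsilon\to0^+$. All entries of $\ME^\pm$ and $\Mb^\pm$ are integrals over fixed contours running to $\infty\ee^{-\ii\pi/4}$. On these contours $\ee^{-\ii h^\infty}$ decays like a Gaussian-free exponential $\ee^{-\ii\lambda z}$, which is uniform in small $\epsilon$, and the sub-exponential bound on $f$ makes them converge. Hence they depend continuously (indeed analytically) on the spectral parameter near $\lambda$. By Lemma~\ref{lem:detD-nonzero}, $\det\ME^\pm(\lambda)\neq0$, so the determinant is nonzero for small $\epsilon$ as well, and $y_n^\pm$ converge. The pointwise limit of the integral representation is then immediate, which gives \eqref{eq:W+}.

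The main obstacle is the contour bookkeeping in the middle step. We must check that the ODE solution with base point in $\Omega^\pm$ is indeed the $L^2$ resolvent; uniqueness of the resolvent makes it so once decay and analyticity hold. We must also make the branch continuations of $\ee^{-\ii h^\infty}$ along $\ell_0^\pm$, $V_j^\pm$, $U_j^\pm$ consistent with the conventions fixed before \eqref{eq:A-f}. We will do this by the same argument already used for Lemma~\ref{lem:m}, being careful about which side of $\Gamma_N$ the base point lies on.
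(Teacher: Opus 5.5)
Your proposal is correct and follows the paper's proof essentially step for step. The shared steps are the rational reduction of $\Pi(u_0\varphi)$, the integrating factor $\ee^{-\ii h^\infty(\diamond;-2(\lambda\pm\ii\epsilon))}$ with base point at $\infty\ee^{-\ii\pi/4}$ in $\Omega^\pm$, analyticity at the $p_j$ converted via the $V_j^\pm$ loops and telescoping into $\ME^\pm\My=\Mb^\pm$, Cramer's rule, and $\epsilon\to0^+$ using Lemma~\ref{lem:detD-nonzero}. Two small slips should be fixed: with the prefactor $-\ii$ in \eqref{eq:W+} the constants are $y_n=c_n\varphi(p_n)$ (not $-\ii c_n\varphi(p_n)$), and $\ee^{\ii(\lambda+\ii\epsilon)x}$ grows rather than decays as $x\to-\infty$, which is exactly why the base point for the upper sign sits at $-\infty$, reached from $\infty\ee^{-\ii\pi/4}$ through $\Omega^+$.
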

\begin{remark}
Define $q_0(\lambda)$ and $\Mq(\lambda)=(q_1(\lambda),\dots,q_N(\lambda))^\top$ by
\begin{equation}\label{eq:qk}
q_0(\lambda):=\int_{\Gamma_N}\ee^{-\ii {h}^\infty(z;-2\lambda)}f(z)\,\dd z,\quad
q_n(\lambda):=\int_{\Gamma_N}\frac{\ee^{-\ii h^\infty(z;-2\lambda)}\,\dd z}{z-p_n},
\quad n=1,\dots, N.
\end{equation}
Then using the identity~\eqref{eq:Uj+-} gives an equivalent representation of $y_n^-(\lambda)$ as
\begin{equation}\label{eq:vk+-}
y_n^-(\lambda)=\frac{\det(\ME^+(\lambda)+\Md\Mq(\lambda)^\top\longleftarroww{n}\Mb^+(\lambda)+q_0 \Md)}{\det(\ME^+(\lambda)+\Md\Mq(\lambda)^\top)},\quad n=1,\dots,N.
\end{equation}
\end{remark}

\begin{proof}
Similarly, we only give an idea of the proof. The functions $\varphi$ are not Jost solutions  but they satisfy~\cite[Equation (14)]{MillerW16a} with $\lambda w_0$ replaced by $-f$. 
We therefore obtain the equation satisfied by $\varphi(\cdot;\lambda\pm\ii\epsilon)$ on $\mathbb{C}_+$:
\begin{equation}\label{eq:Lax-W}
-\ii \varphi'(x)- u_0(x) \varphi(x)-(\lambda\pm \ii\epsilon) \varphi(x)=f(x)-\sum_{k=1}^N\frac{c_n \varphi(p_n)}{x-p_n},
\end{equation}
with the boundary condition $\varphi(x;\lambda\pm\ii\epsilon)\to 0$ as $|x|\to \infty$.

We can integrate equation~\eqref{eq:Lax-W} and obtain the convergent improper integral representation corresponding to~\cite[Equation (15)]{MillerW16a}: defining constants 
$y_n(\lambda\pm\ii\epsilon)= c_n\varphi(p_n;\lambda\pm\ii\epsilon)$ we obtain
\begin{equation}\label{eq:W}
\varphi(x;\lambda\pm\ii\epsilon)
	=-\ii \ee^{\ii {h}^\infty(x;-2(\lambda\pm \ii\epsilon))}\int_{\ell_0^\pm(x)} \ee^{-\ii {h}^\infty(z;-2(\lambda\pm \ii\epsilon))}\left(-f(z)+\sum_{k=1}^N\frac{y_n(\lambda\pm\ii\epsilon)}{z-p_n}\right) \dd z.
\end{equation}
Then we follow the same steps as in the proof of Lemma~\ref{lem:m}, using the idea from~\cite[Corollary 1]{MillerW16a}, and find that
the function $\varphi(\cdot;\lambda\pm\ii\epsilon)$ is analytic in $\mathbb{C}_+$ if and only if the elements of $\My=(y_1(\lambda\pm\ii\epsilon), \dots, y_N(\lambda\pm\ii\epsilon))^\top$ satisfy the system of equations
\begin{equation}\label{eq:W-analytic}
\int_{U_j^\pm}\ee^{-\ii {h}^\infty(z;-2(\lambda\pm\ii\epsilon))}
	\left(-f(z)+\sum_{k=1}^N\frac{y_n(\lambda\pm\ii\epsilon)}{z-p_n}\right) \dd z
	=0,
\quad j=1,\dots, N.
\end{equation}
Using the definitions \eqref{eq:D-negative} and \eqref{eq:bpm-elements} extended from $\lambda>0$ slightly into the upper/lower half-planes, 
the system of equations~\eqref{eq:W-analytic} becomes as in \cite[equation (22)]{MillerW16a}
\begin{equation}
\ME^{\pm}(\lambda\pm\ii\epsilon) \My(\lambda\pm\ii\epsilon)=\Mb^{\pm}(\lambda\pm\ii\epsilon).
\end{equation}
Cramer's rule implies that for every $n=1,\dots,N$, the coefficient $y_n(\lambda\pm\ii\epsilon)$ has the expression
\begin{equation}
y_n(\lambda\pm\ii\epsilon )
	=\frac{\det(  \ME^\pm(\lambda\pm\ii\epsilon )\longleftarroww{n} \Mb^\pm(\lambda\pm\ii\epsilon ))}{\det (\ME\pm(\lambda\pm\ii\epsilon ))}.
\end{equation}
Letting $\epsilon\to 0^+$, the function $\varphi(\diamond;\lambda\pm\ii\epsilon)$ tends to the pointwise limit given by~\eqref{eq:W+}--\eqref{eq:bpm-elements}.
\end{proof}

\subsection{Distorted Fourier transform}\label{ss:distorted}
In this subsection, we show the following identity between $\varphi^\pm(\diamond,\lambda)$ and $m_-(\diamond,\lambda)$.

\begin{proposition}\label{prop:phi-m} For every $\lambda>0$ and for every $x\in\mathbb{R}$,
\begin{equation}\label{eq:phi-m-}
\varphi^+(x;\lambda)-\varphi^-(x;\lambda)
	=-\ii\frac{\det(\MA^\infty_f(\lambda))}{\det({\ME^-}(\lambda))} {m_-}(x;\lambda).
\end{equation}
\end{proposition}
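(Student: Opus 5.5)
We will work directly from the explicit representations \eqref{eq:W+} of $\varphi^\pm(\diamond;\lambda)$ in Lemma~\ref{lem:phi} and \eqref{eq:mpm-contours} of $m_-(\diamond;\lambda)$ in Lemma~\ref{lem:m}. The difference $\varphi^+-\varphi^-$ solves the homogeneous equation $(\mathcal{L}_{u_0}-\lambda\,\mathrm{Id})(\varphi^+-\varphi^-)=0$, since both $\varphi^\pm$ solve \eqref{eq:Lax-W} with $\epsilon=0$ and the same source $f$. Both are bounded and analytic in $\C_+$. By the uniqueness of bounded solutions normalized at $x\to-\infty$ \cite[Theorem 26]{BlackstoneGGM24a}, the difference must therefore be a scalar multiple $\kappa(\lambda)m_-(\diamond;\lambda)$. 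The constant $\kappa$ is read off as $\lim_{x\to-\infty}\ee^{-\ii\lambda x}(\varphi^+-\varphi^-)(x)$, because $K_-=1$ in \eqref{eq:Kpm-consts}. It then remains to show $\kappa=-\ii\det(\MA^\infty_f)/\det(\ME^-)$.

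\textbf{Step 1: reduce the difference to closed contours.} Write the difference of the two integrals in \eqref{eq:W+} over $\ell_0^+(x)$ and $\ell_0^-(x)$. The path $\ell_0^-(x)$ followed by $\ell_0^+(x)$ reversed can be deformed into $\Gamma_N$ (up to orientation), using the continuation conventions for $\ee^{-\ii h^\infty}$ fixed in the Strategy subsection. The coefficients $y_n^\pm$ differ, so we get
\begin{equation*}
\varphi^+-\varphi^-=-\ii\ee^{\ii h^\infty(x;-2\lambda)}\left[\int_{\Gamma_N}\ee^{-\ii h^\infty}\Big(-f+\sum_n\frac{y_n^-}{z-p_n}\Big)\dd z\cdot(\pm1)+\int_{\ell_0^+(x)}\ee^{-\ii h^\infty}\sum_n\frac{y_n^+-y_n^-}{z-p_n}\dd z\right].
\end{equation*}
As $x\to-\infty$ along $\R$, the second integral tends to zero, exactly as in the proof of Lemma~\ref{lem:m}, and $\ee^{\ii h^\infty(x)}\ee^{-\ii\lambda x}\to1$. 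Hence $\kappa=\mp\ii\big(-q_0+\mathbf{q}^\top\My^-\big)$, with the notation of \eqref{eq:qk}. The sign must be fixed carefully from the orientation of $\Gamma_N$.

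\textbf{Step 2: identify the constant with the determinant ratio.} With $\My^-=\ME^-(\lambda)^{-1}\Mb^-$, the quantity $q_0-\mathbf{q}^\top(\ME^-)^{-1}\Mb^-$ is by the Schur complement formula equal to
\begin{equation*}
\det\begin{bmatrix}q_0&\mathbf{q}^\top\\ \Mb^-&\ME^-\end{bmatrix}\Big/\det(\ME^-).
\end{equation*}
The bordered matrix is exactly $\widetilde{\MA}^\infty_f(\lambda)$: its first row is integrals over $\Gamma_N$, and its remaining rows are integrals over $U_j^-$. By Remark~\ref{rem:integral-identities}, identity \eqref{eq:Uj+-}, its determinant equals $\det(\MA^\infty_f(\lambda))$. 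Lemma~\ref{lem:detD-nonzero} guarantees $\det(\ME^-)\neq0$. This gives \eqref{eq:phi-m-} up to the sign, which is matched in Step 1.

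\textbf{Main obstacle.} The delicate point is the consistent bookkeeping of branches of $\ee^{-\ii h^\infty}$ in Step 1. The functions $\varphi^+$ and $\varphi^-$ are built by continuation from the two different asymptotic sectors $\Omega^\pm$. One must check that the prefactor $\ee^{\ii h^\infty(x)}$ is the same branch in both, and that concatenating $\ell_0^-(x)$ with $\ell_0^+(x)$ reversed really yields $\Gamma_N$ with the branch of Lemma~\ref{lem:phi}, rather than $\Gamma_N$ with an extra factor $\ee^{2\pi c}$. This relies on $\mathrm{Re}(c)=0$ from \eqref{eq:sum-of-residues-zero}.

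A secondary concern is the uniqueness claim used to assert proportionality to $m_-$. To avoid it, one can instead verify directly that $\varphi^+-\varphi^-$ has the form \eqref{eq:mpm-contours}, with coefficients $\kappa w_n^-$. This holds because the coefficient vector $\My^+-\My^-$ satisfies the same system \eqref{eq:m-holo} after rescaling. That route needs only Lemma~\ref{lem:m} and the identity \eqref{eq:Uj+-}.
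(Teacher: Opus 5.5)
Your proposal is correct, and its first half is the paper's argument. Your Step~1 is the decomposition \eqref{eq:phi+phi-}. The sign you left open is fixed by $\int_{\ell_0^+(x)}-\int_{\ell_0^-(x)}=-\int_{\Gamma_N}$, which gives $\kappa=-\ii(q_0-\Mq^\top\My^-)$. Your Step~2 is exactly the Schur-complement proof of \eqref{eq:vk-constant}. You are in fact more careful than the paper in noting that the bordered matrix has rows over $U_j^-$, so that Remark~\ref{rem:integral-identities} is needed to identify its determinant with $\det(\MA^\infty_f(\lambda))$.

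Where you differ is in the second ingredient. The paper proves the coefficient identity \eqref{eq:vk-diff}, $y_n^+-y_n^-=(\det\MA^\infty_f/\det\ME^-)\,w_n^-$, by an explicit determinant computation showing $S_1=-S_2$. That computation uses Cramer's rule, the matrix determinant lemma and Sherman--Morrison. The paper then reassembles $m_-$ through Lemma~\ref{lem:m}.

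Your main route sidesteps \eqref{eq:vk-diff} altogether by using uniqueness of the normalized bounded solution \cite[Theorem 26]{BlackstoneGGM24a}. Then only the limit $x\to-\infty$ must be computed, and you apply uniqueness to $m_-+(\varphi^+-\varphi^--\kappa m_-)$. The price is checking that $\varphi^+-\varphi^-$ lies in the uniqueness class: a bounded boundary value of a function analytic in $\C_+$, annihilated by $\mathcal{L}_{u_0}-\lambda\,\mathrm{Id}$ in the extended sense. In the rational setting this follows from the explicit formula \eqref{eq:W+}.

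Your fallback route is the cleanest and is fully self-contained. From \eqref{eq:Uj+-} one has $\ME^-=\ME^++\Md\Mq^\top$ and $\Mb^-=\Mb^++q_0\Md$, hence
\begin{equation*}
\ME^+(\My^+-\My^-)=\Mb^+-(\ME^--\Md\Mq^\top)\My^-=-(q_0-\Mq^\top\My^-)\Md .
\end{equation*}
Meanwhile \eqref{eq:wk} says $\ME^+\mathbf{w}^-=-\Md$. Invertibility of $\ME^+$ (Lemma~\ref{lem:detD-nonzero} covers both signs; cite it explicitly here) then gives \eqref{eq:vk-diff} in two lines, replacing the paper's $S_1/S_2$ computation.
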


Let us first show that this Proposition implies Proposition~\ref{prop:distordu-frac}.

\begin{proof}[Proof of Proposition~\ref{prop:distordu-frac}]
Combining~\eqref{eq:dmuf} and~\eqref{eq:dmuf2} with \eqref{eq:phi-m-} yields
\begin{equation}\begin{aligned}
|\widetilde{f}^-(\lambda)|^2
	&=-\ii \lim_{\epsilon\to 0^+}\langle \varphi(\diamond;\lambda+\ii\epsilon)-\varphi(\diamond;\lambda-\ii\epsilon), f \rangle\\
	&=-
    \frac{\det (\MA^\infty_f(\lambda))}{\det (\ME^-(\lambda))}\langle m_-(\diamond,\lambda),f\rangle\\
	&=-
    \frac{\det (\MA^\infty_f(\lambda))}{\det (\ME^-(\lambda))}{\widetilde{f}^-(\lambda)}^*
\end{aligned}
\end{equation}
so that
\begin{equation}\label{eq:distordu-frac2}
\widetilde{f}^-(\lambda)
	=-
    \frac{\det (\MA^\infty_f(\lambda))}{\det( \ME^-(\lambda))}.
\end{equation}
\end{proof}

We now turn to the proof of Proposition~\ref{prop:phi-m}. From Lemma~\ref{lem:phi} we have
\begin{multline}\label{eq:phi+phi-}
\varphi^+(x;\lambda)-\varphi^-(x;\lambda)
	=
	-\ii \ee^{\ii{h}^\infty(x;-2\lambda)}
		\int_{{\ell_0^+(x)}}\ee^{-\ii {h}^\infty(z;-2\lambda)}
	\sum_{n=1}^N\frac{y_n^+(\lambda)-y_n^-(\lambda)}{z-p_n}\, \dd z	\\
		-\ii\ee^{\ii h^\infty(x;-2\lambda)}\int_{\Gamma_N}\ee^{-\ii {h}^\infty(z;-2\lambda)}
	\left(f(z)-\sum_{n=1}^N\frac{{y_n^-}(\lambda)}{z-p_n}\right)\, \dd z.
\end{multline}
It is enough to show the following lemma.

\begin{lemma}[Identities between $y_n^\pm$ and $w_n^-$]\label{lem:m-to-phi}
Let $\lambda>0$. Then
\begin{equation}\label{eq:vk-constant}
\int_{\Gamma_N}\ee^{-\ii  {h}^\infty(z;-2\lambda)}
	\left(f(z)-\sum_{n=1}^N\frac{{y_n^-}(\lambda)}{z-p_n}\right)\, \dd z
		=\frac{\det (\MA^\infty_f(\lambda))}{\det ({\ME^-}(\lambda))},
\end{equation}
\begin{equation}\label{eq:vk-diff}
y_n^+(\lambda)-y_n^-(\lambda)
	=\frac{\det (\MA^\infty_f(\lambda))}{\det ({\ME^-}(\lambda))}w_n^-(\lambda) ,
	\quad n=1,\dots,N.
\end{equation}
\end{lemma}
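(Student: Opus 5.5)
The two identities of Lemma~\ref{lem:m-to-phi} are finite-dimensional linear-algebra facts about the systems $\ME^\pm(\lambda)\My^\pm=\Mb^\pm(\lambda)$ from Lemma~\ref{lem:phi} and $\ME^+(\lambda)\mathbf{w}^-=-\Md$ from Lemma~\ref{lem:m}. The tool linking the two sides is the contour identity \eqref{eq:Uj+-}. Applied row by row, it gives $\ME^-(\lambda)=\ME^+(\lambda)+\Md\Mq(\lambda)^\top$ and $\Mb^-(\lambda)=\Mb^+(\lambda)+q_0(\lambda)\Md$, with $q_0,\Mq$ as in \eqref{eq:qk}. This is exactly the content of \eqref{eq:vk+-}. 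In particular $\ME^-$ is a rank-one perturbation of $\ME^+$, and both are invertible by Lemma~\ref{lem:detD-nonzero}.

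For \eqref{eq:vk-constant}, the left-hand side equals $q_0-\Mq^\top\My^-$. I would evaluate $\det(\MA^\infty_f)$ using the row replacement in Remark~\ref{rem:integral-identities}, which changes the rows $U_j^+$ into $U_j^-$. The matrix then has first row $(q_0,\Mq^\top)$ and lower block $(\Mb^-\,|\,\ME^-)$. The Schur complement with respect to the invertible block $\ME^-$ gives
\[
\det(\MA^\infty_f)=(-1)^N\det(\ME^-)\bigl(q_0-\Mq^\top(\ME^-)^{-1}\Mb^-\bigr).
\]
Since $(\ME^-)^{-1}\Mb^-=\My^-$, this is $(-1)^N\det(\ME^-)(q_0-\Mq^\top\My^-)$. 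The sign $(-1)^N$ comes from moving the first column past the $N$ others. I expect it to be absorbed by the paper's column-order convention, possibly together with the factor $-1$ appearing in Proposition~\ref{prop:distordu-frac}. I will track this carefully; if it does not cancel, I would recheck the orientation conventions of $\Gamma_N$ against those of $U_j^\pm$.

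For \eqref{eq:vk-diff}, I would compute
\[
\My^+-\My^-=(\ME^+)^{-1}\Mb^+-(\ME^-)^{-1}\Mb^-.
\]
Substituting $\Mb^+=\Mb^--q_0\Md$ and $\ME^+=\ME^--\Md\Mq^\top$, and using $(\ME^+)^{-1}-(\ME^-)^{-1}=-(\ME^+)^{-1}\Md\Mq^\top(\ME^-)^{-1}$, gives
\[
\My^+-\My^-=-(\ME^+)^{-1}\Md\bigl(q_0-\Mq^\top\My^-\bigr).
\]
By Lemma~\ref{lem:m}, $w_n^-=-\bigl((\ME^+)^{-1}\Md\bigr)_n$, since \eqref{eq:wk} is Cramer's rule for $\ME^+\mathbf{w}^-=-\Md$. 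Hence $\My^+-\My^-=(q_0-\Mq^\top\My^-)\,\mathbf{w}^-$, and applying \eqref{eq:vk-constant} yields \eqref{eq:vk-diff}.

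The main obstacle is not the algebra but justifying \eqref{eq:Uj+-} with the correct branch bookkeeping. Continuing $\ee^{-\ii h^\infty}$ from $\Omega^-$ through $\Gamma_N$ into $\Omega^+$ and around $p_1,\dots,p_j$ has to produce precisely the factor $1-\ee^{2\pi(c_1+\cdots+c_j)}$ in front of $\Gamma_N$, and the same entries of $\Md$ as in Lemma~\ref{lem:m}. I would verify this by deforming $U_j^-$ into $U_j^+$ plus $\Gamma_N$-arcs, comparing the monodromy $\ee^{2\pi(c_1+\cdots+c_j)}$ accumulated around the poles $p_1,\dots,p_j$. I would then check the overall sign on the single-pole example $N=1$.
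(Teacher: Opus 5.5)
Your strategy is sound. For \eqref{eq:vk-constant} it is exactly the paper's argument: write the left side as $q_0-\Mq^\top\My^-$, replace the $U_j^+$ rows of $\MA^\infty_f$ by $U_j^-$ rows using Remark~\ref{rem:integral-identities}, and take a Schur complement with respect to $\ME^-$.

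\textbf{The error.} The sign $(-1)^N$ you insert is wrong, and nothing needs to absorb it. In the matrix with first row $(q_0,\Mq^\top)$ and lower block $(\Mb^-\,|\,\ME^-)$, the invertible block $\ME^-$ already sits in the lower-right corner. The Schur formula therefore gives directly $\det(\MA^\infty_f)=\det(\ME^-)\bigl(q_0-\Mq^\top(\ME^-)^{-1}\Mb^-\bigr)$, with no column permutation. Check it for $N=1$: $q_0e-qb=e(q_0-qb/e)$. Equivalently, expanding along the first row gives $q_0\det\ME-\sum_n q_n\det(\ME\longleftarroww{n}\Mb)$, as in the paper.

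As you wrote it, your formula would make \eqref{eq:vk-constant} false for odd $N$. Your proof of \eqref{eq:vk-diff} feeds \eqref{eq:vk-constant} back in, so it would inherit the same spurious sign. Rechecking the orientations of $\Gamma_N$ and $U_j^\pm$ would not help. The minus sign in Proposition~\ref{prop:distordu-frac} is downstream of this self-contained identity and cannot compensate.

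\textbf{A second sign slip.} Your resolvent identity has the wrong sign: $(\ME^+)^{-1}-(\ME^-)^{-1}=(\ME^+)^{-1}(\ME^--\ME^+)(\ME^-)^{-1}=+(\ME^+)^{-1}\Md\Mq^\top(\ME^-)^{-1}$. With the sign you wrote, one obtains $q_0+\Mq^\top\My^-$. Your displayed conclusion $\My^+-\My^-=-(\ME^+)^{-1}\Md\bigl(q_0-\Mq^\top\My^-\bigr)$ is nevertheless the correct one.

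\textbf{Comparison for \eqref{eq:vk-diff}.} Once these signs are fixed, your argument is a valid and shorter alternative to the paper's.
\begin{itemize}
\item The paper clears denominators and expresses everything through $\ME^+$ and $\Mb^+$. It uses Cramer's rule, the matrix determinant lemma for $\det(\ME^++\Md\Mq^\top)$, and Sherman--Morrison for its inverse. It then expands $\det(\MA^\infty_f)$ along its first row with the original $U_j^+$ rows and checks $S_2=-S_1$. It never uses \eqref{eq:vk-constant}.
\item You work at the vector level with the resolvent identity and recycle \eqref{eq:vk-constant}. This avoids both the determinant lemma and a second computation of $\det(\MA^\infty_f)$.
\end{itemize}
Both routes rest on the same rank-one relations $\ME^-=\ME^++\Md\Mq^\top$ and $\Mb^-=\Mb^++q_0\Md$.

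\textbf{Your ``main obstacle''.} The monodromy bookkeeping is not part of this lemma. The identity \eqref{eq:Uj+-} is supplied by Remark~\ref{rem:integral-identities} and is an input here. Granting it, the lemma is pure linear algebra.
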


\begin{proof}[Proof of~\eqref{eq:vk-constant}]
Let $L$ denote the left-hand side of \eqref{eq:vk-constant}.  Using \eqref{eq:ypm-n} and the definition \eqref{eq:qk} of $q_n(\lambda)$, we then have
\begin{equation}
    L=q_0(\lambda)-\mathbf{q}(\lambda)^\top\mathbf{y}^-(\lambda) = q_0(\lambda)-\mathbf{q}(\lambda)^\top \mathbf{E}^-(\lambda)^{-1}\mathbf{b}^-(\lambda),
\end{equation}
where in the second equality we used \eqref{eq:ypm-n}.  By Schur's formula and Lemma~\ref{lem:detD-nonzero},
\begin{equation}
    L=\frac{1}{\det(\mathbf{E}^-(\lambda))}\begin{vmatrix}q_0(\lambda) & \mathbf{q}(\lambda)^\top\\\mathbf{b}^-(\lambda) & \mathbf{E}^-(\lambda)\end{vmatrix}=\frac{\det(\mathbf{A}_f^\infty(\lambda))}{\det(\mathbf{E}^-(\lambda))},
\end{equation}
where we used the definition \eqref{eq:A-f} in the second equality.
\end{proof}

\begin{proof}[Proof of~\eqref{eq:vk-diff}]
Taking into account that $\mathbf{E}^-(\lambda)=\mathbf{E}^+(\lambda)+\mathbf{dq}(\lambda)^\top$ which follows from \eqref{eq:Uj+-} in Remark~\ref{rem:integral-identities}, showing \eqref{eq:vk-diff} is equivalent to proving that $S_2=-S_1$, where $S_1,S_2$ are defined as follows.
Using the expression~\eqref{eq:ypm-n} for $y_n^+(\lambda)$ and \eqref{eq:vk+-} for $y_n^-(\lambda)$, and using~\eqref{eq:wk} for $w_n^-(\lambda)$,
\begin{equation}S_1:=\det(\ME^+\longleftarroww{n}\Mb^+)\det(\ME^++\Md\Mq^\top)
	-\det(\ME^++\Md\Mq^\top \longleftarroww{n} \Mb^+ +q_0 \Md)\det(\ME^+)
\end{equation} and 
\begin{equation}S_2:=\det(\MA^\infty_f)\det(\ME^+\longleftarroww{n} \Md),
\end{equation}
where we suppress the dependence on $\lambda$ for simplicity of notation.

Further simplifying the notation by writing $\mathbf{E}$ and $\mathbf{b}$ for $\mathbf{E}^+$ and $\mathbf{b}^+$ respectively, we have from Cramer's rule
\begin{equation}
\det(\ME \longleftarroww{n} \Mb)
	=\det(\ME)(\ME^{-1}\Mb)_n,
\end{equation}
where $(\ME^{-1}\Mb)_n$ denotes element $n$ of the vector $\ME^{-1}\Mb$. 
Moreover, applying the matrix determinant lemma:
\begin{equation}\label{eq:detD}
\det(\ME+\Md\Mq^T)
	=\det(\ME)(1+\Mq^T \ME^{-1}\Md),
\end{equation}
the first term in the definition of $S_1$ therefore can be written as
\begin{equation}
\det(\ME \longleftarroww{n} \Mb)\det(\ME+\Md\Mq^T)
	=\det(\ME)^2 (\ME^{-1}\Mb)_n (1+\Mq^T \ME^{-1}\Md).
\end{equation}

Now looking at the second term in $S_1$, another application of Cramer's rule gives
\begin{equation}
\det(\ME+\Md \Mq^\top \longleftarroww{n} \Mb+q_0 \Md)
	=\det(\ME+\Md\Mq^\top)((\ME+\Md\Mq^\top)^{-1}(\Mb+q_0 \Md))_n.
\end{equation}
Moreover, by the Sherman–Morrison formula, 
\begin{equation}
(\ME+\Md\Mq^\top)^{-1}
	=\ME^{-1}-\frac{\ME^{-1}\Md\Mq^\top\ME^{-1}}{1+\Mq^\top\ME^{-1}\Md}.
\end{equation}
Also using again~\eqref{eq:detD}, we find
\begin{equation}
\begin{aligned}
\det(\ME+\Md \Mq^\top \longleftarroww{n} \Mb+q_0 \Md)
	&=\det(\ME)((1+\Mq^\top\ME^{-1}\Md)\ME^{-1}(\Mb+q_0\Md)-\ME^{-1}\Md\Mq^\top\ME^{-1}(\Mb+q_0\Md))_n\\
	&=\det(\ME)\left(
	(1+\Mq^\top \ME^{-1}\Md)(\ME^{-1}\Mb)_n+(q_0
	- \Mq^\top \ME^{-1}\Mb)(\ME^{-1} \Md)_n
	\right).
\end{aligned}
\end{equation}
Finally, we obtain a simplied form of $S_1$:
\begin{equation}
S_1
	=-\det(\ME)^2
	(q_0-\Mq^\top \ME^{-1}\Mb) (\ME^{-1}\Md)_n.
\end{equation}

Now we turn to $S_2$.
Applying Cramer's rule to the second factor gives
\begin{equation}
\det(\ME \longleftarroww{n} \Md )=\det(\ME)(\ME^{-1}\Md)_n.
\end{equation}
Then we expand the determinant in the first factor of $S_2$ along the first row and apply Cramer's rule to get
\begin{equation}
\begin{aligned}
\det(\MA^\infty_f)
	&=q_0\det(\ME)-\sum_{n=1}^Nq_n\det(\ME \longleftarroww{n}\Mb)\\
	&=\det(\ME)\left(q_0-\sum_{n=1}^N q_n(\ME^{-1}\Mb)_n\right)\\
	&=\det(\ME)(q_0-\Mq^\top \ME^{-1}\Mb).
\end{aligned}
\end{equation}
We conclude that 
\begin{equation}
S_2
	=\det(\ME)^2(\ME^{-1} \Md)_n(q_0-\Mq^\top \ME^{-1} \Mb)
	=-S_1.
\end{equation}
\end{proof}

Returning now to \eqref{eq:phi+phi-}, we use Lemma~\ref{lem:m-to-phi} to obtain
\begin{equation}
    \varphi^+(x;\lambda)-\varphi^-(x;\lambda)=-\ii\frac{\det(\mathbf{A}_f^\infty(\lambda))}{\det(\mathbf{E}^-(\lambda))}\ee^{\ii h^\infty(x;-2\lambda)}\left(1+\int_{\ell_0^+(x)}\ee^{-\ii h^\infty(z;-2\lambda)}\sum_{n=1}^N\frac{w_n^-(\lambda)}{z-p_n}\,\dd z\right).
\end{equation}
Then using Lemma~\ref{lem:m} completes the proof of Proposition~\ref{prop:distordu-frac}.

\begin{remark}
In \cite[Appendix A]{GassotGM26b}, the reflection coefficient $\beta(\lambda)$ was calculated in a similar fashion for the same initial data with the result written in the present notation as
\begin{equation}
    \beta(\lambda)=\ii\ee^{-2\pi\ii\mathrm{Im}(c)}\frac{\det(\mathbf{A}^\infty_{\Pi u_0}(\lambda))}{\det(\mathbf{E}^+(\lambda))},\quad\lambda>0.
\end{equation}
Therefore, as mentioned in Remark~\ref{rem:beta-of-lambda}, the other distorted Fourier transform of $\Pi u_0$ is
\begin{equation}
    \widetilde{\Pi u_0}^+(\lambda)=-\ii\beta(\lambda)=\ee^{-2\pi\ii\mathrm{Im}(c)}\frac{\det(\mathbf{A}^\infty_{\Pi u_0}(\lambda))}{\det(\mathbf{E}^+(\lambda))},\quad\lambda>0.
\end{equation}
Recalling now \eqref{eq:Gamma-of-lambda}, the unit-modulus function $\Gamma(\lambda)$ relating $m_\pm(\diamond;\lambda)$ and hence also the distorted Fourier transforms $\widetilde{\Pi u_0}^\pm(\lambda)$ is given explicitly by
\begin{equation}
    \Gamma(\lambda)=\frac{\widetilde{\Pi u_0}^-(\lambda)}{\widetilde{\Pi u_0}^+(\lambda)}=-\ee^{2\pi\ii\mathrm{Im}(c)}\frac{\det(\mathbf{E}^+(\lambda))}{\det(\mathbf{E}^-(\lambda))},\quad\lambda>0,
\end{equation}
where we recall that $c:=c_1+\dots+c_N$.

\begin{remark}[Reflectionless potentials]
The $N$-soliton initial data $u_0$ considered in Section~\ref{sec:examples} and Appendix~\ref{app:Nsoliton} has $c_1=c_2=\cdots = c_N=-\ii$.  In this case, $\ee^{-\ii h^\infty(z;-2\lambda)}$ has simple zeros at $p_1^*,\dots,p_N^*$, so the integrand in each of the first row entries of $\mathbf{A}^\infty_{\Pi u_0}(\lambda)$ is analytic in $\C_-$ and exponentially decaying as $z\to\infty\ee^{-\ii\pi/4}$ for $\lambda>0$.  It follows that the first row entries of $\mathbf{A}^\infty_{\Pi u_0}(\lambda)$ all vanish identically for $\lambda>0$, so $\widetilde{\Pi u_0}^\pm(\lambda)=0$ and $\beta(\lambda)=0$, consistent with the terminology of $u_0$ being a reflectionless potential.
\end{remark}
\end{remark}

\bibliographystyle{siamplain}
\bibliography{references}

@article{Benjamin67,
    title = {Internal waves of permanent form in fluids of great depth},
    author = {T. Brooke Benjamin},
    fjournal = {Journal of Fluid Mechanics},
    journal = {J. Fluid Mech.},
    volume = {29},
    number = {3},
    pages = {559--592},
    year = {1967}
}

@article{Ono75,
    title = {Algebraic solitary waves in stratified fluids},
    author = {H. Ono},
    fjournal = {Journal of the Physical Society of Japan},
    journal = {J. Phys. Soc. Japan},
    volume = {39},
    number = {4},
    pages = {1082--1091},
    doi = {10.1143/JPSJ.39.1082},
    mrnumber = {398275},
    year={1975}
}

@article{GassotG26,
    title = {Infinite-order multisoliton solutions to the {B}enjamin-{O}no equation and soliton resolution},
    author={Louise Gassot and Patrick G\'erard},
    note={preprint, arXiv:2603.15419},
    year={2026}
}

@article{GassotGM26,
    title = {A proof of the soliton resolution conjecture for the {B}enjamin-{O}no equation},
    author={Louise Gassot and Patrick G\'erard and Peter D. Miller},
    note={preprint, arXiv:2601.10488},
    year={2026}
}

@article{GassotGM26b,
    title={The {B}enjamin-{O}no equation in the long-time limit: linearized self-similar universality},
    author={Louise Gassot and Patrick G\'erard and Peter D. Miller},
    note = {preprint, arXiv:2606.02989},
    year = {2026}
}

@article {GerardG15,
    AUTHOR = {G\'erard, Patrick and Grellier, Sandrine},
     TITLE = {An explicit formula for the cubic {S}zeg{\H{o}} equation},
   JOURNAL = {Trans. Amer. Math. Soc.},
  FJOURNAL = {Transactions of the American Mathematical Society},
    VOLUME = {367},
      YEAR = {2015},
    NUMBER = {4},
     PAGES = {2979--2995},
      ISSN = {0002-9947,1088-6850},
   MRCLASS = {37K15 (47B35)},
  MRNUMBER = {3301889},
MRREVIEWER = {Julia\ B.\ Prada},
       DOI = {10.1090/S0002-9947-2014-06310-1},
       URL = {https://doi-org.proxy.lib.umich.edu/10.1090/S0002-9947-2014-06310-1},
}

@article {GerardP24,
    AUTHOR = {G\'erard, Patrick and Pushnitski, Alexander},
     TITLE = {The cubic {S}zeg{\H{o}} equation on the real line: explicit
              formula and well-posedness on the {H}ardy class},
   JOURNAL = {Comm. Math. Phys.},
  FJOURNAL = {Communications in Mathematical Physics},
    VOLUME = {405},
      YEAR = {2024},
    NUMBER = {7},
     PAGES = {Paper No. 167, 31},
      ISSN = {0010-3616,1432-0916},
   MRCLASS = {47B35 (30E25 35Q55)},
  MRNUMBER = {4768537},
       DOI = {10.1007/s00220-024-05040-4},
       URL = {https://doi-org.proxy.lib.umich.edu/10.1007/s00220-024-05040-4},
}

@article {GerardL25,
    AUTHOR = {G\'erard, Patrick and Lenzmann, Enno},
     TITLE = {Global well-posedness and soliton resolution for the half-wave
              maps equation with rational data},
   JOURNAL = {Forum Math. Sigma},
  FJOURNAL = {Forum of Mathematics. Sigma},
    VOLUME = {13},
      YEAR = {2025},
     PAGES = {Paper No. e190, 71},
      ISSN = {2050-5094},
   MRCLASS = {35R11 (37K10 47B53)},
  MRNUMBER = {4990790},
MRREVIEWER = {Junyi\ Zhu},
       DOI = {10.1017/fms.2025.10136},
       URL = {https://doi-org.proxy.lib.umich.edu/10.1017/fms.2025.10136},
}

@misc{GerardL25HWMtorus,
      title={The half-wave maps equation on $\mathbb{T}$: Global well-posedness in {$H^{1/2}$} and almost periodicity}, 
      author={P. Gérard and E. Lenzmann},
      year={2026},
      eprint={2602.20895},
      archivePrefix={arXiv},
      primaryClass={math.AP},
      url={https://arxiv.org/abs/2602.20895}, 
}

@article {AbdelouhabBFS89,
    AUTHOR = {Abdelouhab, L. and Bona, J. L. and Felland, M. and Saut,
              J.-C.},
     TITLE = {Nonlocal models for nonlinear, dispersive waves},
   JOURNAL = {Phys. D},
  FJOURNAL = {Physica D. Nonlinear Phenomena},
    VOLUME = {40},
      YEAR = {1989},
    NUMBER = {3},
     PAGES = {360--392},
      ISSN = {0167-2789,1872-8022},
   MRCLASS = {58D25 (35Q53)},
  MRNUMBER = {1044731},
MRREVIEWER = {Peter\ L.\ Christiansen},
       DOI = {10.1016/0167-2789(89)90050-X},
       URL = {https://doi-org.proxy.lib.umich.edu/10.1016/0167-2789(89)90050-X},
}

@article{BronsardCD26,
    author = {Alama Bronsard, Yvonne and Chen, Xi and Dobeault, Matthieu},
    journal = {Found.\@ Comp.\@ Math.\@},
    fjournal = {Foundations of Computational Mathematics},
    title = {Spectrally accurate fully discrete schemes for some nonlocal and nonlinear integrable {PDE}s via explicit formulas},
    doi = {10.1007/s10208-026-09740-2},
    year = {2026},
}

@article {AmickT91,
    AUTHOR = {Amick, C. J. and Toland, J. F.},
     TITLE = {Uniqueness and related analytic properties for the
              {B}enjamin-{O}no equation---a nonlinear {N}eumann problem in
              the plane},
   JOURNAL = {Acta Math.},
  FJOURNAL = {Acta Mathematica},
    VOLUME = {167},
      YEAR = {1991},
    NUMBER = {1-2},
     PAGES = {107--126},
      ISSN = {0001-5962,1871-2509},
   MRCLASS = {35Q35 (35C05 76C10)},
  MRNUMBER = {1111746},
MRREVIEWER = {Paul\ G.\ Schmidt},
       DOI = {10.1007/BF02392447},
       URL = {https://doi-org.proxy.lib.umich.edu/10.1007/BF02392447},
}

@article {AndersonT85,
    AUTHOR = {Anderson, R. L. and Taflin, E.},
     TITLE = {The {B}enjamin-{O}no equation---recursivity of linearization
              maps---{L}ax pairs},
   JOURNAL = {Lett. Math. Phys.},
  FJOURNAL = {Letters in Mathematical Physics. A Journal for the Rapid
              Dissemination of Short Contributions in the Field of
              Mathematical Physics},
    VOLUME = {9},
      YEAR = {1985},
    NUMBER = {4},
     PAGES = {299--311},
      ISSN = {0377-9017},
   MRCLASS = {35Q20 (45K05 76V05)},
  MRNUMBER = {796631},
MRREVIEWER = {F.\ Pempinelli},
       DOI = {10.1007/BF00397756},
       URL = {https://doi-org.proxy.lib.umich.edu/10.1007/BF00397756},
}

@Article{BlackstoneGGM24a,
    title= {The {B}enjamin-{O}no Initial-Value Problem for Rational Data with Application to Long-Time Asymptotics and Scattering},
    author={Elliot Blackstone and Louise Gassot and Patrick G\'erard and Peter D. Miller},
    journal={Ann. Inst. H. Poincar\'e C Anal. Non Lin\'eare},
    year=2025,
    DOI = {10.4171/AIHPC/169},
    note={Published online first.  arXiv:2410.14870}
}

@article{BlackstoneGGM26,
    title={The {B}enjamin-{O}no equation in the zero-dispersion limit for rational initial data: generation of dispersive shock waves},
    author={Elliot Blackstone and Louise Gassot and Patrick G\'erard and Peter D. Miller},
    journal={Comm. Pure Appl. Math.},
    year=2026,
    DOI = {10.1002/cpa.70044},
    note = {Published online first. arXiv:2410.17405}
}

@incollection {BlackstoneGM25,
    AUTHOR = {Blackstone, Elliot and Gassot, Louise and Miller, Peter D.},
     TITLE = {On strong zero-dispersion asymptotics for {B}enjamin-{O}no
              soliton ensembles},
 BOOKTITLE = {The versatility of integrability---in memory of {I}gor
              {K}richever},
    SERIES = {Contemp. Math.},
    VOLUME = {823},
     PAGES = {213--270},
 PUBLISHER = {Amer. Math. Soc., [Providence], RI},
      YEAR = {[2025] \copyright 2025},
      ISBN = {978-1-4704-7526-0; [9781470480363]},
   MRCLASS = {35R09 (35C07 35C08 35C20 35Q53)},
  MRNUMBER = {4945851},
MRREVIEWER = {Theodoros\ P.\ Horikis},
       DOI = {10.1090/conm/823/16498},
       URL = {https://doi-org.proxy.lib.umich.edu/10.1090/conm/823/16498},
}

@article {BlackstoneMM26,
    AUTHOR = {Blackstone, Elliot and Miller, Peter D. and Mitchell, Matthew
              D.},
     TITLE = {Universality in the small-dispersion limit of the
              {B}enjamin-{O}no equation},
   JOURNAL = {Comm.\@ Math.\@ Phys.\@},
  FJOURNAL = {Communications in Mathematical Physics},
    VOLUME = {407},
      YEAR = {2026},
    NUMBER = {3},
    PAGES = {1--69},
    Note = {Paper No. 60},
      ISSN = {0010-3616,1432-0916},
   MRCLASS = {76B55 (35Q53)},
  MRNUMBER = {5035577},
       DOI = {10.1007/s00220-025-05506-z},
    
}

@article {BockK79,
    AUTHOR = {Bock, T. L. and Kruskal, M. D.},
     TITLE = {A two-parameter {M}iura transformation of the {B}enjamin-{O}no
              equation},
   JOURNAL = {Phys. Lett. A},
  FJOURNAL = {Physics Letters. A},
    VOLUME = {74},
      YEAR = {1979},
    NUMBER = {3-4},
     PAGES = {173--176},
      ISSN = {0375-9601,1873-2429},
   MRCLASS = {35Q20 (58F07 76B15)},
  MRNUMBER = {591320},
       DOI = {10.1016/0375-9601(79)90762-X},
       URL = {https://doi-org.proxy.lib.umich.edu/10.1016/0375-9601(79)90762-X},
}

@article {Case79,
    AUTHOR = {Case, K. M.},
     TITLE = {Meromorphic solutions of the {B}enjamin-{O}no equation},
   JOURNAL = {Phys. A},
  FJOURNAL = {Physica A},
    VOLUME = {96},
      YEAR = {1979},
    NUMBER = {1-2},
     PAGES = {173--182},
      ISSN = {0378-4371},
   MRCLASS = {35C05 (35Q20)},
  MRNUMBER = {534587},
MRREVIEWER = {Manfred\ W.\ Kracht},
       DOI = {10.1016/0378-4371(79)90204-8},
       URL = {https://doi-org.proxy.lib.umich.edu/10.1016/0378-4371(79)90204-8},
}

@article {Cheng26+,
author = {Cheng, Zijian},
title = {Degenerations in {B}enjamin-{O}no theory},
note = {In preparation},
year = {2026},
}

@article {Cheng26+b,
author = {Cheng, Zijian},
title = {On the solution of the {B}enjamin-{O}no equation in the small-dispersion limit near the gradient catastrophe point},
note = {In preparation},
year = {2026},
}

@article {ClaeysG09,
    AUTHOR = {Claeys, T. and Grava, T.},
     TITLE = {Universality of the break-up profile for the {K}d{V} equation
              in the small dispersion limit using the {R}iemann-{H}ilbert
              approach},
   JOURNAL = {Comm. Math. Phys.},
  FJOURNAL = {Communications in Mathematical Physics},
    VOLUME = {286},
      YEAR = {2009},
    NUMBER = {3},
     PAGES = {979--1009},
      ISSN = {0010-3616,1432-0916},
   MRCLASS = {35Q53 (35C20 35P25 35Q15 35R30 37K15 37K20)},
  MRNUMBER = {2472023},
MRREVIEWER = {Woodford\ W.\ Zachary},
       DOI = {10.1007/s00220-008-0680-5},
       URL = {https://doi-org.proxy.lib.umich.edu/10.1007/s00220-008-0680-5},
}

@article {ClaeysG10a,
    AUTHOR = {Claeys, Tom and Grava, Tamara},
     TITLE = {Painlev\'e{} {II} asymptotics near the leading edge of the
              oscillatory zone for the {K}orteweg-de {V}ries equation in the
              small-dispersion limit},
   JOURNAL = {Comm. Pure Appl. Math.},
  FJOURNAL = {Communications on Pure and Applied Mathematics},
    VOLUME = {63},
      YEAR = {2010},
    NUMBER = {2},
     PAGES = {203--232},
      ISSN = {0010-3640,1097-0312},
   MRCLASS = {35Q53 (35C20 35Q15 37K10 37K15)},
  MRNUMBER = {2588460},
       DOI = {10.1002/cpa.20277},
       URL = {https://doi-org.proxy.lib.umich.edu/10.1002/cpa.20277},
}

@article {ClaeysG10b,
    AUTHOR = {Claeys, Tom and Grava, Tamara},
     TITLE = {Solitonic asymptotics for the {K}orteweg-de {V}ries equation
              in the small dispersion limit},
   JOURNAL = {SIAM J. Math. Anal.},
  FJOURNAL = {SIAM Journal on Mathematical Analysis},
    VOLUME = {42},
      YEAR = {2010},
    NUMBER = {5},
     PAGES = {2132--2154},
      ISSN = {0036-1410,1095-7154},
   MRCLASS = {35Q53 (35B36 35C08 35P25 35Q15 37K15)},
  MRNUMBER = {2729434},
MRREVIEWER = {Sarah\ G.\ Raynor},
       DOI = {10.1137/090779103},
       URL = {https://doi-org.proxy.lib.umich.edu/10.1137/090779103},
}

@article {CoifmanW90,
    AUTHOR = {Coifman, Ronald R. and Wickerhauser, Mladen Victor},
     TITLE = {The scattering transform for the {B}enjamin-{O}no equation},
   JOURNAL = {Inverse Problems},
  FJOURNAL = {Inverse Problems. An International Journal on the Theory and
              Practice of Inverse Problems, Inverse Methods and Computerized
              Inversion of Data},
    VOLUME = {6},
      YEAR = {1990},
    NUMBER = {5},
     PAGES = {825--861},
      ISSN = {0266-5611,1361-6420},
   MRCLASS = {34L25 (34A55 35Q53)},
  MRNUMBER = {1073870},
MRREVIEWER = {C.\ Tomei},
       DOI = {10.1088/0266-5611/6/5/011},
       URL = {https://doi-org.proxy.lib.umich.edu/10.1088/0266-5611/6/5/011},
}

@article {DeiftVZ97,
    AUTHOR = {Deift, P. and Venakides, S. and Zhou, X.},
     TITLE = {New results in small dispersion {K}d{V} by an extension of the
              steepest descent method for {R}iemann-{H}ilbert problems},
   JOURNAL = {Internat. Math. Res. Notices},
  FJOURNAL = {International Mathematics Research Notices},
      YEAR = {1997},
    NUMBER = {6},
     PAGES = {286--299},
      ISSN = {1073-7928,1687-0247},
   MRCLASS = {35Q53},
  MRNUMBER = {1440305},
MRREVIEWER = {Linghai\ Zhang},
       DOI = {10.1155/S1073792897000214},
       URL = {https://doi-org.proxy.lib.umich.edu/10.1155/S1073792897000214},
}

@article {DeiftVZ98,
    AUTHOR = {Deift, P. and Venakides, S. and Zhou, X.},
     TITLE = {An extension of the steepest descent method for
              {R}iemann-{H}ilbert problems: the small dispersion limit of
              the {K}orteweg-de {V}ries ({K}d{V}) equation},
   JOURNAL = {Proc. Natl. Acad. Sci. USA},
  FJOURNAL = {Proceedings of the National Academy of Sciences of the United
              States of America},
    VOLUME = {95},
      YEAR = {1998},
    NUMBER = {2},
     PAGES = {450--454},
      ISSN = {0027-8424,1091-6490},
   MRCLASS = {65N15},
  MRNUMBER = {1613151},
       DOI = {10.1073/pnas.95.2.450},
       URL = {https://doi-org.proxy.lib.umich.edu/10.1073/pnas.95.2.450},
}

@article {DobrokhotovK91,
    AUTHOR = {Dobrokhotov, S. Yu. and Krichever, Igor Moiseevich},
     TITLE = {Multiphase solutions of the {B}enjamin-{O}no equation and
              their averaging},
   JOURNAL = {Mat. Zametki},
  FJOURNAL = {Akademiya Nauk SSSR. Matematicheskie Zametki},
    VOLUME = {49},
      YEAR = {1991},
    NUMBER = {6},
     PAGES = {42--58, 158},
      ISSN = {0025-567X},
   MRCLASS = {35Q35 (45K05)},
  MRNUMBER = {1135514},
       DOI = {10.1007/BF01156581},
       URL = {https://doi-org.proxy.lib.umich.edu/10.1007/BF01156581},
}

@incollection {Dubrovin08,
    AUTHOR = {Dubrovin, Boris},
     TITLE = {On universality of critical behaviour in {H}amiltonian {PDE}s},
 BOOKTITLE = {Geometry, topology, and mathematical physics},
    SERIES = {Amer. Math. Soc. Transl. Ser. 2},
    VOLUME = {224},
     PAGES = {59--109},
 PUBLISHER = {Amer. Math. Soc., Providence, RI},
      YEAR = {2008},
      ISBN = {978-0-8218-4674-2},
   MRCLASS = {37K55 (34M55 35F20 37K05)},
  MRNUMBER = {2462355},
MRREVIEWER = {Ian\ A. B. Strachan},
       DOI = {10.1090/trans2/224/03},
       URL = {https://doi-org.proxy.lib.umich.edu/10.1090/trans2/224/03},
}

@article {FokasA83,
    AUTHOR = {Fokas, A. S. and Ablowitz, M. J.},
     TITLE = {The inverse scattering transform for the {B}enjamin-{O}no
              equation---a pivot to multidimensional problems},
   JOURNAL = {Stud. Appl. Math.},
  FJOURNAL = {Studies in Applied Mathematics},
    VOLUME = {68},
      YEAR = {1983},
    NUMBER = {1},
     PAGES = {1--10},
      ISSN = {0022-2526},
   MRCLASS = {35Q20 (44A15 76V05)},
  MRNUMBER = {686248},
MRREVIEWER = {Amy\ Cohen},
       DOI = {10.1002/sapm19836811},
       URL = {https://doi-org.proxy.lib.umich.edu/10.1002/sapm19836811},
}

@article {FlaschkaFM80,
    AUTHOR = {Flaschka, H. and Forest, M. G. and McLaughlin, D. W.},
     TITLE = {Multiphase averaging and the inverse spectral solution of the
              {K}orteweg-de {V}ries equation},
   JOURNAL = {Comm. Pure Appl. Math.},
  FJOURNAL = {Communications on Pure and Applied Mathematics},
    VOLUME = {33},
      YEAR = {1980},
    NUMBER = {6},
     PAGES = {739--784},
      ISSN = {0010-3640,1097-0312},
   MRCLASS = {35Q20 (58F07 76B15)},
  MRNUMBER = {596433},
MRREVIEWER = {H.\ P.\ McKean, Jr.},
       DOI = {10.1002/cpa.3160330605},
       URL = {https://doi-org.proxy.lib.umich.edu/10.1002/cpa.3160330605},
}

@article{Gerard22,
	author = {Gérard, P.},
	doi = {10.2140/tunis.2023.5.593},
	fjournal = {Tunisian Journal of Mathematics},
	issn = {2576-7658,2576-7666},
	journal = {Tunis. J. Math.},
	mrclass = {35Q53 (35C05 37K15 47B35)},
	mrnumber = {4662323},
	number = {3},
	pages = {593--603},
	title = {An explicit formula for the {B}enjamin-{O}no equation},
	volume = {5},
	year = {2023}
}

@article {Gerard23,
    AUTHOR = {G\'erard, Patrick},
     TITLE = {The zero dispersion limit for the {B}enjamin-{O}no equation on
              the line},
   JOURNAL = {C. R. Math. Acad. Sci. Paris},
  FJOURNAL = {Comptes Rendus Math\'ematique. Acad\'emie des Sciences. Paris},
    VOLUME = {362},
      YEAR = {2024},
     PAGES = {619--634},
      ISSN = {1631-073X,1778-3569},
   MRCLASS = {35Q53 (35B30 37K15 47B35)},
  MRNUMBER = {4773745},
       DOI = {10.5802/crmath.575},
}

@article{Gassot21zero,
  title={{Zero-dispersion limit for the Benjamin-Ono equation on the torus with single well initial data}},
  author={Gassot, L.},
 fjournal = {Communications in Mathematical Physics},
 journal = {Comm. Math. Phys.},
 issn = {0010-3616},
 volume = {401},
 number = {3},
 pages = {2793--2843},
 year = {2023},
 language = {English},
 doi = {10.1007/s00220-023-04701-0},
 zbMATH = {7719634},
 Zbl = {1529.35381}
}

@article{Gassot23zero,
 author = {Gassot, L.},
 title = {Lax eigenvalues in the zero-dispersion limit for the {Benjamin}-{Ono} equation on the torus},
 fjournal = {SIAM Journal on Mathematical Analysis},
 journal = {SIAM J. Math. Anal.},
 issn = {0036-1410},
 volume = {55},
 number = {5},
 pages = {5782--5822},
 year = {2023},
 language = {English},
 doi = {10.1137/23M154635X},
 zbMATH = {7757958},
 Zbl = {1527.35352}
}

@article{Maehlen26zero,
	author = {M{\ae}hlen, O.},
	journal = {Comm. Math. Phys.},
	number = {6},
	pages = {120},
	title = {The Zero-Dispersion Limit for the {B}enjamin-{O}no Equation on the Circle},
	volume = {407},
	year = {2026},
	doi = {10.1007/s00220-026-05628-y}}

@article {MasoeroRA15,
    AUTHOR = {Masoero, Davide and Raimondo, Andrea and Antunes, Pedro R. S.},
     TITLE = {Critical behavior for scalar nonlinear waves},
   JOURNAL = {Phys. D},
  FJOURNAL = {Physica D. Nonlinear Phenomena},
    VOLUME = {292/293},
      YEAR = {2015},
     PAGES = {1--7},
      ISSN = {0167-2789,1872-8022},
   MRCLASS = {35Q53 (35L67)},
  MRNUMBER = {3286607},
       DOI = {10.1016/j.physd.2014.09.007},
       URL = {https://doi-org.proxy.lib.umich.edu/10.1016/j.physd.2014.09.007},
}

@article{Badreddine24zero,
author = {Badreddine, Rana},
title = {Zero-Dispersion Limit of the {C}alogero–{M}oser Derivative {NLS} Equation},
journal = {SIAM Journal on Mathematical Analysis},
volume = {56},
number = {6},
pages = {7228-7249},
year = {2024},
doi = {10.1137/24M1646935},
URL = { https://doi.org/10.1137/24M1646935
},
eprint = { https://doi.org/10.1137/24M1646935}
}

@article{GurevichP74,
    author = {Gurevich, A. V. and Pitaevski\v{i}, L. P.},
    title = {Nonstationary structure of a collisionless shock wave},
    journal = {Zh. Eksp. Teor. Fiz.},
    volume = {65}, 
    pages = {590--604}, 
    year = {1973}, 
    language = {Russian},
    note = {English trans. \textit{Sov. Phys. JETP} \textbf{38}, 291--297, 1974},
}

@article{Chen25explicitBO,
 author = {Chen, Xi},
 title = {Explicit formula for the {B}enjamin-{O}no equation with square integrable and real valued initial data and applications to the zero dispersion limit},
 fjournal = {Pure and Applied Analysis},
 journal = {Pure Appl. Anal.},
 issn = {2578-5893},
 volume = {7},
 number = {1},
 pages = {101--126},
 year = {2025},
 language = {English},
 doi = {10.2140/paa.2025.7.101},
 zbMATH = {8014452},
 Zbl = {1560.35058}
}

@article{Chen2026explicitCCM,
  title={The defocusing {C}alogero-{M}oser derivative nonlinear {S}chr{\"o}dinger equation with a nonvanishing condition at infinity},
  author={Chen, X.},
  journal={SIAM J. Math. Anal.},
  volume={58},
  number={1},
  pages={182--205},
  year={2026},
  publisher={SIAM}
}

@misc{GerardH26,
      title={An Explicit Formula for the {B}enjamin-{O}no Hierarchy with Applications to Traveling Waves and Zero-Dispersion Limits}, 
      author={Patrick G\'erard and Jiao He},
      year={2026},
      eprint={2604.20464},
      archivePrefix={arXiv},
      primaryClass={math.AP},
      url={https://arxiv.org/abs/2604.20464}, 
}

@article {IonescuK07,
    AUTHOR = {Ionescu, Alexandru D. and Kenig, Carlos E.},
     TITLE = {Global well-posedness of the {B}enjamin-{O}no equation in
              low-regularity spaces},
   JOURNAL = {J. Amer. Math. Soc.},
  FJOURNAL = {Journal of the American Mathematical Society},
    VOLUME = {20},
      YEAR = {2007},
    NUMBER = {3},
     PAGES = {753--798},
      ISSN = {0894-0347,1088-6834},
   MRCLASS = {35Q53 (35B30 76B55 76B70)},
  MRNUMBER = {2291918},
MRREVIEWER = {Grzegorz\ Karch},
       DOI = {10.1090/S0894-0347-06-00551-0},
       URL = {https://doi-org.proxy.lib.umich.edu/10.1090/S0894-0347-06-00551-0},
}

@article {Iorio86,
    AUTHOR = {I\'orio, Jr., Rafael Jos\'e},
     TITLE = {On the {C}auchy problem for the {B}enjamin-{O}no equation},
   JOURNAL = {Comm. Partial Differential Equations},
  FJOURNAL = {Communications in Partial Differential Equations},
    VOLUME = {11},
      YEAR = {1986},
    NUMBER = {10},
     PAGES = {1031--1081},
      ISSN = {0360-5302,1532-4133},
   MRCLASS = {35B40 (35K55 35Q20)},
  MRNUMBER = {847994},
MRREVIEWER = {Bjorn\ Birnir},
       DOI = {10.1080/03605308608820456},
       URL = {https://doi-org.proxy.lib.umich.edu/10.1080/03605308608820456},
}

@article{JorgeMS99,
title = {Modulation solutions for the {B}enjamin–{O}no equation},
journal = {Phys. D},
fjournal = {Physica D: Nonlinear Phenomena},
volume = {132},
number = {1},
pages = {1-18},
year = {1999},
issn = {0167-2789},
doi = {https://doi.org/10.1016/S0167-2789(99)00039-1},
url = {https://www.sciencedirect.com/science/article/pii/S0167278999000391},
author = {M.C. Jorge and A.A. Minzoni and Noel {F. Smyth}}
}

@article {KenigK03,
    AUTHOR = {Kenig, Carlos E. and Koenig, Kenneth D.},
     TITLE = {On the local well-posedness of the {B}enjamin-{O}no and
              modified {B}enjamin-{O}no equations},
   JOURNAL = {Math. Res. Lett.},
  FJOURNAL = {Mathematical Research Letters},
    VOLUME = {10},
      YEAR = {2003},
    NUMBER = {5-6},
     PAGES = {879--895},
      ISSN = {1073-2780},
   MRCLASS = {35Q53 (35B30 76B03)},
  MRNUMBER = {2025062},
MRREVIEWER = {Pierre\ A.\ Vuillermot},
       DOI = {10.4310/MRL.2003.v10.n6.a13},
       URL = {https://doi-org.proxy.lib.umich.edu/10.4310/MRL.2003.v10.n6.a13},
}

@article {KillipLV24,
    AUTHOR = {Killip, Rowan and Laurens, Thierry and Vi{\c{s}}an, Monica},
     TITLE = {Sharp well-posedness for the {B}enjamin-{O}no equation},
   JOURNAL = {Invent. Math.},
  FJOURNAL = {Inventiones Mathematicae},
    VOLUME = {236},
      YEAR = {2024},
    NUMBER = {3},
     PAGES = {999--1054},
      ISSN = {0020-9910,1432-1297},
   MRCLASS = {35C08},
  MRNUMBER = {4743514},
MRREVIEWER = {Francisco\ J.\ Vielma Leal},
       DOI = {10.1007/s00222-024-01250-8},
       URL = {https://doi-org.proxy.lib.umich.edu/10.1007/s00222-024-01250-8},
}

@article {KillipLV25,
    AUTHOR = {Killip, Rowan and Laurens, Thierry and Vi{\c{s}}an, Monica},
     TITLE = {Scaling-critical well-posedness for continuum
              {C}alogero-{M}oser models on the line},
   JOURNAL = {Commun. Am. Math. Soc.},
  FJOURNAL = {Communications of the American Mathematical Society},
    VOLUME = {5},
      YEAR = {2025},
     PAGES = {284--320},
      ISSN = {2692-3688},
   MRCLASS = {35Q51 (37K10)},
  MRNUMBER = {4922705},
MRREVIEWER = {Junyi\ Zhu},
       DOI = {10.1090/cams/48},
       URL = {https://doi-org.proxy.lib.umich.edu/10.1090/cams/48},
}

@article{Badreddine24gwp,
  author  = {Badreddine, R.},
  title   = {On the global well-posedness of the {C}alogero--{S}utherland derivative nonlinear {S}chr\"odinger equation},
  journal = {Pure Appl. Anal.},
  volume  = {6},
  number  = {2},
  pages   = {379--414},
  year    = {2024},
  doi     = {10.2140/paa.2024.6.379},
  eprint  = {2303.01087},
  archivePrefix = {arXiv}
}

@article{BadreddineKV26orbital,
      title={Orbital stability of {B}enjamin-{O}no multisolitons}, 
      author={R. Badreddine and R. Killip and M. Visan},
      year={2026},
      eprint={2509.14153},
      archivePrefix={arXiv},
      journal={To appear in Comm. Math. Phys.},
      primaryClass={math.AP},
      url={https://arxiv.org/abs/2509.14153}, 
}

@misc{BadreddineKV26asymptotic,
      title={Asymptotic stability of {B}enjamin-{O}no multisolitons in $L^2(\mathbb R)$}, 
      author={R. Badreddine and R. Killip and M. Visan},
      year={2026},
      eprint={2606.12951},
      archivePrefix={arXiv},
      primaryClass={math.AP},
      url={https://arxiv.org/abs/2606.12951}, 
}

@book{KleinS21nonlinear,
  author    = {Klein, C. and Saut, J.-C.},
  title     = {Nonlinear Dispersive Equations: Inverse Scattering and {PDE} Methods},
  series    = {Applied Mathematical Sciences},
  volume    = {209},
  publisher = {Springer},
  year      = {2021},
  doi       = {10.1007/978-3-030-91427-1}
}

@article{KochT03,
    AUTHOR = {Koch, H. and Tzvetkov, N.},
     TITLE = {On the local well-posedness of the {B}enjamin-{O}no equation
              in {$H^s({\Bbb R})$}},
   JOURNAL = {Int. Math. Res. Not.},
  FJOURNAL = {International Mathematics Research Notices},
      YEAR = {2003},
    NUMBER = {26},
     PAGES = {1449--1464},
      ISSN = {1073-7928,1687-0247},
   MRCLASS = {35Q53 (35B30)},
  MRNUMBER = {1976047},
MRREVIEWER = {W.\ Watzlawek},
       DOI = {10.1155/S1073792803211260},
       URL = {https://doi-org.proxy.lib.umich.edu/10.1155/S1073792803211260},
}

@article{KoopB81,
    author = {Koop, C. Gary and Butler, Gerald},
    title = {An investigation of internal solitary waves in a two-fluid system},
    journal = {J. Fluid Mech.},
    fjournal = {Journal of Fluid Mechanics}, 
    volume = {112},
    year = 1981,
    pages = {225--251},
}

@article{CLP1979,
author = {Chen, H.H. and Lee, Y.C. and Pereira, Nino},
title = {Algebraic internal wave solitons and the integrable {C}alogero–{M}oser–{S}utherland $N$‐body problem},
journal = {Phys. Fluids},
fjournal = {{The Physics of Fluids}},
volume = {22},
number = {187},
pages = {619--621},
year={1979},
doi = {10.1063/1.862457},
url = {},
  publisher={}
}

@book{Matsuno84,
    AUTHOR = {Matsuno, Yoshimasa},
     TITLE = {{B}ilinear {T}ransformation {M}ethod},
    SERIES = {Mathematics in Science and Engineering},
    VOLUME = {174},
 PUBLISHER = {Academic Press, Inc., Orlando, FL},
      YEAR = {1984},
     PAGES = {viii+223},
      ISBN = {0-12-480480-2},
   MRCLASS = {35Q20 (34-02 35Axx 58F07)},
  MRNUMBER = {759718},
MRREVIEWER = {Francesco\ Calogero},
}

@article {Matsuno98,
    AUTHOR = {Matsuno, Yoshimasa},
     TITLE = {The small dispersion limit of the {B}enjamin-{O}no equation
              and the evolution of a step initial condition},
   JOURNAL = {J. Phys. Soc. Japan},
  FJOURNAL = {Journal of the Physical Society of Japan},
    VOLUME = {67},
      YEAR = {1998},
    NUMBER = {6},
     PAGES = {1814--1817},
      ISSN = {0031-9015,1347-4073},
   MRCLASS = {76C10 (76V05)},
  MRNUMBER = {1632413},
       DOI = {10.1143/JPSJ.67.1814},
       URL = {https://doi-org.proxy.lib.umich.edu/10.1143/JPSJ.67.1814},
}

@Article{MillerX10,
  title =   {On the zero-dispersion limit of the {B}enjamin-{O}no {C}auchy problem for positive initial data},
  author =  {Peter D. Miller and Zhengjie Xu},
  journal = {Communications on Pure and Applied Mathematics},
  volume = 64,
  year = 2011,
  pages = {205--270}
  }

@Article{MillerW16a,
    title = {Direct scattering for the {B}enjamin-{O}no equation with rational initial data},
    author = {Peter D. Miller and Alfredo N. Wetzel},
    journal={Stud. Appl. Math.},
    fjournal = {Studies in Applied Mathematics},
    volume = 137,
    year = 2015,
    doi={10.1111/sapm.12101},
    pages = {53--69}
}

@article {MolinetP12,
    AUTHOR = {Molinet, Luc and Pilod, Didier},
     TITLE = {The {C}auchy problem for the {B}enjamin-{O}no equation in
              {$L^2$} revisited},
   JOURNAL = {Anal. PDE},
  FJOURNAL = {Analysis \& PDE},
    VOLUME = {5},
      YEAR = {2012},
    NUMBER = {2},
     PAGES = {365--395},
      ISSN = {2157-5045,1948-206X},
   MRCLASS = {35Q53 (35B30)},
  MRNUMBER = {2970711},
MRREVIEWER = {John\ Albert},
       DOI = {10.2140/apde.2012.5.365},
       URL = {https://doi-org.proxy.lib.umich.edu/10.2140/apde.2012.5.365},
}

@article {Nakamura79,
    AUTHOR = {Nakamura, Akira},
     TITLE = {B\"acklund transform and conservation laws of the
              {B}enjamin-{O}no equation},
   JOURNAL = {J. Phys. Soc. Japan},
  FJOURNAL = {Journal of the Physical Society of Japan},
    VOLUME = {47},
      YEAR = {1979},
    NUMBER = {4},
     PAGES = {1335--1340},
      ISSN = {0031-9015,1347-4073},
   MRCLASS = {35Q20 (35A30 58F07)},
  MRNUMBER = {550203},
MRREVIEWER = {A.\ Degasperis},
       DOI = {10.1143/JPSJ.47.1335},
       URL = {https://doi-org.proxy.lib.umich.edu/10.1143/JPSJ.47.1335},
}

@article {Ponce91,
    AUTHOR = {Ponce, Gustavo},
     TITLE = {On the global well-posedness of the {B}enjamin-{O}no equation},
   JOURNAL = {Differential Integral Equations},
  FJOURNAL = {Differential and Integral Equations. An International Journal
              for Theory and Applications},
    VOLUME = {4},
      YEAR = {1991},
    NUMBER = {3},
     PAGES = {527--542},
      ISSN = {0893-4983},
   MRCLASS = {35Q35 (76C05 76V05)},
  MRNUMBER = {1097916},
MRREVIEWER = {Piotr\ Biler},
}

@article{PorterS02,
    title = {Modelling the morning glory of the {G}ulf of {C}arpentaria},
    author = {Porter, Anne and Smyth, Noel F.},
    journal = {J. Fluid Mech.},
    fjournal = {Journal of Fluid Mechanics},
    volume = {454}, 
    pages = {1--20},
    doi = {10.1017/S0022112001007455},
    year={2002}
}

@article{ChoiC96,
	title={Weakly nonlinear internal waves in a two-fluid system},
    fjournal={Journal of Fluid Mechanics}, 
    journal = {J. Fluid Mech.},
    volume = {313}, 
    year = {1996}, 
    pages = {83--103},
    author = {Choi, Wooyong and Camassa, Roberto},
    DOI = {10.1017/S0022112096002133}
}

@article{ChristieMC81, title={Solitary waves in the lower atmosphere}, volume={293}, DOI={10.1038/293046a0}, number={5827}, journal={Nature (London)}, publisher={London: Nature Publishing Group UK}, author={Christie, D. R. and Muirhead, K. J. and Clarke, R. H.}, year={1981}, pages={46–49} 
}

@article{Paulsen24,
    author = {Paulsen, M.O.},
    title = {Justification of the {B}enjamin–{O}no equation as an internal water waves model},
    journal = {Ann. PDE},
    fjournal = {Annals of Partial Differential Equations},
    volume = {10}, 
    note = {article number 25},
    year = {2024}, 
    doi = {10.1007/s40818-024-00190-z}
}

@article{SatsumaI79,
author = {Satsuma ,Junkichi and Ishimori ,Y{\=u}ji},
title = {Periodic Wave and Rational Soliton Solutions of the {B}enjamin-{O}no Equation},
journal = {Journal of the Physical Society of Japan},
volume = {46},
number = {2},
pages = {681-687},
year = {1979},
doi = {10.1143/JPSJ.46.681},

URL = { 
    
        https://doi.org/10.1143/JPSJ.46.681
    
    

},
eprint = { 
    
        https://doi.org/10.1143/JPSJ.46.681
    
    

}
}

@article {Saut79,
    AUTHOR = {Saut, J.-C.},
     TITLE = {Sur quelques g\'en\'eralisations de l'\'equation de
              {K}orteweg-de {V}ries},
   JOURNAL = {J. Math. Pures Appl. (9)},
  FJOURNAL = {Journal de Math\'ematiques Pures et Appliqu\'ees. Neuvi\`eme
              S\'erie},
    VOLUME = {58},
      YEAR = {1979},
    NUMBER = {1},
     PAGES = {21--61},
      ISSN = {0021-7824,1776-3371},
   MRCLASS = {35Q20 (76B99)},
  MRNUMBER = {533234},
MRREVIEWER = {Amy\ Cohen},
}

@article {Tao04,
    AUTHOR = {Tao, Terence},
     TITLE = {Global well-posedness of the {B}enjamin-{O}no equation in
              {$H^1(\mathbb{R})$}},
   JOURNAL = {J. Hyperbolic Differ. Equ.},
  FJOURNAL = {Journal of Hyperbolic Differential Equations},
    VOLUME = {1},
      YEAR = {2004},
    NUMBER = {1},
     PAGES = {27--49},
      ISSN = {0219-8916,1793-6993},
   MRCLASS = {35Q53 (35B30 76B03)},
  MRNUMBER = {2052470},
MRREVIEWER = {Grzegorz\ Karch},
       DOI = {10.1142/S0219891604000032},
       URL = {https://doi-org.proxy.lib.umich.edu/10.1142/S0219891604000032},
}

@book {Talbut21,
    AUTHOR = {Talbut, Blaine Aaron Foster},
     TITLE = {{B}enjamin-{O}no at Low Regularity: An Integrability
              Approach},
      NOTE = {Thesis (Ph.D.)--University of California, Los Angeles},
 PUBLISHER = {ProQuest LLC, Ann Arbor, MI},
      YEAR = {2021},
     PAGES = {59},
      ISBN = {979-8759-96416-2},
   MRCLASS = {99-05},
  MRNUMBER = {4380591},
       URL =
              {http://gateway.proquest.com.proxy.lib.umich.edu/openurl?url_ver=Z39.88-2004&rft_val_fmt=info:ofi/fmt:kev:mtx:dissertation&res_dat=xri:pqm&rft_dat=xri:pqdiss:28863669},
}

@article {Wu16a,
    AUTHOR = {Wu, Yilun},
     TITLE = {Simplicity and finiteness of discrete spectrum of the
              {B}enjamin-{O}no scattering operator},
   JOURNAL = {SIAM J. Math. Anal.},
  FJOURNAL = {SIAM Journal on Mathematical Analysis},
    VOLUME = {48},
      YEAR = {2016},
    NUMBER = {2},
     PAGES = {1348--1367},
      ISSN = {0036-1410,1095-7154},
   MRCLASS = {35P25 (35Q53 35R30 47A40)},
  MRNUMBER = {3484397},
       DOI = {10.1137/15M1030649},
       URL = {https://doi.org/10.1137/15M1030649},
}

@article {Wu17,
author = {Wu, Yilun},
title = {{J}ost solutions the direct scattering problem of the {B}enjamin-{O}no equation},
journal = {SIAM J. Math. Anal.},
fjournal= {SIAM Journal on Mathematical Analysis},
volume = {49},
year = {2017},
number = {6},
doi={10.1137/17M1124528},
pages = {5158--5206}
}

@article {KollarM14,
AUTHOR = {Koll\'ar, Richard and Miller, Peter D},
TITLE = {Graphical {K}rein signature theory and {E}vans-{K}rein functions},
JOURNAL = {SIAM Rev.},
FJOURNAL = {SIAM Review},
VOLUME = {56}, 
YEAR = {2014},
NUMBER = {1},
PAGES = {73--123}
}

@article {KaupM98,
author = {Kaup, David J and Matsuno, Yoshimasa},
title = {The inverse scattering transform for the {B}enjamin-{O}no equation},
journal = {Stud. Appl. Math.},
fjournal = {Studies in Applied Mathematics},
volume = {101},
pages = {73--98},
year = {1998},
DOI = {10.1111/1467-9590.00086}
}

@misc{DLMF,
         key = "{\relax DLMF}",
       title = "{\it NIST Digital Library of Mathematical Functions}",
howpublished = "\url{https://dlmf.nist.gov/}, Release 1.2.4 of 2025-03-15",
         url = "https://dlmf.nist.gov/",
        note = "F.~W.~J. Olver, A.~B. {Olde Daalhuis}, D.~W. Lozier, B.~I. Schneider,
                R.~F. Boisvert, C.~W. Clark, B.~R. Miller, B.~V. Saunders,
                H.~S. Cohl, and M.~A. McClain, eds."}

@article {Whitham65a,
    AUTHOR = {Whitham, G. B.},
     TITLE = {Non-linear dispersive waves},
   JOURNAL = {Proc. Roy. Soc. London Ser. A},
  FJOURNAL = {Proceedings of the Royal Society. London. Series A.
              Mathematical, Physical and Engineering Sciences},
    VOLUME = {283},
      YEAR = {1965},
     PAGES = {238--261},
      ISSN = {0962-8444,2053-9169},
   MRCLASS = {76.35},
  MRNUMBER = {176724},
MRREVIEWER = {F.\ Ursell},
       DOI = {10.1098/rspa.1965.0019},
       URL = {https://doi-org.proxy.lib.umich.edu/10.1098/rspa.1965.0019},
}

@article {Whitham65b,
    AUTHOR = {Whitham, G. B.},
     TITLE = {A general approach to linear and non-linear dispersive waves
              using a {L}agrangian},
   JOURNAL = {J. Fluid Mech.},
  FJOURNAL = {Journal of Fluid Mechanics},
    VOLUME = {22},
      YEAR = {1965},
     PAGES = {273--283},
      ISSN = {0022-1120,1469-7645},
   MRCLASS = {76.35},
  MRNUMBER = {182236},
MRREVIEWER = {H.\ P.\ Greenspan},
       DOI = {10.1017/S0022112065000745},
       URL = {https://doi-org.proxy.lib.umich.edu/10.1017/S0022112065000745},
}

@article {AblowitzS77,
    AUTHOR = {Ablowitz, M. J. and Segur, H.},
     TITLE = {Asymptotic solutions of the {K}orteweg-de {V}ries equation},
   JOURNAL = {Stud. Appl. Math.},
  FJOURNAL = {Studies in Applied Mathematics},
    VOLUME = {57},
      YEAR = {1976/77},
    NUMBER = {1},
     PAGES = {13--44},
      ISSN = {0022-2526,1467-9590},
   MRCLASS = {35Q99 (35B40)},
  MRNUMBER = {481656},
MRREVIEWER = {Alan\ Jeffrey},
       DOI = {10.1002/sapm197757113},
}

@article{Lax59,
 author = {Lax, Peter D.},
 title = {Translation invariant spaces},
 fjournal = {Acta Mathematica},
 journal = {Acta Math.},
 issn = {0001-5962},
 volume = {101},
 pages = {163--178},
 year = {1959},
 language = {English},
 doi = {10.1007/BF02559553},
 zbMATH = {3139141},
 Zbl = {0085.09102}
}

@article {LL,
    AUTHOR = {Lax, Peter D. and Levermore, C. David},
     TITLE = {The small dispersion limit of the {K}orteweg-de {V}ries
              equation. {I,II,III}},
   JOURNAL = {Comm. Pure Appl. Math.},
  FJOURNAL = {Communications on Pure and Applied Mathematics},
    VOLUME = {36},
      YEAR = {1983},
     PAGES = {253--290, 571--593, 809--829},
      ISSN = {0010-3640},
   MRCLASS = {35Q20},
}

@article{GKT23,
 author = {Gérard, P. and Kappeler, T. and Topalov, P.},
 title = {Sharp well-posedness results of the {Benjamin}-{Ono} equation in {{\(H^s(\mathbb{T},\mathbb{R})\)}} and qualitative properties of its solutions},
 fjournal = {Acta Mathematica},
 journal = {Acta Math.},
 issn = {0001-5962},
 volume = {231},
 number = {1},
 pages = {31--88},
 year = {2023},
 language = {English},
 doi = {10.4310/ACTA.2023.v231.n1.a2},
 zbMATH = {7779089},
 Zbl = {1533.35061}
}

@article{Sun2020,
 title={{Complete integrability of the Benjamin-Ono equation on the multi-soliton manifolds}},
  author={Sun, Ruoci},
DOI={10.1007/s00220-021-03996-1},
volume={383},
journal={Comm. Math. Phys.},
fjournal={Communications in Mathematical Physics},
publisher={Springer},
year={2021},
month={4},
pages={1051--1092}
}

\end{document}